\documentclass[10pt]{article}

\usepackage[T1]{fontenc}
\usepackage{lmodern}
\usepackage[margin=1in]{geometry}
\usepackage{amsmath,amssymb,amsthm,mathtools,mathrsfs}
\usepackage{microtype}
\usepackage{enumitem}
\usepackage{booktabs}
\usepackage[colorlinks=true,linkcolor=blue,citecolor=blue,urlcolor=blue]{hyperref}

\usepackage{xcolor}

\allowdisplaybreaks
\setlist[itemize]{leftmargin=1.55em,itemsep=2pt,topsep=3pt}
\setlist[enumerate]{leftmargin=1.8em,itemsep=2pt,topsep=3pt}

\newtheorem{theorem}{Theorem}[section]
\newtheorem{corollary}{Corollary}[section]

\newtheorem{proposition}{Proposition}[section]
\newtheorem{lemma}[proposition]{Lemma}

\theoremstyle{definition}
\newtheorem{definition}[proposition]{Definition}

\theoremstyle{remark}
\newtheorem{remark}[proposition]{Remark}

\numberwithin{equation}{section}
\newtheorem{othertheorem}{Theorem}

\newcommand{\R}{\mathbb R}
\newcommand{\Sph}{\mathbb S}
\newcommand{\Cyl}{\mathcal C}
\newcommand{\cX}{\mathcal X}
\newcommand{\cY}{\mathcal Y}
\newcommand{\dd}{\,\mathrm d}
\newcommand{\ind}{\operatorname{ind}}
\newcommand{\sech}{\operatorname{sech}}
\newcommand{\Span}{\operatorname{span}}
\newcommand{\Ker}{\operatorname{Ker}}
\newcommand{\Ran}{\operatorname{Ran}}
\newcommand{\tr}{\operatorname{tr}}
\newcommand{\spec}{\sigma}
\newcommand{\ess}{\mathrm{ess}}
\newcommand{\cA}{\mathfrak a}
\newcommand{\cB}{\mathfrak b}
\newcommand{\Hyp}{\,{}_2F_1}
\newcommand{\ip}[2]{\left\langle #1,#2\right\rangle}
\newcommand{\norm}[1]{\left\lVert #1\right\rVert}
\newcommand{\abs}[1]{\left|#1\right|}

\title{Uniqueness and Symmetry Breaking for \\ Singular $Q$-curvature Equations in $\R^n$}

\author{
	Haoyu Li
	\and
	Tuoxin Li
	\and
	Juncheng Wei
	\and
	Wen Yang
}

\date{}

\begin{document}
\maketitle

\begin{abstract}
For $n\ge3$ and $\gamma>-1$, we study normal solutions of
\begin{equation*}
 (-\Delta)^{n/2}u=(n-1)!\,|x|^{n\gamma}e^{nu}
 \quad\hbox{in }\R^n.
\end{equation*}
We reformulate this problem on the cylinder via the Emden--Fowler transform. We first establish Kelvin symmetry for normal solutions. We then prove existence and uniqueness, up to dilation, of the radial normal solution and establish its radial nondegeneracy. Next, we carry out a detailed spectral analysis of the linearized operator and construct branches of nonradial normal solutions by the Crandall--Rabinowitz theorem. Finally, we prove that every normal solution is radial when $\gamma>0$ is sufficiently small.
\end{abstract}
\tableofcontents

\section{Introduction}
In this paper we consider, for $n\ge3$ and $\gamma>-1$, the singular critical equation
\begin{equation}\label{eq:euclidean}
 (-\Delta)^{n/2}u=(n-1)!\,|x|^{n\gamma}e^{nu}
 \quad\hbox{in }\R^n,
 \qquad
 \Lambda:=\int_{\R^n}|x|^{n\gamma}e^{nu}\dd x<\infty.
\end{equation}
Throughout the paper, we write
\begin{equation*}
	\alpha:=1+\gamma>0.
\end{equation*}

After adding a multiple of $\log|x|$ and a constant, equation \eqref{eq:euclidean} can be written as
\begin{equation}\label{eq:w}
	(-\Delta)^{n/2}w=e^{nw}-\frac{(n-1)!|\Sph^n|\gamma}{2}\delta_0
	\quad\hbox{in }\R^n,  \qquad
	\int_{\R^n}e^{nw}\dd x<\infty.
\end{equation}
Here $\delta_0$ denotes the Dirac mass at the origin. Equation \eqref{eq:w} arises in the prescribed $Q$-curvature problem with singularities. Let $g_E$ be the standard metric on $\R^n$.  If $g_u=e^{2u}g_E$ is a conformal metric on $\R^n$, then the conformal covariance law for the critical GJMS operator has the form
\begin{equation*}
	P_{g_E}u+Q_{g_E}=Q_{g_u}e^{nu}.
\end{equation*}
In the flat case, $Q_{g_E}=0$ and the critical operator is $P_{g_E}=(-\Delta)^{n/2}$.  It is a differential operator when $n$ is even and a nonlocal elliptic pseudodifferential operator when $n$ is odd. The GJMS operators and the associated $Q$-curvature were introduced in the work of Graham--Jenne--Mason--Sparling \cite{GJMS1992}. We refer to \cite{Branson1995,CLY2019,CM2023,ChangChen2001,ChangGonzalez2011,DHL2000,DM2008,FG2013,GurMal2015} and the references therein for more results and background on GJMS operators and related problems.

The case $n=2$ is well understood. In fact, Chen--Li \cite{ChenLi1993} showed that when $n=2$, every solution of \eqref{eq:euclidean} necessarily has mass
\begin{equation*}
	\int_{\R^2}|x|^{2\gamma}e^{2u}\dd x=4\pi(1+\gamma).
\end{equation*}
In \cite{PrajapatTarantello2001}, Prajapat and Tarantello showed that, in complex variables, all solutions have the form
\begin{equation}\label{2D sol}
	u_{\Lambda,\xi}(z)=\log\frac{2(\gamma+1)\Lambda}{\Lambda^2+|z^{\gamma+1}-\xi|^2}
\end{equation}
for some $\Lambda\in \R_+$ and $\xi\in\mathbb{C}$ and $\xi$ must be $0$ unless $\gamma$ is a nonnegative integer. When $\gamma=0$, $\xi$ corresponds to a translation, but when $\gamma\in \mathbb{N}_+$, the free parameter $\xi$ yields an additional explicit family of nonradial solutions. In the notation $\alpha=1+\gamma$, the loss of radial symmetry occurs precisely at the positive integers $\alpha=k$. For integer singular sources in dimension two, the bounded kernel of the linearized equation has also been classified in \cite{DEM2012}. For $n\ge3$, as shown below, singular solutions are generally non-explicit, and symmetry breaking is considerably subtler in both location and mechanism.

\medskip
A new phenomenon in higher dimensions is the basic distinction between \emph{normal} and \emph{nonnormal} solutions. We recall the following decomposition theorem due to Hyder--Mancini--Martinazzi \cite{HMM2021}.
\begin{othertheorem}  \cite[Theorem 1.1]{HMM2021}\label{thm:NormalSolutionHMM}
    Let $u$ solve \eqref{eq:euclidean}. Then
    \begin{equation*}
        u(x)=\frac{2}{|\Sph^n|}\int_{\R^n}\log\left(\frac{1+|y|}{|x-y|}\right)|y|^{n\gamma}e^{nu(y)}\dd y+p(x),
    \end{equation*}
    where $p$ is a polynomial bounded above and of degree at most $n-1$. Moreover, 
    \begin{equation*}
        \lim_{|x|\to\infty}\frac{u(x)-p(x)}{|\log |x||}=-\frac{2\Lambda}{|\mathbb{S}^n|}.
    \end{equation*}
    If $p$ is constant, then
    \begin{equation}\label{eq:normal-mass-intro}
 \Lambda=\int_{\R^n}|x|^{n\gamma}e^{nu}\dd x
 =\alpha|\Sph^n|.
\end{equation}
\end{othertheorem}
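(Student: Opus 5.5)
The statement is the Hyder--Mancini--Martinazzi decomposition. The plan has three parts: construct the logarithmic potential, compare it with $u$ via a Liouville argument, and read off the asymptotics and the mass from a Pohozaev identity.

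\textbf{Step 1: the potential $v$.} Set $f=|x|^{n\gamma}e^{nu}\in L^1(\R^n)$ and define
\begin{equation*}
v(x)=\frac{2}{|\Sph^n|}\int_{\R^n}\log\left(\frac{1+|y|}{|x-y|}\right)f(y)\dd y .
\end{equation*}
The kernel $\frac{2}{|\Sph^n|}\log\frac1{|x-y|}$ is, up to normalization, the fundamental solution of $(-\Delta)^{n/2}$, since $(n-1)!\,\frac{|\Sph^n|}{2}=\gamma_n$ is the usual constant. The factor $1+|y|$ makes the integral converge for $f\in L^1$. One then checks $(-\Delta)^{n/2}v=(n-1)!\,f$ in the distributional sense. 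For odd $n$ this is taken in the sense of tempered distributions with the slowly growing weight $\log(2+|x|)/(1+|x|^{2n})$.

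\textbf{Step 2: Liouville argument for $p=u-v$.} The function $p$ is $(-\Delta)^{n/2}$-harmonic. I will show it is a polynomial of degree at most $n-1$ by bounding its growth in an averaged sense.

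Since $\log\frac{1+|y|}{|x-y|}\ge -\log|x-y|+\log(1+|y|)$, Jensen's inequality and Fubini give control of averages of $|v|$ over balls. One gets $\frac{1}{|B_R|}\int_{B_R}|v|\le C\log R$.

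An upper bound on $p$ comes from $e^{nu}$ being integrable against $|x|^{n\gamma}$. On most of each large ball $u\le C\log R$, because $\int_{B_{2R}\setminus B_R}e^{nu}\lesssim R^{-n\gamma}$. Combined with the bound on $v$, this gives $\int_{B_R}p^+\le CR^{n}\log R$.

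For the mean-value step I will use the standard fact that a polyharmonic (or $s$-harmonic) function with $\int_{B_R}p^+=O(R^{n+n-1})$ and $\int_{\partial B_R}p=O(\cdot)$ is a polynomial. For the fractional case I will use the Poisson-kernel representation, as in Jin--Maalaoui--Martinazzi. This shows $p$ is a polynomial. Its degree is at most $n-1$ because a tempered $(-\Delta)^{n/2}$-harmonic function in the weighted space has degree $<n$.

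\textbf{Step 3: $p$ bounded above.} If $p$ were unbounded above, it would grow polynomially along some open cone of directions. Since $v\ge -C\log|x|$ outside a small-measure set, this would contradict $\int |x|^{n\gamma}e^{nu}<\infty$.

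\textbf{Step 4: asymptotics.} The limit $(u-p)/\log|x|\to -2\Lambda/|\Sph^n|$ is the standard estimate $v(x)=-\frac{2\Lambda}{|\Sph^n|}\log|x|+o(\log|x|)$. To prove it, I split $\R^n$ into $|y|\le |x|/2$, $|x-y|\le |x|/2$ and the remainder. The near-$x$ region requires a pointwise, not merely averaged, bound. I will get it from an $L^q$ bound on $e^{nu}$ near $x$, obtained by Brezis--Merle type exponential integrability.

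\textbf{Step 5: mass identity.} When $p$ is constant, I apply the Pohozaev identity to $u=v+c$. Multiplying by $x\cdot\nabla u$ and integrating over $B_R$ gives
\begin{equation*}
\frac{\Lambda}{\gamma_n}\left(\frac{\Lambda}{\gamma_n}-2\right)\cdot\text{const}=\frac{2n\gamma}{n}\,\Lambda\cdot\text{const},
\end{equation*}
with constants to be tracked. Equivalently, one uses the integral Pohozaev identity for the logarithmic potential,
\begin{equation*}
\frac{\Lambda}{|\Sph^n|}(\Lambda-2|\Sph^n|)\cdot\frac1{?}\ \Rightarrow\ \Lambda=(1+\gamma)|\Sph^n|.
\end{equation*}
The integral form is $\int (x\cdot\nabla v)f=-\frac{\Lambda^2}{|\Sph^n|}$, obtained by symmetrizing $\frac{x\cdot(x-y)}{|x-y|^2}$ in $(x,y)$. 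Against this, integration by parts gives
\begin{equation*}
\int (x\cdot\nabla u)|x|^{n\gamma}e^{nu}=-\frac{n+n\gamma}{n}\Lambda ,
\end{equation*}
and equating the two yields $\Lambda=\alpha|\Sph^n|$.

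The boundary terms vanish because of the asymptotics from Step 4. However, $|x|^{n}|x|^{n\gamma}e^{nu}\to0$ on average is needed, and this requires the decay $u\le -(2\Lambda/|\Sph^n|-\epsilon)\log|x|$ together with $\Lambda>\alpha|\Sph^n|/2$. That last inequality must first be extracted from integrability.

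\textbf{Main obstacle.} I expect Step 2/4 to be the hardest: ruling out pointwise blow-up of $v$ near points where $f$ concentrates, so that the polynomial bound and the boundary terms in the Pohozaev identity are controlled.
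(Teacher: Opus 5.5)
The paper gives no proof of this statement; it is quoted from \cite{HMM2021}. Your outline follows the standard Martinazzi/HMM scheme, and Steps 1, 2 and the final identity in Step 5 are sound in substance. In Step 2 the polyharmonic Liouville theorem needs only $\int_{B_R}p^+=O(R^{n+\ell})$, and in odd dimension the Fourier-support argument for $p\in L_{n/2}$ is simpler than a Poisson representation. In Step 5, comparing $\int(x\cdot\nabla v)f=-\Lambda^2/|\Sph^n|$ with $\int(x\cdot\nabla u)f=-\alpha\Lambda$ is exactly right. The boundary term on the second side vanishes along some $R_k\to\infty$ simply because $\int_0^\infty\int_{\partial B_R}f\,\dd\sigma\,\dd R<\infty$.

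\textbf{The genuine gap is Step 3.} The claim that a polynomial unbounded above must grow on an open cone is false. Take $p(x)=x_1-x_2^2-\cdots-x_n^2$, of degree $2\le n-1$: it is unbounded above, yet it tends to $-\infty$ along every ray except $\R_+e_1$. For $n\ge5$, $p=x_1-x_1^2(x_2^2+\cdots+x_n^2)$ is worse: $\{p>M\}$ has finite measure for every $M>0$. So discarding an exceptional set of small measure cannot produce a contradiction either.

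The repair has two parts.
\begin{enumerate}
\item Use the \emph{exact} bound $v(x)\ge-\frac{2\Lambda}{|\Sph^n|}\log|x|$ for $|x|\ge1$. It follows from $|x-y|\le|x|(1+|y|)$ and needs no exceptional set. It gives $\int_{|x|\ge1}|x|^{n\gamma-2n\Lambda/|\Sph^n|}e^{np}\dd x\le\Lambda$.
\item Prove that a polynomial of degree $d$ with $\int_{|x|\ge1}|x|^{-N}e^{np}\dd x<\infty$ is bounded above. This needs a real-algebraic input, because when the top-degree part vanishes somewhere on the sphere the lower-order terms decide the answer. One route: $m(R)=\max_{\overline B_R}p$ is a nondecreasing semialgebraic function, hence either bounded or $\ge cR^\theta$ for some $\theta>0$ and all large $R$. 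In the second case pick $x_R$ with $p(x_R)=m(|x_R|)\ge c|x_R|^\theta$. The bound $|\nabla p|\le C|x|^{d-1}$ keeps $p\ge p(x_R)/2$ on a ball of radius $\gtrsim\min\{1,|x_R|^{\theta-d+1}\}$. The integral over that ball is then $\gtrsim|x_R|^{-C}e^{nc|x_R|^\theta/2}\to\infty$, a contradiction.
\end{enumerate}

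\textbf{Step 4 also needs an idea you do not state.} Using $p\le C$ and splitting $v$ away from $B_2(x)$, you get for $y\in B_1(x)$
$u(y)\le-\frac{2\Lambda}{|\Sph^n|}\log|y|+o(\log|y|)+\frac{2}{|\Sph^n|}\int_{B_2(x)}\log^+\frac{1}{|y-z|}f(z)\dd z$.
Brezis--Merle, using $\int_{B_2(x)}f\to0$, then yields only $\|f\|_{L^q(B_1(x))}\le C|x|^{\kappa}$ with $\kappa=n\gamma-\frac{2n\Lambda}{|\Sph^n|}+o(1)$. For $\gamma>0$ nothing a priori excludes $\kappa>0$ in the nonnormal case. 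Then H\"older bounds the singular term only by $O(|x|^\kappa)$, not by $o(\log|x|)$.

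The fix is to split $\int_{B_1(x)}\log\frac{1}{|x-y|}f(y)\dd y$ at $|x-y|=|x|^{-K}$. The outer part is at most $K\log|x|\int_{B_1(x)}f=o(\log|x|)$. The inner part is $\lesssim|x|^{\kappa-Kn/q'}\log|x|\to0$ once $K$ is large.

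In the normal case this difficulty does not arise. The exact lower bound makes $\int_{|x|\ge1}|x|^{n\gamma-2n\Lambda/|\Sph^n|}\dd x$ finite, which is precisely the inequality $\Lambda>\frac{\alpha}{2}|\Sph^n|$ you said must be extracted from integrability. Hence $\kappa<-n$ and $f\le C|x|^{-n-\delta}$ at infinity. This decay is what makes the cross term $\int_{B_R}\int_{\R^n\setminus B_R}\frac{x\cdot(x-y)}{|x-y|^2}f(x)f(y)\dd y\dd x$ of the truncated Pohozaev identity tend to zero.
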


A solution is called \emph{normal} when the polynomial $p$ is constant. Otherwise, it is called \emph{nonnormal}.

Nonnormal solutions may contain a genuine polynomial part at infinity and their volume need not equal the value in \eqref{eq:normal-mass-intro}. The normal class is therefore the geometrically distinguished class in which the asymptotic behavior is purely logarithmic.  As mentioned above, all solutions are normal in dimension two. However, nonnormal solutions also exist in higher dimensions. See, for example, \cite{Hyder2016,HM2015,JMMX2015,WeiYe2008}.

For the nonsingular higher-dimensional equation, extensive classification and existence results are available. See \cite{Hyder2016,Hyder2019,JMMX2015,Lin1998,Martinazzi2009,WeiXu1999} and the references therein. For the singular equation, Hyder--Mancini--Martinazzi \cite[Theorem~1.2]{HMM2021} proved the existence of a radial normal solution for every $n\ge3$ and $\gamma>-1$. In dimension four, Fang--Ma \cite[Theorems~1.5 and~6.7]{FangMa2022} established uniqueness for the corresponding radial equation with prescribed linear growth. Ahmedou--Wu--Zhang \cite[Theorem~1.3]{AWZ2022} proved radial symmetry in the range $-1<\gamma<0$. For more results about isolated singularities, we refer to \cite{GhoshHyder2025,GLW2026,HM2021Power,JSTW2022,KonigLaurain2022}. For geometric problems with several conical points or on closed conic manifolds, see \cite{FangMa2022,HLW2020,JSY2025}.

 To the best of our knowledge, several basic questions concerning normal solutions remain open in general higher dimensions, including uniqueness of the radial normal solution, its nondegeneracy and the construction of nonradial normal solutions. The purpose of the present paper is to address these questions in every dimension $n\ge3$. Our approach treats the local and nonlocal cases within an unified formulation. The starting point is the Emden--Fowler transform
\begin{equation}\label{EF trans}
 x=e^t\omega, \qquad (t,\omega)\in\Cyl:=\R\times\Sph^{n-1},
 \qquad V(t,\omega)=u(e^t\omega)+\alpha t.
\end{equation}
By conformal covariance, equation \eqref{eq:euclidean} becomes the following autonomous equation on the cylinder:
\begin{equation}\label{eq:cylinder-equation}
 P_nV=(n-1)!e^{nV}\quad\hbox{on }\Cyl,
\end{equation}
where $P_n$ is the critical cylinder GJMS operator. Let $\mathcal{H}_k$ denote the space of spherical harmonics of degree $k$ and let $\{Y_{k,\ell}\}_{\ell}$ be an orthonormal basis of $\mathcal{H}_k$. Then
\begin{equation*}
 -\Delta_{\Sph^{n-1}}Y_{k,\ell}=\lambda_kY_{k,\ell}\text{ and }\lambda_k:=k(k+n-2).
\end{equation*}
For a function $\phi=\phi(t)$, the operator separates as
\begin{equation*}
 P_n(\phi Y_{k,\ell})=(A_{n,k}\phi)Y_{k,\ell},
\end{equation*}
where $A_{n,k}$ is the one-dimensional Fourier multiplier with symbol
\begin{equation}\label{eq:mode-symbol}
 a_{n,k}(\xi)=2^n
 \frac{\Gamma\left(\frac{n+k+i\xi}{2}\right)
       \Gamma\left(\frac{n+k-i\xi}{2}\right)}
      {\Gamma\left(\frac{k+i\xi}{2}\right)
       \Gamma\left(\frac{k-i\xi}{2}\right)}.
\end{equation}
In fact, \cite[Theorem~1.1]{DelaTorreGonzalez2018} computed the cylinder symbols for a family of subcritical operators. The critical symbol $a_{n,k}$ is the limit of these cylinder symbols. 

For real $\xi$, $a_{n,k}$ is positive except when $k=0$ and $\xi=0$.  If $n=2m$ is even, the Gamma quotient reduces to
\begin{equation}\label{a 2m symbol}
 a_{2m,k}(\xi)=\prod_{j=0}^{m-1}\bigl((k+2j)^2+\xi^2\bigr),
 \qquad A_{2m,k}=\prod_{j=0}^{m-1}\left(-\partial_t^2+(k+2j)^2\right).
\end{equation}
For odd $n$, the same Gamma-function quotient defines a self-adjoint pseudodifferential operator of order $n$. 

Kelvin symmetry connects the behavior at the origin with the behavior at infinity. Our first result shows that every normal solution is Kelvin symmetric.
	\begin{theorem}\label{thm:kelvin}
	Let $u$	be a normal solution of \eqref{eq:euclidean}. Then there is a unique $R>0$ such that
		\begin{equation}\label{u kelvin}
	u(x)=u\!\left(\frac{R^2x}{|x|^2}\right)				-2\alpha\log\frac{|x|}{R},			\qquad x\neq 0.
		\end{equation}
      Here $R$ is the unique radius such that
		\begin{equation}\label{R half mass}
			\int_{B_R}|x|^{n\gamma}e^{nu(x)}\dd x=\frac{1}{2}\alpha|\Sph^n|.
		\end{equation}
		Equivalently, for $t_0=\log R$ and $V$ given by \eqref{EF trans}, we have
		\begin{equation}\label{V sym}
				V(t_0+t,\omega)=V(t_0-t,\omega),\qquad (t,\omega)\in\mathcal{C}.
		\end{equation}
	\end{theorem}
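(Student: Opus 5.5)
The plan is to prove the reflection symmetry \eqref{V sym} by the method of moving planes on the cylinder $\Cyl$, in the integral form supplied by Theorem~\ref{thm:NormalSolutionHMM}.

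\textbf{Step 1: integral equation on the cylinder.} For a normal solution, $p$ is constant. Substituting $x=e^t\omega$, $y=e^s\theta$ gives
\[
\log\frac{1}{|x-y|}=-\tfrac{t+s}{2}-\tfrac12\log\bigl(2\cosh(t-s)-2\,\omega\cdot\theta\bigr).
\]
The factor $|y|^{n\gamma}e^{nu}\,dy$ equals $e^{nV(s,\theta)}\,ds\,d\theta$. The mass identity \eqref{eq:normal-mass-intro} gives $\frac{2}{|\Sph^n|}\cdot\frac{\Lambda}{2}=\alpha$, so the term $-\alpha t$ cancels the $+\alpha t$ in $V$. We obtain
\[
V(t,\omega)=c+\frac{1}{|\Sph^n|}\int_{\Cyl}K(t-s,\omega\cdot\theta)\,e^{nV(s,\theta)}\,ds\,d\theta
+\frac{2}{|\Sph^n|}\int_{\Cyl}\Bigl(\log(1+e^s)-\tfrac s2\Bigr)e^{nV}\,ds\,d\theta,
\]
with $K(\tau,c)=-\log(2\cosh\tau-2c)$. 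The last integral is a finite constant, so
\[
V(t,\omega)=C+\frac{1}{|\Sph^n|}\int_{\Cyl}K(t-s,\omega\cdot\theta)\,e^{nV}.
\]
The kernel is even in $\tau$ and strictly decreasing in $|\tau|$ for each fixed $c$. Theorem~\ref{thm:NormalSolutionHMM} also gives $V(t,\omega)\to-\infty$ as $|t|\to\infty$, with $V=-\alpha|t|+O(1)$ at both ends. The behavior at $t\to-\infty$ comes from regularity near the origin, where $u$ is bounded by the integral representation.

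\textbf{Step 2: moving planes in $t$.} For $\lambda\in\R$ set $V_\lambda(t,\omega)=V(2\lambda-t,\omega)$ and $w_\lambda=V_\lambda-V$ on $\Sigma_\lambda=\{t<\lambda\}$. The evenness of $K$ and the reflection yield
\[
w_\lambda(t,\omega)=\frac{1}{|\Sph^n|}\int_{\Sigma_\lambda}\Bigl[K(t-s,\cdot)-K(t+s-2\lambda,\cdot)\Bigr]\bigl(e^{nV_\lambda}-e^{nV}\bigr)(s,\theta)\,ds\,d\theta .
\]
For $t,s<\lambda$ we have $|t-s|<|t+s-2\lambda|$, so the bracket is positive.
\begin{itemize}
\item \emph{Start.} Show $w_\lambda\ge0$ on $\Sigma_\lambda$ for $\lambda$ very negative. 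Use the standard integral-inequality argument: on $\Sigma_\lambda^-=\{w_\lambda<0\}$, the mean value theorem together with Hardy--Littlewood--Sobolev (or a weighted $L^\infty$ bound) gives $\|w_\lambda^-\|\le C\|e^{nV}\|_{L^{q}(\Sigma_\lambda)}\|w_\lambda^-\|$. The right-hand coefficient is small since $e^{nV}\sim e^{-n\alpha|t|}$.
\item \emph{Continuation.} Let $\lambda_0=\sup\{\lambda:w_\mu\ge0 \text{ for all } \mu\le\lambda\}$. The same argument from the other end shows $\lambda_0<\infty$. If $w_{\lambda_0}\not\equiv0$, the strict positivity of the bracket forces $w_{\lambda_0}>0$. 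Smallness of the mass of $e^{nV}$ on thin slabs and at the far ends then lets us push slightly past $\lambda_0$, a contradiction.
\end{itemize}
Hence $V(t_0+t,\omega)=V(t_0-t,\omega)$ with $t_0=\lambda_0$, which is \eqref{V sym}.

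\textbf{Step 3: the remaining claims.} Translating \eqref{V sym} back via \eqref{EF trans} with $R=e^{t_0}$ gives \eqref{u kelvin}. Indeed, $R^2x/|x|^2$ corresponds to $2t_0-t$, and $\alpha(t-(2t_0-t))=2\alpha\log(|x|/R)$. Symmetry also implies that the mass of $e^{nV}$ on $\{t<t_0\}$ equals half the total, which is \eqref{R half mass}. The half-mass radius is unique because $r\mapsto\int_{B_r}|x|^{n\gamma}e^{nu}$ is strictly increasing. This also yields uniqueness of $R$ in \eqref{u kelvin}: any admissible $R$ satisfies \eqref{R half mass}.

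\textbf{Main obstacle.} The hardest step is the start and continuation of the moving plane. The bracket weight is not integrable globally, and the decay must be controlled uniformly at both ends $t\to\pm\infty$. I would first try to establish the sharp two-sided asymptotics $V=-\alpha|t|+O(1)$, with the $O(1)$ term converging, and then work with $w_\lambda^-$ in a weighted sup-norm. Concretely, I would bound $\int_{\Sigma_\lambda^-}$ of the bracket times $e^{nV}$ using the logarithmic singularity of $K$ on the diagonal. This avoids relying on HLS constants on $\Cyl$.
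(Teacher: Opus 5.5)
Your proposal follows the paper's proof essentially step for step. It uses the same cylinder integral representation with kernel $\log\bigl(2(\cosh\tau-\omega\cdot\eta)\bigr)$ and moves planes in $t$, starting at $-\infty$ by bounding the kernel by linear growth plus a $\log^+$ diagonal singularity against the exponentially decaying $e^{nV}$ in the plain sup-norm of $w_\lambda^-$; it then uses strict positivity of the reflected kernel at the stopping value and continues past it by splitting into a far-left piece, a compact middle piece and a thin strip near the plane. One point needs tightening: Theorem~\ref{thm:NormalSolutionHMM} alone only gives $V=-\alpha|t|+o(|t|)$ as $t\to+\infty$. The $O(1)$ asymptotics you need for $\sup w_\lambda^-<\infty$ and for $\lambda_0<\infty$ must be derived separately from the integral representation, as the paper does in Lemma~\ref{lem:V infty}.
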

\begin{remark}
    Up to a dilation, we may assume $R=1$ in (\ref{R half mass}). In cylindrical variables, this fixes the center of reflection at $t=0$. We will use this normalization in both the bifurcation argument and the compactness analysis.
\end{remark}

Next we consider radial normal solutions. A radial normal solution $u=u(r)$ corresponds, under $v_\alpha(t)=u(e^t)+\alpha t$, to a function satisfying
\begin{equation}\label{eq:radial-equation}
 A_{n,0}v_\alpha=(n-1)!e^{nv_\alpha}.
\end{equation}
Normality determines its asymptotic behavior:
\begin{equation}\label{eq:v al infty}
 v_\alpha'(t)\longrightarrow
 \begin{cases}
  \alpha,&t\to-\infty,\\
  -\alpha,&t\to+\infty.
 \end{cases}
\end{equation}
The derivation is contained in Lemma \ref{lem:V infty}. Translation in $t$ corresponds to dilation in the Euclidean variables.

Set
\begin{equation*}
 W_\alpha(t):=n!e^{nv_\alpha(t)}.
\end{equation*}
The linearized operator of \eqref{eq:cylinder-equation} at $v_\alpha$ is
\begin{equation*}
 L_\alpha=P_n-W_\alpha,
\end{equation*}
and on the degree-$k$ spherical-harmonic subspace, it reduces to
\begin{equation}\label{L al k}
 L_{\alpha,k}:=A_{n,k}-W_\alpha.
\end{equation}
The mode-$0$ part of the kernel of $L_{\alpha,0}$ contains $v_\alpha'$.  At the regular value $\alpha=1$, the mode-$1$ part represents Euclidean translations.  Subspaces with degree  $k\ge2$ are the genuinely symmetry-breaking modes considered below.

The following theorem gives a complete description of the radial normal solution.

\begin{theorem}\label{thm radial}
 For every $\alpha>0$, \eqref{eq:euclidean} has a unique radial normal solution up to dilation. Up to a  translation in $t$, the corresponding solution $v_\alpha$ of \eqref{eq:radial-equation} is even, strictly concave and strictly decreasing on $(0,\infty)$. Moreover, if $L_{\alpha,0}\phi=0$ for some bounded function $\phi$, then
$\phi\in\Span\{v_\alpha'\}$.
\end{theorem}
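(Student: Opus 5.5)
Existence of a radial normal solution is already known from \cite[Theorem~1.2]{HMM2021}, so the plan concentrates on shape, uniqueness and nondegeneracy, all carried out for the cylinder profile $v_\alpha$ solving \eqref{eq:radial-equation} with the asymptotics \eqref{eq:v al infty}.

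\textbf{Symmetry and shape.} Theorem~\ref{thm:kelvin}, applied to a radial normal solution and normalized with $R=1$, gives $v_\alpha(t)=v_\alpha(-t)$ after a translation in $t$. For concavity I would write $v_\alpha$ through the Green kernel $G_0$ of $A_{n,0}$ on $\R$. Modulo its linear part, $G_0$ behaves like $-c|t|$ plus a decaying correction, since the symbol has a double zero at $\xi=0$: indeed $a_{n,0}(\xi)\sim c\,\xi^2$. This gives
\begin{equation*}
v_\alpha(t)=C+\frac{1}{|\Sph^n|}\int_\R G_0(t-s)\,n!\,e^{nv_\alpha(s)}\dd s ,
\end{equation*}
with the constants fixed by the mass identity \eqref{eq:normal-mass-intro}. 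Differentiating twice yields $v_\alpha''=G_0''*W_\alpha/|\Sph^n|$. Here $-G_0''$ is the kernel of $\partial_t^2 A_{n,0}^{-1}$, whose symbol is $-\xi^2/a_{n,0}(\xi)$. In the even case this is a product of factors $(\xi^2+4j^2)^{-1}$, so $G_0''$ is strictly negative, being a convolution of positive exponential kernels up to a delta from the $j=0$ factor. In the odd case, $\xi^2/a_{n,0}$ is a Gamma quotient of the form $\Gamma(\tfrac{i\xi}{2}+1)\Gamma(1-\tfrac{i\xi}{2})/\Gamma(\tfrac{n+i\xi}{2})\Gamma(\tfrac{n-i\xi}{2})$. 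I would show it is positive definite via a beta-integral representation, with inverse Fourier transform a positive multiple of $\cosh^{-n}$-type kernels. Hence $v_\alpha''<0$. Evenness then gives $v_\alpha'(0)=0$, so $v_\alpha$ is strictly decreasing on $(0,\infty)$.

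\textbf{Nondegeneracy.} Let $\phi$ be bounded with $L_{\alpha,0}\phi=0$. Split $\phi$ into its even and odd parts, using that $W_\alpha$ is even. For the odd part, compare with $v_\alpha'$, which is odd, negative on $(0,\infty)$, and decays like a constant minus $\alpha$; more precisely $v_\alpha'\to\mp\alpha$. Since $v_\alpha''<0$ is an integrable, positive-sign kernel element, I would rather use $\psi=v_\alpha'$ only as a barrier. For the other solutions I would use the family generated by the two scaling symmetries. Differentiating with respect to $\alpha$ gives the family $v_\alpha$ (once uniqueness is known), and the asymptotic slope $\pm\alpha$ changes linearly. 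So the bounded kernel is analyzed by a Wronskian/ODE argument in the even case: $A_{2m,0}$ has a $2m$-dimensional local solution space, and the decay of $W_\alpha$ (like $e^{-n\alpha|t|}$) implies that solutions behave at $+\infty$ like combinations of $1,t,e^{\pm 2jt}$. Boundedness kills all but $\{1\}$ asymptotically at each end. A counting and matching argument, with Fredholm index of $L_{\alpha,0}$ on weighted spaces $e^{\delta|t|}L^2$ with small $\delta$, equal to $-2$ on decaying and $+2$ on growing spaces, shows the bounded kernel has dimension at most $2$. It is spanned by $v_\alpha'$ and possibly an even element. The even element is excluded by an integral identity. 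Such a $\phi$ would give, via integration against $1$, the relation $\int W_\alpha\phi=0$, and against $t\,v_\alpha'$-type multipliers (Pohozaev), a contradiction with the mass constraint unless $\phi\equiv0$. For odd $n$ the same scheme works with the Fredholm theory of the Fourier multiplier, using that $a_{n,0}$ has only the double zero at $0$ on the real line and that its nearest complex zeros lie at $\xi=\pm 2i$.

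\textbf{Uniqueness.} This I expect to be the main obstacle. I would use a continuation argument in $\alpha$. The set of radial normal solutions, normalized to be even, is compact for $\alpha$ in compact subsets of $(0,\infty)$; this follows from the mass identity and the asymptotics of Theorem~\ref{thm:NormalSolutionHMM}. At $\alpha=1$ the solution is unique: it is the standard bubble by classification of regular normal solutions. The nondegeneracy in the even class, proved above without using uniqueness, together with the implicit function theorem in even weighted spaces, makes the solution set a finite union of smooth curves $\alpha\mapsto v_\alpha$ with no turning points. Hence the number of solutions is locally constant, thus equals $1$ everywhere. For this to work, nondegeneracy must not rely on uniqueness, and blow-up as $\alpha\to$ any finite value must be ruled out. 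The latter is the delicate compactness step, handled through the fixed half-mass normalization at $t=0$.
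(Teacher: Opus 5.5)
Your shape argument matches the paper's. Evenness comes from Theorem~\ref{thm:kelvin}, and strict concavity from $-v_\alpha''=(n-1)!\,g_n*e^{nv_\alpha}$, where $g_n$ has symbol $\xi^2/a_{n,0}(\xi)$ and the beta integral gives $g_n=d_nf_n*\widetilde f_n>0$. What is needed is positivity of this kernel, not positive definiteness of the symbol, and no delta occurs for $n\ge3$. Citing \cite{HMM2021} for existence is also fine. The genuine gap is radial nondegeneracy. Your uniqueness proof is a continuation in $\alpha$ that needs even nondegeneracy at \emph{every} even radial solution, so uniqueness inherits the same gap.

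After your Fredholm count you still have to show that no bounded even $\phi\ne0$ satisfies $L_{\alpha,0}\phi=0$. For your claim ``dimension at most $2$'' you also need that there is no decaying kernel element at all, since on the growing weighted space the count only gives $\dim\Ker=2+\dim\Ker_{\mathrm{dec}}$. The exclusion you sketch does not produce a contradiction:
\begin{itemize}
\item Integrating against $1$ gives $\int_\R W_\alpha\phi=0$. But this is exactly what boundedness of $\phi$ already forces, because it removes the $|t|$-growth of $\mathcal K_n*(W_\alpha\phi)$. So it separates bounded from linearly growing even solutions without ruling out the bounded ones.
\item A Pohozaev multiplier $Xv_\alpha$ such as $tv_\alpha'$ is no better. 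Linearizing at a kernel direction an identity obtained by testing the equation against $Xv_\alpha$ yields only $\langle\phi,L_{\alpha,0}(Xv_\alpha)\rangle=\langle L_{\alpha,0}\phi,Xv_\alpha\rangle=0$. This holds automatically and carries no sign.
\item The linearized mass identity \eqref{eq:normal-mass-intro} is again just $\int W_\alpha\phi=0$.
\item The map $t\mapsto\lambda t$ is not a symmetry of $A_{n,0}$ for $n\ge3$, so there is no second symmetry-generated solution to compare with.
\end{itemize}

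What is missing is a coercivity mechanism. The paper gets it from the free-energy structure. Set $\sigma=W_\alpha\phi$, which has zero mean, and let $\zeta=-\rho_\alpha^{-1}\int_{-\infty}^t\sigma$, which is bounded. The linearized equation together with $(\log\rho_\alpha)''=-C_{n,\alpha}\,g_n*\rho_\alpha$ gives $\int_\R\rho_\alpha|\zeta'|^2+\frac{C_{n,\alpha}}2\iint g_n(t-s)|\zeta(t)-\zeta(s)|^2\rho_\alpha(t)\rho_\alpha(s)\dd t\dd s=0$. Since $g_n>0$, $\zeta$ is constant and $\phi\in\Span\{v_\alpha'\}$. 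Uniqueness in the paper is not proved by continuation. It follows from convexity of $\mathcal G_{n,\alpha}$ along linear interpolation of quantile functions, strict modulo translations, which needs neither nondegeneracy nor compactness in $\alpha$.
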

The proof is variational. After factoring the radial operator, the equation becomes the Euler--Lagrange equation of a free-energy functional on probability densities on $\R$. The direct method gives existence. Strict convexity modulo translation yields uniqueness up to translation, and the second variation identity gives radial nondegeneracy.

We next determine all higher-mode parameters at which the radial solution becomes degenerate beyond dilation.
\begin{theorem}\label{thm e value L}
 For every integer $k\ge2$, there is exactly one parameter $\alpha_{n,k}>1$ such that the kernel of $L_{\alpha_{n,k},k}$ is nontrivial. At this parameter, zero is the principal eigenvalue of $L_{\alpha_{n,k},k}$.  If $k_1\ne k_2$, then $\alpha_{n,k_1}\ne\alpha_{n,k_2}$.  Moreover,
 \begin{equation}\label{eq:main-range-intro}
  \cA_{n,k}<\alpha_{n,k}<\cA_{n,k}+\cB_{n,k},
 \end{equation}
 where
 \begin{equation}\label{def ak bk}
  \cA_{n,k}:=\frac{\Gamma(n+k-1)}{n\Gamma(k)\Gamma(n-1)},  \qquad  \cB_{n,k}:=\frac{k(n+k-2)}{n(n-2)}.
 \end{equation}
\end{theorem}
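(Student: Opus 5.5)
The plan is to study the principal eigenvalue $\mu_k(\alpha)$ of $L_{\alpha,k}=A_{n,k}-W_\alpha$ on $L^2(\R)$. We show that $\alpha\mapsto\mu_k(\alpha)$ is continuous and strictly decreasing, positive for $\alpha$ near $1$ or below, and negative for large $\alpha$. Its unique zero is then $\alpha_{n,k}$.

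First, the spectral setup. For $k\ge1$ the symbol $a_{n,k}(\xi)\ge a_{n,k}(0)=2^n\Gamma(\frac{n+k}{2})^2/\Gamma(\frac k2)^2>0$. Since $W_\alpha$ decays exponentially, by \eqref{eq:v al infty}, the essential spectrum of $L_{\alpha,k}$ is $[a_{n,k}(0),\infty)$. Any kernel element lies below this threshold and is therefore an isolated eigenvalue. I would show that nonpositive eigenvalues of $L_{\alpha,k}$ are simple with a positive eigenfunction, using a Perron--Frobenius argument. For this, the Green's function of $A_{n,k}$ is positive, because $1/a_{n,k}$ is a product of Fourier transforms of positive functions (the Gamma quotient factors into Beta-type kernels). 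We then study the Birman--Schwinger operator $K_\alpha=W_\alpha^{1/2}A_{n,k}^{-1}W_\alpha^{1/2}$, which is compact, positivity improving and self-adjoint. The kernel of $L_{\alpha,k}$ is nontrivial iff $1$ is an eigenvalue of $K_\alpha$. If the kernel is nontrivial, the positive kernel element must be the principal eigenfunction, provided we know the top eigenvalue $\rho_k(\alpha)$ of $K_\alpha$ equals $1$. So the claim reduces to showing that $\rho_k(\alpha)=1$ at exactly one $\alpha$, and that there $1$ is the top eigenvalue, i.e.\ zero is the principal eigenvalue of $L_{\alpha,k}$. Since $a_{n,k}$ is strictly increasing in $k$ pointwise, $\rho_k(\alpha)$ is strictly decreasing in $k$. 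Hence if $\rho_{k_1}(\alpha)=\rho_{k_2}(\alpha)=1$ with $k_1<k_2$ we would have a contradiction, which gives $\alpha_{n,k_1}\ne\alpha_{n,k_2}$ once uniqueness in $\alpha$ is known.

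The main obstacle is monotonicity in $\alpha$, since $W_\alpha$ is non-explicit. My approach is a rescaling. Set $v_\alpha(t)=\alpha\,\tilde v(\alpha t)+\text{const}$-type comparisons, which are exact when $n=2$. Alternatively, differentiate in $\alpha$ using the nondegeneracy of Theorem~\ref{thm radial} to get $\partial_\alpha v_\alpha$. I would then prove that the mass identity $\int W_\alpha=n!\,\alpha|\Sph^n|/|\Sph^{n-1}|$ (from \eqref{eq:normal-mass-intro}) combines with concavity and evenness, so that $W_\alpha(t/\alpha)/\alpha^{?}$ is ordered. Then $\rho_k$ is strictly monotone by Hellmann--Feynman, $\rho_k'(\alpha)=\langle \partial_\alpha(\cdot)\psi,\psi\rangle>0$ with $\psi$ the positive principal eigenfunction. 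If a direct ordering fails, I would instead use the variational (free-energy) structure from the proof of Theorem~\ref{thm radial} to compute $\partial_\alpha v_\alpha$ and its sign on the support of $\psi$. Limits: as $\alpha\to\infty$ the potential spreads over a length $\sim\alpha$ with height bounded below, so test functions give $\mu_k<0$. At $\alpha=1$ the solution is the explicit bubble $v_1=\log\sech t+c$, and the kernel of $L_{1,k}$ for $k\ge2$ is trivial with $\mu_k(1)>0$, by the classical nondegeneracy of the standard bubble.

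For the bounds \eqref{eq:main-range-intro}, I would use test functions and a comparison. For the lower bound, I would test $K_\alpha$ against the exact kernel identity. At the threshold, $\psi$ is positive, so integrating $L_{\alpha,k}\psi=0$ against $\psi$ and against $\cosh$-type weights yields $\int W_\alpha\psi=\int a_{n,k}$-weighted quantities. Convexity of $\log a_{n,k}$ in $\xi$ relative to $a_{n,0}$, together with the known mass, gives $\alpha>\cA_{n,k}$; note that $\cA_{n,k}$ is exactly $a_{n,k}(0)$-type ratio normalization. For the upper bound, I would plug the trial function $\phi=e^{-k|t|}$-like or $(\sech)^{k}$-modified $v_\alpha'$ into the Rayleigh quotient. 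Here the relation $A_{n,k}-A_{n,0}\le \cB$-type quadratic symbol difference, namely $a_{n,k}-a_{n,0}$ controlled via $\lambda_k$ (giving $\cB_{n,k}=\lambda_k/(n(n-2))$), shows $\mu_k(\cA_{n,k}+\cB_{n,k})<0$. These estimates I expect to be computational but delicate.
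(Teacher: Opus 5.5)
Your outline (Birman--Schwinger operator, Perron--Frobenius, monotonicity in $k$ and in $\alpha$, the explicit point $\alpha=1$) has the right shape, but the steps that carry the theorem are missing. The first gap is structural. Perron--Frobenius for the positivity-improving operator $\mathscr K_{\alpha,k}$ only says that the \emph{top} eigenvalue $\mu_1(\alpha,k)$ is simple with a positive eigenfunction. Your claim that every nonpositive eigenvalue of $L_{\alpha,k}$ is simple with a positive eigenfunction amounts to $\mu_2(\alpha,k)<1$, i.e.\ $L_{\alpha,k}$ has at most one nonpositive eigenvalue, and positivity alone does not give this. Without it, a unique crossing of $\mu_1(\cdot,k)$ through $1$ yields neither ``exactly one'' $\alpha_{n,k}$ (some $\mu_j$ with $j\ge2$ could equal $1$ at another parameter) nor that zero is the principal eigenvalue. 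The paper obtains $\mu_2<1$ from strict total positivity of order two of $G_{n,k}(t-s)$, which comes from log-concavity of $h_{n,k}$. This forces the second eigenfunction to have exactly one sign change, hence to be odd, so $\mu_2(\alpha,k)$ is the spectral radius of the odd half-line operator. Plancherel and the strict symbol inequality $a_{n,k}>a_{n,0}$ then compare it with the odd radial operator $\mathscr K^{\mathrm o}_{\alpha,0}$, whose spectral radius is exactly $1$ because $-W_\alpha^{1/2}v_\alpha'>0$ on $(0,\infty)$ is an eigenfunction. The same bound $\|\mathscr K^{\mathrm o}_{\alpha,k}\|=\mu_2<\mu_1$ also gives $\partial_t\psi_{\alpha,k}<0$ on $(0,\infty)$, which is needed below. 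Two smaller points. At $\alpha=1$, nondegeneracy of the bubble only gives $\mu_1(1,k)\neq1$; to get $\mu_1(1,k)<1$ you need $\mu_1(1,1)=1$ (translation mode $\sech t$) plus monotonicity in $k$, or the explicit eigenfunction $W_1^{1/2}\sech^k t$. And your pointwise ordering of $a_{n,k}$ in $k$ is asserted, not proved, whereas the pointwise decrease of $G_{n,\kappa}(t)$ in $\kappa$ is immediate from its Beta integral.

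Second, the monotonicity in $\alpha$ has no working mechanism. No rescaling relates $v_\alpha$ for different $\alpha$ when $n\ge3$. Moreover $q_\alpha=\partial_\alpha v_\alpha$ has no sign: $q_\alpha=-|t|+O(1)$ at infinity, while $\int_\R W_\alpha q_\alpha\dd t=2\beta_n>0$. Hellmann--Feynman reduces the question to $\int_\R W_\alpha q_\alpha\psi_{\alpha,k}^2\dd t>0$. The paper writes this as $-2\int_0^\infty M\,(\psi_{\alpha,k}^2)'\dd t$ with $M(t)=\int_0^tW_\alpha q_\alpha\dd s$, proves $M>0$ on $(0,\infty)$ by a minimum principle for $M/W_\alpha$ based on the positive odd kernel $g_n(|t-s|)-g_n(t+s)$, and uses $\partial_t\psi_{\alpha,k}<0$. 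Third, neither bound follows from your sketch. $\cA_{n,k}$ is not an $a_{n,k}(0)$-normalization: $\alpha/\cA_{n,k}=G_{n,k}(0)\int_\R W_\alpha\dd t=\tr\mathscr K_{\alpha,k}$, and the lower bound is just $1=\mu_1<\sum_j\mu_j$, every eigenvalue being positive by injectivity. For the upper bound, no operator inequality $A_{n,k}-A_{n,0}\le\cB$ exists, since the symbol difference grows like $\lambda_k|\xi|^{n-2}$. The paper instead tests $\mathscr K_{\alpha,k}$ with $W_\alpha^{1/2}$ (the constant function in $L^2(\rho_\alpha\dd t)$). It proves $1-G_{n,k}(t)/G_{n,k}(0)<\frac{\lambda_{n,k}}{2(n-2)}\tanh t\,H_n(t)$ by an ODE maximum principle using the hypergeometric forms of both sides. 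It then evaluates the resulting integral exactly via the virial identity $\iint|t-s|H_n(|t-s|)\rho_\alpha(t)\rho_\alpha(s)\dd t\dd s=2/(n\alpha)$, obtained by dilating the free energy. This gives $\mu_1(\alpha,k)>(\alpha-\cB_{n,k})/\cA_{n,k}$ and makes your large-$\alpha$ heuristic unnecessary. That heuristic is also wrong as stated: for large $\alpha$ the density $\rho_\alpha$ concentrates rather than spreading over a length $\sim\alpha$.
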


\begin{remark}\label{al n2 nonint}
 For $k=2$, the estimate becomes
 \begin{equation*}
  n-1<\alpha_{n,2}<n-1+\frac{2}{n-2}.
 \end{equation*}
 Thus $\alpha_{n,2}$ is not an integer whenever $n\ge4$. 
\end{remark}

The proof uses the Birman--Schwinger formulation, which turns the kernel condition for $L_{\alpha,k}$ into an eigenvalue condition at the level $1$. We show that only the principal Birman--Schwinger eigenvalue can reach $1$. Then we prove that it is strictly increasing in $\alpha$ and strictly decreasing in the angular parameter $k$. This yields a unique crossing in each angular mode and separates the crossing parameters for different modes. The explicit interval in \eqref{eq:main-range-intro} follows from the trace formula for the operator and a comparison with an explicit integral kernel.

Combining radial nondegeneracy with the mode analysis gives the following nondegeneracy consequence. 
\begin{corollary}\label{coro:nondegen}
Let $0<\alpha<\alpha_{n,2}$. Then the solution $v_\alpha$ given in Theorem~\ref{thm radial} is nondegenerate in the following sense: 
let $h$ be a bounded distributional solution of $L_\alpha h=0$ on $\Cyl$. If $\alpha\neq 1$, then $h\in \Span\{v_\alpha'\}$. If $\alpha=1$, then
\begin{equation}\label{h kernel}
    h(t,\omega)=-c_0\tanh{t}+\sech t\sum_{i=1}^nc_i\omega_i
    \end{equation}
    for some constants $c_0,\dots, c_n$. In particular, every bounded distributional solution of
\begin{equation*}
    	(-\Delta)^{n/2}h=n!e^{nU}h\quad\hbox{in }\R^n,
\end{equation*}
    where $U(x)=\log\frac{2}{1+|x|^2}$, is of the form $h=\sum_{i=0}^nc_iZ_i$, where
    \begin{equation}\label{eq:kernel}
Z_0(x)=\frac{1-|x|^2}{1+|x|^2},\qquad
Z_i(x)=\frac{2x_i}{1+|x|^2}.
\end{equation}
\end{corollary}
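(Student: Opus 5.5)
We decompose a bounded distributional solution $h$ of $L_\alpha h=0$ into spherical harmonics, $h(t,\omega)=\sum_{k,\ell}h_{k,\ell}(t)Y_{k,\ell}(\omega)$ with $h_{k,\ell}(t)=\int_{\Sph^{n-1}}h(t,\omega)Y_{k,\ell}(\omega)\dd\omega$. Since $W_\alpha$ depends only on $t$, each coefficient is a bounded distributional solution of $L_{\alpha,k}h_{k,\ell}=0$ on $\R$. This uses that $P_n$ commutes with the projections onto $\mathcal H_k$, which holds because $P_n$ is a Fourier multiplier in $t$ that acts diagonally in the spherical harmonics. We then treat the three regimes $k=0$, $k=1$ and $k\ge2$ separately.

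\emph{Mode $k=0$.} Theorem~\ref{thm radial} gives $h_{0}\in\Span\{v_\alpha'\}$ directly. \emph{Modes $k\ge2$.} We show that $L_{\alpha,k}$ has trivial bounded kernel for $0<\alpha<\alpha_{n,2}$. By Theorem~\ref{thm e value L} and its proof, the principal Birman--Schwinger eigenvalue $\mu_k(\alpha)$ of $W_\alpha^{1/2}A_{n,k}^{-1}W_\alpha^{1/2}$ is strictly increasing in $\alpha$ and strictly decreasing in $k$. It equals $1$ exactly at $\alpha=\alpha_{n,k}$, and only the principal eigenvalue can reach $1$. Hence for $k\ge2$ and $\alpha<\alpha_{n,2}$ we get $\mu_k(\alpha)\le\mu_2(\alpha)<\mu_2(\alpha_{n,2})=1$. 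As a byproduct this gives $\alpha_{n,k}>\alpha_{n,2}$. For $k\ge1$ the symbol $a_{n,k}$ is bounded below by a positive constant, so $A_{n,k}$ is invertible on bounded functions with an integrable kernel. A bounded solution therefore satisfies $h_{k,\ell}=A_{n,k}^{-1}(W_\alpha h_{k,\ell})$. Because $W_\alpha$ decays exponentially by \eqref{eq:v al infty}, $\psi=W_\alpha^{1/2}h_{k,\ell}\in L^2$ solves $\psi=W_\alpha^{1/2}A_{n,k}^{-1}W_\alpha^{1/2}\psi$, so $\psi$ is an eigenfunction with eigenvalue $1$. This forces $\psi=0$, and hence $h_{k,\ell}=0$.

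\emph{Mode $k=1$.} This is the main obstacle, since Theorem~\ref{thm e value L} is stated only for $k\ge2$. The same Birman--Schwinger argument applies if we know that $\mu_1(\alpha)=1$ exactly when $\alpha=1$. At $\alpha=1$ we have $\sech t$ in the kernel, coming from translations. Monotonicity in $\alpha$ (proved as in Theorem~\ref{thm e value L}) then gives $\mu_1(\alpha)\neq1$ for $\alpha\ne1$, provided one also knows that $\sech t$ corresponds to the principal eigenvalue, i.e.\ that it is positive. For $\alpha=1$ the explicit solution is $v_1(t)=\log\sech t$, so $W_1=n!\sech^n t$. Then $\sech t$ is a positive kernel element, and by principality it spans the one-dimensional eigenspace. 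Similarly, for $k=0$ at $\alpha=1$ we have $v_1'=-\tanh t$.

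This yields \eqref{h kernel} for $\alpha=1$, and $h\in\Span\{v_\alpha'\}$ otherwise. For the Euclidean statement we set $h(x)=\tilde h(\log|x|,x/|x|)$. The linearized equation transforms, by conformal covariance of $P_n$ under the Emden--Fowler map, into $L_1\tilde h=0$. Indeed the weight $e^{nU}|x|^n$ becomes $\sech^n t$, since $e^{U(e^t)}e^t=\sech t$. Boundedness is preserved, and $-\tanh(\log r)=Z_0$ and $\sech(\log r)\,\omega_i=Z_i$, which completes the proof.
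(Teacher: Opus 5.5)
Your proposal is correct and follows essentially the same route as the paper: you decompose into spherical harmonics, use Theorem~\ref{thm radial} for $k=0$, pass from bounded to $L^2$ kernel elements (the paper's Lemma~\ref{lem bdd=spec}), and then use Proposition~\ref{thm:mu2} together with the monotonicity of the principal Birman--Schwinger eigenvalue in $\alpha$ and $k$ to eliminate all modes $k\ge2$ and to show that a $k=1$ kernel occurs only at $\alpha=1$. The only cosmetic difference is that you obtain $\mu_1(1,1)=1$ from the positive translation mode $\sech t$ and Perron--Frobenius, whereas the paper computes $A_{n,k}\sech^k t=\frac{\Gamma(n+k)}{\Gamma(k)}\sech^{n+k}t$ explicitly; the two amount to the same thing.
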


The degeneracy at $\alpha_{n,k}$ produces nonradial normal solutions.
\begin{theorem}\label{thm:main-bifur}
Let $n\ge3$ and let $k\ge2$ be an integer.
Then there exist $\varepsilon>0$ and a smooth curve
\begin{equation*}
 s\longmapsto(\alpha(s),\mathfrak u(s)), \qquad |s|<\varepsilon,    
\end{equation*}
of normal solutions of \eqref{eq:euclidean} such that
\begin{equation*}
 (\alpha(0),\mathfrak u(0))=(\alpha_{n,k},u_{\alpha_{n,k}})    
\end{equation*}
and $\mathfrak u(s)$ is nonradial for every sufficiently small $s\ne0$. Moreover, writing $x=r\omega$, we have
\begin{equation*}
 \mathfrak u(s)(r\omega)=u_{\alpha(s)}(r)+s\psi_{\alpha_{n,k},k}(\log r)Y_k(\omega)+o(s),    
\end{equation*}
where $Y_k\in\mathcal{H}_k$, $\psi_{\alpha_{n,k},k}>0$ spans $\Ker L_{\alpha_{n,k},k}$ and the remainder is $o(s)$ in $L^\infty(\R^n)$.
\end{theorem}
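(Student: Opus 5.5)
We will prove Theorem~\ref{thm:main-bifur} with the Crandall--Rabinowitz theorem (bifurcation from a simple eigenvalue), applied on the cylinder in a symmetry-reduced space. The space is chosen so that the kernel at $\alpha_{n,k}$ becomes one-dimensional and the translation kernel $v_\alpha'$ is removed.

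\textbf{Step 1: the functional setting.} Fix a zonal harmonic $Y_k\in\mathcal H_k$, invariant under the subgroup $O(n-1)\subset O(n)$ fixing the axis $e_n$. Work with functions $V(t,\omega)=v_\alpha(t)+\phi(t,\omega)$ where $\phi$ is:
\begin{itemize}
\item even in $t$ (Kelvin symmetry, Theorem~\ref{thm:kelvin} with $R=1$);
\item $O(n-1)$-invariant in $\omega$;
\item for $k$ even, even under $\omega\mapsto-\omega$.
\end{itemize}
The corresponding decomposition is $\phi=\sum_j\phi_j(t)Y_j(\omega)$ over zonal harmonics $Y_j$. Take $X$ to be a weighted Hölder (or $C^{n,\beta}\cap L^\infty$) space of such $\phi$, and $Y$ the space of right-hand sides with decay $e^{-\delta|t|}$.

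Define
\[
F(\alpha,\phi)=P_n\phi-(n-1)!\bigl(e^{n(v_\alpha+\phi)}-e^{nv_\alpha}\bigr),
\]
so that $F(\alpha,0)=0$. We need $\alpha\mapsto v_\alpha$ to be smooth. This follows from the implicit function theorem: by Theorem~\ref{thm radial}, $L_{\alpha,0}$ is nondegenerate on even functions, because $v_\alpha'$ is odd.

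Normality must be preserved. Since $\phi$ is bounded, the Euclidean function $u=u_\alpha+\phi(\log r,\omega)$ differs from $u_\alpha$ by a bounded amount, so the polynomial $p$ in Theorem~\ref{thm:NormalSolutionHMM} is constant. To build this in, we invert $P_n$ on the nonzero modes only, and for mode $0$ we invert $A_{n,0}$ through the logarithmic representation, which keeps $\phi_0$ bounded. This is the reason to formulate $F$ as $\phi-\mathcal K[\cdots]$, with $\mathcal K$ a compact Green operator, giving a Fredholm map of index $0$.

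\textbf{Step 2: the kernel and the range.} We have $D_\phi F(\alpha,0)=L_\alpha$, which splits into the modes $L_{\alpha,j}$.
\begin{itemize}
\item For $j=0$: the bounded kernel is $\Span\{v_\alpha'\}$ by Theorem~\ref{thm radial}, and this is excluded by evenness in $t$.
\item For $j=1$: Theorem~\ref{thm e value L} only treats $k\ge2$. We need $\Ker L_{\alpha,1}=\{0\}$ near $\alpha_{n,k}>1$, and we expect the same Birman--Schwinger monotonicity to give this. If $k$ is even, the case is excluded anyway by the antipodal symmetry. For odd $k$, we argue that the principal mode-$1$ eigenvalue crosses zero only at $\alpha=1$.
\item For $j\ge2$, $j\ne k$: the kernel is trivial at $\alpha_{n,k}$, since $\alpha_{n,j}\ne\alpha_{n,k}$ by Theorem~\ref{thm e value L}.
\end{itemize}
For mode $k$, the kernel is spanned by the principal eigenfunction $\psi_{\alpha_{n,k},k}>0$, which is even in $t$ because the principal eigenfunction of an even potential is even. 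Hence
\[
\Ker D_\phi F(\alpha_{n,k},0)=\Span\{\psi Y_k\}.
\]
By self-adjointness and the Fredholm property, the range is the $L^2$-orthogonal complement of $\psi Y_k$.

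\textbf{Step 3: transversality (the main obstacle).} We must show
\[
\partial_\alpha D_\phi F(\alpha_{n,k},0)[\psi Y_k]\notin\Ran,
\quad\text{i.e.}\quad
\int_{\R}\partial_\alpha W_\alpha\big|_{\alpha_{n,k}}\,\psi^2\,\dd t\neq0 .
\]
This is exactly the derivative of the principal eigenvalue $\mu_k(\alpha)$ of $L_{\alpha,k}$, up to sign: $\mu_k'(\alpha)=-\int \partial_\alpha W_\alpha\,\psi^2$. So we would deduce it from the strict monotonicity in $\alpha$ of the principal Birman--Schwinger eigenvalue established in the proof of Theorem~\ref{thm e value L}. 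The gap is that strict monotonicity alone does not give a nonzero derivative. We would try to upgrade it using the variational structure: perturbation theory for the simple eigenvalue $1$ of $W_\alpha^{1/2}A_{n,k}^{-1}W_\alpha^{1/2}$ gives an explicit derivative formula. If the monotonicity proof yields a strict inequality at the operator level (for instance via a scaling identity relating $\partial_\alpha v_\alpha$ to $v_\alpha$), the derivative is strictly positive. If that fails, we would invoke an analytic-dependence argument, using that the crossing is of odd order, together with a Krasnoselskii/Rabinowitz-type variant to at least get bifurcation. Smooth curves, however, need the strict derivative.

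\textbf{Step 4: conclusion.} Crandall--Rabinowitz then yields a smooth curve
\[
(\alpha(s),\phi(s))=\bigl(\alpha_{n,k}+o(1),\; s\psi Y_k+o(s)\bigr)\quad\text{in }X.
\]
Undoing the Emden--Fowler transform gives $\mathfrak u(s)$ with the stated expansion in $L^\infty$. The solution is nonradial for small $s\ne0$ because its $Y_k$-component is $s\psi+o(s)\ne0$. It is normal by the boundedness noted in Step 1.
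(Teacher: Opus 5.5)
Your architecture is the same as the paper's: Crandall--Rabinowitz on the cylinder, $O(n-1)$-invariant and $t$-even functions, a kernel $\Span\{\psi_{\alpha_{n,k},k}Y_k\}$ obtained from the uniqueness and separation of the crossings, and a range equal to the $L^2$-orthogonal complement. The proof has a genuine gap exactly where you flagged it, namely transversality. You need $\int_\R W_{\alpha_*}q_{\alpha_*}\psi_{\alpha_*,k}^2\,\dd t\neq0$, with $q_\alpha=\partial_\alpha v_\alpha$, and none of your proposed routes delivers it.

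Theorem~\ref{thm e value L} as stated only gives a unique crossing. The strict sign of the derivative, \eqref{eq:mu-al-increasing} and \eqref{eq:lambda-transverse}, is proved separately in Subsection~\ref{sec:transversality}. There is no operator-level inequality to exploit, because $\partial_\alpha W_\alpha=nW_\alpha q_\alpha$ changes sign: $q_\alpha(t)=-|t|+O(1)$ at infinity, while $\int_\R W_\alpha q_\alpha\,\dd t=2\beta_n>0$. Nor is there a scaling identity to fall back on. For $n\ge3$ there is no exact relation $v_\alpha(t)=\log\alpha+v_1(\alpha t)$ as in dimension two, since $a_{n,0}$ is not homogeneous. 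Finally, the Krasnoselskii/Rabinowitz fallback would give at most a bifurcating continuum, not the smooth curve with the stated first-order expansion.

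\textbf{The missing idea.} Integrate the sign-changing weight against the monotone weight $\psi^2$. With $M(t)=\int_0^tW_\alpha q_\alpha\,\dd s$ one has $\int_\R W_\alpha q_\alpha\psi_{\alpha,k}^2\,\dd t=-2\int_0^\infty M\,(\psi_{\alpha,k}^2)'\,\dd t$. Two facts make this positive.
\begin{enumerate}
\item $M>0$ on $(0,\infty)$ (Lemma~\ref{thm:cumulative-positivity}). Extend $M$ oddly; then $A_{n,0}q_\alpha=W_\alpha q_\alpha$ gives $-q_\alpha'=g_n*M$. Combined with $-(W_\alpha'/W_\alpha)'=g_n*W_\alpha$, this yields an equation for $R=M/W_\alpha$ with the positive half-line kernel $g_n(|t-s|)-g_n(t+s)$. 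A minimum principle then applies, using $R(0)=0$ and $M(\infty)=\beta_n>0$; the latter comes from differentiating $\int_\R W_\alpha=2n\beta_n\alpha$.
\item $\partial_t\psi_{\alpha,k}<0$ on $(0,\infty)$ (Proposition~\ref{thm:mu2}). Differentiating the eigenvalue equation, $z=-W_\alpha^{1/2}\partial_t\psi_{\alpha,k}$ solves $(\mu_1-\mathscr K^{\mathrm o}_{\alpha,k})z=F$ with $F>0$ in the odd sector. A Neumann series with positive kernel applies because $\|\mathscr K^{\mathrm o}_{\alpha,k}\|=\mu_2<\mu_1$.
\end{enumerate}

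\textbf{Two smaller points.} First, your mode-$1$ step for odd $k$ rests on the same missing ingredient. $L_{\alpha,1}$ is nondegenerate for $\alpha\ne1$ because $\mu_1(1,1)=1$, $\mu_1(\cdot,1)$ is strictly increasing, and $\mu_2(\alpha,1)<1$. Second, on mode $0$ the Green operator $-\mathcal K_n*f$ grows like $-\frac{|t|}{2\beta_n}\int_\R f$, so inverting $A_{n,0}$ through the logarithmic representation does not automatically keep $\phi_0$ bounded. The paper handles this with the domain $\R\oplus H^n_\nu(\Cyl)^{G,e}$. There the constant direction, mapped to $-cW_\alpha$, compensates the codimension-one (zero-integral) range of $P_n$, and this gives index zero.
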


We would like to compare our results with the corresponding results for the H\'enon equation
\begin{equation}\label{eq Henon}
	\left\{
	\begin{aligned}
		&-\Delta u=(N+\alpha)(N-2)|x|^\alpha u^{\frac{N+2+2\alpha}{N-2}} &\text{ in } \R^N,\\
		&u>0 &\text{ in } \R^N,
	\end{aligned}
	\right.
\end{equation}
where $N\ge 3$. Gladiali--Grossi--Neves \cite{GGN2013} constructed nonradial solutions to \eqref{eq Henon} that decay like $|x|^{2-N}$ at infinity using bifurcation theory. Moreover, the bifurcations occur exactly at
\begin{equation*}
	\alpha_k=2(k-1),\quad k\in \mathbb{N}.
\end{equation*}
They further conjectured that non-radial solutions exist only at these special values of $\alpha_k$. However, Jiang--Xiong \cite{JX2026} have recently disproved this conjecture by expanding the bifurcation parameter to the next order. This is in strong contrast with the two-dimensional case of \eqref{eq:euclidean}, where the nonradial solution \eqref{2D sol} exists exactly when $\alpha\in \mathbb{N}$.

Our problem is subtler. For $n\ge 3$, unlike in either the two-dimensional problem \eqref{eq:euclidean} or the H\'enon equation \eqref{eq Henon}, the bifurcation parameters in the present higher-dimensional problem are difficult to compute explicitly. Instead, Theorem \ref{thm e value L} provides quantitative refined bounds for $\alpha_{n,k}$. Remark \ref{al n2 nonint} shows that $\alpha_{n,2}$ is not an integer whenever $n\ge4$. This distinguishes the higher-dimensional equation from the integer-parameter pattern in dimension two.

\medskip

Finally, complementing the existence result for nonradial solutions, we show that every normal solution is radial when $\gamma>0$ is sufficiently small.
\begin{theorem}\label{thm:rad small ga}
	Let $n\ge3$. There exists $\gamma_n>0$ such that, for every $0<\gamma<\gamma_n$, every normal solution of \eqref{eq:euclidean} is radial, and any two such solutions differ only by a dilation.
\end{theorem}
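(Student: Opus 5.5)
We argue by contradiction and compactness, perturbing off the regular case $\gamma=0$ ($\alpha=1$), where every normal solution is a standard bubble. Suppose there are $\gamma_j\downarrow0$ and normal solutions $u_j$ of \eqref{eq:euclidean} with $\gamma=\gamma_j$ that are not radial about the origin. By Theorem~\ref{thm:kelvin} and the remark after it, we dilate so that $R=1$. The cylinder functions $V_j$ are then even in $t$, and half of the mass $\alpha_j|\Sph^n|$ lies in $\{t<0\}$. We compare $V_j$ with the radial solution $v_{\alpha_j}$ of Theorem~\ref{thm radial}, translated so that it is even about $t=0$. We set $h_j:=V_j-v_{\alpha_j}$ and aim to show $h_j\equiv0$ for large $j$. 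This gives both radiality and uniqueness up to dilation, since radial normal solutions are unique by Theorem~\ref{thm radial}.

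\textbf{Step 1 (compactness).} We show $V_j\to v_1$ uniformly on $\Cyl$, where $v_1(t)=\log\sech t$ corresponds to $U$ with $\alpha=1$. First we obtain uniform $L^\infty$ bounds on $V_j$ and uniform control of $V_j'$ at $t\to\pm\infty$, of the type in Lemma~\ref{lem:V infty}. This should come from the integral representation in Theorem~\ref{thm:NormalSolutionHMM} with $p$ constant, together with the fixed total mass $\alpha_j|\Sph^n|\to|\Sph^n|$ and the Kelvin normalization. A blow-up analysis (Brezis--Merle type for $(-\Delta)^{n/2}$) excludes concentration. A blow-up point away from the origin would carry at least the mass $|\Sph^n|$ of a regular bubble. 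By evenness it would also have a mirror point (reflection $t\mapsto-t$) carrying the same mass. The total would be at least $2|\Sph^n|>\alpha_j|\Sph^n|$, which is impossible. Concentration at $0$ or $\infty$ would contradict the half-mass normalization \eqref{R half mass}. Hence $V_j$ converges locally to a normal solution with $\alpha=1$ that is symmetric about $t=0$ and satisfies the half-mass condition. Since Kelvin symmetry about $t=0$ removes the translation parameters, the limit is $U$ itself, i.e.\ $v_1$. Uniform convergence on all of $\Cyl$ follows from the uniform logarithmic asymptotics at both ends.

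\textbf{Step 2 (linearization).} The difference $h_j$ solves
\[
P_nh_j=W_j h_j,\qquad W_j:=(n-1)!\,\frac{e^{nV_j}-e^{nv_{\alpha_j}}}{V_j-v_{\alpha_j}}\to n!e^{nv_1}.
\]
Set $\tilde h_j=h_j/\|h_j\|_\infty$. Passing to the limit (the decay of $W_j$ gives compactness), $\tilde h_j\to h$ with $\|h\|_\infty=1$ and $L_1h=0$. The function $h$ is even in $t$, because both $V_j$ and $v_{\alpha_j}$ are. By Corollary~\ref{coro:nondegen}, $h=-c_0\tanh t+\sech t\sum c_i\omega_i$. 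Evenness forces $c_0=0$, but the translation modes $\sech t\,\omega_i$ are even and survive.

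\textbf{Step 3 (main obstacle: killing the translation modes).} Here $\gamma_j>0$ matters: the singular weight $|x|^{n\gamma}$ breaks translation invariance. We would test the equation for $V_j$ against the mode-$1$ kernel functions through a Pohozaev/Kazdan--Warner type identity. Multiplying \eqref{eq:euclidean} by $\partial_{x_i}u$, integrating, and using normality to kill boundary terms at infinity gives
\[
\gamma\int_{\R^n}\frac{x_i}{|x|^2}|x|^{n\gamma}e^{nu}\dd x=0 .
\]
Expanding this at first order in $\|h_j\|_\infty$ around the radial solution, the leading term is $\gamma_j\|h_j\|_\infty\,c\sum_i c_i$ times a nonzero constant. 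That constant is $n\int\sech t\cdot(\text{radial weight})\dd t\ne0$, because the weight $\frac{x_i}{|x|^2}$ pairs exactly with the $\omega_i$ modes. The error terms are $o(\gamma_j\|h_j\|_\infty)$, so all $c_i=0$. This contradicts $\|h\|_\infty=1$, hence $h_j\equiv0$ for large $j$.

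The delicate points are justifying the Pohozaev identity for the nonlocal operator when $n$ is odd, and checking that the error terms are genuinely $o(\gamma_j)$. For the latter, one needs a priori $\|h_j\|$ bounds relative to $\gamma_j$, or a Lyapunov--Schmidt splitting $h_j=\sum a_i^j\sech t\,\omega_i+h_j^\perp$ with $\|h_j^\perp\|\lesssim\gamma_j|a^j|+|a^j|^2$.
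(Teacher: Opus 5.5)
Your overall architecture matches the paper: compactness to $U$, a normalized difference, nondegeneracy at $\alpha=1$, Kelvin symmetry to kill the odd mode $-\tanh t$, and a $\gamma>0$ identity to kill the translation modes. However, Step 1 has a genuine gap, and it is the translation degeneracy reappearing at the nonlinear level.

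Kelvin symmetry about $t=0$ together with the half-mass condition does not pin down the limit. A direct computation shows that every bubble $\log\frac{2\lambda}{1+\lambda^2|x-\xi|^2}$ with $\lambda^2(1-|\xi|^2)=1$ satisfies $u(x)=u(x/|x|^2)-2\log|x|$. Since $f\dd x$ is then inversion invariant, such a bubble also has exactly half its mass in $B_1$. So the claim that Kelvin symmetry ``removes the translation parameters'' is false: an $n$-parameter family $\xi\in B_1$ survives.

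Letting $|\xi|\to1$ in this family gives Kelvin-symmetric, half-mass-normalized solutions that concentrate at a point of the unit sphere. This is exactly the case your mirror-point argument misses. A point with $t_*=0$ is its own image under $t\mapsto -t$, so there is no mass doubling, and half mass in $B_1$ is compatible with a delta on $\partial B_1$. Your Step 1 never uses $\gamma_j>0$, so it would equally ``prove'' compactness at $\gamma=0$, where these examples show it fails. Without $\xi=0$, $V_j$ need not approach $v_{\alpha_j}$, and Step 2 has nothing to linearize.

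The fix is the identity you wrote in Step 3, used at the nonlinear level with $\gamma_j$ divided out. For $\gamma>0$ one has $\int_{\R^n}\frac{x}{|x|^2}|x|^{n\gamma}e^{nu}\dd x=0$ exactly. No Pohozaev identity is needed, even for odd $n$:
\begin{itemize}
\item $\int f\nabla u\dd x=0$ by antisymmetry of $\frac{x-y}{|x-y|^2}$ in the integral representation of $\nabla u$;
\item $\int\nabla f\dd x=0$, and $\nabla f=n\gamma\frac{x}{|x|^2}f+nf\nabla u$.
\end{itemize}
With this identity:
\begin{itemize}
\item Concentration at any $x_*\ne0,\infty$ would give $x_*/|x_*|^2=0$ in the limit, which also excludes the unit sphere.
\item Combined with inversion invariance of $f_j\dd x$, it yields $\int xf_j\dd x=0$. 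The limit of this forces $\xi=0$, and then the half-mass condition gives $\lambda=1$.
\end{itemize}

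With that in place, your Step 3 works, and your closing worries are unnecessary. The identity holds exactly for each $j$ after dividing by $\gamma_j$, and trivially for $u_j^{\mathrm{rad}}$. So the first-order expansion directly gives $\int\frac{x}{|x|^2}e^{nU}h\dd x=0$ with errors $o(\|h_j\|_\infty)$, and no bound on $\|h_j\|$ relative to $\gamma_j$ is needed. This is the paper's observation $\nabla u_j(0)=\frac{2}{|\Sph^n|}\int\frac{y}{|y|^2}f_j\dd y=0$ in another guise, which gives $\nabla h(0)=0$.

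Two smaller points:
\begin{itemize}
\item $V_j\to v_1$ cannot hold uniformly on $\Cyl$, since the asymptotic slopes are $\pm\alpha_j$ versus $\pm1$. You only need $u_j-u_j^{\mathrm{rad}}\to0$ locally uniformly on $\R^n$.
\item To keep $\|h\|_\infty=1$ in the limit, work in Euclidean variables. There Kelvin symmetry places the supremum on the compact set $\overline B_1$, origin included, rather than on the noncompact half-cylinder.
\end{itemize}
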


The paper is organized as follows.  Section~\ref{sec:prelimin} collects the special-function identities and basic properties of normal solutions used in the proofs. Section \ref{sec:kelvin} establishes Kelvin symmetry. In Section~\ref{sec:radial}, we construct the radial normal solution, prove its uniqueness up to translation and establish radial nondegeneracy. Section~\ref{sec:BS} is devoted to the spectral analysis of the linearized operator and the location of the bifurcation parameters.
In Section~\ref{sec:nonradial}, we introduce the weighted function spaces and apply the Crandall--Rabinowitz theorem to prove Theorem \ref{thm:main-bifur}.
Finally, in Section \ref{sec:rad small ga}, we prove Theorem \ref{thm:rad small ga} by combining compactness with a linearized argument at the standard bubble.

\section{Preliminaries}\label{sec:prelimin}
In this section, we first record the Fourier and special-function identities used in the analysis of the cylinder operators. We then establish asymptotic estimates, integral identities, and a removability criterion for normal solutions.

\subsection{Special-function and Fourier identities}

Throughout the paper, we use the Fourier convention
\begin{equation*}
 \widehat f(\xi)=\int_\R e^{-it\xi}f(t)\dd t,
 \qquad f(t)=\frac{1}{2\pi}\int_\R e^{it\xi}\widehat f(\xi)\dd\xi.
\end{equation*}
We need the Beta and Gamma identities 
\begin{equation}\label{eq:beta-gamma}
 B(p,q)=\int_0^1 s^{p-1}(1-s)^{q-1}\dd s=\frac{\Gamma(p)\Gamma(q)}{\Gamma(p+q)}, \quad \Re p,\Re q>0,
\end{equation}
\begin{equation*}
 \Gamma(z+1)=z\Gamma(z), \qquad \Gamma(z)\Gamma\left(z+\frac{1}{2}\right) =2^{1-2z}\sqrt\pi\,\Gamma(2z).
\end{equation*}
The reflection formula also yields, for $s>0$,
\begin{equation*}
 |\Gamma(is)|^2=\frac{\pi}{s\sinh(\pi s)},
 \qquad
 |\Gamma(\frac{1}{2}+is)|^2=\frac{\pi}{\cosh(\pi s)}.
\end{equation*}

The Pochhammer symbol is defined by $(a)_0=1$ and $(a)_j=a(a+1)\cdots(a+j-1)$ for $j\ge1$.  

The Gauss hypergeometric function is
\begin{equation*}
 \Hyp(a,b;c;z)=\sum_{j=0}^\infty
 \frac{(a)_j(b)_j}{(c)_j\,j!}z^j,
 \qquad |z|<1.
\end{equation*}
It satisfies the equation
\begin{equation}\label{eq:hyp-ode}
 z(1-z)y''+[c-(a+b+1)z]y'-aby=0.
\end{equation}
For $\Re c>\Re b>0$, it has Euler's integral representation
\begin{equation}\label{eq:hyp-euler}
 \Hyp(a,b;c;z)=\frac{\Gamma(c)}{\Gamma(b)\Gamma(c-b)}\int_0^1 s^{b-1}(1-s)^{c-b-1}(1-zs)^{-a}\dd s.
\end{equation}
In particular, if $\Re(c-a-b)>0$, we have
\begin{equation}\label{eq:hyp-gauss}
 \Hyp(a,b;c;1)=\frac{\Gamma(c)\Gamma(c-a-b)}{\Gamma(c-a)\Gamma(c-b)}.
\end{equation}
We also use the quadratic transformation
\begin{equation}\label{eq:quadratic-transformation}
 z^{k/2}\Hyp\left(1-\frac n2,k;k+\frac n2;z\right)=2^{-k}x^{k/2}\Hyp\left(\frac k2,\frac{k+1}{2};k+\frac n2;x\right),
 \qquad x=\frac{4z}{(1+z)^2}.
\end{equation}
It follows either from the standard quadratic transformations or by checking that the two sides solve the same transformed equation and have the same asymptotic behavior at $z=0$. Finally,
\begin{equation}\label{eq:sech-transform}
 \int_\R e^{-it\xi}\sech^p t\dd t =\frac{2^{p-1}}{\Gamma(p)}\Gamma\left(\frac{p+i\xi}{2}\right)\Gamma\left(\frac{p-i\xi}{2}\right), \qquad p>0.
\end{equation}
This is obtained by the substitution $s=(1+\tanh t)/2$. These identities are listed, for example, in the NIST Digital Library of Mathematical Functions \cite{DLMF}.

\subsection{Fundamental properties of normal solutions}
The results in this subsection provide the asymptotic and integral information used in the symmetry arguments. We also prove a removability and normality criterion for solutions obtained by returning from the cylinder to Euclidean variables. Recall that by Theorem \ref{thm:NormalSolutionHMM}, a normal solution has the representation
\begin{equation}\label{rig:eq:normal-representation}
 u(x)=c+\frac{2}{|\Sph^n|}\int_{\R^n} \log\left(\frac{1+|y|}{|x-y|}\right)|y|^{n\gamma}e^{nu(y)}\dd y.
\end{equation}

\begin{lemma}\label{lem:V infty}
Let $\gamma>0$. Let $u$ be a normal solution of \eqref{eq:euclidean} and let $V$ be given by \eqref{EF trans}. Then there exist constants $b_-, b_+\in \R$ such that uniformly in $\omega\in \Sph^{n-1}$,
    	\begin{equation}\label{V infty}
		\begin{aligned}
			V(t,\omega)=\alpha t+b_-+O(e^t),\quad
			&\partial_t V(t,\omega)=\alpha+O(e^t),&t\to-\infty,\\
			V(t,\omega)=-\alpha t+b_++O(e^{-t}),\quad
			&\partial_t V(t,\omega)=-\alpha+O(e^{-t}),&t\to+\infty.
		\end{aligned}
	\end{equation}
If $-1<\gamma\le 0$, then for $\delta<\min\{1, n\alpha\}$, \eqref{V infty} is still true if $O(e^{\pm t})$ is replaced by $O(e^{\pm \delta t})$ respectively. 

In particular, $\|\partial_t V\|_{L^\infty(\Cyl)}<\infty$ and $e^{nV}\le Ce^{-n\alpha|t|}$.
\end{lemma}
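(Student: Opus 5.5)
The plan is to read off both asymptotic regimes from the representation formula \eqref{rig:eq:normal-representation}: the behavior at $+\infty$ directly, and the behavior at $-\infty$ from local regularity of $u$ near the origin. Write $f(y):=|y|^{n\gamma}e^{nu(y)}$, so that $\int f=\Lambda=\alpha|\Sph^n|$ by \eqref{eq:normal-mass-intro} and $\frac{2\Lambda}{|\Sph^n|}=2\alpha$. The mass identity is what makes the leading slope $\alpha-2\alpha=-\alpha$ at $+\infty$.

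\textbf{Step 1 (behavior at the origin).} For $\gamma>0$ the weight $|y|^{n\gamma}$ is bounded near $0$. First I would show $u$ is bounded above near $0$. The potential $\int\log\frac{1}{|x-y|}f(y)\dd y$ is finite at $x=0$ and continuous there once $f\in L^p_{loc}$ for some $p>1$; this follows from a Brezis--Merle type exponential integrability argument applied to the log potential, which is the standard route in \cite{HMM2021}. Bootstrapping then gives $u\in C^1$ near $0$, and hence
\begin{equation*}
V(t,\omega)=u(e^t\omega)+\alpha t=\alpha t+u(0)+O(e^t).
\end{equation*}
Differentiating, $\partial_tV=\alpha+e^t\omega\cdot\nabla u(e^t\omega)=\alpha+O(e^t)$. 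Set $b_-:=u(0)$.

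For $-1<\gamma\le0$ the function $f$ is only in $L^p$ near $0$ with $p<1/(-\gamma)$ (or $p$ arbitrary when $\gamma=0$), so $u$ is only H\"older near $0$. One obtains $|u(x)-u(0)|\le C|x|^\delta$ and $|x||\nabla u|\le C|x|^\delta$ for $\delta<\min\{1,n\alpha\}$. This comes from estimating $\int|\log|x-y|-\log|y||f(y)\dd y$ and $|x|\int|x-y|^{-1}f(y)\dd y$ with $f\le C|y|^{n\gamma}$, splitting into $|y|<2|x|$ and $|y|\ge2|x|$. The restriction $\delta<n\alpha=n+n\gamma$ arises from the inner region, and $\delta<1$ from the outer region.

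\textbf{Step 2 (behavior at infinity).} I would apply the Kelvin transform $\hat u(x)=u(x/|x|^2)-2\alpha\log|x|$. This maps $e^t\omega\mapsto e^{-t}\omega$ and sends $V(t,\omega)$ to $V(-t,\omega)$. One checks by conformal invariance that $\hat u$ is again a normal solution with the same $\gamma$. The only genuine check here is that $\hat u$ is normal, with no polynomial part. This follows by substituting into \eqref{rig:eq:normal-representation} and using $\log|x/|x|^2-y/|y|^2|=\log|x-y|-\log|x|-\log|y|$ together with $\Lambda=\alpha|\Sph^n|$, which shows the $\log|x|$ terms cancel exactly. Step 1 applied to $\hat u$ then gives the $+\infty$ expansion.

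Alternatively, the $+\infty$ behavior can be done directly. One shows $u(x)=-2\alpha\log|x|+c'+O(|x|^{-1})$ and $x\cdot\nabla u=-2\alpha+O(|x|^{-1})$ using the decay $f(y)\lesssim|y|^{n\gamma-2n\alpha}=|y|^{-n-n\alpha}$. This decay is first obtained crudely from the limit statement in Theorem~\ref{thm:NormalSolutionHMM} and then bootstrapped.

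\textbf{Step 3 (consequences).} Boundedness of $\partial_tV$ follows by combining the two ends with continuity on compact $t$-sets, and $e^{nV}\le Ce^{-n\alpha|t|}$ is immediate from the expansions.

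\textbf{Main obstacle.} The hardest step is Step 1's uniform control of the derivative near the origin. In particular, one must rule out $u\to-\infty$ or $u$ unbounded above at $0$, which requires the exponential-integrability bootstrap. In the Kelvin route, the corresponding difficulty is checking that the transformed function is normal rather than acquiring a polynomial.
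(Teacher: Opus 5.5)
Your proposal is correct. At $t\to+\infty$ your main route differs from the paper's.

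\textbf{The paper's route.} It treats the two ends separately. Near the origin it simply quotes the $C^1$ regularity of \cite[Theorem~2.1]{HMM2021}. At infinity it works directly with the representation: it first extracts the crude decay $f\le C_\varepsilon|x|^{-n(1+\alpha)+\varepsilon}$ from the limit in Theorem~\ref{thm:NormalSolutionHMM}. It then splits $\int\log\frac{|x-y|}{|x|}f(y)\dd y$ over three regions to get the $O(|x|^{-1})$ remainder. This is essentially your ``alternative''.

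\textbf{Your Kelvin route.} It reduces the end $t\to+\infty$ to the end $t\to-\infty$. The inversion $y\mapsto y/|y|^2$ preserves the measure $|y|^{n\gamma}e^{nu}\dd y$. The identity for $\log|x/|x|^2-y/|y|^2|$ then shows that $\hat u$ inherits the normal representation exactly because $\Lambda=\alpha|\Sph^n|$; on the cylinder this is just the reflection $t\mapsto-t$.

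What this buys you:
\begin{itemize}
\item a symmetric treatment of both ends, with no separate far-field splitting;
\item a transparent explanation that the $O(e^{-t})$ rate at $+\infty$ is the same phenomenon as $C^1$ regularity at $0$;
\item no circularity, since you use only the Kelvin transform and not Theorem~\ref{thm:kelvin}, whose proof relies on this lemma.
\end{itemize}

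\textbf{The price.} You cannot transfer the cited regularity of $u$ at the origin to $\hat u$ for free. Continuity of $\hat u$ at $0$ is equivalent to the existence of $b_+$, so Step~1 has to be run from the integral representation alone. Your Brezis--Merle sketch does exactly that, and it applies to $\hat u$ for two reasons:
\begin{itemize}
\item the mass of $|w|^{n\gamma}e^{n\hat u}$ in $B_r$ equals the mass of $f$ outside $B_{1/r}$, so it is small;
\item for $|x|<r<1$ and $|w|\ge 2r$ one has $0\le\log\frac{1+|w|}{|x-w|}\le C_r$, so no additional moment condition is needed to control the far part.
\end{itemize}
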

\begin{proof}
  Write $f(x)=|x|^{n\gamma}e^{nu(x)}$. Then $\int_{\R^n}f(x)\dd x=\alpha|\Sph^n|$. The normal logarithmic behavior of $u$ implies that, for every sufficiently small $\varepsilon>0$,
	\begin{equation*}
		f(x)\leq C_\varepsilon |x|^{-n(1+\alpha)+\varepsilon},
		\qquad |x|\geq 1.
	\end{equation*}
	We first assume $\gamma>0$. Then since $n\alpha>2$, we can choose $\beta>0$ so that
	\begin{equation}\label{f decay}
		f(x)\leq C(1+|x|)^{-n-2-\beta}.
	\end{equation}
	
	We may therefore absorb the numerator in \eqref{rig:eq:normal-representation} into its constant and write
	\begin{equation}\label{u rep f}
		u(x)=c-\frac{2}{|\Sph^n|}\int_{\R^n}\log|x-y|\,f(y)\,dy
	\end{equation}
	Differentiation is legitimate by \eqref{f decay}. Hence
	\begin{equation}\label{grad u rep f}
		\nabla u(x)=-\frac{2}{|\Sph^n|}\int_{\R^n}\frac{x-y}{|x-y|^2}\,f(y)\,dy.
	\end{equation}
	
	To obtain the behavior at infinity, we rewrite \eqref{u rep f} as
    \begin{equation}\label{uneq 0rep f}
		u(x)=c-\frac{2}{|\Sph^n|}\int_{\R^n}\log\frac{|x-y|}{|x|}\,f(y)\,dy-2\alpha\log|x|, \qquad x\neq 0.
	\end{equation}
    For $r=|x|\geq 2$, we split $\R^n$ into
	\begin{equation*}
		\Omega_1=\{y: |y|<r/2\},\quad
		\Omega_2=\{y: |y|\geq r/2,\ |x-y|<r/2\},\quad
		\Omega_3=\{y: |y|\geq r/2,\ |x-y|\geq r/2\}.
	\end{equation*}
	On $\Omega_1$,
	\begin{equation*}
		\left|\log\frac{|x-y|}{r}\right|\leq C\frac{|y|}{r},
	\end{equation*}
    so
    \begin{equation*}
        \int_{\Omega_1}\left|\log\frac{|x-y|}{r}\right|f(y)\,dy\le \frac{C}{r}.
    \end{equation*}
	 On $\Omega_2$, \eqref{f decay} gives
	\begin{equation*}
		\begin{aligned}
			\int_{\Omega_2}
			\left|\log\frac{|x-y|}{r}\right|f(y)\,dy\leq Cr^{-2-\beta}
		\end{aligned}
	\end{equation*}
	On $\Omega_3$, we have
	\begin{equation*}
		\left|\log\frac{|x-y|}{r}\right|
		\leq C+\log\left(1+\frac{|y|}{r}\right),
	\end{equation*}
    so the integral on $\Omega_3$ is again $O(r^{-2-\beta})$. Consequently, for $b_+=c$, we have
	\begin{equation*}
	u(x)=-2\alpha\log|x|+b_++O(|x|^{-1}),\qquad x\to \infty.
	\end{equation*}
A similar analysis applied to \eqref{grad u rep f} gives
\begin{equation*}
    \nabla u(x)=-2\alpha\frac{x}{|x|^2}+O(|x|^{-2}), \qquad x\to \infty.
\end{equation*}
    
	At the origin, $C^1$ regularity (see \cite[Theorem 2.1]{HMM2021}) gives $u(x)=u(0)+O(|x|)$. Thus, writing $b_-=u(0)$ gives \eqref{V infty}.

    Finally, when $-1<\gamma\leq 0$, we take $\delta<\min\{1,n\alpha\}$. Then for $0<\varepsilon<n\alpha-\delta$, we have
    \begin{equation*}
        f(x)\le \begin{cases}
            C|x|^{n\alpha-n},\qquad |x|<1,\\
            C_\varepsilon |x|^{-n-n\alpha+\varepsilon}, \qquad |x|\ge 1.
        \end{cases}
    \end{equation*}
   For $r=|x|$ small, by splitting the logarithmic integral, we can obtain
    \begin{equation*}
        |u(x)-u(0)|+r|\nabla u(x)|\le Cr^{\delta}.
    \end{equation*}
    At infinity, following the previous argument, we use $\left|\log\frac{|x-y|}{r}\right|\leq C\frac{|y|}{r}\le C(\frac{|y|}{r})^\delta$ to obtain
    \begin{equation*}
        \int_{\Omega_1}\left|\log\frac{|x-y|}{r}\right|f(y)\,dy\le \frac{C}{r^\delta}.
    \end{equation*}
    Then the same splitting of integral gives
    \begin{equation*}
        |u(x)+2\alpha\log r-b_+|+|x\cdot \nabla u(x)+2\alpha|\le Cr^{-\delta}.
    \end{equation*}
    This gives the weaker asymptotics for $-1<\gamma\le 0$.
\end{proof}

\begin{lemma}\label{rig:lem:translation-identity}
Let $u$ be a normal solution of \eqref{eq:euclidean} with $\gamma>0$ and let $f(x)=|x|^{n\gamma}e^{nu(x)}$. Then
\begin{equation}\label{rig:eq:inverse-moment}
 \int_{\R^n}\frac{x}{|x|^2}f(x)\dd x=0.
\end{equation}
\end{lemma}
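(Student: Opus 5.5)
This is a Pohozaev-type (translation) identity. We obtain it by testing the equation against a constant vector field, which in the logarithmic representation amounts to symmetrizing a double integral. Start from the gradient representation \eqref{grad u rep f} established in the proof of Lemma~\ref{lem:V infty}, valid for $\gamma>0$ thanks to the decay \eqref{f decay}:
\begin{equation*}
 \nabla u(x)=-\frac{2}{|\Sph^n|}\int_{\R^n}\frac{x-y}{|x-y|^2}f(y)\dd y .
\end{equation*}
Multiply by $f(x)$ and integrate in $x$. The kernel $(x-y)/|x-y|^2$ is antisymmetric under $x\leftrightarrow y$. It is also absolutely integrable against $f(x)f(y)$: near the diagonal it is $O(|x-y|^{-1})$, which is locally integrable since $n\ge3$; $f$ is bounded near the origin because $\gamma>0$ and $u$ is continuous; and \eqref{f decay} controls infinity. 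Fubini therefore gives
\begin{equation*}
 \int_{\R^n}f(x)\nabla u(x)\dd x=0 .
\end{equation*}

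Next, compute the same quantity by integrating by parts. Write $f\nabla u=\frac1n|x|^{n\gamma}\nabla(e^{nu})$. For an annulus $\{\varepsilon<|x|<\rho\}$, the divergence theorem gives
\begin{equation*}
 \int_{\varepsilon<|x|<\rho}f\nabla u\dd x=-\frac1n\int_{\varepsilon<|x|<\rho}\nabla(|x|^{n\gamma})e^{nu}\dd x+\text{boundary terms},
\end{equation*}
and $\nabla(|x|^{n\gamma})=n\gamma|x|^{n\gamma}\,x/|x|^2$. Hence the bulk term equals $-\gamma\int \frac{x}{|x|^2}f\dd x$. This integral converges absolutely, since $|x|^{-1}f$ is integrable both near $0$ (as $n\ge 2$) and at infinity.

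It remains to show that the boundary terms vanish. They are bounded by $\frac1n\int_{|x|=r}|x|^{n\gamma}e^{nu}\dd\sigma$ with $r=\varepsilon$ or $r=\rho$. At $r=\varepsilon$ this is $O(\varepsilon^{n\gamma+n-1})\to0$. At $r=\rho$, Lemma~\ref{lem:V infty} gives $e^{nu}\le C\rho^{-2n\alpha}$, so the term is $O(\rho^{n\gamma-2n\alpha+n-1})=O(\rho^{-n\alpha-1})\to0$. Letting $\varepsilon\to0$ and $\rho\to\infty$ yields
\begin{equation*}
 0=-\gamma\int_{\R^n}\frac{x}{|x|^2}f\dd x,
\end{equation*}
and since $\gamma>0$ this gives \eqref{rig:eq:inverse-moment}.

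The main point needing care is justifying the vanishing of the symmetrized double integral, namely the absolute integrability used for Fubini. This is exactly where $\gamma>0$ enters, through \eqref{f decay} and the boundedness of $f$ at $0$. The integration by parts itself is routine given the asymptotics of Lemma~\ref{lem:V infty}.
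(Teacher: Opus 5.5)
Your proof is correct and follows the paper's argument. The paper also uses antisymmetry of the kernel in \eqref{grad u rep f} together with Fubini to get $\int_{\R^n} f\nabla u\,dx=0$, and then integrates $\nabla f=n\gamma\frac{x}{|x|^2}f+nf\nabla u$ using $\int_{\R^n}\nabla f\,dx=0$. Your annulus integration by parts, with the boundary terms shown to vanish, is exactly the justification of that last step, which the paper leaves implicit.
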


\begin{proof}
By \eqref{grad u rep f} and Fubini's theorem, we obtain
\begin{equation}\label{int f grad u=0}
     \int_{\R^n}f(x)\nabla u(x)\dd x=-\frac{2}{|\Sph^n|}\iint \frac{x-y}{|x-y|^2}f(x)f(y)\dd y\dd x=0
\end{equation}
due to its antisymmetry in $x$ and $y$.

A direct computation gives
\begin{equation}\label{grad f}
    \nabla f=n\gamma\frac{x}{|x|^2} f+n f\nabla u.
\end{equation}
Since $\int_{\R^n}\nabla f \dd x=0$, integrating \eqref{grad f} and using \eqref{int f grad u=0} gives  \eqref{rig:eq:inverse-moment}.
\end{proof}

\begin{lemma}\label{lem:removability-normality}
Let $n\ge3$, $\gamma>-1$ and $u\in C^0(\R^n)$ satisfy
\begin{equation*}
     |u(x)|\le C(1+\log(1+|x|)).
\end{equation*}
Suppose that $|x|^{n\gamma}e^{nu}\in L^1(\R^n)$ and that $(-\Delta)^{n/2}u=(n-1)!|x|^{n\gamma}e^{nu}$ on $\R^n\setminus\{0\}$. Then the equation holds on $\R^n$ and $u$ is normal.
\end{lemma}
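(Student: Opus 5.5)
The plan is to compare $u$ with the logarithmic potential of its own right-hand side and show that the difference is a polynomial which must in fact be constant. Set $f(x)=|x|^{n\gamma}e^{nu(x)}\in L^1(\R^n)$ and
\begin{equation*}
 w(x):=\frac{2}{|\Sph^n|}\int_{\R^n}\log\left(\frac{1+|y|}{|x-y|}\right)f(y)\dd y .
\end{equation*}
Since $\frac{2}{|\Sph^n|}\log\frac{1}{|x|}$ is a fundamental solution of $(-\Delta)^{n/2}$ with the normalization $(n-1)!$ built into the equation, $w$ is well defined. Standard estimates for $L^1$ data give $w\in L^1_{\mathrm{loc}}$ and $|w(x)|\le C\log(2+|x|)$ off a set of small density, and $(-\Delta)^{n/2}w=(n-1)!f$ in $\mathcal D'(\R^n)$. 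For odd $n$ this is understood in the weighted space $L_{n/2}$ of functions integrable against $(1+|x|)^{-2n}$, and both $u$ and $w$ lie in $L_{n/2}$ by the logarithmic bounds.

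\textbf{Step 1: removability at the origin.} Let $h:=u-w$. Then $(-\Delta)^{n/2}h$ is a distribution supported at $\{0\}$, hence a finite combination $\sum_{|\beta|\le N}c_\beta\partial^\beta\delta_0$. Consequently
\begin{equation*}
 h=\sum_\beta c_\beta\,\partial^\beta\Big(\tfrac{2}{|\Sph^n|(n-1)!}\log\tfrac1{|x|}\Big)+H,
 \qquad (-\Delta)^{n/2}H=0 \text{ on }\R^n .
\end{equation*}
Near $0$, $u$ is bounded because it is continuous. For $w$, I would use Brezis--Merle type estimates, which give $e^{p|w|}\in L^1_{\mathrm{loc}}$ for some $p>0$, so $w$ cannot blow up like $\log\frac1{|x|}$ or worse on a set of positive density. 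The derivatives of $\log|x|$ of order $\ge1$ are homogeneous of negative degree with nonzero angular profile, so they are not $L^1$-compatible with this exponential integrability. A logarithmic multiple is excluded by the same integrability: $\log|x|$ is bounded below near $0$ only with one sign, and integrating against small radial test functions forces its coefficient to vanish. Hence all $c_\beta=0$ and the equation holds on $\R^n$. This is the step I expect to be the main obstacle, since $w$ is only a potential of an $L^1$ function and one must rule out the $\delta_0$ coefficient carefully. An alternative route is to test the equation against radial cutoffs $\eta(x/\epsilon)$ and let $\epsilon\to0$, using $\int|u|\,|(-\Delta)^{n/2}\eta_\epsilon|\to0$; this follows because $u$ is bounded near $0$ and $\|(-\Delta)^{n/2}\eta_\epsilon\|_{L^1}=O(1)$ while its mass concentrates. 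I would try this cutoff argument first, since it needs only continuity of $u$.

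\textbf{Step 2: normality.} Now $H=u-w$ is $(-\Delta)^{n/2}$-harmonic on $\R^n$ and lies in $L_{n/2}$, so by the Liouville theorem for polyharmonic and fractional harmonic functions it is a polynomial $p$ of degree at most $n-1$. From $|u|\le C\log(2+|x|)$ and the averaged bound $\frac{1}{|B_R|}\int_{B_R}|w|\le C\log R$, the averages of $|p|$ over $B_R$ grow at most logarithmically. A nonconstant polynomial has averages of $|p|$ over $B_R$ growing at least like $R$, so $p$ is constant. Thus $u=w+c$, which is exactly the representation \eqref{rig:eq:normal-representation}. Therefore $u$ is normal in the sense of Theorem \ref{thm:NormalSolutionHMM}, and the mass identity \eqref{eq:normal-mass-intro} follows from that theorem.
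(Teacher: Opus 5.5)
Your Step 2 is fine. It re-derives directly (potential plus a tempered $(-\Delta)^{n/2}$-harmonic remainder, Liouville, growth of averages) what the paper gets by citing Theorem~\ref{thm:NormalSolutionHMM} once the equation holds on all of $\R^n$.

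The gap is in Step 1, exactly where you expected it. The cutoff argument you would try first rests on $\int|u|\,|(-\Delta)^{n/2}\eta_\varepsilon|\to0$, and this is false. Since $(-\Delta)^{n/2}\eta_\varepsilon=\varepsilon^{-n}\bigl((-\Delta)^{n/2}\eta\bigr)(\cdot/\varepsilon)$, the integral equals $\int|u(\varepsilon y)|\,|(-\Delta)^{n/2}\eta(y)|\dd y$, which tends to $|u(0)|\,\|(-\Delta)^{n/2}\eta\|_{L^1}$. An $O(1)$ amount of mass concentrating at the origin is what \emph{prevents} decay. So absolute values give at best boundedness of $\langle T,\eta_\varepsilon\rangle$, where $T:=(-\Delta)^{n/2}u-(n-1)!f$. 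That can kill derivative terms but not the $\delta_0$ coefficient.

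The missing idea is the cancellation $\int_{\R^n}(-\Delta)^{n/2}\phi\dd y=0$ (the symbol $|\xi|^n$ vanishes at $\xi=0$). It lets you subtract $u(0)$:
\begin{equation*}
 \bigl\langle(-\Delta)^{n/2}u,\phi(\cdot/\varepsilon)\bigr\rangle=\int_{\R^n}\bigl[u(\varepsilon y)-u(0)\bigr](-\Delta)^{n/2}\phi(y)\dd y\longrightarrow0,
\end{equation*}
by continuity of $u$ at $0$ and dominated convergence. For odd $n$, $(-\Delta)^{n/2}\phi$ is not compactly supported; it has a $|y|^{-2n}$ tail. So the domination must come from the logarithmic growth bound on $u$ together with $(1+\log(1+|y|))(-\Delta)^{n/2}\phi\in L^1(\R^n)$, not from boundedness of $u$ near $0$. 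Since also $\int f\phi(\cdot/\varepsilon)\to0$, you get $\langle T,\phi(\cdot/\varepsilon)\rangle\to0$.

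If the cutoff argument is to replace your first route entirely, as you propose, you must use arbitrary $\phi\in C_c^\infty(\R^n)$ rather than radial cutoffs. Writing $T=\sum_{|\beta|\le N}c_\beta\partial^\beta\delta_0$, one has $\langle T,\phi(\cdot/\varepsilon)\rangle=\sum_\beta c_\beta(-1)^{|\beta|}\varepsilon^{-|\beta|}\partial^\beta\phi(0)$. Radial $\phi$ cannot detect non-radial terms such as $\partial_1\delta_0$. For general $\phi$, a polynomial in $\varepsilon^{-1}$ that tends to $0$ has all coefficients zero, so every $c_\beta=0$. This is exactly the paper's proof, and it makes the decomposition of $h$ into derivatives of the fundamental solution unnecessary.

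That first route does not close as written anyway. Knowing $e^{p|w|}\in L^1_{\mathrm{loc}}$ for \emph{some} $p>0$ cannot exclude a term $c_0\log\frac1{|x|}$, which is itself exponentially integrable for small $p$. You would need integrability for every $p<\infty$; this does hold, because $f\in L^1$ has no atom at $0$. Your remark about the sign of $\log|x|$ does not supply that.
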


\begin{proof}
Write $P=(-\Delta)^{\frac{n}{2}}$, $f=(n-1)!|x|^{n\gamma}e^{nu}$ and $T=Pu-f$, so that $\operatorname{supp}T\subset\{0\}$. For $\phi\in C_c^\infty(\R^n)$, we have $\int_{\R^n}P\phi\dd y=0$ and $(1+\log(1+|y|))P\phi\in L^1(\R^n)$; in odd dimensions the latter follows from \cite[Proposition~2.1]{Hyder2019}.
Therefore, a scaling and the dominated convergence theorem give
\begin{equation*}
 \left\langle T,\phi(\cdot/\varepsilon)\right\rangle
 =\int_{\R^n}[u(\varepsilon y)-u(0)]P\phi(y)\dd y-\int_{\R^n}f(x)\phi(x/\varepsilon)\dd x\longrightarrow0.
\end{equation*}
By the structure theorem for distributions supported at a point (se \cite[Theorem~6.24(d) $\And$ Theorem~6.25]{Rudin1991}), this forces $T=0$. 

Applying Theorem~\ref{thm:NormalSolutionHMM} to this global solution gives $u=v+p$, where $v$ has logarithmic growth and $p$ is a polynomial. The growth assumption on $u$ implies that $p$ is constant, so $u$ is normal.
\end{proof}

\section{Kelvin symmetry}\label{sec:kelvin}
In this section, we prove Theorem \ref{thm:kelvin} on the cylinder, where Kelvin inversion becomes reflection in the $t$-variable. The argument applies the moving plane method to the integral representation of a normal solution. Related integral moving-plane arguments are developed in \cite{ChenLiOu2006}.

For a normal cylinder solution $V$, set
	\begin{equation*}
		p(t,\omega)=\frac{e^{nV(t,\omega)}}{\alpha|\Sph^n|}.
	\end{equation*}
	Then $\int_{\Cyl}p\dd t\dd \omega=1$, and \eqref{V infty} implies
	\begin{equation*}
		0<p(t,\omega)\leq Ce^{-n\alpha|t|},
		\qquad |p_t(t,\omega)|=n|\partial_t V(t,\omega)|p(t,\omega)\leq Cp(t,\omega).
	\end{equation*}
	
Using the identity
	\begin{equation*}
		\log|e^t\omega-e^s\eta|
		=\frac{t+s}{2}
		+\frac{1}{2}\log\bigl(2(\cosh(t-s)-\omega\cdot\eta)\bigr),
	\end{equation*}
 the normal representation of $u$ can be written as
\begin{equation}\label{int rep V}
	V(t,\omega)=C-\alpha\int_{\R}\int_{\Sph^{n-1}}
	L(t-s,\omega\cdot\eta)p(s,\eta)\,d\eta\,ds,
	\end{equation}
	in terms of $V$, where
	\begin{equation*}
		L(\tau,a)=\log\bigl(2(\cosh\tau-a)\bigr).
	\end{equation*}
Here the finite constant $-\alpha\int_{\Cyl} sp(s,\eta)\,ds\,d\eta$ is absorbed into $C$.
	
\begin{proof}[Proof of Theorem \ref{thm:kelvin}]
    We first start the moving plane far to the left. We then show that it must stop at a finite position and prove that the stopping plane is a plane of symmetry by ruling out further continuation.
    \noindent\textbf{Step 1: Start of the moving plane.} 	For a real number $\lambda$, reflect in $t=\lambda$ and set
	\begin{equation*}
	t^\lambda=2\lambda-t,\qquad w_\lambda(t,\omega)=V(t^\lambda,\omega)-V(t,\omega) \quad(t<\lambda).
	\end{equation*}
	Then taking difference of the two representations in \eqref{int rep V} yields
	\begin{equation}\label{int rep w la}
		\begin{aligned}
			w_\lambda(t,\omega)
			&=\alpha\int_{s<\lambda}\int_{\Sph^{n-1}}K_\lambda(t,s,\omega\cdot\eta)\bigl(p(s^\lambda,\eta)-p(s,\eta)\bigr)\,d\eta\,ds\\
            &=\alpha\int_{s<\lambda}\int_{\Sph^{n-1}}K_\lambda(t,s,\omega\cdot\eta)p(s,\eta)\bigl(e^{nw_\lambda(s,\eta)}-1\bigr)\,d\eta\,ds,
		\end{aligned}
	\end{equation}
	where
	\begin{equation*}
		K_\lambda(t,s,a)
		=\log\frac{\cosh(2\lambda-t-s)-a}{\cosh(t-s)-a}.
	\end{equation*}
	
	For $t,s<\lambda$, $K_\lambda>0$ since $2\lambda-t-s>|t-s|$. To obtain an upper bound, we set $A=2\lambda-t-s$ and $D=|t-s|$. Then	for $a\in[-1,1]$ and $A, D>0$, we have elementary inequalities
	\begin{equation*}
		\begin{aligned}
			\cosh A-a\leq 2e^A,\qquad
			\cosh D-a\geq\cosh D-1\geq\frac{e^D}{2}(1-e^{-1})^2\min\{D^2,1\}.
		\end{aligned}
	\end{equation*}
  Therefore,
	\begin{equation}\label{K upper}
			0<K_\lambda(t,s,a)
			\leq 2(\lambda-s)+C_*+2\log^+\frac{1}{|t-s|},
			\qquad
			C_*=\log\frac{4}{(1-e^{-1})^2}.
	\end{equation}
	Here $\log^+r=\max\{\log r,0\}$. 
	
	We will use the following elementary bounds:
	\begin{equation}\label{log int}
		\int_\R\log^+\frac{1}{|t-s|}\dd s=2,
	\end{equation}
	and, for every interval $I$ of length $0<\ell\leq1$,
	\begin{equation}\label{log int upper}
	\sup_{t\in\R}\int_I\log^+\frac{1}{|t-s|}\dd s\leq\ell\left(1+\log\frac{2}{\ell}\right).
	\end{equation}

	To start the moving plane, we define
	\begin{equation*}
		N_\lambda=\{(s,\eta):s<\lambda,\ w_\lambda(s,\eta)<0\},
		\qquad w_\lambda^-=\max\{-w_\lambda,0\}.
	\end{equation*}
On $N_\lambda$, we have
	\begin{equation*}
1-e^{nw_\lambda(s,\eta)}=1-e^{-nw_\lambda^-(s,\eta)}\leq nw_\lambda^-(s,\eta).
	\end{equation*}
At a point where $w_\lambda(t,\omega)<0$, discarding the integral in \eqref{int rep w la} outside $N_\lambda$ yields
	\begin{equation}\label{w- le}
			w_\lambda^-(t,\omega)\leq n\alpha\int_{N_\lambda}K_\lambda(t,s,\omega\cdot\eta)p(s,\eta)
			w_\lambda^-(s,\eta)\,ds\,d\eta.
	\end{equation}
	
	For each fixed $\lambda$, the expansions \eqref{V infty} give $\lim_{t\to-\infty}w_\lambda(t,\omega)=b_+-b_--2\alpha\lambda$ uniformly in $\omega$. Together with continuity and $w_\lambda(\lambda,\omega)=0$ due to its definition, this proves that
	\begin{equation}\label{M la finite}
		M_\lambda:=\sup_{t<\lambda,\,\omega\in\Sph^{n-1}}w_\lambda^-(t,\omega)<\infty.
	\end{equation}

	For sufficiently negative $\lambda$, the bound $p(s,\eta)\leq Ce^{n\alpha s}$ holds for all $s<\lambda$. Using \eqref{log int}, the upper bound for $K_\lambda$ \eqref{K upper} gives
\begin{equation*}
	\begin{aligned}
	\sup_{t<\lambda,\,\omega}\int_{s<\lambda}\int_{\Sph^{n-1}}K_\lambda(t,s,\omega\cdot\eta)p(s,\eta)\,d\eta\,ds\leq C|\Sph^{n-1}|e^{n\alpha\lambda}\left(\frac{2}{(n\alpha)^2}+\frac{C_*}{n\alpha}+4\right)\longrightarrow0, \quad \lambda\to-\infty.
		\end{aligned}
\end{equation*}
	Consequently, for $\lambda_0$ negative enough, taking the supremum in \eqref{w- le} gives $M_\lambda\leq\frac{1}{2}M_\lambda$, so $w_\lambda\geq0$ for every $\lambda\leq\lambda_0$.	This starts the moving plane.
\medskip

    \noindent\textbf{Step 2: The stopping value and strict positivity.}
    Define the stopping value by
	\begin{equation}\label{def t0}
		t_0=\sup\Bigl\{\lambda\in\R:w_\mu(t,\omega)\geq0
		\text{ for every }\mu\leq\lambda,\ t<\mu,\ \omega\in\Sph^{n-1}\Bigr\}.
	\end{equation}
 By \eqref{V infty},	$\lim_{t\to-\infty}w_\lambda(t,\omega)=b_+-b_--2\alpha\lambda<0$	for sufficiently large $\lambda$, so $t_0$ is finite. By continuity,	$w_{t_0}(t,\omega)\geq0$ for all $t<t_0$. Our goal is to prove that $w_{t_0}\equiv 0$ and then the symmetry follows.

	Suppose, for contradiction, that $w_{t_0}$ is not identically zero. It is then positive on a nonempty open subset. On that subset $	p(s,\eta)\bigl(e^{nw_{t_0}(s,\eta)}-1\bigr)>0$. Since $K_{t_0}>0$ by \eqref{K upper}, the integral representation \eqref{int rep w la} then yields
	\begin{equation}\label{w>0 strict}
		w_{t_0}(t,\omega)>0\quad\text{for every }t<t_0,\ \omega\in\Sph^{n-1}.
	\end{equation}
We now show that this contradicts the definition of $t_0$.
    \medskip

    \noindent\textbf{Step 3: Excluding further continuation.}
    For $\lambda>t_0$ close to $t_0$, we divide the left half-cylinder into the three regions
\begin{equation*}
 (-\infty,-T)\times\Sph^{n-1},\qquad
 [-T,t_0-\delta]\times\Sph^{n-1},\qquad
 (t_0-\delta,\lambda)\times\Sph^{n-1}.   
\end{equation*}

Strict positivity of $w_{t_0}$ and continuity preserve positivity on the middle region. On the other two regions, choosing $T$ large and $\delta$ small makes the kernel integrals sufficiently small to exclude a negative part by \eqref{w- le}.
    	
We first provide a uniform estimate for the integral over $(-\infty,-T)\times\Sph^{n-1}$. To this end, we choose $T>|t_0|+2$ sufficiently large that $p(s,\eta)\leq Ce^{n\alpha s}$ for all $s<-T$. For all $\lambda\in[t_0,t_0+1]$, we have
	\begin{equation*}
	\int_{s<-T}\int_{\Sph^{n-1}}\bigl(2(\lambda-s)+C_*\bigr)p(s,\eta)\,d\eta\,ds\leq C\int_{-\infty}^{-T}(1+|s|)e^{n\alpha s}\,ds
	\leq C(1+T)e^{-n\alpha T}.
	\end{equation*}
	For the logarithmic part, we have
	\begin{equation*}
		\int_{s<-T}\int_{\Sph^{n-1}}\log^+\frac{1}{|t-s|}\,p(s,\eta)\,d\eta\,ds
        \leq Ce^{-n\alpha T}\int_{\R}\log^+\frac{1}{|t-s|}\,ds=2Ce^{-n\alpha T}.
	\end{equation*}
uniformly in $t$. Consequently, (\ref{K upper}) implies 
	\begin{equation}\label{int far left}
		\sup_{\substack{t_0\leq\lambda\leq t_0+1\\t<\lambda,\ \omega\in\Sph^{n-1}}}\int_{s<-T}\int_{\Sph^{n-1}}K_\lambda(t,s,\omega\cdot\eta)p(s,\eta)\,d\eta\,ds
		\leq C(1+T)e^{-n\alpha T}.
	\end{equation}

We also need an estimate on $(t_0-\delta,\lambda)\times\Sph^{n-1}$. Let $0<\delta<1/4$ and $\lambda\in[t_0,t_0+\delta]$. Set
	\begin{equation*}
		I=(t_0-\delta,\lambda),\qquad \ell=|I|\leq2\delta<1.
	\end{equation*}
	On this strip $0<\lambda-s<\ell$. Since $p$ is bounded, \eqref{K upper} and \eqref{log int upper} imply
	\begin{equation}\label{int strip}
		\begin{aligned}
			\sup_{t<\lambda,\,\omega}
			&\int_{t_0-\delta<s<\lambda}\int_{\Sph^{n-1}}K_\lambda(t,s,\omega\cdot\eta)p(s,\eta)\,d\eta\,ds\\
			&\quad\leq\|p\|_\infty|\Sph^{n-1}|\left[\ell^2+C_*\ell+2\ell\left(1+\log\frac{2}{\ell}\right)\right]\\
			&\quad\leq C\delta(1+|\log\delta|).
		\end{aligned}
	\end{equation}

	Now we fix $T$ large enough and $\delta$ small enough such that the integrals in \eqref{int far left} and \eqref{int strip} are less than $\frac{1}{4n\alpha}$. Then by the compactness of $[-T,t_0-\delta]\times\Sph^{n-1}$ and \eqref{w>0 strict}, we get
	\begin{equation*}
		m:=\min_{[-T,t_0-\delta]\times\Sph^{n-1}}w_{t_0}>0.
	\end{equation*}
Moreover,
	\begin{equation*}
	|w_\lambda(t,\omega)-w_{t_0}(t,\omega)|=|V(2\lambda-t,\omega)-V(2t_0-t,\omega)|\leq2\|\partial_t V\|_\infty|\lambda-t_0|.
	\end{equation*}
	
	Choose
	\begin{equation*}
		0<\varepsilon\leq\min\left\{\delta,\frac{m}{4(1+\|\partial_t V\|_\infty)}\right\}.
	\end{equation*}
	Then for every $\lambda\in[t_0,t_0+\varepsilon]$,
	\begin{equation*}
		w_\lambda\geq\frac{m}{2}\quad\text{on }[-T,t_0-\delta]\times\Sph^{n-1}.
	\end{equation*}
	Therefore its negative set satisfies
	\begin{equation*}
		N_\lambda\subset\bigl(( -\infty,-T)\cup(t_0-\delta,\lambda)\bigr)\times\Sph^{n-1}.
	\end{equation*}

	By our choices of $T$, $\delta$, and $\varepsilon$, we obtain
	\begin{equation*}
n\alpha\sup_{t<\lambda,\,\omega}\int_{N_\lambda}K_\lambda(t,s,\omega\cdot\eta)p(s,\eta)\,ds\,d\eta<\frac{1}{2}
	\end{equation*}
	 for every $\lambda\in[t_0,t_0+\varepsilon]$. By \eqref{w- le} and \eqref{M la finite},
	\begin{equation*}
		w_\lambda^-(t,\omega)\leq n\alpha M_\lambda\int_{N_\lambda}K_\lambda p\dd s\dd\eta
	\leq\frac{1}{2}M_\lambda.
	\end{equation*}
	Taking the supremum yields $M_\lambda\leq\frac{1}{2}M_\lambda$, so $M_\lambda=0.$ Consequently, $w_\lambda\geq0$ for every $\lambda\in [t_0,t_0+\varepsilon]$, which contradicts the definition of $t_0$ \eqref{def t0}. 
    
    Therefore, we proved that $w_{t_0}\equiv0$, which is exactly \eqref{V sym}. As a result, \eqref{u kelvin} holds for $R=e^{t_0}$. Finally, \eqref{R half mass} follows from a simple change of variable.
\end{proof}

By Theorem \ref{thm:kelvin}, a change of variable in Lemma \ref{rig:lem:translation-identity} gives the following corollary. 
\begin{corollary}\label{coro:translation-identity}
Let $u$ be a normal solution of \eqref{eq:euclidean} with $\gamma>0$ and let $f(x)=|x|^{n\gamma}e^{nu(x)}$. Then
\begin{equation*}
 \int_{\R^n}xf(x)\dd x=0.
\end{equation*}
\end{corollary}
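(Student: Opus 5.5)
We apply the Kelvin symmetry of Theorem~\ref{thm:kelvin} to the identity \eqref{rig:eq:inverse-moment} of Lemma~\ref{rig:lem:translation-identity}. By Theorem~\ref{thm:kelvin} there is $R>0$ with
\begin{equation*}
u(x)=u\!\left(\frac{R^2x}{|x|^2}\right)-2\alpha\log\frac{|x|}{R},\qquad x\neq0 .
\end{equation*}
After a dilation we may assume $R=1$. The statement $\int xf=0$ is invariant under dilation, since replacing $u$ by $u(\lambda x)+\alpha\log\lambda$ rescales $f$ and multiplies $\int xf$ by a positive constant. We then compute how $f$ transforms under the inversion $x^*=x/|x|^2$.

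Using $n\gamma-2n\alpha=-n\gamma-2n$, we obtain
\begin{equation*}
f(x)=|x|^{n\gamma}e^{nu(x^*)}|x|^{-2n\alpha}=|x^*|^{n\gamma}e^{nu(x^*)}\,|x^*|^{2n}=f(x^*)|x^*|^{2n}.
\end{equation*}
Next we change variables $y=x^*$ in $\int_{\R^n} x f(x)\dd x$. The Jacobian of the inversion is $\dd x=|y|^{-2n}\dd y$ and $x=y/|y|^2$, so
\begin{equation*}
\int_{\R^n}xf(x)\dd x=\int_{\R^n}\frac{y}{|y|^2}f(y)|y|^{2n}|y|^{-2n}\dd y=\int_{\R^n}\frac{y}{|y|^2}f(y)\dd y,
\end{equation*}
which vanishes by Lemma~\ref{rig:lem:translation-identity}.

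The only point needing care is absolute convergence of $\int |x|f(x)\dd x$, which justifies the change of variables. Near the origin, $f$ is bounded by $C|x|^{n\gamma}$ with $\gamma>0$. At infinity, Lemma~\ref{lem:V infty} gives $f(x)\le C|x|^{-n-n\alpha}$, and since $n\alpha>1$ the function $|x|f$ is integrable. Equivalently, the right-hand integral converges absolutely, as used in Lemma~\ref{rig:lem:translation-identity}. This integrability check is the only mild obstacle; the rest is the algebraic identity above.
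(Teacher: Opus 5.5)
Your proposal is correct and follows the paper's argument: the paper obtains the corollary in one line, by applying the Kelvin symmetry of Theorem~\ref{thm:kelvin} and changing variables $y=x/|x|^2$ in the inverse-moment identity \eqref{rig:eq:inverse-moment}. Your reduction to $R=1$, the identity $f(x)=f(x^*)|x^*|^{2n}$, the Jacobian $\dd x=|y|^{-2n}\dd y$ and the absolute-integrability check are all right; they supply the details the paper leaves implicit.
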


\section{The radial normal solution}\label{sec:radial}
In this section, we prove Theorem \ref{thm radial}. We first factor the radial operator and derive an equation for the normalized density. Then we associate the equation with a free-energy functional that is convex under interpolation of inverse distribution functions and prove Theorem \ref{thm radial} by a variational approach.

\subsection{Properties of the radial solution and the radial kernel}

By \eqref{eq:mode-symbol}, the radial symbol $a_{n,0}(\xi)$ has a double zero at the origin, so we can write
\begin{equation}\label{eq:radial-factor}
 A_{n,0}=(-\partial_t^2)B_n,
 \qquad
 b_n(\xi):=\frac{a_{n,0}(\xi)}{\xi^2},
 \qquad
 \beta_n:=b_n(0)=2^{n-2}\Gamma\left(\frac n2\right)^2.
\end{equation}
The inverse of $B_n$ is convolution with a kernel $g_n$ satisfying
\begin{equation}\label{eq:g-hat}
 \widehat g_n(\xi)=\frac{1}{b_n(\xi)}
 =2^{2-n}\left|
 \frac{\Gamma(1+i\xi/2)}{\Gamma(n/2+i\xi/2)}
 \right|^2.
\end{equation}

\begin{lemma}\label{lem:radial-kernel-g}
	Let
	\begin{equation*}
	f_n(s)=e^{-2s}(1-e^{-2s})^{n/2-2}\mathbf 1_{(0,\infty)}(s),
	\qquad
	\widetilde f_n(s)=f_n(-s).
	\end{equation*}
	Then 
	\begin{equation}\label{eq:g-autocorrelation-revised}
		g_n=d_n f_n*\widetilde f_n,
	\end{equation}
    where $d_n=\frac{2^{4-n}}{\Gamma(n/2-1)^2}$.	Consequently, $g_n$ is even and positive, and $g_n(t)\le Ce^{-2|t|}$ for $|t|\ge1$ and some constant $C>0$. The integral of $g_n$ on $\mathbb{R}$ is
\begin{equation}\label{int gn}
		\int_\R g_n(t)\dd t=\frac{1}{\beta_n}.
	\end{equation}
 Moreover, $g_n'(t)<0$ for $t>0$. For $n=3$, $g_3$ has a logarithmic singularity at the origin and is smooth away from it.
\end{lemma}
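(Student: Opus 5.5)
The plan is to verify \eqref{eq:g-autocorrelation-revised} on the Fourier side and then read off the qualitative properties from the convolution structure.

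\textbf{Fourier transform of $f_n$.} For $n\ge3$ we have $n/2-2>-1$, so $f_n\in L^1(\R)$. Substituting $x=e^{-2s}$ gives
\begin{equation*}
\widehat f_n(\xi)=\int_0^\infty e^{-s(2+i\xi)}(1-e^{-2s})^{n/2-2}\dd s=\frac12\int_0^1 x^{i\xi/2}(1-x)^{n/2-2}\dd x=\frac12 B\!\left(1+\tfrac{i\xi}{2},\tfrac n2-1\right).
\end{equation*}
By \eqref{eq:beta-gamma} this equals $\frac12\Gamma(n/2-1)\Gamma(1+i\xi/2)/\Gamma(n/2+i\xi/2)$. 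Since $\widetilde f_n$ is real and reflected, $\widehat{\widetilde f_n}=\overline{\widehat f_n}$. Hence
\begin{equation*}
\widehat{f_n*\widetilde f_n}=|\widehat f_n|^2=\tfrac14\Gamma(n/2-1)^2\left|\frac{\Gamma(1+i\xi/2)}{\Gamma(n/2+i\xi/2)}\right|^2 .
\end{equation*}
Multiplying by $d_n$ reproduces \eqref{eq:g-hat}. Because $f_n*\widetilde f_n\in L^1$, Fourier injectivity yields $g_n=d_nf_n*\widetilde f_n$. As a side check, one should confirm that $1/b_n$ equals \eqref{eq:g-hat} by cancelling $\Gamma(\pm i\xi/2)$ in \eqref{eq:mode-symbol} with $k=0$, using $\Gamma(1+z)=z\Gamma(z)$.

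\textbf{Elementary properties.}
\begin{itemize}
\item Evenness and positivity are immediate, since $g_n(t)=d_n\int f_n(s+t)f_n(s)\dd s$ and $f_n\ge0$ with $f_n\not\equiv0$.
\item The value $\int g_n=\widehat g_n(0)=1/b_n(0)=1/\beta_n$ is also immediate. One can double-check it via $\Gamma(n/2)^2$.
\item For decay, take $t\ge1$. Then $g_n(t)=d_n\int_0^\infty f_n(s+t)f_n(s)\dd s$ and $f_n(s+t)\le Ce^{-2(s+t)}$ for $s+t\ge1$. This gives $g_n(t)\le Ce^{-2t}\|f_n\|_{L^1}$.
\end{itemize}

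\textbf{Monotonicity (main obstacle).} When $n\ge4$, $f_n$ is bounded and vanishes or is continuous at $0$ only in favorable cases, so I would differentiate under the integral instead:
\begin{equation*}
g_n'(t)=d_n\int_0^\infty f_n'(s+t)f_n(s)\dd s,\qquad t>0.
\end{equation*}
This is legitimate because $s+t\ge t>0$ keeps us away from the singular point. It then suffices to show $f_n'<0$ on $(0,\infty)$, or at least to establish a sign after pairing.

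Write $f_n(\sigma)=x(1-x)^{m}$ with $x=e^{-2\sigma}$ and $m=n/2-2$. Then $\frac{d}{dx}\bigl[x(1-x)^m\bigr]=(1-x)^{m-1}(1-(m+1)x)$. Thus $f_n$ is decreasing wherever $x<1/(m+1)$, but for $m>0$ it increases near $\sigma=0$, so $f_n$ is not monotone for $n>4$. This is the hard step.

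My first attempt is to write $f_n=e^{-2\sigma}h(\sigma)$ with $h=(1-e^{-2\sigma})^m$. A cleaner route is to use the semigroup structure of Beta-type kernels: $\widehat g_n$ factors as $\prod_j |\Gamma(j+i\xi/2)/\Gamma(j+1+i\xi/2)|^2=\prod_j \frac{1}{j^2+\xi^2/4}$ for $n$ even. In that case $g_n$ is a convolution of the kernels $e^{-2j|t|}$. Each of these is even, log-concave and decreasing on $(0,\infty)$, and such properties are preserved under convolution, which gives $g_n'<0$ for $t>0$.

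For general $n$, I would use that $|\Gamma(1+i\xi/2)/\Gamma(n/2+i\xi/2)|^2$ is, up to a constant, the transform of $f_n*\widetilde f_n$, which is a symmetric log-concave convolution when $f_n$ is log-concave. To see this, note that $\log f_n=-2\sigma+m\log(1-e^{-2\sigma})$ is concave for $m\ge0$. The autocorrelation of a log-concave function is log-concave (Prékopa) and even, hence nonincreasing on $(0,\infty)$. Strictness follows from analyticity or strict concavity.

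For $n=3$ we have $m=-1/2$ and log-concavity fails, so I would compute directly. Here $f_3$ behaves like $(2s)^{-1/2}$ near $0$, and the autocorrelation of $s^{-1/2}\mathbf 1_{s>0}$ near $0$ is $\int_0 (s+t)^{-1/2}s^{-1/2}\dd s\sim\log(1/t)$. This gives the logarithmic singularity, while smoothness away from $0$ follows from differentiating under the integral. For monotonicity when $n=3$, I would use $f_3'<0$ (since $m+1=1/2$ and $1-x/2>0$), which makes $g_3'<0$ directly from the formula above.
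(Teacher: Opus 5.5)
Your proposal is correct. The Fourier identification, evenness, positivity, decay and the value of $\int g_n$ match the paper, and so does your $n=3$ monotonicity argument ($f_3'<0$, then differentiate under the integral). Where you differ is the monotonicity for $n\ge4$ and the logarithmic singularity.

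\textbf{Monotonicity for $n\ge4$.} The paper handles $n=4$ by the explicit formula $g_4=\frac14e^{-2|t|}$. For $n>4$ it uses $f_n(0)=0$ to integrate by parts, $g_n'(t)=-d_n\int_0^\infty f_n(s+t)f_n'(s)\dd s$, and averages this with the direct derivative. This gives
\[
g_n'(t)=\frac{d_n}{2}\int_0^\infty f_n(s+t)f_n(s)\bigl[(\log f_n)'(s+t)-(\log f_n)'(s)\bigr]\dd s<0 .
\]
Strict negativity is immediate from $(\log f_n)''<0$.

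You instead apply Pr\'ekopa's theorem (or, for even $n$, the factorization into the kernels $e^{-2j|t|}$) to conclude that $g_n$ is itself log-concave. Both routes rest on the log-concavity of $f_n$. Yours yields the stronger structural fact that $g_n$ is log-concave, but it needs an external theorem and by itself gives only $g_n'\le0$.

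Your strictness step (``analyticity or strict concavity'') needs one more line, since analyticity alone does not exclude isolated zeros of $g_n'$. Combined with concavity it does. The function $(\log g_n)'$ is nonincreasing on $(0,\infty)$ and $(\log g_n)'(0^+)\le0$ by evenness. So a zero at some $t_0>0$ would force $\log g_n$ to be constant on $[0,t_0]$. This is impossible because, for $n\ge4$ (where $f_n\in L^2$),
\[
g_n(0)-g_n(t)=\tfrac{d_n}{2}\,\|f_n(\cdot+t)-f_n\|_{L^2}^2>0 \quad\text{for } t\ne0 .
\]
The paper's averaging identity avoids this extra step.

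\textbf{Logarithmic singularity of $g_3$.} The paper infers it from $\widehat g_3(\xi)\sim|\xi|^{-1}$. Your direct computation, $\int_0^1 (s(s+t))^{-1/2}\dd s\sim\log(1/t)$ together with $f_3(s)\sim(2s)^{-1/2}$ near $0$, is more elementary. It also gives the precise asymptotics $g_3(t)\sim\frac{d_3}{2}\log(1/|t|)=\frac1\pi\log(1/|t|)$.
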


\begin{proof}
The Beta identity \eqref{eq:beta-gamma} gives
\begin{equation*}
 \frac{\Gamma(1+i\xi/2)}{\Gamma(n/2+i\xi/2)}
 =\frac{1}{\Gamma(n/2-1)}
 \int_0^1 r^{i\xi/2}(1-r)^{n/2-2}\dd r.
\end{equation*}
With $r=e^{-2s}$, this becomes
\begin{equation*}
 \frac{\Gamma(1+i\xi/2)}{\Gamma(n/2+i\xi/2)}
 =\frac{2}{\Gamma(n/2-1)}\int_0^\infty e^{-i\xi s}f_n(s)\dd s
 =\frac{2}{\Gamma(n/2-1)}\widehat f_n(\xi).
\end{equation*}
Using \eqref{eq:g-hat} and $\widehat{\widetilde f_n}=\overline{\widehat f_n}$ proves \eqref{eq:g-autocorrelation-revised} with $d_n=\frac{2^{4-n}}{\Gamma(n/2-1)^2}$. Evenness and positivity follow immediately.
	
For $t>0$, using $\widetilde f_n(s)=f_n(-s)$ and changing variables $s\mapsto-s$, we obtain
\begin{equation}\label{gn convol}
 g_n(t) =d_n\int_\R f_n(t-s)f_n(-s)\dd s=d_n\int_0^\infty f_n(t+s)f_n(s)\dd s,
\end{equation}
For $t\ge1$ and $s>0$, $f_n(t+s)\le Ce^{-2t}e^{-2s}$. Since $f_n\in L^1(\R)$, this gives $g_n(t)\le Ce^{-2t}$. The corresponding bound for $t\le-1$ follows by evenness.
	
The integral identity \eqref{int gn} follows from
	\begin{equation*}
		\int_\R g_n(t)\dd t=\widehat g_n(0)=\frac{1}{\beta_n}.
	\end{equation*}
    
	Now we prove that $g_n'(t)<0$ for $t>0$.
For $n>4$, direct differentiation gives
	\begin{equation*}
	\frac{\dd^2}{\dd s^2}\log f_n(s)
	=-\frac{4(\frac{n}{2}-2)e^{-2s}}{(1-e^{-2s})^2}<0\text{ for }s>0.
	\end{equation*}
	Set $K(x,y)=f_n(x-y)$ and fix $x_1<x_2$ and $y_1<y_2$.
If $y_2\ge x_1$, then $K(x_1,y_2)=0$, so
\[
 \det[K(x_i,y_j)]_{i,j=1}^2
 =K(x_1,y_1)K(x_2,y_2)\ge0.
\]
If $y_2<x_1$, all four arguments are positive. Since
$(\log f_n)'$ is strictly decreasing and $x-y_2<x-y_1$,
integration over $x\in[x_1,x_2]$ gives
\[
 \log\frac{f_n(x_2-y_2)}{f_n(x_1-y_2)}
 >
 \log\frac{f_n(x_2-y_1)}{f_n(x_1-y_1)}.
\]
Exponentiating and rearranging yields
\[
 \det[K(x_i,y_j)]_{i,j=1}^2
 =f_n(x_1-y_1)f_n(x_2-y_2)
  -f_n(x_1-y_2)f_n(x_2-y_1)>0.
\]
By \eqref{eq:g-autocorrelation-revised}, we get $g_n(x-y)=d_n\int_\R K(x,z)K(y,z)\dd z$.
Expanding the product of the two determinants and applying
Fubini's theorem and the symmetry, we get

\[
 \det[g_n(x_i-y_j)]_{i,j=1}^2
 =d_n^2\iint_{z_1<z_2}
 \det[K(x_i,z_j)]_{i,j=1}^2
 \det[K(y_i,z_j)]_{i,j=1}^2
 \dd z_1\dd z_2>0.
\]
The strict inequality follows because both determinants are
nonnegative on $z_1<z_2$ and strictly positive on the set
$z_1<z_2<\min\{x_1,y_1\}$, which has positive measure.
The integrand is nonnegative everywhere and strictly positive
when $z_1<z_2<\min\{x_1,y_1\}$.
Since $f_n(0)=0$ for $n>4$, differentiation of
\eqref{gn convol} and integration by parts give
\[
 \begin{aligned}
g_n'(t)=d_n\int_0^\infty f_n'(s+t)f_n(s)\dd s=-d_n\int_0^\infty f_n(s+t)f_n'(s)\dd s.
 \end{aligned}
\]
Averaging these expressions yields
\[
 g_n'(t)=\frac{d_n}{2}\int_0^\infty
 f_n(s+t)f_n(s)
 \left[
 \frac{f_n'(s+t)}{f_n(s+t)}
 -\frac{f_n'(s)}{f_n(s)}
 \right]\dd s<0,
 \qquad t>0,
\]
because the preceding derivative formula shows that $f_n'/f_n$ is strictly decreasing.
 
For $n=4$, a direct computation gives $\hat{g}_4(\xi)=\frac{1}{\xi^2+4}$ and  $g_4(t)=\frac14e^{-2|t|}$. 
  
For $n=3$,  differentiating \eqref{gn convol} under the integral sign gives
   \begin{equation*}
   	g_3'(t)=d_3\int_0^\infty f_3'(s+t)f_3(s)\dd s<0,
   	\qquad t>0.
   \end{equation*}
This proves the strict decrease of $g_n$ on $(0,\infty)$.
   
   By \eqref{eq:g-hat}, $\widehat g_3(\xi)\sim |\xi|^{-1}$ as $|\xi|\to\infty$. Then the Fourier inversion gives the logarithmic singularity of $g_3$ at the origin. The same multiplier formula shows that $g_3$ is smooth on $\R\setminus\{0\}$.
\end{proof}

 We introduce the concept of total positivity of order two. For more background on total positivity, see \cite{Karlin,Pinkus}.
\begin{definition}\label{def:TP2}
A kernel $K:\R\times\R\to(0,\infty)$ is called \emph{strictly totally positive of order two} if
\[
 \det\begin{pmatrix}
 K(t_1,s_1)&K(t_1,s_2)\\
 K(t_2,s_1)&K(t_2,s_2)
 \end{pmatrix}>0
\]
whenever $t_1<t_2$ and $s_1<s_2$.
\end{definition}

To study radial solutions, we need to understand the fundamental solution of the radial operator $A_{n,0}$. Let $\mathcal K_n$ be the even function determined by
\begin{equation*}
\mathcal K_n(0)=\mathcal K_n'(0)=0,
\qquad \mathcal K_n''=g_n.
\end{equation*}
Then $\mathcal K_n$ is strictly convex since $g_n>0$. Moreover, we can write
 \begin{equation}\label{eq:interaction-kernel}
	\mathcal{K}_n(t)=\int_0^{|t|}(|t|-s)g_n(s)\dd s.
\end{equation}

Since $g_n$ is even and $\int_\R g_n(t)\dd t=\frac{1}{\beta_n}$, we have
\begin{equation*}
\mathcal K_n'(t)\longrightarrow
\begin{cases}
	(2\beta_n)^{-1},&t\to+\infty,\\
	-(2\beta_n)^{-1},&t\to-\infty.
\end{cases}
\end{equation*}
Moreover, \eqref{eq:interaction-kernel} and the exponential decay of $g_n$ give
\begin{equation}\label{eq:K-linear-revised}
	\mathcal K_n(t)=\frac{|t|}{2\beta_n}-\int_{0}^\infty sg_n(s)\dd s+O(e^{-2|t|}),\qquad |t|\to \infty.
\end{equation}
Noticing that $(-\partial_t^2)B_n(-\mathcal K_n)=B_n\mathcal K_n''=B_ng_n=\delta_0$, we get
\begin{equation}\label{eq:K-fundamental-revised}
	A_{n,0}(-\mathcal K_n)=\delta_0
\end{equation}
in the sense of distributions.
Applying $B_n^{-1}$ to \eqref{eq:radial-equation} gives
\begin{equation}\label{eq:strict-concavity}
 -v_\alpha''=(n-1)!\,g_n*e^{nv_\alpha}>0.
\end{equation}
Thus every radial normal solution is strictly concave. Integrating this identity and using Tonelli's theorem together with \eqref{int gn} and \eqref{eq:v al infty}, we obtain
\[
 2\alpha
 =\int_\R(-v_\alpha'')\dd t
 =\frac{(n-1)!}{\beta_n}\int_\R e^{nv_\alpha}\dd t.
\]
Therefore
\begin{equation}\label{int e nv}
 (n-1)!\int_\R e^{nv_\alpha}\dd t=2\beta_n\alpha.
\end{equation}
The duplication formula
\cite[\S5.5(iii), Eq.~(5.5.5)]{DLMF} gives $(n-1)!|\Sph^n|/|\Sph^{n-1}|=2\beta_n$.
Consequently,
\begin{equation}\label{eq:W-mass}
 \int_\R W_\alpha(t)\dd t
 =2^{n-1}n\Gamma(n/2)^2\alpha
 =2n\beta_n\alpha.
\end{equation}

Set
\begin{equation}\label{def M rho C}
	 M_\alpha:=\frac{2\beta_n}{(n-1)!}\alpha,	\qquad	\rho_\alpha(t):=\frac{e^{nv_\alpha(t)}}{M_\alpha},	\qquad C_{n,\alpha}:=n!M_\alpha=2n\alpha\beta_n.
\end{equation}
Then $\rho_\alpha$ is a probability density, i.e., $\int_{\R}\rho_\alpha=1$ and $\rho_\alpha\geq 0$, and
\begin{equation*}
 A_{n,0}\log\rho_\alpha=C_{n,\alpha}\rho_\alpha.
\end{equation*}
Since $A_{n,0}(-\mathcal K_n)=\delta_0$, we can write
\begin{equation*}
 A_{n,0}\bigl(\log\rho_\alpha+C_{n,\alpha}\mathcal K_n*\rho_\alpha\bigr)=0.
\end{equation*}

The expression in parentheses is a tempered function. Indeed, normality gives $\log\rho_\alpha(t)=-n\alpha|t|+O(1)$, while \eqref{eq:K-linear-revised} and the exponential decay of $\rho_\alpha$ give
$|\mathcal K_n*\rho_\alpha(t)|\le C(1+|t|)$.

\begin{lemma}\label{lem:tempered-radial-kernel}
	If $A_{n,0}h=0$ for a tempered distribution $h$, then $h(t)=at +b$ for some constants $a$ and $b$.
\end{lemma}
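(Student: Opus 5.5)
Take the Fourier transform. If $h$ is tempered and $A_{n,0}h=0$, then $a_{n,0}\widehat h=0$ in $\mathcal S'(\R)$. The plan is to show that $\widehat h$ is supported at $\xi=0$ with order at most one. This gives $\widehat h=c_0\delta_0+c_1\delta_0'$, so $h(t)=at+b$. The converse, that affine functions lie in the kernel, is immediate from the factorization \eqref{eq:radial-factor}, since $A_{n,0}=(-\partial_t^2)B_n$ and $B_n$ maps affine functions to affine functions. In fact it is not even needed for the stated implication.

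\textbf{Step 1: support.} By \eqref{eq:mode-symbol}, $a_{n,0}$ is smooth on $\R$ and $a_{n,0}(\xi)>0$ for $\xi\neq0$. One caveat: $A_{n,0}h$ must make sense for tempered $h$. For even $n$ this is clear. For odd $n$, $a_{n,0}$ is smooth with polynomially bounded derivatives, because it is a ratio of Gamma functions with Stirling asymptotics and has no poles on the real line. So multiplication by $a_{n,0}$ is a continuous map on $\mathcal S$, and we interpret $A_{n,0}h$ as $\mathcal F^{-1}(a_{n,0}\widehat h)$.

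Given $\varphi\in C_c^\infty(\R\setminus\{0\})$, the function $\varphi/a_{n,0}$ is again smooth with compact support. Hence
\[
\langle\widehat h,\varphi\rangle=\langle a_{n,0}\widehat h,\varphi/a_{n,0}\rangle=0,
\]
and therefore $\operatorname{supp}\widehat h\subset\{0\}$. By the structure theorem for distributions supported at a point (as in Lemma \ref{lem:removability-normality}), $\widehat h=\sum_{j=0}^N c_j\delta_0^{(j)}$.

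\textbf{Step 2: order.} Write $a_{n,0}(\xi)=\xi^2 b_n(\xi)$ with $b_n(0)=\beta_n>0$, as in \eqref{eq:radial-factor}. Here $b_n$ is smooth near $0$, because
\[
\Gamma\!\left(\tfrac{i\xi}{2}\right)\Gamma\!\left(-\tfrac{i\xi}{2}\right)=\tfrac{4\pi}{\xi\sinh(\pi\xi/2)}
\]
has a double pole at $0$. Pair $a_{n,0}\widehat h$ with a test function $\varphi$:
\[
\langle a_{n,0}\widehat h,\varphi\rangle=\sum_j c_j(-1)^j\,\partial^j\bigl(\xi^2 b_n\varphi\bigr)(0).
\]
This must vanish for every $\varphi$. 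Suppose $N\ge2$ with $c_N\neq0$. Choose $\varphi$ with $\varphi(\xi)=\xi^{N-2}/b_n(\xi)$ near $0$. Then $\xi^2b_n\varphi=\xi^N$ near $0$, so the pairing equals $(-1)^Nc_NN!\neq0$, a contradiction. Hence $N\le1$, and inverting the Fourier transform gives $h(t)=at+b$.

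The main obstacle is justifying that $A_{n,0}$ acts on tempered distributions in the odd case. This reduces to checking that $a_{n,0}$ and its derivatives grow at most polynomially, which follows from Stirling's formula and the expression for $\psi$-derivatives of log-Gamma. The argument itself is routine.
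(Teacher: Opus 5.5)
Your proof is correct and follows the paper's route: Fourier transform, $\operatorname{supp}\widehat h\subset\{0\}$, the structure theorem for point-supported distributions, and then the double zero $a_{n,0}=\xi^2 b_n$ with $b_n(0)>0$ to cap the order at one. The differences are cosmetic. You eliminate the top coefficient with the explicit test function $\xi^{N-2}/b_n$, where the paper applies H\"ormander's division theorem twice. Also, the constant in your Gamma identity should be $2\pi/(\xi\sinh(\pi\xi/2))$ rather than $4\pi/(\xi\sinh(\pi\xi/2))$, which does not affect the argument.
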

\begin{proof}
	Taking Fourier transforms gives $a_{n,0}\widehat h=0$.  Since $a_{n,0}$ is nonzero away from the origin, we get $\operatorname{supp}\widehat h\subset\{0\}$. By \cite[Theorem~6.24(d) $\And$ Theorem~6.25]{Rudin1991}, we get $\widehat h=\sum_{|k|\leq N}c_k\delta^{(k)}_0$. Since $\operatorname{supp}\widehat h\subset\{0\}$ and $b_n(0)>0$, the equation $\xi^2b_n\widehat h=0$ implies $\xi^2\widehat h=0$. By \cite[Theorem~3.1.16, p.~67]{HormanderI}, $\xi\widehat h=c\delta_0$ for some constant $c$. By the fact that $\xi\delta_0'=-\delta_0$, a second application of the same theorem gives $\widehat h=a\delta_0'+b\delta_0$ for some constants $a$ and $b$. Taking the inverse Fourier transform completes the proof.
\end{proof}

It follows from the above lemma that $\log\rho_\alpha+C_{n,\alpha}\mathcal K_n*\rho_\alpha=at +b$ for some constants $a$ and $b$. 

%\begin{definition}\label{def:affine}
%A function $h:\R\to\R$ is called \emph{affine} if there exist constants $a,b\in\R$ such that $h(t)=at+b$ for $t\in\R$.
%\end{definition}

We next examine the asymptotic behavior of this affine function. In fact, \eqref{eq:K-linear-revised} and the exponential decay of
$\rho_\alpha$ give
\[
 (\mathcal K_n*\rho_\alpha)(t)=\frac{|t|}{2\beta_n}+O(1),
 \qquad
 \log\rho_\alpha(t)=-n\alpha|t|+O(1).
\]
Since $C_{n,\alpha}/(2\beta_n)=n\alpha$, it follows that
$\log\rho_\alpha+C_{n,\alpha}\mathcal K_n*\rho_\alpha$
is bounded.  Hence every radial normal solution satisfies
\begin{equation}\label{eq:density-EL}
	\log\rho_\alpha+C_{n,\alpha}\mathcal K_n*\rho_\alpha=\lambda_\alpha
\end{equation}
for a constant $\lambda_\alpha$.

\subsection{Existence and uniqueness by the variational method}
Let
\begin{equation*}
    \mathcal{P}_1(\R):=\{\mu: \mu \text{ is a Borel probability measure on }\R, \int_{\R}|t|\dd\mu<\infty \}.
\end{equation*}
Define the entropy by
\begin{equation*}
    \operatorname{Ent}(\mu)=\begin{cases}
        &\int_\R\rho\log\rho\dd t,\quad \mu=\rho \dd t,\\
        &+\infty,\quad \mu\not\ll \dd t.
    \end{cases}
\end{equation*}
Then equation \eqref{eq:density-EL} is the Euler--Lagrange equation associated with the energy
\begin{equation}\label{eq:free-energy}
 \mathcal F_{n,\alpha}(\mu) =\operatorname{Ent}(\mu)+\frac{C_{n,\alpha}}{2} \iint_{\R^2}\mathcal K_n(t-s)\dd \mu(t)\dd \mu(s)
\end{equation}
The entropy takes value in $\R\cup\{+\infty\}$ and the interaction term is finite since $\mathcal{K}_n$ has at most linear growth. We regard $\mathcal F_{n,\alpha}$ as an extended-value functional on $\mathcal{P}_1(\R)$. In what follows we use the same symbol for a measure and its density function (if exists) by a slight abuse of notation.

We first establish the coercivity of $\mathcal{F}_{n,\alpha}$ modulo translations.
\begin{proposition}\label{prop:coercivity}
	Let $\rho$ be a probability measure translated so that $0$ is its median, that is,
\begin{equation}\label{0 median}
\rho\bigl((-\infty,0]\bigr)\ge\frac{1}{2}\text{ and } \rho\bigl([0,\infty)\bigr)\ge\frac{1}{2}.
\end{equation}
	 Then
	\begin{equation*}
	\mathcal F_{n,\alpha}(\rho)
	\ge c\int_\R|t|\rho(t)\dd t-C
	\end{equation*}
	for constants $c,C>0$ depending only on $n$ and $\alpha$.
\end{proposition}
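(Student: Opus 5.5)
The argument bounds the two pieces of $\mathcal F_{n,\alpha}$ separately. The interaction term will supply a positive linear multiple of the first moment, and the entropy will be bounded below by a smaller negative multiple of that moment plus a constant. We may assume $\rho$ is absolutely continuous, since otherwise $\mathcal F_{n,\alpha}(\rho)=+\infty$. We may also assume $m:=\int_\R|t|\rho(t)\dd t<\infty$, as $\rho\in\mathcal P_1(\R)$.

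\textbf{Interaction term.} By \eqref{eq:K-linear-revised} and the fact that $\mathcal K_n$ is continuous, even and convex, there is a constant $c_0$ with $\mathcal K_n(t)\ge \frac{|t|}{2\beta_n}-c_0$ for all $t\in\R$. Hence
\begin{equation*}
 \frac{C_{n,\alpha}}{2}\iint\mathcal K_n(t-s)\rho(t)\rho(s)\dd t\dd s\ge \frac{C_{n,\alpha}}{4\beta_n}\iint|t-s|\rho(t)\rho(s)\dd t\dd s-C.
\end{equation*}
The median normalization \eqref{0 median} now converts the pair distance into the first moment. For fixed $t\ge0$ we have $\int|t-s|\rho(s)\dd s\ge\int_{s\le0}(t-s)\rho(s)\dd s\ge \frac{t}{2}$, and the symmetric bound holds for $t\le0$. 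Integrating in $t$ gives $\iint|t-s|\rho\rho\ge \frac12 m$. Since $C_{n,\alpha}/(2\beta_n)=n\alpha$, the interaction term is therefore at least $\frac{n\alpha}{4}m-C$.

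\textbf{Entropy.} We use the standard relative-entropy bound against $\nu(t)=\frac{\varepsilon}{2}e^{-\varepsilon|t|}$. Jensen's inequality gives $\int\rho\log(\rho/\nu)\ge0$, which rearranges to
\begin{equation*}
 \operatorname{Ent}(\rho)\ge \int\rho\log\nu=\log\frac{\varepsilon}{2}-\varepsilon m.
\end{equation*}
This bound makes sense because the negative part of $\rho\log\rho$ is integrable whenever $m<\infty$, which follows from the same comparison. Choosing $\varepsilon=\frac{n\alpha}{8}$ yields
\begin{equation*}
 \mathcal F_{n,\alpha}(\rho)\ge\frac{n\alpha}{8}m-C,
\end{equation*}
with constants depending only on $n$ and $\alpha$.

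The only delicate point is that the interaction kernel grows merely linearly, so the entropy must be controlled by a multiple of $m$ strictly smaller than the interaction constant. The free parameter $\varepsilon$ in the reference density $\nu$ provides exactly this margin. The median normalization is what prevents the loss of coercivity under translation.
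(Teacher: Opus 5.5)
Your proposal is correct and follows essentially the same route as the paper. Both use the linear lower bound $\mathcal K_n(t)\ge\frac{|t|}{2\beta_n}-C_0$, the median argument giving $\iint|t-s|\rho(t)\rho(s)\dd t\dd s\ge\frac12\int_\R|t|\rho(t)\dd t$, and a comparison with the Laplace density $\frac{\sigma}{2}e^{-\sigma|t|}$. You phrase that comparison as nonnegativity of relative entropy, while the paper uses the pointwise tangent-line inequality for $x\log x$; your choice $\varepsilon=n\alpha/8$ lies in the paper's admissible range $0<\sigma<C_{n,\alpha}/(8\beta_n)=n\alpha/4$.
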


\begin{proof} 
	By \eqref{0 median}, we obtain
	\begin{equation*}
		\iint_{\R^2}|t-s|\rho(t)\rho(s)\dd t \dd s
		\ge\frac{1}{2}\int_\R|t|\rho(t) \dd t.
	\end{equation*}
From \eqref{eq:K-linear-revised}, we have $\mathcal K_n(t)\ge \frac{|t|}{2\beta_n} -C_0$. Therefore,
\begin{equation}\label{int ge}
	\iint_{\R^2}\mathcal K_n(t-s)\rho(t)\rho(s)\dd t\dd s \ge \frac{1}{4\beta_n}\int_\R|t|\rho(t) \dd t-C_0.
\end{equation}

	For $\sigma>0$, we define an auxiliary function
    \begin{equation*}
		q_\sigma(t)=\frac{\sigma}{2}e^{-\sigma|t|},
	\end{equation*} 
    Since $x\mapsto x\log x$ is convex on $(0,\infty)$ and its derivative is $1+\log x$, we have
\begin{equation}\label{xlogx convex}
 x\log x\ge y\log y+(1+\log y)(x-y)
 =x\log y+x-y \text{ for }x\ge0\text{ and }y>0,   
\end{equation}
where the case $x=0$ follows by continuity. Taking $x=\rho(t)$ and $y=q_\sigma(t)>0$ gives
\[
 \rho(t)\log\rho(t)
 \ge \rho(t)\log q_\sigma(t)+\rho(t)-q_\sigma(t).
\]
	Integrating, we obtain
	\begin{equation}\label{ent ge}
			\int_{\R}\rho\log\rho\dd t
		\ge\log\frac\sigma2-\sigma\int_{\R}|t|\rho(t)\dd t.
	\end{equation}
	Combining \eqref{int ge} and \eqref{ent ge}, the assertion follows by choosing $0<\sigma<\frac{C_{n,\alpha}}{8\beta_n}$.
\end{proof}

\begin{proposition}\label{prop:radial-existence}
For every $n\geq 3$ and $\alpha>0$, the infimum of $\mathcal F_{n,\alpha}$ on $\mathcal{P}_1(\R)$ is finite and attained.  Every minimizer is positive, satisfies \eqref{eq:density-EL} and gives a radial normal solution of \eqref{eq:euclidean}.
\end{proposition}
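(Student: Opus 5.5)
We follow the direct method on $\mathcal P_1(\R)$, using Proposition~\ref{prop:coercivity} for compactness. \emph{Finiteness of the infimum.} Evaluating $\mathcal F_{n,\alpha}$ at $q_1(t)=\frac12e^{-|t|}$ gives a finite value, since $\mathcal K_n$ grows at most linearly. Every $\rho$ is a translate of one with median $0$, and $\mathcal F_{n,\alpha}$ is translation invariant. So Proposition~\ref{prop:coercivity} gives $\inf\mathcal F_{n,\alpha}\ge -C>-\infty$.

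\emph{Compactness.} Take a minimizing sequence $\rho_j$ with median $0$. By coercivity, $\int|t|\rho_j\le C$. Then the bound \eqref{ent ge} with a fixed $\sigma$ shows that $\operatorname{Ent}(\rho_j)$ is bounded above and below. Hence the family $\rho_j\dd t$ is tight. The superlinearity of $x\log x$ and de la Vallée-Poussin give uniform integrability, so a subsequence converges weakly in $L^1$ to a probability density $\rho$.

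\emph{Lower semicontinuity.} The entropy is convex, hence weakly lower semicontinuous on $L^1$. To make this rigorous on $\R$, write $\operatorname{Ent}(\rho)$ as the relative entropy with respect to $q_1$ minus $\int|t|\rho$, plus a constant. Relative entropy is lower semicontinuous under weak convergence of measures. For the first moment, $\int|t|\rho\le\liminf\int|t|\rho_j$ by Fatou.

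For the interaction term, split $\mathcal K_n=\frac{|t|}{2\beta_n}+h$ with $h$ bounded and continuous, by \eqref{eq:K-linear-revised}. Then $\iint h\,\rho_j\rho_j\to\iint h\,\rho\rho$ by weak convergence of the product measures. The term $\iint|t-s|\rho_j\rho_j$ is lower semicontinuous because $|t-s|$ is continuous and nonnegative.

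\emph{The subtle point.} The linear pieces of the two terms have opposite signs: $+\frac{C_{n,\alpha}}{4\beta_n}\iint|t-s|$ against $-\int|t|\rho$ hidden inside the entropy. I would handle this by using instead $q_\sigma$ with small $\sigma$, chosen as in the coercivity proof. Then the decomposition is
\begin{equation*}
\mathcal F_{n,\alpha}=H(\rho|q_\sigma)-\sigma\int|t|\rho+\frac{C_{n,\alpha}}{2}\iint\mathcal K_n+\mathrm{const}.
\end{equation*}
Since $\sigma$ is small, the combination of the last two terms can be written as a sum of lower semicontinuous pieces plus bounded continuous ones. Concretely, use $\iint|t-s|\ge\frac12\int|t|$ at median $0$, and pass to the limit in the form $\iint\bigl(c|t-s|-\sigma'(|t|+|s|)\bigr)$ with $c>2\sigma'$. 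I expect this lower semicontinuity bookkeeping to be the main technical step. If needed, a truncation of $|t|$ at level $R$ with uniform tail control from the moment bound settles it. This yields a minimizer $\rho$.

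\emph{Euler--Lagrange and positivity.} For positivity, suppose $Z=\{\rho=0\}$ had positive measure. Perturb $\rho_\varepsilon=(1-\varepsilon)\rho+\varepsilon\mathbf 1_{E}/|E|$ with $E\subset Z$ bounded of positive measure. The entropy derivative is $\varepsilon\log\varepsilon\to-\infty$ in slope, while the interaction changes by $O(\varepsilon)$. This contradicts minimality, so $\rho>0$ a.e.

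Next, for bounded compactly supported $\phi$ with $\int\phi=0$ and $|\phi|\le C\rho$ on its support, vary $\rho+\varepsilon\phi$. This gives
\begin{equation*}
\int\bigl(\log\rho+C_{n,\alpha}\mathcal K_n*\rho\bigr)\phi=0,
\end{equation*}
hence $\log\rho+C_{n,\alpha}\mathcal K_n*\rho=\lambda$ a.e. So $\rho=e^{\lambda-C_{n,\alpha}\mathcal K_n*\rho}$ is continuous and positive everywhere, and $\rho\le Ce^{-n\alpha|t|}$ by \eqref{eq:K-linear-revised}.

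\emph{Recovering the solution.} Set $e^{nv}=M_\alpha\rho$. Applying $A_{n,0}$ and using \eqref{eq:K-fundamental-revised} gives $A_{n,0}v=(n-1)!e^{nv}$. Put $u(x)=v(\log|x|)-\alpha\log|x|$. Then $u$ is radial with $|u|\le C(1+\log(1+|x|))$ near infinity, since $v=-\alpha|t|+O(1)$. Near the origin $u$ is bounded, because $v(t)-\alpha t$ tends to a constant as $t\to-\infty$; after bootstrapping, $u$ is continuous at $0$. Also $|x|^{n\gamma}e^{nu}\in L^1$. By conformal covariance, $u$ solves the equation on $\R^n\setminus\{0\}$. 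Lemma~\ref{lem:removability-normality} then shows that $u$ is a normal solution on $\R^n$.
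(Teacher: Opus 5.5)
Your overall plan is the paper's: median normalization, coercivity, compactness, lower semicontinuity, the Euler--Lagrange equation, then Lemma~\ref{lem:removability-normality}. However, the lower semicontinuity step, which is the heart of ``attained'', does not close as written.

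With $\operatorname{Ent}(\rho)=H(\rho|q_\sigma)-\sigma\int|t|\rho\dd t+\log\frac{\sigma}{2}$, the first moment enters with a minus sign. Weak convergence only gives $\int|t|\rho\le\liminf_j\int|t|\rho_j$, which is the wrong direction for that term. Your remedy does not repair this:
\begin{itemize}
\item The integrand $c|t-s|-\sigma'(|t|+|s|)$ is unbounded below; it equals $-2\sigma'|t|$ on the diagonal. So weak convergence of $\rho_j\otimes\rho_j$ gives no lower semicontinuity for it.
\item The median inequality $\iint|t-s|\rho\rho\ge\frac12\int|t|\rho$ holds only after integration against a fixed measure. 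It cannot stand in for a pointwise lower bound on the integrand.
\item The truncation fallback fails for the same reason. The bound $\sup_j\int|t|\rho_j<\infty$ does not give $\sup_j\int_{|t|>R}|t|\rho_j\dd t\to0$: a bump of mass $1/j$ at distance $j$ contributes $1$ to the first moment and vanishes in the limit.
\end{itemize}
To make your coupled argument work, you would need a quantitative estimate showing that the interaction term gains more than $\sigma$ times whatever first moment escapes to infinity. You have not supplied it.

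The missing idea is that the linear weight in $q_\sigma$ creates the problem, and a sublinear weight removes it. Take $q(t)=c\,e^{-\sqrt{1+|t|}}$, so that $\operatorname{Ent}(\rho)=H(\rho|q)-\int\sqrt{1+|t|}\,\rho\dd t+\log c$. Since $\sqrt{1+|t|}=o(|t|)$ and the first moments are uniformly bounded, $\int\sqrt{1+|t|}\,\rho_j\dd t\to\int\sqrt{1+|t|}\,\rho\dd t$. Meanwhile $H(\cdot|q)$ is lower semicontinuous. Hence the entropy by itself is lower semicontinuous along sequences with bounded first moment; this is the statement the paper quotes from \cite{Santambrogio2016}.

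The interaction term is also lower semicontinuous, immediately from $\mathcal K_n\ge0$ being continuous; your splitting $\mathcal K_n=\frac{|t|}{2\beta_n}+h$ also works. Together these produce the minimizer, with no need to couple entropy and interaction.

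The rest of your proposal is sound. It differs from the paper only in the Euler--Lagrange step. You get positivity from the $\varepsilon\log\varepsilon$ slope and then \eqref{eq:density-EL} from localized variations with $|\phi|\le C\rho$. The paper instead compares $\rho$ once with the Gibbs density $\bar\rho\propto e^{-C_{n,\alpha}\mathcal K_n*\rho}$. This gives $\int\rho\log(\rho/\bar\rho)\dd t\le0$, which yields positivity and the Euler--Lagrange equation in one stroke.
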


\begin{proof}
Choose a minimizing sequence $\rho_j$ and translate each density so that $0$ is a median, as in \eqref{0 median}. Proposition~\ref{prop:coercivity} gives
\[
 \sup_j\int_\R |t|\rho_j(t)\dd t\le M_0<\infty.
\]
By Prokhorov's theorem \cite[Theorem~3.10.3]{Durrett2019}, after passing to a subsequence, the probability measures $\rho_j(t)\dd t$ converge weakly to a probability measure $\mu$, with
$\int_\R|t|\dd\mu(t)\le M_0$.

Since $\mathcal{K}_n\geq 0$, the minimizing sequence satisfies $\operatorname{Ent}(\rho_j)\le \mathcal{F}_{n,\alpha}\le C$. By the lower semicontinuity of the entropy on measures with uniformly bounded first moments (see \cite[Proposition 2.1 (1)--(2)]{Santambrogio2016}), we have
\begin{equation*}
    \operatorname{Ent}(\mu)\le \liminf_{j\to\infty}\operatorname{Ent}(\rho_j)\le C.
\end{equation*}
Therefore $\mu=\rho\dd t$ for some density $\rho$ in $\mathcal{P}_1(\R)$.

Since $\mathcal K_n$ is continuous and nonnegative, weak convergence of the product measures also yields
\begin{equation*}
    \iint_{\R^2}\mathcal K_n(t-s)\rho(t)\rho(s)\dd t\dd s
 \le\liminf_{j\to\infty}\iint_{\R^2}\mathcal K_n(t-s)\rho_j(t)\rho_j(s)\dd t\dd s.
\end{equation*}
Combining these inequalities shows that $\rho$ is a minimizer with finite entropy and finite first moment. 

Moreover, since
\begin{equation*}
 (\mathcal K_n*\rho)(t)=\frac{|t|}{2\beta_n}+O(1),    
\end{equation*}
the probability density
\begin{equation*}
 \bar\rho(t):=\frac{\exp(-C_{n,\alpha}(\mathcal K_n*\rho)(t))}{\int_\R\exp(-C_{n,\alpha}(\mathcal K_n*\rho)(s))\dd s}    
\end{equation*}
is well defined and satisfies $c e^{-n\alpha|t|}\le\bar\rho(t)\le C e^{-n\alpha|t|}$ for $t\in\R$ and some $c, C>0$.  For $\rho_\varepsilon=(1-\varepsilon)\rho+\varepsilon\bar\rho$ and $\delta=\bar\rho-\rho$, convexity of the entropy and expansion of the interaction term give
\begin{equation*}
    \begin{aligned}
      0\le\mathcal F_{n,\alpha}(\rho_\varepsilon)
 -\mathcal F_{n,\alpha}(\rho)
 \le&\varepsilon \left(\operatorname{Ent}(\bar\rho)-\operatorname{Ent}(\rho)+C_{n,\alpha}\int_{\R}(\mathcal K_n*\rho)\delta\dd t\right)\\
 &+\frac{C_{n,\alpha}\varepsilon^2}{2} \iint_{\R^2}\mathcal K_n(t-s)\delta(t)\delta(s)\dd t\dd s\\
 =&-\varepsilon\int_\R\rho\log\frac{\rho}{\bar\rho}\dd t+O(\varepsilon^2).   
    \end{aligned}
\end{equation*}
The remainder is finite because both densities have finite first moment and $\mathcal K_n$ has at most linear growth. Dividing by $\varepsilon$ and letting $\varepsilon\downarrow0$ and applying inequality \eqref{xlogx convex}, we obtain that  $\rho=\bar\rho$ almost everywhere. Thus $\rho$ has a positive continuous representative and satisfies \eqref{eq:density-EL}.

With $M_\alpha$ as in \eqref{def M rho C}, set $v_\alpha=n^{-1}\log(M_\alpha\rho)$. Then \eqref{eq:K-fundamental-revised} gives \eqref{eq:radial-equation}. The symbol $a_{n,0}$ is elliptic of order $n$, so local elliptic regularity \cite[Theorem~18.1.29]{HormanderIII} first gives $v_\alpha\in H^n_{\mathrm{loc}}(\R)$, since $e^{nv_\alpha}\in L^2_{\mathrm{loc}}(\R)$.
Iterating this regularity gain together with Sobolev composition gives $v_\alpha\in H^{kn}_{\mathrm{loc}}(\R)$ for every integer $k\ge1$ and hence $v_\alpha\in C^\infty(\R)$. Using \eqref{eq:K-linear-revised} and the finite first moment of $\rho$, we obtain the following asymptotics by the dominated convergence theorem:
\begin{equation}\label{eq:v toinfty}
 v_\alpha(t)=
 \begin{cases}
  \alpha t+c_-+o(1),&t\to-\infty,\\
  -\alpha t+c_++o(1),&t\to+\infty.
 \end{cases}
\end{equation}
Differentiating \eqref{eq:density-EL} also gives
\eqref{eq:v al infty}.
Consequently,
\[
 u_\alpha(x)=v_\alpha(\log|x|)-\alpha\log|x|
\]
is continuous on $\R^n$ with $u_\alpha(0)=c_-$, has at most logarithmic growth and
\[
 \int_{\R^n}|x|^{n\gamma}e^{nu_\alpha}\dd x=|\Sph^{n-1}|M_\alpha<\infty.
\]
The Emden--Fowler transformation gives the equation away from the origin and Lemma~\ref{lem:removability-normality} shows that $u_\alpha$ is a radial normal solution on $\R^n$.
\end{proof}

It remains to prove uniqueness up to translation. Inspired by \cite{McCann1997,Villani}, we consider linear interpolation of inverse distribution functions of the density. To be more precise, for a probability measure $\rho(t)\dd t$, we define
\begin{equation*}
 F_\rho(t)=\int_{-\infty}^t\rho(s)\dd s,\qquad X_\rho(p)=\inf\{t:F_\rho(t)\ge p\},\quad 0<p<1.
\end{equation*}
It holds that
\begin{equation*}
	\int_\R\varphi(t)\rho(t)\dd t
	=\int_0^1\varphi(X_\rho(p))\dd p.
\end{equation*}
Moreover, if $\rho$ is positive and smooth,
\begin{equation*}
	\rho(X_\rho(p))X_\rho'(p)=1.
\end{equation*}

Therefore, for $X=X_\rho$, we can write
\begin{equation}\label{eq:quantile-energy}
 \mathcal F_{n,\alpha}(\rho)= \mathcal G_{n,\alpha}(X):=-\int_0^1\log X'(p)\dd p
 +\frac{C_{n,\alpha}}2\int_0^1\!\int_0^1
 \mathcal K_n(X(p)-X(q))\dd p\dd q.
\end{equation}
 Since $-\log$ and $\mathcal K_n$ are convex, the function $\theta\mapsto \mathcal G_{n,\alpha}(X_\theta)$, where $X_\theta:=\theta X_1+(1-\theta)X_0$ for given $X_0$ and $X_1$, is also convex. In addition, it is strictly convex modulo translations and equality in the convexity inequality forces that $X_1-X_0$ is constant. 
 
Now let $\rho_0,\rho_1$ correspond to two radial normal solutions at the same parameter $\alpha$ and define $X_0,X_1$ accordingly. Set 
\begin{equation*}
  D=X_1-X_0,\qquad  E(\theta)=\mathcal G_{n,\alpha}((1-\theta)X_0+\theta X_1).
\end{equation*}
By normality, $\rho_i(t)e^{n\alpha|t|}$ has positive limit as $t\to\infty$. Consequently,
\begin{equation*}
    D\in L^\infty(0,1),\qquad X_i'(p)\sim \frac{1}{n\alpha p},\ p\downarrow 0,\qquad X_i'(p)\sim \frac{1}{n\alpha (1-p},\ p\uparrow 1.
\end{equation*}
  Differentiating \eqref{eq:density-EL} gives
\begin{equation*}
 \left(\frac{1}{X_i'}\right)'(p)=-C_{n,\alpha}(\mathcal K_n'*\rho_i)(X_i(p)).
\end{equation*}
Using also the oddness of $\mathcal K_n'$, we obtain, for $i=0,1$,
\begin{align*}
 E'(i)=-\left[\frac{D}{X_i'}\right]_0^1
 +\int_0^1D(p)\left[\left(\frac{1}{X_i'}\right)'(p)
 +C_{n,\alpha}(\mathcal K_n'*\rho_i)(X_i(p))\right]\dd p=0,
\end{align*}
Since $E$ is convex, it is constant. Equality in the strict convexity inequality for $-\log$ implies $X_1'=X_0'$, and hence $X_1-X_0$ is constant. Therefore the solutions differ only by a translation in $t$, or equivalently by a dilation in Euclidean variables. In particular, the minimizer of $\mathcal{F}_{n,\alpha}$ is unique up to translation.

\subsection{Radial nondegeneracy and the global radial branch}

Translation invariance gives $L_{\alpha,0}v_\alpha'=0$. We prove that there are no other bounded radial kernel elements.

\begin{proposition}\label{prop:radial-nondegeneracy}
Let $\phi$ be a bounded radial solution of $A_{n,0}\phi=W_\alpha\phi$. Then $\phi\in\Span\{ v_\alpha'\}$.
\end{proposition}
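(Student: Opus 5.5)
The plan is to linearize the free-energy formulation \eqref{eq:density-EL} and to use the strict convexity of the quantile functional \eqref{eq:quantile-energy} modulo translations, now at the level of the second variation.

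\emph{Step 1 (reduction to a linearized Euler--Lagrange equation).} Let $\phi$ be bounded with $A_{n,0}\phi=W_\alpha\phi$. Set $\sigma:=n\rho_\alpha\phi$. Since $W_\alpha=n!M_\alpha\rho_\alpha=C_{n,\alpha}\rho_\alpha$, the equation reads $A_{n,0}(n\phi)=C_{n,\alpha}\sigma$. By \eqref{eq:K-fundamental-revised},
\[
A_{n,0}\bigl(n\phi+C_{n,\alpha}\mathcal K_n*\sigma\bigr)=0 .
\]
Here $\sigma=O(e^{-n\alpha|t|})$ by Lemma~\ref{lem:V infty}, so $\mathcal K_n*\sigma$ grows at most linearly. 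Lemma~\ref{lem:tempered-radial-kernel} then gives $n\phi+C_{n,\alpha}\mathcal K_n*\sigma=at+b$. Expanding with \eqref{eq:K-linear-revised} gives, as $|t|\to\infty$,
\[
(\mathcal K_n*\sigma)(t)=\frac{|t|}{2\beta_n}\int\sigma-\frac{\operatorname{sgn}t}{2\beta_n}\int s\sigma\,ds+O(1).
\]
Boundedness of $\phi$ therefore forces $\int_\R\sigma=0$ and $a=0$. Thus
\[
n\phi+C_{n,\alpha}\mathcal K_n*\sigma=b,
\]
which is exactly the linearization of \eqref{eq:density-EL} in the mass-preserving direction $\sigma$. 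Here $b$ plays the role of the variation of $\lambda_\alpha$.

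\emph{Step 2 (passage to quantile variables).} Define the transport displacement
\[
Y(p):=-\frac{1}{\rho_\alpha(X(p))}\int_{-\infty}^{X(p)}\sigma(s)\,ds,\qquad X=X_{\rho_\alpha},
\]
so that $\sigma=-(\rho_\alpha Y\circ F_{\rho_\alpha})'$. Since $\int\sigma=0$, $\phi$ is bounded and $\rho_\alpha\sim e^{-n\alpha|t|}$ with matching constants on both tails, the tail integrals of $\sigma$ are bounded by $C\rho_\alpha$. Hence $Y\in L^\infty(0,1)$, and $Y'$ has controlled growth near $p=0,1$. In these variables the relation from Step 1 says that $Y$ lies in the kernel of the Hessian of $\mathcal G_{n,\alpha}$ at $X$. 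Indeed, testing that relation against $\sigma$ (equivalently, pairing with $Y$ after integration by parts) should give
\[
0=\int_0^1\frac{(Y')^2}{(X')^2}\,dp+\frac{C_{n,\alpha}}{2}\int_0^1\!\!\int_0^1\mathcal K_n''(X(p)-X(q))\bigl(Y(p)-Y(q)\bigr)^2\,dp\,dq .
\]
Here the constant $b$ drops out because $\int\sigma=0$. The left side of the bracket identity is $\int n\phi\,\sigma=\int n^2\rho_\alpha\phi^2$ minus the interaction pairing, and it is rewritten in $Y$ using the same computation that produced $E'(i)$ in the uniqueness argument, differentiated once more.

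\emph{Step 3 (conclusion).} Both terms are nonnegative, and $\mathcal K_n''=g_n>0$. Hence $Y'\equiv0$, so $Y\equiv c$, and $\sigma=-c\rho_\alpha'=-cn\rho_\alpha v_\alpha'$. This gives $\phi=-c\,v_\alpha'$.

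\emph{Main obstacle.} The delicate point is justifying the integrations by parts in Step 2. The boundary terms $[Y\,Y'/X'^2]_0^1$ and the terms coming from $1/X'$ must vanish. I would handle this using $X'(p)\sim (n\alpha p)^{-1}$ near $p=0$ (and symmetrically near $p=1$), together with the bound $Y'=O(1/p)$ near $0$. The latter follows from $|\phi|\le C$ and the exact exponential tail of $\rho_\alpha$ obtained from \eqref{eq:v toinfty}. With these, $Y'/X'$ is bounded and $Y/X'\to0$ at the endpoints. If the bound on $Y'$ is insufficient, I would first truncate $\sigma$ to compact support, apply the identity, and pass to the limit using the exponential decay from Lemma~\ref{lem:V infty}.
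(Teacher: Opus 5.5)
Your proposal is correct and is essentially the paper's argument written in quantile coordinates. Your $Y$ is the paper's $\zeta(t)=-\rho_\alpha(t)^{-1}\int_{-\infty}^t\sigma$ composed with $X_{\rho_\alpha}$ (your $\sigma$ differs from the paper's $W_\alpha\phi$ only by the constant factor $n/C_{n,\alpha}$), and since $Y'/X'=\zeta'\circ X$ and $dp=\rho_\alpha\,dt$, your vanishing Hessian of $\mathcal G_{n,\alpha}$ is exactly the paper's identity $\mathcal Q_\alpha[\zeta]=0$. The paper carries out directly in $t$ the step you only sketch, by expanding $\int\rho_\alpha\zeta\,(\sigma/\rho_\alpha+C_{n,\alpha}\mathcal K_n*\sigma)'\,dt$ and using $(\log\rho_\alpha)''=-C_{n,\alpha}g_n*\rho_\alpha$; the boundary terms vanish for the reason you give, namely boundedness of $\zeta$ and $\zeta'$ against the exponential decay of $\rho_\alpha$.
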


\begin{proof}
We first claim that $\int_\R W_\alpha(t)\phi(t)\dd t=0$. In fact, the function $h:=\phi+\mathcal{K}_n*(W_\alpha\phi)$ satisfies $A_{n,0}h=0$ and it is a tempered distribution because of the exponential decay of $W_\alpha$ and the boundedness of $\phi$. By Lemma \ref{lem:tempered-radial-kernel}, $h=at+b$ for some $a, b\in\R$.

On the other hand, by \eqref{eq:K-linear-revised}, the dominated convergence theorem gives
\begin{equation*}
\frac{(\mathcal{K}_n* (W_\alpha\phi))(t)}{t}\longrightarrow
\begin{cases}
	\frac{1}{2\beta_n} \int_\R W_\alpha(s)\phi(s)\dd s,&t\to+\infty,\\
	-\frac{1}{2\beta_n} \int_\R W_\alpha(s)\phi(s)\dd s,&t\to-\infty.
\end{cases}
\end{equation*}
Since $\phi$ is bounded, we must have $\int_\R W_\alpha(t)\phi(t)\dd t=0$.

Set $\sigma(t)=W_\alpha(t)\phi(t)$. Then $\int_{\R}\sigma(t)\dd t=0$ and the equation for $\phi$ gives
\begin{equation}\label{eq:sigma}
	\frac{\sigma}{\rho_\alpha}
	+C_{n,\alpha}\mathcal{K}_n*\sigma=\text{constant}.
\end{equation}
Define $\zeta(t):=-\frac{1}{\rho_\alpha(t)} \int_{-\infty}^t\sigma(s)\dd s$. Then $\sigma=-(\rho_\alpha\zeta)'$. Since $\sigma=C_{n,\alpha}\rho_\alpha\phi$ and $\int_\R\sigma=0$, for $t\ge0$ we have
\begin{equation*}
    |\zeta(t)|=\frac{\left|\int_t^\infty\sigma(s)\dd s\right|}{\rho_\alpha(t)}
 \le C_{n,\alpha}\|\phi\|_\infty\frac{\int_t^\infty\rho_\alpha(s)\dd s}{\rho_\alpha(t)}
 \le C.
\end{equation*}
Here the last inequality follows from $\rho_\alpha(t)\sim e^{-n\alpha|t|}$ due to  \eqref{def M rho C} and \eqref{eq:v toinfty}. For $t\le0$, the same estimate follows directly from the defining integral over $(-\infty,t]$. Thus $\zeta$ is bounded.
Differentiating its definition gives
\begin{equation*}
    \zeta'=-\frac{\sigma}{\rho_\alpha}-\frac{\rho_\alpha'}{\rho_\alpha}\zeta
 =-C_{n,\alpha}\phi-nv_\alpha'\zeta.
\end{equation*}
Since $\phi$, $v_\alpha'$ and $\zeta$ are bounded, so is $\zeta'$. 

We have the following key identity.
\begin{equation}\label{Q al id}
	\begin{aligned}
		\mathcal Q_\alpha[\zeta]
		:=&\int_\R\rho_\alpha(t)|\zeta'(t)|^2\dd t+\frac{C_{n,\alpha}}2
		\iint_{\R^2}g_n(t-s)
		|\zeta(t)-\zeta(s)|^2\rho_\alpha(t)\rho_\alpha(s)\dd t\dd s\\
		=& \int_\R \rho_\alpha\zeta(\frac{\sigma}{\rho_\alpha}
		+C_{n,\alpha}\mathcal{K}_n*\sigma)'\dd t.
	\end{aligned}
\end{equation}
Indeed, using integration by parts, we can write
\begin{equation}\label{Q al 1}
	\begin{aligned}
	\int_\R \rho_\alpha\zeta(\frac{\sigma}{\rho_\alpha})'\dd t=&\int_\R \rho_\alpha\zeta\left(-\zeta'-(\log\rho_\alpha)'\zeta\right)'\dd t\\
	=&\int_\R\rho_\alpha|\zeta'|^2\dd t-\int_\R (\log\rho_\alpha)''\rho_\alpha\zeta^2\dd t\\
	=&\int_\R\rho_\alpha|\zeta'|^2\dd t+C_{n,\alpha}\iint g_n(t-s)	\rho_\alpha(t)\rho_\alpha(s)\zeta(t)^2\dd t\dd s,
	\end{aligned}
\end{equation}
where we used the equation $(\log\rho_\alpha)''=-C_{n,\alpha}g_n*\rho_\alpha$ in the last equality.

On the other hand, integration by parts gives
\begin{equation*}
	(\mathcal{K}_n'*\sigma)(t)
	=-\int_\R g_n(t-s)\rho_\alpha(s)\zeta(s)\dd s,
\end{equation*}
so
\begin{equation}\label{Q al 2}
	C_{n,\alpha}\int_\R\rho_\alpha(t)\zeta(t)(\mathcal{K}_n'*\sigma)(t)\dd t
	=-C_{n,\alpha}\iint_{\R^2} g_n(t-s)
	\rho_\alpha(t)\rho_\alpha(s)\zeta(t)\zeta(s)\dd t\dd s.
\end{equation}
\eqref{Q al id} then follows from \eqref{Q al 1} and \eqref{Q al 2}.

By \eqref{eq:sigma}, integration by parts once more yields that $\mathcal{Q}_\alpha[\zeta]=0$, so $\zeta$ is constant. Hence $\sigma=-c\rho_\alpha'$. Since $W_\alpha=n!M_\alpha\rho_\alpha$, we obtain that
\begin{equation*}
	\phi=\frac{\sigma}{W_\alpha}=-\frac{c}{n!M_\alpha}\frac{\rho_\alpha'}{\rho_\alpha}=-\frac{c}{(n-1)!M_\alpha}v_\alpha'.
\end{equation*}
\end{proof}

\section{Eigenvalues of the linearized operators}\label{sec:BS}

In this section, we analyze the eigenvalues of the linearized operator $L_{\alpha,k}$ through the associated Birman--Schwinger operator $\mathscr K_{\alpha,k}$ defined in \eqref{eq:BS operator}. We establish its positivity and oscillation properties and derive quantitative bounds for crossing parameters. Thess results leads to Theorem \ref{thm e value L}.

\subsection{The Green kernel at mode $k$}

For $k\ge1$, let $G_{n,k}$ be the convolution kernel of $A_{n,k}^{-1}$. Then we have the following representation.

\begin{proposition}\label{prop:mode-green}
For $n\ge3$, $k\ge1$ and $t\ge0$, we have
\begin{align}
 G_{n,k}(t)
 &=\frac{2^{1-n}e^{-kt}}{\Gamma(n/2)^2}
 \int_0^1s^{k-1}(1-s)^{n/2-1}
 (1-e^{-2t}s)^{n/2-1}\dd s,
 \label{eq:G-beta}\\
 &=\frac{2^{1-n}\Gamma(k)}{\Gamma(n/2)\Gamma(k+n/2)}e^{-kt}
 \Hyp\left(1-\frac n2,k;k+\frac n2;e^{-2t}\right).
 \label{eq:G-hyper}
\end{align}
Moreover,
\begin{equation}\label{eq:G-diagonal}
 G_{n,k}(0)=
 \frac{2^{1-n}\Gamma(k)\Gamma(n-1)}
 {\Gamma(n/2)^2\Gamma(n+k-1)}.
\end{equation}
The function $G_{n,k}$ is even, strictly positive and strictly
decreasing on $(0,\infty)$. Moreover,
\begin{equation}\label{eq:G-positive-minors}
 \det\bigl[G_{n,k}(t_i-s_j)\bigr]_{i,j=1}^2>0
 \qquad
 \text{whenever }t_1<t_2\text{ and }s_1<s_2.
\end{equation}
\end{proposition}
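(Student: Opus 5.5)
The plan mirrors the proof of Lemma~\ref{lem:radial-kernel-g}: factor the symbol $1/a_{n,k}$ into a product of two Beta-type Fourier transforms, so that $G_{n,k}$ becomes an explicit autocorrelation. By \eqref{eq:mode-symbol},
\begin{equation*}
 \frac{1}{a_{n,k}(\xi)}=2^{-n}\left|\frac{\Gamma\left(\frac{k+i\xi}{2}\right)}{\Gamma\left(\frac{n+k+i\xi}{2}\right)}\right|^2 .
\end{equation*}
The Beta identity \eqref{eq:beta-gamma} with $p=\frac{k+i\xi}{2}$, $q=\frac n2$ gives
\begin{equation*}
 \frac{\Gamma\left(\frac{k+i\xi}{2}\right)}{\Gamma\left(\frac{n+k+i\xi}{2}\right)}=\frac{1}{\Gamma(n/2)}\int_0^1 r^{\frac{k+i\xi}{2}-1}(1-r)^{n/2-1}\dd r .
\end{equation*}
Substituting $r=e^{-2s}$, this equals $\frac{2}{\Gamma(n/2)}\widehat h(\xi)$, where $h(s)=e^{-ks}(1-e^{-2s})^{n/2-1}\mathbf 1_{(0,\infty)}(s)$. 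Here $h\in L^1$ for $k\ge1$, and $h$ is bounded since $n\ge3$.

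It follows that
\begin{equation*}
 G_{n,k}=\frac{2^{2-n}}{\Gamma(n/2)^2}\,h*\widetilde h .
\end{equation*}
For $t\ge0$ this reads
\begin{equation*}
 G_{n,k}(t)=\frac{2^{2-n}}{\Gamma(n/2)^2}\int_0^\infty h(t+\sigma)h(\sigma)\dd\sigma .
\end{equation*}
Substitute $s=e^{-2\sigma}$, so that $\dd\sigma=\dd s/(2s)$. Then
\begin{equation*}
 h(\sigma)=s^{k/2}(1-s)^{n/2-1},\qquad h(t+\sigma)=e^{-kt}s^{k/2}(1-e^{-2t}s)^{n/2-1},
\end{equation*}
and the integrand becomes $\frac12 e^{-kt}s^{k-1}(1-s)^{n/2-1}(1-e^{-2t}s)^{n/2-1}$. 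This is exactly \eqref{eq:G-beta}.

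Applying Euler's representation \eqref{eq:hyp-euler} with $a=1-\frac n2$, $b=k$, $c=k+\frac n2$ (so that $c>b>0$) gives \eqref{eq:G-hyper}. At $t=0$, Gauss's formula \eqref{eq:hyp-gauss} applies because $c-a-b=n-1>0$, and it yields
\begin{equation*}
 \frac{\Gamma(k+\frac n2)\Gamma(n-1)}{\Gamma(n/2)\Gamma(n+k-1)} .
\end{equation*}
Multiplying by the prefactor produces \eqref{eq:G-diagonal}. Evenness and strict positivity are immediate from the autocorrelation form.

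Strict decrease on $(0,\infty)$ is read off from \eqref{eq:G-beta}. For $n\ge3$ the factor $(1-e^{-2t}s)^{n/2-1}$ is nondecreasing in $t$, so a direct monotonicity argument is not available. Instead, differentiate under the integral. The $t$-derivative of $e^{-kt}(1-e^{-2t}s)^{n/2-1}$ is
\begin{equation*}
 e^{-kt}(1-e^{-2t}s)^{n/2-2}\bigl[-k(1-e^{-2t}s)+(n-2)e^{-2t}s\bigr].
\end{equation*}
Its sign is not obviously negative. So I would instead differentiate the autocorrelation, $G'(t)\propto\int_0^\infty h'(t+\sigma)h(\sigma)\dd\sigma$, and then use $h(0)=0$ and the log-concavity argument of Lemma~\ref{lem:radial-kernel-g}. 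We have
\begin{equation*}
 (\log h)''(s)=-\frac{4(\frac n2-1)e^{-2s}}{(1-e^{-2s})^2}<0,
\end{equation*}
so $h'/h$ is strictly decreasing. Averaging the two integration-by-parts expressions for $G'$ then gives $G'(t)<0$ for $t>0$.

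The boundary term in that integration by parts vanishes since $h(0)=0$ for $n>2$. For $n=3$, $h$ behaves like $\sqrt{2s}$ near $0$, so $h'\sim s^{-1/2}$, which is integrable. I expect this $n=3$ regularity check to be the main technical point, but it seems harmless.

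The minor inequality \eqref{eq:G-positive-minors} follows exactly as in Lemma~\ref{lem:radial-kernel-g}. First, $K(x,y)=h(x-y)$ has nonnegative $2\times2$ minors by log-concavity, and these minors are strictly positive when all arguments lie in $(0,\infty)$. Second, the Cauchy--Binet identity
\begin{equation*}
 \det[G(x_i-y_j)]=c^2\iint_{z_1<z_2}\det[K(x_i,z_j)]\det[K(y_i,z_j)]\dd z_1\dd z_2
\end{equation*}
holds with $c=\frac{2^{2-n}}{\Gamma(n/2)^2}$. The integrand is positive on $\{z_1<z_2<\min\{x_1,y_1\}\}$, which gives strict positivity.
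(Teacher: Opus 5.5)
Your proof is correct, and apart from the monotonicity step it matches the paper's argument. The shared steps are the Beta factorization of $1/a_{n,k}$, the autocorrelation $G_{n,k}=\frac{2^{2-n}}{\Gamma(n/2)^2}h*\widetilde h$, Euler's formula for \eqref{eq:G-hyper}, and the log-concavity plus Cauchy--Binet argument for \eqref{eq:G-positive-minors}.

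The difference is how you obtain strict decrease. You differentiate the autocorrelation and use the averaging trick from the $n>4$ case of Lemma~\ref{lem:radial-kernel-g}. This needs $h(0)=0$ and $h'\in L^1$ near $0$. Both hold for every $n\ge3$, because the exponent here is $n/2-1$ rather than $n/2-2$. Differentiation under the integral for $t>0$ is also dominated, since $h'$ is bounded on $[t,\infty)$.

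The paper reverses the order: it proves \eqref{eq:G-positive-minors} first and reads off monotonicity from it. For $0\le s<t$, take $(t_1,t_2)=(s,t)$ and $(s_1,s_2)=(0,s+t)$; evenness then gives $G_{n,k}(s)^2-G_{n,k}(t)^2>0$.

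The paper's route needs no calculus or endpoint regularity, and works for any even, positive, strictly totally positive kernel. Yours gives the slightly stronger pointwise statement $G_{n,k}'(t)<0$, with an explicit integral formula for the derivative.

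As a small aside, \eqref{eq:G-diagonal} also follows directly from \eqref{eq:G-beta} at $t=0$ as the Beta value $B(k,n-1)$, without Gauss's formula.
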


\begin{proof}
The proof is similar to the proof of Lemma \ref{lem:radial-kernel-g}. From the Beta identity \eqref{eq:beta-gamma} and a change of variables, we can write
\begin{equation*}
 \frac{\Gamma((k+i\xi)/2)}{\Gamma((k+n+i\xi)/2)}
 =\frac2{\Gamma(n/2)}\int_0^\infty
 e^{-(k+i\xi)s}(1-e^{-2s})^{n/2-1}\dd s.
\end{equation*}
Recall that the Fourier symbol of $A_{n,k}$ is $a_{n,k}(\xi)$, given in \eqref{eq:mode-symbol}. Then, by letting
\begin{equation*}
 h_{n,k}(s)=e^{-ks}(1-e^{-2s})^{n/2-1}\mathbf1_{(0,\infty)}(s),
\end{equation*}
we have
\begin{equation*}
 G_{n,k}=\frac{2^{2-n}}{\Gamma(n/2)^2}h_{n,k}*\widetilde h_{n,k},
\end{equation*}
which is exactly \eqref{eq:G-beta}. Euler's formula \eqref{eq:hyp-euler} then gives \eqref{eq:G-hyper} and \eqref{eq:G-diagonal} follows by evaluating at $t=0$.

The convolution representation gives evenness and strict positivity.
Moreover,
\[
 \frac{\dd^2}{\dd s^2}\log h_{n,k}(s)
 =-\frac{4(\frac n2-1)e^{-2s}}{(1-e^{-2s})^2}<0,
 \qquad s>0.
\]
Since $h_{n,k}>0$ on $(0,\infty)$ and vanishes on
$(-\infty,0]$, the determinant argument in the proof of
Lemma~\ref{lem:radial-kernel-g} applies with $h_{n,k}$ in
place of $f_n$ and gives \eqref{eq:G-positive-minors}.
For $0\le s<t$, taking
$(t_1,t_2)=(s,t)$ and $(s_1,s_2)=(0,s+t)$ in that inequality
and using evenness gives $G_{n,k}(s)^2-G_{n,k}(t)^2>0$. Since $G_{n,k}>0$, this proves strict decrease on $(0,\infty)$.
\end{proof}

The Beta representation \eqref{eq:G-beta} also extends to a positive real angular parameter $\kappa$. In fact, after the
substitution $x=e^{-|t|}s$, we can define
\begin{equation}\label{eq:G-kappa}
 G_{n,\kappa}(t)=\frac{2^{1-n}}{\Gamma(n/2)^2}
 \int_0^{e^{-|t|}}x^{\kappa-1}
 \bigl(1-2x\cosh|t|+x^2\bigr)^{n/2-1}\dd x.
\end{equation}
The integrand is positive on the indicated interval.  Consequently
$G_{n,\kappa}(t)$ is strictly decreasing in $\kappa$ for every fixed $t$.

\subsection{The principal eigenvalue}

For $k\ge1$, we take $A_{n,k}$ as a self-adjoint Fourier multiplier on $L^2(\R)$ with domain
\begin{equation*}
D(A_{n,k})=H^n(\R).
\end{equation*}
Its symbol $a_{n,k}(\xi)$ is continuous, positive and satisfies $a_{n,k}(\xi)\sim 1+|\xi|^n$.  Since $G_{n,k}\ge 0$,
\begin{equation*}
\frac1{a_{n,k}(\xi)}
=\int_\R e^{-it\xi}G_{n,k}(t)\dd t
\le\int_\R G_{n,k}(t)\dd t
=\frac1{a_{n,k}(0)}.
\end{equation*}
Hence $a_{n,k}(\xi)\ge a_{n,k}(0)$. 

Under the Fourier transform, $A_{n,k}$ is unitarily equivalent to the maximal multiplication operator
 \begin{equation*}
 	M_{a_{n,k}}\hat{f}(\xi)=a_{n,k}\hat{f}(\xi).
 \end{equation*}
 It is well known that the spectrum of a multiplication operator is the essential range of its multiplier. See, for example, \cite[Section~VIII.3]{ReedSimonI}. Since $a_{n,k}$ is continuous, its essential range agrees with its ordinary range. Moreover, the image of the connected set $[0, \infty)$ under $a_{n,k}$ is a connected unbounded interval containing its global minimum $a_{n,k}(0)$. Consequently,
\begin{equation*}
	\spec(A_{n,k})=[a_{n,k}(0),\infty),
	\qquad a_{n,k}(0)=2^n\frac{\Gamma((n+k)/2)^2}{\Gamma(k/2)^2}>0.
\end{equation*}

The exponential decay of $W_\alpha$ implies that multiplication by $W_\alpha$ is relatively compact with respect to $A_{n,k}$. Weyl's theorem (see \cite[Chapter IV, Section 5]{Kato1995} or \cite[Theorem~XIII.14]{ReedSimonIV}) gives
\begin{equation*}
	\spec_{\ess}(L_{\alpha,k})=[a_{n,k}(0),\infty).
\end{equation*}
In particular, zero lies strictly below the essential spectrum.  If $0\in\spec(L_{\alpha,k})$, then zero is an isolated eigenvalue of finite multiplicity.

\medskip

Motivated by the proof of the Birman--Schwinger principle \cite[Theorem~XIII.10]{ReedSimonIV}, we introduce the Birman--Schwinger operator
	\begin{equation}\label{eq:BS operator}
		\mathscr K_{\alpha,k}:=
		W_\alpha^{\frac{1}{2}}A_{n,k}^{-1}W_\alpha^{\frac{1}{2}}		\quad\hbox{on }L^2(\R).
	\end{equation}
Then its integral kernel is given by
\begin{equation}\label{int kernel K}
W_\alpha(t)^{\frac{1}{2}}G_{n,k}(t-s)W_\alpha(s)^{\frac{1}{2}}.
\end{equation}

Before we state the results of this section, we recall some definitions concerning operators on a Hilbert space.
\begin{definition}
Let $\{e_j\}_{j=1}^\infty$ be an orthonormal basis of a Hilbert space with inner product $\langle \cdot ,\cdot \rangle$. An operator $T$ is \emph{Hilbert--Schmidt} if
\begin{equation*}
	\norm{T}_{\mathfrak S_2}^2:=\sum_{j=1}^\infty \|Te_j\|^2<\infty.
\end{equation*}
$T$ is \emph{trace class} if
\begin{equation*}
	\norm{T}_{\mathfrak S_1}:= \tr(\sqrt{T^*T})<\infty,
\end{equation*}
where the \emph{trace} is 
\begin{equation*}
	\tr(T)=\sum_{j=1}^\infty\langle Te_j, e_j\rangle.
\end{equation*}
\end{definition}
It is easy to see that these definitions do not depend on the choice of the orthonormal basis. We refer to \cite[p. 207, Theorems~VI.21 $\And$ VI.24]{ReedSimonI} for these standard facts.

We have the following basic properties of the Birman--Schwinger operator $\mathscr K_{\alpha,k}$.
\begin{proposition}\label{prop:BS-basic}
	For every $n\ge3$, $\alpha>0$ and $k\ge1$, the operator $\mathscr K_{\alpha,k}$ is positive, self-adjoint, compact, trace class, injective and positivity-improving in the sense that
	\begin{equation*}
		  f\ge0,\ f\not\equiv0
		\quad\Longrightarrow\quad
		\mathscr{K}_{\alpha,k}f>0\quad\text{on }\R. 
	\end{equation*}
	Its trace is
	\begin{equation}\label{eq:exact-trace}
		\tr\mathscr K_{\alpha,k}
		=\frac{\alpha}{\cA_{n,k}},
	\end{equation}
	where $\cA_{n,k}$ is defined in \eqref{def ak bk}. Its principal eigenvalue $\mu_1(\alpha,k)$ is simple and has a strictly positive eigenfunction. $\mathscr K_{\alpha,k}$ is strictly totally positive of order two in the sense of Definition \ref{def:TP2}. Moreover,
	\begin{equation}\label{equiv L K}
	0\in\spec(L_{\alpha,k})
	\quad\Longleftrightarrow\quad
	1\in\spec(\mathscr K_{\alpha,k}),
	\end{equation}
and multiplicities agree. 
\end{proposition}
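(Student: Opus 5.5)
We prove the properties roughly in the order listed, using the kernel representation \eqref{int kernel K} together with Proposition~\ref{prop:mode-green}.

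\textbf{Positivity, self-adjointness, trace class.} Positivity and self-adjointness follow from Plancherel: for $f\in L^2(\R)$,
\begin{equation*}
\ip{\mathscr K_{\alpha,k}f}{f}=\frac1{2\pi}\int_\R \frac{|\widehat{W_\alpha^{1/2}f}(\xi)|^2}{a_{n,k}(\xi)}\dd\xi\ge0 .
\end{equation*}
Here $W_\alpha^{1/2}f\in L^1\cap L^2$ because $W_\alpha$ decays exponentially. For the trace class property, factor $\mathscr K_{\alpha,k}=T^*T$ with $T=A_{n,k}^{-1/2}W_\alpha^{1/2}$. The operator $T$ is Hilbert--Schmidt: its kernel is $\check{(a_{n,k}^{-1/2})}(t-s)W_\alpha(s)^{1/2}$, and
\begin{equation*}
\norm{T}_{\mathfrak S_2}^2=\frac1{2\pi}\int_\R a_{n,k}^{-1}\dd\xi\int_\R W_\alpha\dd s<\infty,
\end{equation*}
since $a_{n,k}\sim|\xi|^n$ with $n\ge3$. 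Hence $\mathscr K_{\alpha,k}$ is trace class and compact. Its kernel is continuous, so the trace is the diagonal integral. By \eqref{eq:G-diagonal}, \eqref{eq:W-mass} and the duplication identity $\beta_n=2^{n-2}\Gamma(n/2)^2$,
\begin{equation*}
\tr\mathscr K_{\alpha,k}=G_{n,k}(0)\int_\R W_\alpha\dd t=\frac{2^{1-n}\Gamma(k)\Gamma(n-1)}{\Gamma(n/2)^2\Gamma(n+k-1)}\cdot 2^{n-1}n\Gamma(n/2)^2\alpha=\frac{\alpha}{\cA_{n,k}} .
\end{equation*}
This is \eqref{eq:exact-trace}; it is a routine check.

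\textbf{Injectivity and positivity improving.} If $\mathscr K_{\alpha,k}f=0$, then $\ip{\mathscr K_{\alpha,k}f}{f}=0$. The Plancherel formula above then forces $\widehat{W_\alpha^{1/2}f}=0$, so $W_\alpha^{1/2}f=0$, and since $W_\alpha>0$ we get $f=0$. Positivity improving is immediate from \eqref{int kernel K}: $W_\alpha>0$ and $G_{n,k}>0$ everywhere by Proposition~\ref{prop:mode-green}. For $f\ge0$ with $f\not\equiv0$, the integral $\int W_\alpha(t)^{1/2}G_{n,k}(t-s)W_\alpha(s)^{1/2}f(s)\dd s$ is therefore strictly positive for every $t$. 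Simplicity of the principal eigenvalue $\mu_1=\norm{\mathscr K_{\alpha,k}}$ with a positive eigenfunction is the Perron--Frobenius/Krein--Rutman statement for positivity-improving compact self-adjoint operators, as in \cite[Theorem~XIII.43]{ReedSimonIV}. The argument: if $\phi$ is a maximizer of the Rayleigh quotient, so is $|\phi|$, and then $\mathscr K|\phi|=\mu_1|\phi|>0$. Since $\mu_1|\phi|=|\mathscr K\phi|\le\mathscr K|\phi|$ with equality, $\phi$ has a constant sign. Two orthogonal positive eigenfunctions are impossible, which gives simplicity.

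\textbf{Total positivity.} This is the product
\begin{equation*}
W_\alpha(t_i)^{1/2}W_\alpha(s_j)^{1/2}\det[G_{n,k}(t_i-s_j)]
\end{equation*}
of the positive weights with the $2\times2$ determinant. The determinant is positive by \eqref{eq:G-positive-minors}, because multiplying row $i$ by $W_\alpha(t_i)^{1/2}$ and column $j$ by $W_\alpha(s_j)^{1/2}$ preserves its sign.

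\textbf{Birman--Schwinger equivalence \eqref{equiv L K}.} If $L_{\alpha,k}\psi=0$ with $\psi\in H^n$, set $f=W_\alpha^{1/2}\psi\in L^2$. Then $A_{n,k}\psi=W_\alpha^{1/2}f$, so $\psi=A_{n,k}^{-1}W_\alpha^{1/2}f$ and $\mathscr K_{\alpha,k}f=f$. Conversely, if $\mathscr K_{\alpha,k}f=f$, define $\psi:=A_{n,k}^{-1}W_\alpha^{1/2}f\in H^n$. Then $W_\alpha^{1/2}\psi=f$ and $A_{n,k}\psi=W_\alpha\psi$. The maps $\psi\mapsto W_\alpha^{1/2}\psi$ and $f\mapsto A_{n,k}^{-1}W_\alpha^{1/2}f$ are mutually inverse linear bijections between the two kernels, since $W_\alpha^{1/2}\psi=0$ forces $\psi=0$. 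Hence the multiplicities agree.

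I expect the main technical obstacle to be the trace computation, specifically justifying $\tr=\int(\text{diagonal})$. The factorization $T^*T$ with a Hilbert--Schmidt $T$ reduces this to $\norm{T}_{\mathfrak S_2}^2=G_{n,k}(0)\int W_\alpha$, which avoids appealing to Mercer-type theorems.
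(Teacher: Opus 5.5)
Your proposal is correct and follows the paper's proof essentially step for step. The shared steps are: the factorization $\mathscr K_{\alpha,k}=T^*T$ with Hilbert--Schmidt $T=A_{n,k}^{-1/2}W_\alpha^{1/2}$, which gives compactness, trace class and $\tr\mathscr K_{\alpha,k}=\norm{T}_{\mathfrak S_2}^2=G_{n,k}(0)\int_\R W_\alpha$; strict positivity of the kernel together with Perron--Frobenius; and the row/column scaling of \eqref{eq:G-positive-minors} for total positivity. Your explicit bijection between $\Ker L_{\alpha,k}$ and $\Ker(\mathscr K_{\alpha,k}-I)$ is more detailed than the paper's one-line argument. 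To pass from eigenvalues to spectra, you only need to add that $0$ lies strictly below $\spec_{\ess}(L_{\alpha,k})=[a_{n,k}(0),\infty)$, and that the nonzero spectrum of the compact operator $\mathscr K_{\alpha,k}$ consists of eigenvalues.
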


\begin{proof}
	Let $T_{\alpha,k}=A_{n,k}^{-\frac{1}{2}}W_\alpha^{\frac{1}{2}}$.  Then $\mathscr K_{\alpha,k}=T_{\alpha,k}^*T_{\alpha,k}$. Let $Q_{n,k}$ be the convolution kernel of $A_{n,k}^{-\frac{1}{2}}$.  By Plancherel's identity,
	\begin{equation*}
	\int_\R|Q_{n,k}(t)|^2\dd t
	=\frac1{2\pi}\int_\R\frac{\dd\xi}{a_{n,k}(\xi)}
	=G_{n,k}(0).
	\end{equation*}
	Consequently, since  $W_\alpha\in L^1$, we get
	\begin{align*}
		\norm{T_{\alpha,k}}_{\mathfrak S_2}^2=\iint_{\R^2}		|Q_{n,k}(t-s)|^2W_\alpha(s)\dd s\dd t=G_{n,k}(0)\int_\R W_\alpha(s)\dd s<\infty.
	\end{align*}
	$T_{\alpha,k}$ is Hilbert--Schmidt by	\cite[Theorem~VI.23]{ReedSimonI} and compact by	\cite[Theorem~VI.22(e)]{ReedSimonI}. Hence $\mathscr K_{\alpha,k}=T_{\alpha,k}^*T_{\alpha,k}$ is trace class.  Moreover,
	\begin{equation*}
	\tr\mathscr K_{\alpha,k}
	=\norm{T_{\alpha,k}}_{\mathfrak{S}_2}^2
	=G_{n,k}(0)\int_\R W_\alpha(t)\dd t.
	\end{equation*}
	Substituting \eqref{eq:G-diagonal} and \eqref{eq:W-mass} gives \eqref{eq:exact-trace}.
	
	Positivity and self-adjointness follow from the factorization.  Since $W_\alpha>0$ and $A_{n,k}^{-\frac{1}{2}}$ is injective, $T_{\alpha,k}$ and hence $\mathscr K_{\alpha,k}$ are injective. The integral kernel \eqref{int kernel K} is strictly positive, so $\mathscr{K}_{\alpha,k}$ is positivity-improving. The Perron--Frobenius theorem for compact positivity-improving operators	\cite[Theorem~V.6.6, p.~337]{Schaefer1974} then shows that its principal eigenvalue is simple and has a strictly positive eigenfunction.
	
		By \eqref{eq:G-positive-minors}, for $t_1<t_2$ and $s_1<s_2$,
\[
 \begin{aligned}
 \det\left[
 W_\alpha(t_i)^{1/2}G_{n,k}(t_i-s_j)
 W_\alpha(s_j)^{1/2}
 \right]_{i,j=1}^2=
 \prod_{i=1}^2
 \bigl[W_\alpha(t_i)W_\alpha(s_i)\bigr]^{1/2}
 \det\bigl[G_{n,k}(t_i-s_j)\bigr]_{i,j=1}^2>0.
 \end{aligned}
\]
Thus the integral kernel \eqref{int kernel K} satisfies
Definition~\ref{def:TP2}.
		
	Finally, \eqref{equiv L K} follows directly from the definition.
	\end{proof}
	
The nondegeneracy in Corollary \ref{coro:nondegen} concerns the bounded solutions of the linearized operator. The following lemma shows that bounded kernel and the kernel in the $L^2$ sense coincides except for the radial kernel.
\begin{lemma}\label{lem bdd=spec}
    Let $k\ge 1$ and $\alpha>0$. Then every bounded distributional solution of $L_{\alpha,k}\phi=0$ belongs to $H^n(\R)$. Consequently, the bounded kernel of $L_{\alpha,k}$ agrees with its $L^2$ kernel.
\end{lemma}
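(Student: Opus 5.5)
The argument is a bootstrap: rewrite the equation as an identity involving the Green kernel $G_{n,k}$, and read off the decay and regularity of $\phi$ from it. Let $\phi\in L^\infty(\R)$ satisfy $A_{n,k}\phi=W_\alpha\phi$ in the sense of tempered distributions. Set $f:=W_\alpha\phi$. By Lemma~\ref{lem:V infty}, or by \eqref{eq:v toinfty} in the radial case, $W_\alpha\le Ce^{-n\alpha|t|}$. Hence $f\in L^1\cap L^\infty(\R)$, and in particular $f\in L^2(\R)$.

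The first step is to show that $\phi=G_{n,k}*f$. The function $\psi:=G_{n,k}*f$ is bounded, since $G_{n,k}\in L^1$ by Proposition~\ref{prop:mode-green}. It also satisfies $A_{n,k}\psi=f$, because $\widehat{G_{n,k}}=1/a_{n,k}$. The difference $h:=\phi-\psi$ is therefore a bounded, hence tempered, distribution with $a_{n,k}\widehat h=0$. For $k\ge1$ we know $a_{n,k}(\xi)\ge a_{n,k}(0)>0$ for all $\xi$, and $a_{n,k}$ is smooth and of polynomial growth on $\R$. Its reciprocal $1/a_{n,k}$ is then smooth with all derivatives polynomially bounded, so it is a multiplier on $\mathcal S'$. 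Multiplying by it gives $\widehat h=0$, so $h=0$.

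This smoothness of $1/a_{n,k}$ is the only delicate point. Smoothness is clear from the Gamma-quotient formula \eqref{eq:mode-symbol}, since the Gamma functions in the denominator, $\Gamma\bigl(\tfrac{k\pm i\xi}{2}\bigr)$, have no poles on the real line when $k\ge1$. The derivative bounds follow from Stirling asymptotics of that quotient. This is exactly where $k\ge1$ is used: for $k=0$ the double zero at the origin is what produces the affine kernel in Lemma~\ref{lem:tempered-radial-kernel}.

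The second step reads off the regularity. Since $\widehat\phi=\widehat f/a_{n,k}$, $f\in L^2$, and $a_{n,k}(\xi)\ge c(1+|\xi|)^n$, we get
\[
\int_\R(1+|\xi|^2)^{n}|\widehat\phi(\xi)|^2\dd\xi\le C\int_\R|\widehat f(\xi)|^2\dd\xi<\infty .
\]
Thus $\phi\in H^n(\R)=D(A_{n,k})$, and $\phi$ is a genuine $L^2$ eigenfunction of $L_{\alpha,k}$ with eigenvalue $0$.

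The converse inclusion is immediate. If $\phi\in H^n(\R)$ and $L_{\alpha,k}\phi=0$, then $\phi$ is a distributional solution, and it is bounded by the Sobolev embedding $H^n(\R)\subset L^\infty(\R)$, valid since $n\ge3>1/2$. The two kernels therefore coincide.
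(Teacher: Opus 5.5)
Your proof is correct and follows essentially the same route as the paper. You set $\psi=A_{n,k}^{-1}(W_\alpha\phi)$, use that $a_{n,k}$ is smooth and strictly positive for $k\ge1$ to force $\phi=\psi$, read off $\phi\in H^n(\R)$ from $a_{n,k}(\xi)\ge c(1+|\xi|)^n$, and use $H^n(\R)\subset L^\infty(\R)$ for the converse. Your $\mathcal S'$-multiplier justification only spells out a step the paper states in one line, and running the converse on $H^n$ kernel elements rather than Birman--Schwinger eigenfunctions is an inessential difference.
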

\begin{proof}
    Let $\varphi$ be a bounded distributional solution of $L_{\alpha,k}\phi=0$. Since $W_\alpha\in L^2(\R)\cap L^\infty(\R)$, we have $W_\alpha\phi\in L^2(\R)$. By \eqref{eq:mode-symbol},
    \begin{equation*}
        a_{n,k}(\xi)\ge c_{n,k}(1+|\xi|^2)^{\frac{n}{2}}.
    \end{equation*}
    Therefore $\psi:=A_{n,k}^{-1}(W_\alpha\phi)\in H^n(\R)$. The difference $\phi-\psi$ is a tempered distribution and satisfies $a_{n,k}(\xi)(\widehat{\phi-\psi})=0$. Since $a_{n,k}$ is positive and smooth, we conclude that $\phi=\psi$. 
    
    Conversely, if $\mathscr{K}_{\alpha,k}\varphi=\varphi$ for some $\varphi\in L^2(\R)$, then $\phi:=A_{n,k}^{-1}(W_\alpha^{1/2} \varphi)\in H^n(\R)$ and $L_{\alpha,k}\phi=0$. The boundedness of $\phi$ then follows from the embedding $H^n(\R)\subset L^\infty(\R)$.
\end{proof}

We next present an oscillation argument in the form needed here.
\begin{lemma}\label{lem:order-two-oscillation}
	Let $T$ be a compact, injective, positive, self-adjoint integral operator on $L^2(\R)$ with a continuous symmetric kernel $K$ that is strictly totally positive of order two. Assume $T$ is trace class. Let
	\begin{equation*}
	\mu_1>\mu_2\ge\mu_3\ge\cdots>0
	\end{equation*}
	be its eigenvalues.  Then $\mu_2$ is simple.  If $\phi_1>0$ and $\phi_2$ are eigenfunctions corresponding to $\mu_1$ and $\mu_2$, respectively, then $\frac{\phi_2}{\phi_1}$ is strictly monotone and $\phi_2$ has exactly one sign change.
\end{lemma}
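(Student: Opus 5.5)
The plan is to use the second compound (exterior square) operator of $T$, which is the standard route to Gantmacher--Krein type oscillation results. On $L^2(\Delta)$, with $\Delta:=\{(t_1,t_2)\in\R^2:t_1<t_2\}$, define $T^{(2)}$ as the integral operator with kernel
\begin{equation*}
K^{(2)}(t_1,t_2;s_1,s_2):=\det\bigl[K(t_i,s_j)\bigr]_{i,j=1}^2 .
\end{equation*}
Under the identification of $L^2(\Delta)$ with the antisymmetric subspace of $L^2(\R^2)$, $T^{(2)}$ is the restriction of $T\otimes T$ to that subspace, i.e.\ $\Lambda^2T$. Since $T$ is trace class, positive and compact, $T\otimes T$ is again positive, compact and trace class. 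Its eigenvalues on the antisymmetric subspace are therefore exactly the products $\mu_i\mu_j$ with $i<j$, counted with multiplicity. The corresponding eigenfunctions are $\phi_i\wedge\phi_j(t_1,t_2)=\phi_i(t_1)\phi_j(t_2)-\phi_j(t_1)\phi_i(t_2)$, where $\{\phi_i\}$ is an orthonormal eigenbasis of $T$; since $T$ is injective, this basis is complete. Consequently the top eigenvalue of $T^{(2)}$ is $\mu_1\mu_2$, and its multiplicity equals the multiplicity of $\mu_2$, using $\mu_1>\mu_2$.

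By strict $TP_2$, $K^{(2)}>0$ on $\Delta\times\Delta$, so $T^{(2)}$ is a compact, self-adjoint, positivity-improving operator on $L^2(\Delta)$. Exactly as in Proposition \ref{prop:BS-basic}, the Perron--Frobenius theorem \cite[Theorem~V.6.6]{Schaefer1974} then gives that its principal eigenvalue $\mu_1\mu_2$ is simple. Hence $\mu_2$ is simple. The same theorem gives that the principal eigenfunction $\Phi=\phi_1\wedge\phi_2$ can be taken strictly positive on $\Delta$, after possibly replacing $\phi_2$ by $-\phi_2$. Continuity of eigenfunctions follows from $\phi=\mu^{-1}T\phi$ and the continuity of $K$; I would verify this with some care, e.g.\ via dominated convergence under local boundedness of $K$, or else work almost everywhere and upgrade. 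Granting continuity, $\Phi>0$ everywhere on $\Delta$ rather than only a.e., because $\Phi=(\mu_1\mu_2)^{-1}T^{(2)}\Phi$ with a strictly positive kernel.

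Now fix $t_1<t_2$. Dividing $\Phi(t_1,t_2)>0$ by $\phi_1(t_1)\phi_1(t_2)>0$ gives
\begin{equation*}
\frac{\phi_2(t_2)}{\phi_1(t_2)}-\frac{\phi_2(t_1)}{\phi_1(t_1)}>0,
\end{equation*}
so $\phi_2/\phi_1$ is strictly increasing.

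It remains to count sign changes. The function $\phi_2$ is orthogonal to $\phi_1>0$, so $\phi_2$ must change sign. Because $\phi_1>0$, the signs of $\phi_2$ and of $\phi_2/\phi_1$ agree, and a strictly increasing function can change sign at most once. Hence $\phi_2$ has exactly one sign change.

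The main obstacle is rigorously identifying the spectrum of $T^{(2)}$ with $\{\mu_i\mu_j\}_{i<j}$ and ensuring that nothing is lost in this identification, namely the completeness of $\{\phi_i\wedge\phi_j\}$ in $L^2(\Delta)$. Completeness follows from the fact that $\{\phi_i\otimes\phi_j\}$ is an orthonormal basis of $L^2(\R^2)$, which uses injectivity of $T$. Beyond that, the remaining delicate point is the regularity needed so that the positivity statements hold pointwise and not merely almost everywhere.
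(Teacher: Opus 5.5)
Your proposal is correct and follows the paper's proof essentially step for step. The paper also passes to the determinant-kernel operator on $\{t_1<t_2\}$ and identifies its spectrum as $\{\mu_i\mu_j\}_{i<j}$ via the antisymmetric basis $\phi_i(t_1)\phi_j(t_2)-\phi_i(t_2)\phi_j(t_1)$. It then deduces simplicity of $\mu_2$ from simplicity of the top eigenvalue $\mu_1\mu_2$, and obtains the strict monotonicity of $\phi_2/\phi_1$ and the single sign change from positivity of $\psi_{12}$ together with orthogonality to $\phi_1$. Your explicit appeal to Perron--Frobenius for the compound operator, and your remark on continuity of eigenfunctions (needed for pointwise rather than a.e.\ positivity), fill in points the paper leaves implicit.
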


\begin{proof}
	Let
	\begin{equation*}
	\Omega_2=\{(t_1,t_2)\in\R^2:t_1<t_2\}.
	\end{equation*}
For $(t_1,t_2),(s_1,s_2)\in\Omega_2$, we define
	\begin{equation*}
\mathcal{D}(t_1,t_2;s_1,s_2):=	\det\bigl(K(t_i,s_j)\bigr)_{i,j=1}^2=K(t_1,s_1)K(t_2,s_2)-K(t_1,s_2)K(t_2,s_1).
	\end{equation*}
	By its positivity, we get $\mathcal{D}>0$. We consider the operator $\mathcal{T}$ on $L^2(\Omega_2)$ defined by
	\begin{equation*}
		(\mathcal{T}F)(t_1,t_2):=\int_{\Omega_2}\mathcal{D}(t_1,t_2;s_1,s_2)F(s_1,s_2)\dd s_1\dd s_2.
	\end{equation*}
	Then $\mathcal{T}$ is self-adjoint and compact.
	
	We next describe the eigenfunctions of $\mathcal{T}$. Let $\phi_j$ be a normalized eigenfunction of $T$ corresponding to $\mu_j$. Let
	\begin{equation*}
		\psi_{ij}(t_1,t_2)=\phi_i(t_1)\phi_j(t_2)-\phi_i(t_2)\phi_j(t_1).
	\end{equation*} 
	Then it is straightforward to show that $ \mathcal{T}\psi_{ij}=\mu_i\mu_j\psi_{ij}$. On the other hand, by extending the functions in $\Omega_2$ to $\R^2$ antisymmetrically, we see that $\{\psi_{ij}: i<j\}$ is an orthogonal basis of $L^2(\Omega_2)$. Therefore, the eigenvalues of $\mathcal{T}$ are $\mu_i\mu_j$, $i<j$.
	
	As a result, the principal eigenvalue of $\mathcal{T}$ is $\mu_1\mu_2$ and is simple. Since $\mu_1$ is a simple eigenvalue of $T$, it follows that $\mu_2$ is simple.
	
	After choosing the sign of $\phi_2$, we may assume the principal eigenfunction $\psi_{12}$ is positive. Since $\phi_1>0$, the ratio $\frac{\phi_2}{\phi_1}$ is strictly increasing.  Orthogonality
	\begin{equation*}
	\int_\R\phi_1(t)\phi_2(t)\dd t=0
	\end{equation*}
	forces $\frac{\phi_2}{\phi_1}$ to take both signs.  Strict monotonicity then gives exactly one zero and exactly one sign change of $\phi_2$.
\end{proof}

Because $W_\alpha$ and $G_{n,k}$ are even in $t$, $\mathscr K_{\alpha,k}$ commutes with reflection, so the principal eigenfunction is even. By Lemma \ref{lem:order-two-oscillation}, the second eigenfunction of $\mathscr K_{\alpha,k}$ is simple, so reflection symmetry makes it either even or odd.  A continuous even function cannot have exactly one sign change, whereas an odd function with exactly one sign change must change sign at the origin. Therefore the second eigenfunction must be odd.  Equivalently,
\begin{equation}\label{eq:mu2-odd-sector}
	\mu_2(\alpha,k)
	=r(\mathscr K_{\alpha,k}^{\mathrm o}),
\end{equation}
where $r$ denotes the spectral radius and $\mathscr K_{\alpha,k}^{\mathrm o}$ is the odd half-line operator on $L^2(0,\infty)$ with kernel
\begin{equation}\label{eq:odd-angular-kernel}
	W_\alpha(t)^{\frac{1}{2}}
	\bigl(G_{n,k}(|t-s|)-G_{n,k}(t+s)\bigr)
	W_\alpha(s)^{\frac{1}{2}},
	\qquad t,s>0.
\end{equation}
This kernel is strictly positive because $G_{n,k}$ is strictly decreasing.

 Next, we consider the radial mode restricted to the positive half-line. On the positive half-line, the odd inverse of $B_n$ has kernel
 \begin{equation*}
 g_n^{\mathrm o}(t,s)=g_n(|t-s|)-g_n(t+s)>0.
 \end{equation*}
 The Dirichlet inverse of $-\partial_t^2$ with zero value at $t=0$ and zero derivative at infinity has kernel $\min(t,s)$.  Hence the odd inverse of $A_{n,0}=(-\partial_t^2)B_n$ has the positive kernel
 \begin{equation}\label{eq:radial-odd-green}
 	\mathcal G_{n,0}^{\mathrm o}(t,s)
 	=\int_0^\infty\bigl[g_n(|t-r|)-g_n(t+r)\bigr]\min(r,s)\dd r.
 \end{equation}
 $\mathcal G_{n,0}^{\mathrm o}$ is symmetric and the kernel grows at most linearly in $t+s$. Therefore the function
 \begin{equation*}
 W_\alpha(t)^{\frac{1}{2}}\mathcal G_{n,0}^{\mathrm o}(t,s)W_\alpha(s)^{\frac{1}{2}}
 \end{equation*}
 defines a compact positivity-improving operator
 $\mathscr K_{\alpha,0}^{\mathrm o}$ on $L^2(0,\infty)$.
 
 Since $v_\alpha$ is even and strictly concave, the function $-v_\alpha'(t)$ is positive when $t>0$ and satisfies
 \begin{equation*}
 A_{n,0}(-v_\alpha')=W_\alpha(-v_\alpha').
 \end{equation*}
 Since $v_\alpha'(0)=0$ and $W_\alpha^{\frac{1}{2}}v_\alpha'$ decays exponentially as $t\to\infty$, $W_\alpha^{\frac{1}{2}}v_\alpha'$ belongs to the domain of the half-line operator $\mathscr{K}_{\alpha,0}^{\mathrm{o}}$ and we have
 \begin{equation*}
 \mathscr K_{\alpha,0}^{\mathrm o}\bigl(-W_\alpha^{\frac{1}{2}}v_\alpha'\bigr)
 =-W_\alpha^{\frac{1}{2}}v_\alpha'.
 \end{equation*}
 This eigenfunction is strictly positive, so we conclude that
 \begin{equation}\label{odd spec rad 1}
 	r(\mathscr K_{\alpha,0}^{\mathrm o})=1.
 \end{equation}

The next proposition shows that we only need to consider the principal eigenvalues.

\begin{proposition}\label{thm:mu2}
If $\mu_1(\alpha,k)>\mu_2(\alpha,k)\ge\cdots$ are the eigenvalues of $\mathscr K_{\alpha,k}$, then
\begin{equation*}
 \mu_2(\alpha,k)<1 \quad\hbox{for every }n\ge3,\ \alpha>0,\ k\ge1.
\end{equation*}
Consequently $L_{\alpha,k}$ has Morse index at most one. If zero is an eigenvalue of $L_{\alpha,k}$, then it is the principal eigenvalue and is simple. 
Moreover, let $\psi_{\alpha,k}>0$ be the even principal generalized eigenfunction, that is,
\begin{equation}\label{eq:generalized-eigenpair}
A_{n,k}\psi_{\alpha,k}=\tau_{\alpha,k}W_\alpha\psi_{\alpha,k}, \qquad\tau_{\alpha,k}=\mu_1(\alpha,k)^{-1}.
\end{equation}
Then $\partial_t\psi_{\alpha,k}(t)<0$ for $t>0$.
\end{proposition}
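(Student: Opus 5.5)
By Lemma~\ref{lem:order-two-oscillation} and the discussion after it, $\mu_2(\alpha,k)$ is the spectral radius of the odd half-line operator $\mathscr K^{\mathrm o}_{\alpha,k}$ in \eqref{eq:mu2-odd-sector}, whose kernel is \eqref{eq:odd-angular-kernel}. By \eqref{odd spec rad 1} the corresponding radial odd operator $\mathscr K^{\mathrm o}_{\alpha,0}$ has spectral radius exactly $1$. The plan is to compare the two positive kernels pointwise and show that
\begin{equation*}
 G_{n,k}(|t-s|)-G_{n,k}(t+s)\le \mathcal G^{\mathrm o}_{n,0}(t,s),\qquad t,s>0,
\end{equation*}
with strict inequality on a set of positive measure. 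Once this holds, let $\phi\ge0$ be the Perron eigenfunction of $\mathscr K^{\mathrm o}_{\alpha,k}$; it is strictly positive because the kernel is strictly positive and compact. Let $\varphi_0=-W_\alpha^{1/2}v_\alpha'>0$ be the radial odd eigenfunction. Pairing and using symmetry of both kernels gives
\begin{equation*}
 \mu_2\langle\phi,\varphi_0\rangle=\langle\mathscr K^{\mathrm o}_{\alpha,k}\phi,\varphi_0\rangle<\langle\phi,\mathscr K^{\mathrm o}_{\alpha,0}\varphi_0\rangle=\langle\phi,\varphi_0\rangle .
\end{equation*}
This yields $\mu_2<1$.

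To prove the kernel comparison, I would work at the operator level rather than on explicit kernels. The odd Green function of $A_{n,k}$ is the kernel of $A_{n,k}^{-1}$ restricted to odd functions, and the same holds for $A_{n,0}$ on odd functions vanishing at $0$. The symbols satisfy $a_{n,k}(\xi)>a_{n,0}(\xi)$ for all real $\xi$, since each Gamma quotient is increasing in $k$; for even $n$ this is immediate from \eqref{a 2m symbol}. So formally $A_{n,k}^{-1}\le A_{n,0}^{-1}$ as quadratic forms on odd functions. However, a form inequality does not give a pointwise kernel inequality. I would therefore establish positivity of the difference kernel via a resolvent-type identity,
\begin{equation*}
 A_{n,0}^{-1}-A_{n,k}^{-1}=A_{n,0}^{-1}(A_{n,k}-A_{n,0})A_{n,k}^{-1},
\end{equation*}
on the odd sector. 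The difference multiplier $a_{n,k}-a_{n,0}$ is not obviously positivity-preserving, so I would instead interpolate in the real angular parameter $\kappa\in(0,k]$ using \eqref{eq:G-kappa}. That formula shows $G_{n,\kappa}(t)$ is decreasing in $\kappa$ pointwise, but the odd kernel involves a difference $G(|t-s|)-G(t+s)$, so I need monotonicity of this difference. I would try to show $\partial_\kappa$ of the odd kernel is $\le 0$, by writing $G_{n,\kappa}(|t-s|)-G_{n,\kappa}(t+s)$ as an integral over $x$ of the integrand in \eqref{eq:G-kappa}, where the $\kappa$-dependence sits in the weight $x^{\kappa-1}$ with $0<x<1$. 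As $\kappa\downarrow0$, after subtracting the divergent even part, the odd kernel should converge to $\mathcal G^{\mathrm o}_{n,0}$. This limit is where the $\min(t,s)$ term arises from the double zero of $a_{n,0}$. I expect this step, monotonicity and the limit of the odd kernel in $\kappa$, to be the main obstacle.

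The remaining claims are then routine. Since $\mu_2<1$, $1$ can only equal $\mu_1$, which is simple by Proposition~\ref{prop:BS-basic}. By \eqref{equiv L K} and the Birman--Schwinger counting (the negative eigenvalues of $L_{\alpha,k}$ correspond to eigenvalues of $\mathscr K_{\alpha,k}$ greater than $1$), the Morse index is at most one and a zero eigenvalue is principal and simple. For the monotonicity of $\psi_{\alpha,k}=A_{n,k}^{-1}W_\alpha^{1/2}\phi_1$, differentiate the equation $\psi=\tau G_{n,k}*(W_\alpha\psi)$. Using evenness of $W_\alpha\psi$ gives
\begin{equation*}
 \psi'(t)=\tau\int_0^\infty\bigl[G_{n,k}'(t-s)+G_{n,k}'(t+s)\bigr]W_\alpha\psi(s)\,ds .
\end{equation*}
This is equivalently the odd kernel of $G'$. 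Its negativity for $t>0$ follows from the fact that $G_{n,k}(t-s)-G_{n,k}(t+s)$ is decreasing in $t$ appropriately, which in turn follows from the strict decrease and the TP$_2$ property \eqref{eq:G-positive-minors}. The integrand is negative because $|t-s|<t+s$ and $G_{n,k}$ is even and strictly decreasing on $(0,\infty)$, which makes $G'(t-s)+G'(t+s)<0$ after symmetrization.
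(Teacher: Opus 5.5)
\paragraph{The step $\mu_2<1$.} Your reduction of $\mu_2(\alpha,k)<1$ to the pointwise inequality $G_{n,k}(|t-s|)-G_{n,k}(t+s)\le\mathcal G^{\mathrm o}_{n,0}(t,s)$ leaves the claim resting on a lemma you do not prove. You establish neither the monotonicity in $\kappa$ of the odd kernel nor its $\kappa\downarrow0$ limit. The monotonicity is not visible from \eqref{eq:G-kappa}: the two terms of $G_{n,\kappa}(|t-s|)-G_{n,\kappa}(t+s)$ have different integrands and different upper limits, so the weight $x^{\kappa-1}$ gives no sign.

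More to the point, the pointwise comparison is not needed. The form inequality you set aside is exactly what suffices, because nothing forces you to pair against the Perron eigenfunction $\varphi_0$. Take the odd unit eigenfunction $f$ for $\mu_2$ and set $h=W_\alpha^{1/2}f$. Then $\mu_2=\frac1{2\pi}\int_\R|\widehat h|^2a_{n,k}^{-1}\dd\xi<\frac1{2\pi}\int_\R|\widehat h|^2a_{n,0}^{-1}\dd\xi$. The right side is finite because $h$ is odd and decays exponentially, so $\widehat h(\xi)=O(\xi)$ while $a_{n,0}(\xi)\sim\beta_n\xi^2$. It equals $\langle f_+,\mathscr K^{\mathrm o}_{\alpha,0}f_+\rangle$, where $f_+=\sqrt2\,f|_{(0,\infty)}$ is a unit vector. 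Since $\mathscr K^{\mathrm o}_{\alpha,0}$ is positive and self-adjoint, this is at most $r(\mathscr K^{\mathrm o}_{\alpha,0})=1$ by \eqref{odd spec rad 1}. This is the paper's argument.

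The symbol inequality $a_{n,k}>a_{n,0}$ also needs a proof, not just the assertion that the Gamma quotient increases in $k$. The paper obtains it for integer $k$ from \eqref{eq:k-recurrence}, \eqref{eq:n-recurrence} and an explicit comparison of $a_{3,0}$ with $a_{3,1}$.

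\paragraph{The monotonicity of $\psi_{\alpha,k}$.} This argument fails as written. Your kernel is $\partial_t[G_{n,k}(t-s)+G_{n,k}(t+s)]$. For $s>t>0$ it equals $|G_{n,k}'(s-t)|-|G_{n,k}'(s+t)|$, which is positive for small $t$ and large $s$. The reason is that $G_{n,k}(r)$ behaves like $ce^{-kr}$ and is convex for large $r$ by \eqref{eq:G-hyper}. Strict decrease and \eqref{eq:G-positive-minors} do not change this. If you move the derivative onto $W_\alpha\psi$, you get the positive odd kernel $G_{n,k}(|t-s|)-G_{n,k}(t+s)$. But it is tested against $(W_\alpha\psi)'=W_\alpha'\psi+W_\alpha\psi'$, which contains the unknown $\psi'$, so the argument becomes circular.

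The missing idea is to solve for $\psi'$. The odd function $z=-W_\alpha^{1/2}\psi'$ satisfies $(\mu_1-\mathscr K^{\mathrm o}_{\alpha,k})z=F_{\alpha,k}$ on $(0,\infty)$, where $F_{\alpha,k}(t)=-W_\alpha(t)^{1/2}\int_0^\infty[G_{n,k}(|t-s|)-G_{n,k}(t+s)]W_\alpha'(s)\psi(s)\dd s$. This is positive because $W_\alpha'<0$ there. By \eqref{eq:mu2-odd-sector} and simplicity of $\mu_1$, $\|\mathscr K^{\mathrm o}_{\alpha,k}\|=\mu_2<\mu_1$. Hence the Neumann series $z=\mu_1^{-1}\sum_{j\ge0}(\mathscr K^{\mathrm o}_{\alpha,k}/\mu_1)^jF_{\alpha,k}$ converges. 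Positivity of the kernel then gives $z\ge F_{\alpha,k}/\mu_1>0$, i.e.\ $\partial_t\psi_{\alpha,k}<0$ on $(0,\infty)$. Note that this uses only the gap $\mu_2<\mu_1$, not $\mu_2<1$.

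Your Morse-index and simplicity deductions from $\mu_2<1$ are fine.
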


\begin{proof}
By \eqref{eq:mu2-odd-sector}, $\mu_2(\alpha,k)$ is the principal eigenvalue in the class of odd functions.  Let $f\in L^2(\R)$ be the corresponding odd eigenfunction, normalized by $\|f\|_2=1$ and chosen positive on $(0,\infty)$. Define the odd function $h=W_\alpha^{\frac{1}{2}}f$. Then, since $W_\alpha$ decays exponentially, we have $h\in L^1\cap L^2$, $\widehat h(0)=0$ and $\widehat h(\xi)=O(\xi)$ as $\xi\to0$. Since $\widehat h(\xi)=O(\xi)$ and $a_{n,0}(\xi)\sim\beta_n\xi^2$ as $\xi\to0$, the quotient $|\widehat h(\xi)|^2/a_{n,0}(\xi)$ is bounded near zero. Away from zero, $a_{n,0}^{-1}$ is bounded and $\widehat h\in L^2(\R)$ by Plancherel's theorem. Thus the radial inverse quadratic form is finite:
\[
 \frac1{2\pi}\int_\R
 \frac{|\widehat h(\xi)|^2}{a_{n,0}(\xi)}\dd\xi<\infty.
\]

We claim that for every $n\ge3$, every integer $k\ge1$ and every $\xi\in\R$,
\begin{equation}\label{eq:symbol-order}
	a_{n,k}(\xi)>a_{n,0}(\xi).
\end{equation}
Indeed, by definition \eqref{eq:mode-symbol}, we have the recurrence relations
\begin{align}
	\frac{a_{n,k+2}(\xi)}{a_{n,k}(\xi)}
	&=\frac{\xi^2+(n+k)^2}{\xi^2+k^2}>1,
	\label{eq:k-recurrence}\\
	a_{n+2,k}(\xi)
	&=\bigl(\xi^2+(n+k)^2\bigr)a_{n,k}(\xi).
	\label{eq:n-recurrence}
\end{align}
For even dimensions, we begin with $a_{2,k}(\xi)=\xi^2+k^2>a_{2,0}(\xi)=\xi^2$. Then the inequality follows from \eqref{eq:n-recurrence}.

For odd dimensions, start with $n=3$.  With $s=\frac{|\xi|}{2}$, the reflection identities for the gamma function give
\begin{equation*}
a_{3,0}(\xi)=8s\left(s^2+\frac14\right)\tanh(\pi s),
\qquad a_{3,1}(\xi)=8s(s^2+1)\coth(\pi s).
\end{equation*}
Thus $a_{3,1}>a_{3,0}$.  Formula \eqref{eq:k-recurrence} gives $a_{3,k}>a_{3,0}$ for every even $k\ge2$ and it gives $a_{3,k}>a_{3,1}>a_{3,0}$ for every odd $k\ge3$. Finally, \eqref{eq:n-recurrence} propagates the inequality to all odd dimensions.

Set $f_+(t)=\sqrt2\,f(t)$ for $t>0$.
Since $f$ is odd and $\|f\|_{L^2(\R)}=1$, we have
$\|f_+\|_{L^2(0,\infty)}=1$.
By Plancherel's identity, \eqref{odd spec rad 1}, and \eqref{eq:symbol-order},
\begin{align*}
 \mu_2(\alpha,k)
 &=\frac1{2\pi}\int_\R
   \frac{|\widehat h(\xi)|^2}{a_{n,k}(\xi)}\dd\xi\\
 &<\frac1{2\pi}\int_\R
   \frac{|\widehat h(\xi)|^2}{a_{n,0}(\xi)}\dd\xi\\
 &=\left\langle
   f_+,\mathscr K_{\alpha,0}^{\mathrm o}f_+
   \right\rangle_{L^2(0,\infty)}\\
 &\le r(\mathscr K_{\alpha,0}^{\mathrm o})
       \|f_+\|_{L^2(0,\infty)}^2=1.
\end{align*}
By the Birman--Schwinger principle,
\begin{equation*}
     \operatorname{ind}^{-}(L_{\alpha,k})=\#\{j:\mu_j(\alpha,k)>1\},
\end{equation*}
where $\operatorname{ind}^{-}$ denotes the number of negative eigenvalues and both sides count multiplicities. Since $\mu_2(\alpha,k)<1$, this number is at most one. If zero is an eigenvalue of $L_{\alpha,k}$, then \eqref{equiv L K} gives $\mu_1(\alpha,k)=1$. Thus $L_{\alpha,k}$ has no negative eigenvalues, and zero is its principal eigenvalue and is simple.

It remains to prove the monotonicity of $\psi_{\alpha,k}$. Since $\psi_{\alpha,k}\in H^n(\R)$ and $W_\alpha,W_\alpha'\in L^\infty(\R)$, we can differentiate the equation \eqref{eq:generalized-eigenpair} with respect to $t$. Then, the odd function
\begin{equation*}
 z_{\alpha,k}:=-W_\alpha^{1/2}\partial_t\psi_{\alpha,k}\in L^2(\R)   
\end{equation*}
 satisfies
\begin{align}
\bigl(\mu_1(\alpha,k)I-\mathscr K_{\alpha,k}^{\mathrm o}\bigr)z_{\alpha,k}&=-W_\alpha^{1/2}A_{n,k}^{-1}
\bigl(W_\alpha'\psi_{\alpha,k}\bigr)\nonumber\\
&=- W_\alpha(t)^{1/2}\int_0^\infty
 \bigl[G_{n,k}(|t-s|)-G_{n,k}(t+s)\bigr]W_\alpha'(s)\psi_{\alpha,k}(s)\dd s\nonumber\\
 &=:F_{\alpha,k} \text{ on } (0,\infty).\nonumber
\end{align}
Here in the second equation, we use oddness and the half-line kernel \eqref{eq:odd-angular-kernel}. Since $ W_\alpha'(t)=n\,\partial_t v_\alpha(t)\,W_\alpha(t)<0$ for $t>0$ and $G_{n,k}$ is strictly decreasing, $F_{\alpha,k}(t)>0$ for every $t>0$.

By \eqref{eq:mu2-odd-sector} and the positivity and self-adjointness of the operator,
\[
 \norm{\mathscr K_{\alpha,k}^{\mathrm o}}_{L^2\to L^2}
 =\mu_2(\alpha,k)<\mu_1(\alpha,k).
\]
Consequently, we can invert the operator $(\mu_1(\alpha,k)I-\mathscr K_{\alpha,k}^{\mathrm o}\bigr)$ to obtain
\begin{equation*}
 z_{\alpha,k}=\frac{1}{\mu_1(\alpha,k)}\sum_{j=0}^\infty
 \left(\frac{\mathscr K_{\alpha,k}^{\mathrm o}}{\mu_1(\alpha,k)}\right)^j F_{\alpha,k},   
\end{equation*}
with convergence in $L^2(0,\infty)$. The kernel \eqref{eq:odd-angular-kernel} is positive, so $z_{\alpha,k}\ge \frac{F_{\alpha,k}}{\mu_1(\alpha,k)}>0$ almost everywhere. Since $z_{\alpha,k}$ and $F_{\alpha,k}$ are continuous, this inequality holds everywhere on $(0,\infty)$. Therefore, we conclude that
\begin{equation*}
    \partial_t\psi_{\alpha,k}(t)=-\frac{z_{\alpha,k}(t)}{W_\alpha(t)^{1/2}}<0
 \text{  for }t>0.
\end{equation*}
\end{proof}

%Thus, $L_{\alpha,k}$ has a zero eigenvalue if and only if $\tau_{\alpha,k}=1$.

For a real angular parameter $\kappa>0$, using \eqref{eq:G-kappa}, we define $\mathscr K_{\alpha,\kappa}$ by replacing $k$ with $\kappa$ in its definition \eqref{eq:BS operator}. Then the principal eigenvalue $\mu_1(\alpha,\kappa)$ is strictly monotone in both $\alpha$ and $\kappa$.

\begin{lemma}\label{lem:monotone mu}
	\begin{equation}\label{eq:mu-kappa-decreasing}
		\kappa\longmapsto\mu_1(\alpha,\kappa) \quad\text{is strictly decreasing}.
	\end{equation}
	$\mu_1(\alpha,\kappa)$ is differentiable with respect to $\alpha$ and
	\begin{equation}\label{eq:mu-al-increasing}
		\partial_\alpha\mu_1(\alpha,\kappa)>0.
	\end{equation}
\end{lemma}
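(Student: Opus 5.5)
The two claims need different arguments. Monotonicity in $\kappa$ follows from a variational characterization plus the pointwise monotonicity of the kernel \eqref{eq:G-kappa}. Monotonicity in $\alpha$ needs a Hellmann--Feynman formula together with a sign analysis of $\partial_\alpha W_\alpha$.

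\emph{Monotonicity in $\kappa$.} For real $\kappa>0$ the kernel $G_{n,\kappa}$ in \eqref{eq:G-kappa} is positive and even. So the proof of Proposition~\ref{prop:BS-basic} carries over verbatim: $\mathscr K_{\alpha,\kappa}$ is compact, positive and self-adjoint, and its principal eigenvalue $\mu_1(\alpha,\kappa)=\sup_{\|f\|_2=1}\langle f,\mathscr K_{\alpha,\kappa}f\rangle$ is attained by a strictly positive eigenfunction. The only input to check is $\int G_{n,\kappa}<\infty$ and $G_{n,\kappa}(0)<\infty$, which holds for $\kappa>0$ by the Beta representation. Fix $\kappa_1<\kappa_2$ and let $\varphi>0$ be the normalized principal eigenfunction for $\kappa_2$. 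Then
\[
\mu_1(\alpha,\kappa_1)\ge\langle\varphi,\mathscr K_{\alpha,\kappa_1}\varphi\rangle
=\mu_1(\alpha,\kappa_2)+\iint W_\alpha^{1/2}(t)\varphi(t)\bigl[G_{n,\kappa_1}-G_{n,\kappa_2}\bigr](t-s)W_\alpha^{1/2}(s)\varphi(s)\dd s\dd t .
\]
The last integral is strictly positive, since $G_{n,\kappa}(t)$ is strictly decreasing in $\kappa$ for every $t$ and the weights are positive. This gives \eqref{eq:mu-kappa-decreasing}.

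\emph{Differentiability in $\alpha$.} First I would show that $\alpha\mapsto v_\alpha$, normalized to be even, is a $C^1$ curve in a space of the form $\{v:\ v+\alpha|t|\text{ bounded}\}$ with exponentially weighted norms. The tool is the implicit function theorem applied to \eqref{eq:density-EL}, or equivalently to $A_{n,0}v=(n-1)!e^{nv}$ posed among even functions. The radial nondegeneracy in Theorem~\ref{thm radial} makes the linearization invertible on even functions, because the only bounded kernel element $v_\alpha'$ is odd. Uniqueness up to translation then identifies the IFT branch with $v_\alpha$. Consequently $\alpha\mapsto W_\alpha^{1/2}$ is $C^1$ in a weighted $L^\infty$ norm, and $\mathscr K_{\alpha,\kappa}$ is $C^1$ in trace norm. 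Since $\mu_1$ is simple and isolated, analytic perturbation theory gives differentiability. In the generalized form \eqref{eq:generalized-eigenpair} the Hellmann--Feynman formula reads
\[
\partial_\alpha\tau_{\alpha,\kappa}=-\tau_{\alpha,\kappa}\frac{\int_\R\partial_\alpha W_\alpha\,\psi_{\alpha,\kappa}^2\dd t}{\int_\R W_\alpha\psi_{\alpha,\kappa}^2\dd t}.
\]
Hence \eqref{eq:mu-al-increasing} is equivalent to $\int\partial_\alpha W_\alpha\,\psi^2>0$.

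\emph{Sign of $\int\partial_\alpha W_\alpha\psi^2$.} Two facts are already available. By \eqref{eq:W-mass}, $\int\partial_\alpha W_\alpha=2n\beta_n>0$. By Proposition~\ref{thm:mu2} (whose proof works for real $\kappa$), $\psi^2$ is even and strictly decreasing in $|t|$. Write $\partial_\alpha W_\alpha=nW_\alpha z$ with $z=\partial_\alpha v_\alpha$, which is even and satisfies $z\sim-|t|$ at infinity. It then suffices to show that $z$ has exactly one sign change on $(0,\infty)$, from positive to negative, at some $t_*$. Granting this, $(\psi^2(t)-\psi^2(t_*))\partial_\alpha W_\alpha\ge0$ pointwise, so
\[
\int\partial_\alpha W_\alpha\psi^2\ge\psi^2(t_*)\int\partial_\alpha W_\alpha>0 .
\]

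\textbf{Main obstacle.} The hardest step is this single-sign-change property of $z$. My first attempt would differentiate \eqref{eq:strict-concavity} in $\alpha$ to get $-z''=(n-1)!\,g_n*(nW_\alpha z/n!)$, and then argue as follows. If $z$ had two sign changes on $(0,\infty)$, a variation-diminishing argument using the TP2 property of $g_n$ (Lemma~\ref{lem:radial-kernel-g}) and of the odd half-line kernel would produce a contradiction. If instead $z$ were negative everywhere, this would contradict $\int W_\alpha z>0$. As a fallback I would express $z$ through the quantile map $X_\alpha$. Differentiating the Euler--Lagrange identity in the quantile variables, where $C_{n,\alpha}$ depends linearly on $\alpha$, should show that $\partial_\alpha X_\alpha/X_\alpha$ has a sign. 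That sign corresponds to the densities $\rho_\alpha$ concentrating as $\alpha$ grows, which is exactly the single crossing needed.
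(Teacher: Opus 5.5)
Your $\kappa$-monotonicity argument, the implicit-function-theorem route to smooth dependence of $v_\alpha$, and the Hellmann--Feynman reduction to $\int_\R\partial_\alpha W_\alpha\,\psi_{\alpha,\kappa}^2\dd t>0$ all match the paper. Your closing inequality is also correct, but only \emph{if} $z=\partial_\alpha v_\alpha$ changes sign exactly once on $(0,\infty)$. That property is precisely what the lemma hinges on, and you do not prove it.

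Ruling out $z<0$ via $\int_\R W_\alpha z\dd t>0$ only excludes zero crossings. The variation-diminishing idea cannot exclude two crossings. Suppose $z(0)<0$, $z>0$ on $(a,b)$ and $z<0$ on $(b,\infty)$. Then $M(t)=\int_0^tW_\alpha z\dd s$ is negative and then positive, because after $b$ it decreases to $\beta_n>0$. Now take the relation $-z'(t)=\int_0^\infty[g_n(|t-s|)-g_n(t+s)]M(s)\dd s$ and grant that this half-line kernel is TP2. It still only forces $-z'$ to have at most one sign change, from negative to positive. That is exactly what this configuration has, so no contradiction arises. Variation diminishing applied to the homogeneous equation for $z$ just returns the sign pattern of $z$ itself; a comparison with a second, known solution is needed. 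The quantile fallback is only asserted, and it asks for more than necessary. From $W_\alpha=2n\beta_n\alpha\rho_\alpha$ one gets $M(t)=2\beta_n\partial_\alpha\bigl(\alpha\int_0^t\rho_\alpha\dd s\bigr)$, whereas contraction of quantiles would assert monotonicity of $\int_0^t\rho_\alpha\dd s$ itself.

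The missing ingredient is Lemma~\ref{thm:cumulative-positivity}: $M(t)>0$ for all $t>0$, proved by comparing $M$ with $W_\alpha$. The function $M$ satisfies \eqref{eq:M-halfline}, with the odd kernel $g_n(|t-s|)-g_n(t+s)$. The weight $W_\alpha$ satisfies \eqref{eq:N-halfline}, obtained from \eqref{eq:strict-concavity}, with the even kernel $g_n(|t-s|)+g_n(t+s)$. For $R=M/W_\alpha$ this gives \eqref{eq:ratio-identity}. Its extra term $-2M(t)\int_0^\infty g_n(t+s)W_\alpha(s)\dd s$ is strictly positive at a negative interior minimum of $R$, while the left side is nonpositive there. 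Since $R(0)=0$ and $R\to+\infty$, a minimum-principle argument yields $R>0$.

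With this in hand, the paper simply integrates by parts: $\int_\R W_\alpha q_\alpha\psi_{\alpha,\kappa}^2\dd t=-2\int_0^\infty M\,(\psi_{\alpha,\kappa}^2)'\dd t>0$. Your route would also close. By the half-line formula above, $M>0$ makes $-z'>0$ on $(0,\infty)$, so $z$ is strictly decreasing there and has exactly the single crossing you need.
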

\begin{proof}
	For $\kappa_2>\kappa_1$, let $f_2>0$ be the normalized principal eigenfunction of $\mathscr K_{\alpha,\kappa_2}$. Since $G_{n,\kappa}(t)$ is strictly decreasing in $\kappa$, for each fixed $t$, we see that
	\begin{equation*}
	\mu_1(\alpha,\kappa_2)
	=\ip{f_2}{\mathscr K_{\alpha,\kappa_2}f_2}
	<\ip{f_2}{\mathscr K_{\alpha,\kappa_1}f_2}
	\le\mu_1(\alpha,\kappa_1).
	\end{equation*}
	The monotonicity in $\alpha$ requires smooth dependence of the radial solution on the parameter. We establish this dependence in Corollary \ref{coro v al smooth} and prove \eqref{eq:mu-al-increasing} in Subsection \ref{sec:transversality}.
\end{proof}

Equation \eqref{eq:G-kappa} gives, locally uniformly in $t$,
\begin{equation*}
    G_{n,\kappa}(t)=\frac{2^{1-n}}{\Gamma(n/2)^2\kappa}+O(1),
 \qquad \kappa\downarrow0.
\end{equation*}
Moreover, since $0\le 1-2x\cosh|t|+x^2\le 1$ for $0<\kappa\le1$, we have
\begin{equation*}
   0<\kappa G_{n,\kappa}(t)\le\kappa G_{n,\kappa}(0)\le \frac{2^{1-n}}{\Gamma(n/2)^2},\qquad 0<\kappa\le1, 
\end{equation*}
where the last bound follows from \eqref{eq:G-diagonal}. Since $W_\alpha\in L^1(\R)$, the integral kernel of $\kappa\mathscr K_{\alpha,\kappa}$ converges pointwise to
\begin{equation*}
     \frac{2^{1-n}}{\Gamma(n/2)^2} W_\alpha(t)^{1/2}W_\alpha(s)^{1/2}
\end{equation*}
and is dominated in $L^2(\R^2)$. By dominated convergence, $\kappa\mathscr K_{\alpha,\kappa}$ converges to the rank-one operator
\begin{equation*}
    f(t)\mapsto \frac{2^{1-n}}{\Gamma(n/2)^2} W_\alpha(t)^{1/2}\int_{\R}W_\alpha(s)^{1/2}f(s)\dd s
\end{equation*}
in Hilbert--Schmidt norm and operator norm. By \eqref{eq:W-mass}, the principal eigenvalue of this operator is
\begin{equation*}
 \frac{2^{1-n}}{\Gamma(n/2)^2}\int_\R W_\alpha(t)\dd t=n\alpha.   
\end{equation*}
 Consequently,
\begin{equation*}
 \kappa\mu_1(\alpha,\kappa)\to\frac{2^{1-n}}{\Gamma(n/2)^2}\int_\R W_\alpha(t)\dd t ={n\alpha},
 \qquad \kappa\downarrow0.
\end{equation*}

On the other hand, the trace formula \eqref{eq:exact-trace}, extended to real $\kappa$, gives
\begin{equation*}
 \mu_1(\alpha,\kappa)\le\tr\mathscr K_{\alpha,\kappa}\longrightarrow0,
 \qquad \kappa\to\infty.   
\end{equation*}
The same dominated-convergence argument on compact $\kappa$-intervals gives operator-norm continuity in $\kappa$, and hence continuity of $\mu_1(\alpha,\kappa)$. Together with \eqref{eq:mu-kappa-decreasing}, these limits show that, for each fixed $\alpha>0$, there is a unique generalized angular threshold $\kappa_*(\alpha)>0$ such that
\begin{equation*}
 \mu_1(\alpha,\kappa_*(\alpha))=1.   
\end{equation*}

\subsection{Location of the bifurcation parameters}\label{sec:crossings}
In this subsection, we prove the estimate \eqref{eq:main-range-intro}. The lower bound follows directly from the trace formula. The upper bound requires a comparison between the normalized angular Green kernel and a function obtained from the radial kernel. We establish this comparison and combine it with an identity for the radial solution.

At a bifurcation parameter $\alpha=\alpha_{n,k}$, the principal eigenvalue $\mu_1(\alpha_{n,k},k)=1$, while all remaining eigenvalues are positive. By the trace formula \eqref{eq:exact-trace},
\begin{equation}\label{al nk lower}
	\alpha_{n,k}-\cA_{n,k}=\cA_{n,k}\sum_{j\ge2}\mu_j>0.
\end{equation}
This proves the lower bound for $\alpha_{n,k}$.

\medskip

In the rest of this subsection, we study the upper bound for $\alpha_{n,k}$. For $t\ge0$, we define
\begin{equation}\label{eq:H-definition}
	H_n(t):=2\beta_n\mathcal K_n'(t)=2\beta_n\int_0^t g_n(s)\dd s.
\end{equation}
Then $H_n(0)=0$, $H_n(t)>0$ and by \eqref{int gn}, $H_n(t)\to1$ as $t\to+\infty$. 
\begin{lemma}\label{lem:H-representations}
	Let
\begin{equation*}
a=\frac{n-2}{2}, \qquad q=e^{-2t},\qquad x=\sech^2t=\frac{4q}{(1+q)^2}.
\end{equation*}
	For $t>0$, we have the representations
	\begin{align}
		H_n(t)&=2a\int_0^1(1-s)^{a-1}(1-qs)^a\dd s-1,\label{eq:H-beta}\\
		&=\tanh t\,F_a(x),		\label{eq:H-hyper}
	\end{align}
	where $F_a(x)=\Hyp\left(\frac{1}{2},1;a+1;x\right)$. Moreover,
	\begin{equation}\label{eq:F-first-order}
		x(1-x)F_a'(x)+\left(a-\frac x2\right)F_a(x)=a,
	\end{equation}
	and $F_a(x)\ge1$ for $0\le x<1$. Finally, we have the identity
	\begin{equation}\label{eq:virial-identity}
		\iint_{\R^2}|t-s|H_n(|t-s|)\rho_\alpha(t)\rho_\alpha(s)\dd t \dd s=\frac{2}{n\alpha}.	
	\end{equation}
\end{lemma}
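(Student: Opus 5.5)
The plan is to start from the explicit Fourier description of $g_n$ in Lemma~\ref{lem:radial-kernel-g}, namely $g_n=d_nf_n*\widetilde f_n$, written as in \eqref{gn convol}:
\[
g_n(t)=d_n\int_0^\infty f_n(t+s)f_n(s)\dd s .
\]
Integrating in $t$ from $0$ to $t$ and using $2\beta_n d_n=4(n/2-1)^2=(n-2)^2$, then substituting $r=e^{-2s}$, reduces $H_n(t)$ to a double integral over $r$ and $q'=e^{-2\tau}$. I expect to perform the inner integration explicitly, since $\int(1-q'r)^{a-1}\dd q'$ is elementary. The result should collapse to a single Beta-type integral in $s$ of the form \eqref{eq:H-beta}.

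As a consistency check, at $t=0$ ($q=1$) the right side of \eqref{eq:H-beta} is $2a\cdot\frac1{2a}-1=0$. As $t\to\infty$ ($q=0$) it is $2a\cdot\frac1a-1=1$, matching $H_n(\infty)=1$. I will also verify that the $t$-derivative of the right side equals $2\beta_n g_n(t)$ computed from \eqref{gn convol}. This derivative check is cleaner than direct integration, so I will use it as the actual proof: both sides vanish at $t=0$, so equal derivatives give equality.

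\eqref{eq:H-hyper} follows from Euler's formula \eqref{eq:hyp-euler}. The integral equals $\frac{1}{a}\Hyp(-a,1;a+1;q)$, so
\[
H_n=2\,\Hyp(-a,1;a+1;q)-1 .
\]
I would then apply a quadratic transformation of the type \eqref{eq:quadratic-transformation}, or alternatively verify that both sides satisfy the same first-order ODE in $t$ with the same value at $t=0$.

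The ODE \eqref{eq:F-first-order} is a contiguous relation for $\Hyp(\frac12,1;a+1;x)$. To obtain it, write $F_a$ as the Euler integral with $b=1$ and integrate by parts once, or check it coefficientwise on the power series. Doing this gives $(j+1)c_{j+1}(a+1+j)=c_j(j+\frac12)$, consistent with $c_j=(\frac12)_j/(a+1)_j$. The bound $F_a\ge1$ is immediate because all the series coefficients are nonnegative and $c_0=1$.

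The virial identity \eqref{eq:virial-identity} is the step I expect to be the main obstacle. The plan is to multiply the derivative of \eqref{eq:density-EL},
\[
\rho_\alpha'/\rho_\alpha=-C_{n,\alpha}\,\mathcal K_n'*\rho_\alpha ,
\]
by $t\rho_\alpha$ and integrate. Integrating by parts on the left gives $-1$, using $t\rho_\alpha\to0$ exponentially. The right side becomes
\[
-C_{n,\alpha}\iint t\,\mathcal K_n'(t-s)\rho_\alpha(t)\rho_\alpha(s)\dd t\dd s
=-\frac{C_{n,\alpha}}{2}\iint (t-s)\,\mathcal K_n'(t-s)\rho_\alpha\rho_\alpha ,
\]
where the equality comes from symmetrizing with the oddness of $\mathcal K_n'$. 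Since $(t-s)\mathcal K_n'(t-s)=|t-s|H_n(|t-s|)/(2\beta_n)$ and $C_{n,\alpha}/(2\beta_n)=n\alpha$, we get $1=\frac{n\alpha}{2}\iint|t-s|H_n\rho_\alpha\rho_\alpha$, which is exactly the claimed identity. The care needed here is justifying the boundary terms and Fubini, which is handled by the exponential decay of $\rho_\alpha$ and the boundedness of $\mathcal K_n'$.
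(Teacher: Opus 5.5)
\paragraph{The hypergeometric form needs repair.} As you state it, the step giving \eqref{eq:H-hyper} does not close. The transformation \eqref{eq:quadratic-transformation} does not apply to $\Hyp(-a,1;a+1;q)$ itself. That function is not of the form $\Hyp(A,B;A-B+1;q)$ in either order of the upper parameters. Moreover, the target identity $2\Hyp(-a,1;a+1;q)-1=\tanh t\,F_a(x)$ is inhomogeneous.

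The transformation does apply one derivative down. Since $g_n=G_{n-2,2}$ is a constant multiple of $q\Hyp(1-a,2;a+2;q)$, \eqref{eq:quadratic-transformation} with $(n,k)$ replaced by $(n-2,2)$ gives $2\beta_ng_n(t)=\frac{a}{a+1}x\Hyp(1,\tfrac32;a+2;x)=2a\bigl(F_a(x)-1\bigr)$. On the other hand, \eqref{eq:F-first-order} together with $x'=-2x\tanh t$ gives $\frac{\dd}{\dd t}\bigl[\tanh t\,F_a(x)\bigr]=2a\bigl(F_a(x)-1\bigr)$. Both sides tend to $0$ as $t\downarrow0$, including the case $n=3$, where $F_{1/2}$ has only a logarithmic singularity at $x=1$. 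This completes the argument.

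Your ODE alternative fails as stated. The first-order equation both sides naturally satisfy is $y'=2a(\coth t\,y-1)$. This equation is singular at $t=0$, and its homogeneous solutions $C\sinh^{2a}t$ all vanish there. So agreement at $t=0$ forces nothing. Impose the condition at $t=\infty$ instead: both sides tend to $1$, while $C\sinh^{2a}t$ is unbounded unless $C=0$. With that boundary condition, this route avoids \eqref{eq:quadratic-transformation} entirely. Indeed, $2\Phi-1$ with $\Phi=\Hyp(-a,1;a+1;q)$ satisfies the same equation, by the series identity $q(1-q)\Phi'+a(1+q)\Phi=a$.

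\paragraph{The virial identity: a different, valid route.} The paper differentiates $\lambda\mapsto\mathcal F_{n,\alpha}(\rho_\lambda)$, with $\rho_\lambda(t)=\lambda\rho_\alpha(\lambda t)$, at $\lambda=1$. It then uses that $\rho_\alpha$ is critical, which implicitly means it is the minimizer (existence plus uniqueness up to translation). You instead test the differentiated Euler--Lagrange equation against $t\rho_\alpha$ and symmetrize using the oddness of $\mathcal K_n'$; your signs and constants are correct. This is the same first variation along the dilation field $(t\rho_\alpha)'$, written out explicitly. Your version uses only \eqref{eq:density-EL} and the exponential decay of $\rho_\alpha$. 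It therefore holds for any solution of \eqref{eq:density-EL}, without appealing to minimality or to differentiability of the energy along the curve. The paper's version is shorter within its variational framework.

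\paragraph{The other parts and two slips.} Your treatment of \eqref{eq:H-beta}, \eqref{eq:F-first-order} and $F_a\ge1$ agrees in substance with the paper. The paper uses $b_n=a_{n-2,2}$ and \eqref{eq:G-beta}, which is the same integral you get from $d_nf_n*\widetilde f_n$. Two arithmetic slips need fixing.
\begin{enumerate}
\item In fact $2\beta_nd_n=8(n/2-1)^2=2(n-2)^2$. Only after the Jacobian $\tfrac12$ from $r=e^{-2s}$ do you get $2\beta_ng_n(t)=4a^2q\int_0^1r(1-r)^{a-1}(1-qr)^{a-1}\dd r$, which is what the $t$-derivative of the right side of \eqref{eq:H-beta} produces.
\item The coefficient recursion coming from \eqref{eq:F-first-order} is $(a+1+j)c_{j+1}=(j+\tfrac12)c_j$, without the extra factor $j+1$.
\end{enumerate}
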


\begin{proof}
	For $t>0$, the Beta-integral calculation in Proposition~\ref{prop:mode-green} remains valid for every positive dimension parameter, since the integrand is integrable near both $s=0$ and $s=1$. We may therefore apply it with dimension parameter $n-2$ and degree $2$, using $b_n(\xi)=a_{n-2,2}(\xi)$.
By \eqref{eq:H-definition}, integrating $g_n$ from zero to $t$ gives \eqref{eq:H-beta}. Euler's integral representation \eqref{eq:hyp-euler} and the quadratic transformation \eqref{eq:quadratic-transformation} give \eqref{eq:H-hyper}.

	Since
	\begin{equation*}
	F_a(x)=\sum_{j=0}^\infty\frac{(\frac{1}{2})_j}{(a+1)_j}x^j,
	\end{equation*}
	all coefficients are nonnegative and $F_a\ge1$. Substitution of this series into \eqref{eq:F-first-order} proves the identity term by term.

To prove \eqref{eq:virial-identity}, we evaluate the energy \eqref{eq:free-energy} along the mass-preserving dilation $\rho_\lambda(t)=\lambda\rho_\alpha(\lambda t)$.
	
	A change of variables gives
	\begin{equation*}
\int_\R\rho_\lambda\log\rho_\lambda\dd t
=\log\lambda+\int_\R\rho_\alpha\log\rho_\alpha\dd t,
	\end{equation*}
	and the interaction term in \eqref{eq:free-energy} becomes
	\begin{equation*}
	\frac{C_{n,\alpha}}{2}\iint_{\R^2}\mathcal K_n\!\left(\frac{t-s}{\lambda}\right)\rho_\alpha(t)\rho_\alpha(s)\dd t\dd s.
	\end{equation*}
	
	Since $\rho_\alpha$ is a critical point, we have
	\begin{equation*}
	0=\frac{d\mathcal{F}_{n,\alpha}(\rho_\lambda)}{d\lambda}\Big|_{\lambda=1}
    =1-\frac{C_{n,\alpha}}{2}\iint_{\R^2}|t-s|\mathcal K_n'(|t-s|)\rho_\alpha(t)\rho_\alpha(s)\dd t\dd s.
	\end{equation*}
	Then \eqref{eq:virial-identity} follows from the identities $H_n=2\beta_n\mathcal K_n'$ and $C_{n,\alpha}=2n\alpha\beta_n$.
\end{proof}

We introduce the normalized kernel
\begin{equation*}
R_{n,k}(t):=\frac{G_{n,k}(t)}{G_{n,k}(0)}.
\end{equation*}
Combining \eqref{eq:G-hyper} with the quadratic transformation gives
\begin{equation*}
	R_{n,k}(t)=	\frac{x^{k/2}\Hyp\left(\frac k2,\frac{k+1}{2};k+\frac n2;x\right)}{\Hyp\left(\frac k2,\frac{k+1}{2};k+\frac n2;1\right)},
\end{equation*}
where $x=\sech^2t$. By definition, $R_{n,k}(0)=1$ and $R_{n,k}(t)\to0$ as $t\to\infty$. 

 We observe that $R_{n,k}$ satisfies the following ODE
\begin{equation}\label{eq:R-ODE}
	\mathcal D_nR_{n,k}=\lambda_{n,k}R_{n,k},
\end{equation}
where $\mathcal D_n:=\partial_t^2-(n-2)\coth t\,\partial_t$ and $ \lambda_{n,k}:=k(k+n-2)$. Indeed, by \eqref{eq:hyp-ode}, the function $y(x)=x^{k/2}\Hyp\left(\frac k2,\frac{k+1}{2};k+\frac n2;x\right)$ satisfies the hypergeometric equation
\begin{equation*}
4x^2(1-x)y''+2x(n-3x)y'-\lambda_{n,k}y=0.
\end{equation*}
Since $x'=-2x\tanh t$ and $x''=4x-6x^2$, this is exactly \eqref{eq:R-ODE}.

The next comparison lemma plays a central role in deriving the upper bound for $\alpha_{n,k}$.
\begin{lemma}\label{lem:kernel-comparison}
	For every $n\ge3$, $k\ge2$ and $t>0$, we have
	\begin{equation}\label{eq:kernel-comparison}
		1-R_{n,k}(t) <\frac{\lambda_{n,k}}{2(n-2)}\tanh t\,H_n(t).
	\end{equation}
\end{lemma}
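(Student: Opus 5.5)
The plan is to prove \eqref{eq:kernel-comparison} by a maximum-principle argument for the ODE \eqref{eq:R-ODE}. Set $c_{n,k}:=\frac{\lambda_{n,k}}{2(n-2)}$ and
\[
P(t):=c_{n,k}\tanh t\,H_n(t),\qquad \Phi(t):=1-R_{n,k}(t)-P(t),\qquad t\ge0.
\]
The goal is to show $\Phi<0$ on $(0,\infty)$. By \eqref{eq:H-hyper}, and since $\tanh^2t=1-x$ with $x=\sech^2t$,
\[
P=c_{n,k}(1-x)F_a(x).
\]
Thus $P$ is an explicit hypergeometric expression, and the first-order relation \eqref{eq:F-first-order} lets us replace $F_a'$ by $F_a$ wherever it appears.

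\textbf{Step 1 (boundary values).} At $t=0$ we have $\Phi(0)=1-1-0=0$. As $t\to\infty$, $R_{n,k}\to0$ and $H_n\to1$. Hence
\[
\Phi(\infty)=1-c_{n,k}\le 1-\frac{\lambda_{n,2}}{2(n-2)}=1-\frac{n}{n-2}<0 .
\]

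\textbf{Step 2 (differential inequality).} From \eqref{eq:R-ODE} we get $\mathcal D_n(1-R_{n,k})=\lambda_{n,k}(1-R_{n,k})-\lambda_{n,k}$. Therefore
\[
\mathcal D_n\Phi-\lambda_{n,k}\Phi=-\lambda_{n,k}+\lambda_{n,k}P-\mathcal D_nP .
\]
I will compute $\mathcal D_nP$ in the variable $x$, using $x'=-2x\tanh t$, $x''=4x-6x^2$, $\coth t\,\partial_t=-2x\partial_x$ up to the appropriate factor, and \eqref{eq:F-first-order} to eliminate the derivatives of $F_a$. The aim is the one-signed inequality
\[
\mathcal D_nP-\lambda_{n,k}P\ \ge\ -\lambda_{n,k}\qquad\text{on }(0,\infty),
\]
and, if possible, with strict inequality. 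After the substitution this should reduce to an inequality of the form $\text{(explicit rational function of }x)\cdot F_a(x)\ge \text{const}$. I would close it using $F_a\ge1$ from Lemma~\ref{lem:H-representations} together with the constraint $k\ge2$ through $c_{n,k}$, since $c_{n,k}$ grows like $\lambda_{n,k}$.

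\textbf{Step 3 (maximum principle).} Suppose $\Phi$ is positive somewhere. By Step 1 it then attains a positive interior maximum at some $t_*>0$. There $\Phi'(t_*)=0$ and $\Phi''(t_*)\le0$, so $\mathcal D_n\Phi(t_*)\le0$ and hence $\mathcal D_n\Phi-\lambda_{n,k}\Phi<0$ at $t_*$. This contradicts Step 2. So $\Phi\le0$. Strictness comes from the strict version of Step 2, or from a strong maximum principle or unique-continuation argument for the second-order ODE: if $\Phi(t_1)=0$ at some $t_1>0$, that point would be an interior maximum with value zero.

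\textbf{Main obstacle.} The delicate part is the degenerate endpoint $t=0$. There both $1-R_{n,k}$ and $P$ vanish to second order, and the coefficient $\coth t$ is singular, so one must check that $\Phi$ does not become positive immediately. I would Taylor-expand both functions at $t=0$. For $1-R_{n,k}$, use the hypergeometric series of $y(x)$ near $x=1$; for $P$, use $P\sim c_{n,k}\,2\beta_n g_n(0)\,t^2$ when $n>3$, with a separate treatment of the logarithmic case $n=3$. Then compare the leading coefficients. If these coefficients coincide, I would pass to the next order, or run the maximum principle on $\Phi/\tanh^2t$ instead.

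The second potential difficulty is that the sign in Step 2 may fail on part of $(0,\infty)$. In that case I would split $(0,\infty)$ at an explicit $x_0$. On one piece I would use the Step 2 argument, and on the other a direct bound: $1-R_{n,k}\le1$ against $P\ge c_{n,k}\tanh^2t$, which already gives $\Phi<0$ once $\tanh^2t>1/c_{n,k}$.
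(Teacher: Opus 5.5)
Your strategy is the paper's: compare $1-R_{n,k}$ with $P$ through \eqref{eq:R-ODE}, write $P=c_{n,k}(1-x)F_a(x)$, eliminate $F_a'$ with \eqref{eq:F-first-order}, and close with $F_a\ge1$ and $k\ge2$. However, Step 2 aims at an inequality with the wrong sign, so Step 3 yields no contradiction. Your own identity reads $(\mathcal D_n-\lambda_{n,k})\Phi=-\lambda_{n,k}-(\mathcal D_n-\lambda_{n,k})P$. Hence your target $(\mathcal D_n-\lambda_{n,k})P\ge-\lambda_{n,k}$ gives $(\mathcal D_n-\lambda_{n,k})\Phi\le0$. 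That is fully consistent with what you find at a positive interior maximum, namely $(\mathcal D_n-\lambda_{n,k})\Phi(t_*)<0$. To exclude a nonnegative interior maximum you need the reverse, $(\mathcal D_n-\lambda_{n,k})\Phi>0$, i.e.\ $(\mathcal D_n-\lambda_{n,k})P<-\lambda_{n,k}$.

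Your target is in fact false at every $t>0$. So it cannot be proved, and the splitting fallback in your last paragraph cannot save it. Carrying out your computation gives $\mathcal D_n[(1-x)F_a]=-2(n-2)x-2x(1-x)F_a$, and therefore
\[
(\mathcal D_n-\lambda_{n,k})P+\lambda_{n,k}
=\lambda_{n,k}(1-x)\Bigl[1-\frac{(\lambda_{n,k}+2x)F_a(x)}{2(n-2)}\Bigr]<0,
\qquad 0<x<1,
\]
because $F_a\ge1$ and $\lambda_{n,k}\ge\lambda_{n,2}=2n>2(n-2)$.

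Once the direction is reversed, this display is exactly what you need. The paper phrases it as $(\mathcal D_n-2n)\Phi_n+1=(1-x)\bigl[1-\frac{n+x}{n-2}F_a\bigr]<0$ for $\Phi_n=P/\lambda_{n,k}$, then uses $\lambda_{n,k}-2n=(k-2)(k+n)\ge0$ and $\Phi_n>0$. Your Step 3 then closes:
\begin{itemize}
\item $\Phi$ is continuous on $[0,\infty)$ with $\Phi(0)=0$ and $\Phi(\infty)<0$.
\item If $\Phi(t_1)\ge0$ for some $t_1>0$, there is an interior maximum point $t_*$ with $\Phi(t_*)\ge0$.
\item There $\Phi'(t_*)=0$ and $\Phi''(t_*)\le0$ give $(\mathcal D_n-\lambda_{n,k})\Phi(t_*)\le0$, contradicting strict positivity.
\end{itemize}
This gives the strict inequality \eqref{eq:kernel-comparison} directly, with no separate strictness argument.

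It also shows your ``main obstacle'' is not one. Since $\Phi'$ vanishes at an interior extremum, the singular coefficient $\coth t$ never enters. No Taylor expansion at $t=0$, and no separate treatment of the logarithmic case $n=3$, is needed.
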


\begin{proof}
	Let $a=\frac{n-2}{2}$, $x=\sech^2t$ and $F_a$ be as in
	Lemma~\ref{lem:H-representations}.  Define
	\begin{equation*}
		\Phi_n(t):=\frac{\tanh t\,H_n(t)}{2(n-2)}
		=\frac{(1-x)F_a(x)}{2(n-2)}.
	\end{equation*}
	In the $x$ variable,
	\begin{equation*}
	\mathcal D_n=4x^2(1-x)\partial_x^2+2x(n-3x)\partial_x.
	\end{equation*}
	Using \eqref{eq:F-first-order} and the fact that $F_a\ge1$, we obtain
	\begin{equation*}
		(\mathcal D_n-2n)\Phi_n+1
		=(1-x)\left[1-\frac{n+x}{n-2}F_a(x)\right]<0.
	\end{equation*}
	Since $k\ge2$, $\lambda_{n,k}-2n=(k-2)(k+n)\ge0$, so
	\begin{equation*}
		(\mathcal D_n-\lambda_{n,k})\Phi_n<-1.
	\end{equation*}
	
	Set $f=\frac{1-R_{n,k}}{\lambda_{n,k}}$. Then \eqref{eq:R-ODE} implies
	\begin{equation*}
	(\mathcal D_n-\lambda_{n,k})f=-1.
	\end{equation*}
	
	At $t=0$, $f(0)=\Phi_n(0)=0$; while at $t=\infty$, we have
	\begin{equation*}
	\lim_{t\to\infty}f(t)=\frac1{\lambda_{n,k}}, \qquad	\lim_{t\to\infty}\Phi_n(t)=\frac1{2(n-2)}.
	\end{equation*}
	Therefore, the function $w:=\Phi_n-f$ satisfies
	\begin{equation*}
		w(0)=0,\quad \lim_{t\to\infty}w(t)>0, \quad (\mathcal D_n-\lambda_{n,k})w<0.
	\end{equation*}
	A nonpositive interior minimum would satisfy	$w'=0$, $w''\ge0$ and hence $(\mathcal D_n-\lambda_{n,k})w=w''-\lambda_{n,k}w\ge0$, a contradiction. Thus $f<\Phi_n$, which is
	\eqref{eq:kernel-comparison}.
\end{proof}

	On $L^2(\R,\rho_\alpha\dd t)$, we define the operator
	\begin{equation*}
	(S_{\alpha,k}f)(t)=\int_\R R_{n,k}(t-s)f(s)\rho_\alpha(s)\dd s.
	\end{equation*}
	The map
	\begin{equation*}
		U:L^2(\R,\rho_\alpha\dd t)\longrightarrow L^2(\R,\dd t), \qquad Uf=\rho_\alpha^{\frac{1}{2}}f,
	\end{equation*}
	is unitary.  Since $G_{n,k}=G_{n,k}(0)R_{n,k}$ and $C_{n,\alpha}G_{n,k}(0)=\frac{\alpha}{\cA_{n,k}}$, by \eqref{eq:BS operator}, we have
\begin{equation}\label{eq:unitary-equivalence}
\mathscr K_{\alpha,k}=\frac{\alpha}{\cA_{n,k}}US_{\alpha,k}U^{-1}.
	\end{equation}
	
	Let $\Lambda_1(\alpha,k)$ be the principal eigenvalue of $S_{\alpha,k}$.  The constant function $1$ has norm one in $L^2(\R,\rho_\alpha\dd t)$ because $\int_\R\rho_\alpha(t)\dd t=1$, so
	\begin{equation}\label{eq:constant-trial}
		\Lambda_1(\alpha,k)	\ge\iint_{\R^2}R_{n,k}(t-s)\rho_\alpha(t)\rho_\alpha(s)\dd t\dd s.
	\end{equation}
	
	On the other hand, since $\tanh r<r$ for $r>0$, by \eqref{eq:kernel-comparison} and \eqref{eq:virial-identity}, we have
	\begin{align}
		\iint_{\R^2}
		\bigl[1-R_{n,k}(t-s)\bigr]
		\rho_\alpha(t)\rho_\alpha(s)\dd t \dd s
		<&\frac{\lambda_{n,k}}{2(n-2)}
		\iint_{\R^2}
		|t-s|H_n(|t-s|)
		\rho_\alpha(t)\rho_\alpha(s)\dd t\dd s 
		\notag\\
		=&\frac{\lambda_{n,k}}{n(n-2)\alpha}.
		\label{eq:integral-kernel-bound}
	\end{align}
	Combining \eqref{eq:constant-trial} and \eqref{eq:integral-kernel-bound} yields $$\Lambda_1(\alpha,k)>1-\frac{\lambda_{n,k}}{n(n-2)\alpha}.$$
    By \eqref{eq:unitary-equivalence},
	\begin{equation}\label{eq:principal-lower-estimate}
		\mu_1(\alpha,k)=\frac{\alpha}{\cA_{n,k}}\Lambda_1(\alpha,k)
		>\frac{\alpha-\cB_{n,k}}{\cA_{n,k}},
	\end{equation}
	where $\cB_{n,k}=\frac{k(n+k-2)}{n(n-2)}$. This proves the upper bound
	\begin{equation}\label{al nk upper}
	\alpha_{n,k}<\cA_{n,k}+\cB_{n,k}.
	\end{equation} 

	\subsection{Proof of Theorem \ref{thm e value L}}
	\begin{proof}[Proof of Theorem \ref{thm e value L}]
    We now combine the preceding estimates with the monotonicity in $\alpha$ proved in Subsection \ref{sec:transversality}. By Proposition \ref{thm:mu2}, we only need to consider the principal eigenvalue $\mu_1(\alpha,k)$.
		
		In the regular case $\alpha=1$, the solution is explicitly given by
		\begin{equation}\label{v1 explicit}
		v_1(t)=-\log\cosh t.
		\end{equation}
		In this case $W_1(t)=n!\sech^n t$. Using \eqref{eq:mode-symbol} and \eqref{eq:sech-transform}, we have
		\begin{equation*}
			A_{n,k}(\sech^k t)=\frac{\Gamma(n+k)}{\Gamma(k)}\sech^{n+k} t.
		\end{equation*}
		Hence $W_1^{1/2}\sech^k t$ is a positive principal eigenfunction of $\mathscr K_{1,k}$, with eigenvalue
		\begin{equation*}
			\mu_1(1,k)=\frac{\Gamma(n+1)\Gamma(k)}{\Gamma(n+k)}.
		\end{equation*}
		Thus $\mu_1(1,1)=1$, while $\mu_1(1,k)<1$ for every $k\ge2$. By \eqref{eq:principal-lower-estimate}, $\mu_1(\cA_{n,k}+\cB_{n,k},k)>1$.  Continuity therefore yields a crossing and \eqref{eq:mu-al-increasing} makes it unique. The range \eqref{eq:main-range-intro} is given by \eqref{al nk lower} and \eqref{al nk upper}.
		
		By \eqref{eq:mu-kappa-decreasing}, different modes $k$ yield distinct crossing values $\alpha$. This completes the proof.
\end{proof}

We close this section by proving the nondegeneracy result Corollary \ref{coro:nondegen}.
\begin{proof}[Proof of Corollary \ref{coro:nondegen}]
    Let the spherical decomposition of $h$ be
    \begin{equation*}
        h(t,\omega)=\sum_{k=0}^\infty\sum_\ell h_{k,\ell}(t)Y_{k,\ell}(\omega).
    \end{equation*}
    Then each $h_{k,\ell}$ is bounded and satisfies $L_{\alpha,k}h_{k,\ell}=0$. By Theorem \ref{thm radial}, $h_0=c_0v_\alpha'$ for some $c_0$. For $k\ge 1$, Lemma \ref{lem bdd=spec} implies $h_{k,\ell}\in H^n(\R)$, so the spectral results in this section apply.

    Since $0<\alpha<\alpha_{n,2}$, from Theorem \ref{thm e value L} we conclude that $h_{k,\ell}=0$ for all $k\ge 2$. For $k=1$, we have $\mu_2(\alpha,1)< 1=\mu_1(1,1)$, and the strict monotonicity in $\alpha$ shows that a kernel occurs only at $\alpha=1$. By \eqref{v1 explicit}, $v_1'(t)=-\tanh{t}$. Since the space of degree-$1$ spherical harmonics $\mathcal{H}_1$ is spanned by coordinate functions $\omega_1,\dots, \omega_n$, we obtain \eqref{h kernel}. The Euclidean statement then follows by a change of variable $t=\log|x|$.
\end{proof}

\section{Nonradial solutions: a local bifurcation}\label{sec:nonradial}

In this section, we construct nonradial normal solutions by applying the Crandall--Rabinowitz theorem \cite{CR} in weighted Sobolev spaces. We verify the Fredholm property, reduce the kernel to one dimension by imposing a suitable angular symmetry, and establish transversality. We begin by recalling the version of the theorem used below.

\begin{othertheorem}[Crandall--Rabinowitz]\label{thm:quoted-CR}
	Let $X,Y$ be real Banach spaces, let $U\subset\R\times X$ be a neighborhood of
	$(\lambda_0,0)$ and let $F\in C^r(U,Y)$ with $r\ge2$ and $F(\lambda,0)=0$.  Put
	$L=D_xF(\lambda_0,0)$.  Assume:
	\begin{enumerate}[label=\textup{(\roman*)}]
		\item $L$ is Fredholm of index zero;
		\item $\Ker L=\Span\{\phi_0\}$ and $\operatorname{codim}\Ran L=1$;
		\item $D_{\lambda x}F(\lambda_0,0)\phi_0\notin\Ran L$.
	\end{enumerate}
	Then there are $\varepsilon>0$ and $C^{r-1}$ functions	$s\mapsto\lambda(s)$ and $s\mapsto z(s)$, with	$z(s)$ taking values in a fixed complement of $\Span\{\phi_0\}$, such that $\lambda(0)=\lambda_0$, $z(0)=0$, and every nontrivial solution near $(\lambda_0,0)$ is of the form
	\begin{equation*}
		(\lambda(s),\,s\phi_0+s z(s)),
		\qquad |s|<\varepsilon.
	\end{equation*}
\end{othertheorem}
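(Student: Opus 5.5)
The approach is the standard one: a Lyapunov--Schmidt splitting followed by a blow-up in the kernel direction, so that the implicit function theorem applies to a desingularized map. Fix a closed complement $Z$ of $\Span\{\phi_0\}$ in $X$; it exists because the kernel is finite dimensional. Fix a closed complement of $\Ran L$ in $Y$; since $\Ran L$ is closed of codimension one, we may take it to be $\Span\{y_0\}$ with $y_0:=D_{\lambda x}F(\lambda_0,0)\phi_0$, which is allowed by (iii). Write $Q:Y\to\Ran L$ for the associated projection.

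\textbf{Step 1 (desingularized map).} Define, for $(s,\lambda,z)$ near $(0,\lambda_0,0)$ in $\R\times\R\times Z$,
\begin{equation*}
 G(s,\lambda,z):=\int_0^1 D_xF\bigl(\lambda,\tau s(\phi_0+z)\bigr)(\phi_0+z)\dd\tau .
\end{equation*}
Because $F(\lambda,0)=0$, we have $G(s,\lambda,z)=s^{-1}F(\lambda,s(\phi_0+z))$ for $s\neq0$. Since $F\in C^r$ with $r\ge2$, the map $G$ is $C^{r-1}$, in particular $C^1$. At the base point, $G(0,\lambda_0,0)=L\phi_0=0$.

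\textbf{Step 2 (invertibility of the linearization).} The partial derivative of $G$ in $(\lambda,z)$ at $(0,\lambda_0,0)$ is
\begin{equation*}
 (\mu,\zeta)\longmapsto \mu\, y_0+L\zeta,\qquad \R\times Z\to Y.
\end{equation*}
\begin{itemize}
 \item Injectivity: if $\mu y_0+L\zeta=0$, then $\mu y_0\in\Ran L$, so (iii) forces $\mu=0$. Then $\zeta\in\Ker L\cap Z=\{0\}$.
 \item Surjectivity: $Y=\Ran L\oplus\Span\{y_0\}$, and $L|_Z:Z\to\Ran L$ is onto.
\end{itemize}
By the open mapping theorem this map is an isomorphism. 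The implicit function theorem then yields $C^{r-1}$ functions $\lambda(s)$ and $z(s)$ with $\lambda(0)=\lambda_0$, $z(0)=0$ and $G(s,\lambda(s),z(s))=0$. Multiplying by $s$ shows that $(\lambda(s),s\phi_0+sz(s))$ solves $F=0$.

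\textbf{Step 3 (every nontrivial solution is on the curve).} This is the step I expect to be the main obstacle: a nearby solution $x=s\phi_0+w$ with $w\in Z$ must be shown to have $w=O(|s|)$, so that it can be written in the form $s(\phi_0+z)$ with $z$ small.
\begin{itemize}
 \item Solve the range equation $QF(\lambda,s\phi_0+w)=0$ for $w$. Its $w$-derivative at $(\lambda_0,0,0)$ is $L|_Z:Z\to\Ran L$, an isomorphism, so the implicit function theorem gives a unique local solution $w=w(\lambda,s)$ of class $C^r$.
 \item Uniqueness together with $F(\lambda,0)=0$ gives $w(\lambda,0)=0$. Hence $w(\lambda,s)=s\tilde z(\lambda,s)$ with $\tilde z$ continuous and $\tilde z(\lambda_0,0)=0$, because $\partial_s w(\lambda_0,0)=-(L|_Z)^{-1}QL\phi_0=0$.
 \item A nontrivial solution has $s\neq0$, since $s=0$ forces $w=0$ and so $x=0$. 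For such a solution, $x=s(\phi_0+\tilde z)$ with $\tilde z$ small, so $G(s,\lambda,\tilde z)=0$ with $(s,\lambda,\tilde z)$ near $(0,\lambda_0,0)$.
 \item The uniqueness part of the implicit function theorem in Step 2 then gives $\lambda=\lambda(s)$ and $\tilde z=z(s)$.
\end{itemize}
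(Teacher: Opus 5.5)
The paper gives no proof of this statement: it quotes the theorem from Crandall--Rabinowitz \cite{CR} and uses it as a black box in the proof of Theorem~\ref{thm:main-bifur}. Your argument is correct and is essentially the classical proof from \cite{CR}. You desingularize via $G(s,\lambda,z)=s^{-1}F(\lambda,s(\phi_0+z))$ and invert $(\mu,\zeta)\mapsto\mu y_0+L\zeta$ using (iii). You then use a Lyapunov--Schmidt reduction to show that nearby nontrivial zeros have $Z$-component $O(|s|)$, so the uniqueness part of the implicit function theorem places them on the curve.
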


\subsection{Weighted Fredholm theory}\label{subsec:fredholm}
Weighted Fredholm theory on cylindrical ends is classical for differential and pseudodifferential operators; see \cite{KohrNistorWendland,LockhartMcOwen1985}. In this subsection, we give a direct proof for the present critical symbols.

Let $G\subset O(n)$ be a closed subgroup.  Recall that $\mathcal{H}_k$ is the space of spherical harmonics of degree $k$ and $\{Y_{k,\ell}\}_{\ell}$ is an orthonormal basis of $\mathcal{H}_k$. Define
\begin{equation}\label{eq:kG}
	k_G:=\min\{k\ge1:\mathcal{H}_k^G\ne\{0\}\},
\end{equation}
with the convention $k_G=\infty$ if no positive degree occurs. Here, $$\mathcal H_k^G
 =\left\{Y\in\mathcal H_k:
 Y(g\omega)=Y(\omega)
 \text{ for all }g\in G,\ \omega\in\Sph^{n-1}\right\}.$$  A superscript $G$ means $G$-invariance in $\omega$. 

Choose
\begin{equation}\label{eq:weight-condition}
	0<\nu<\min\{2,k_G,n\alpha\}.
\end{equation}

For a function $v$, we write its spherical decomposition as follows:
\begin{equation}\label{v spher decom}
	v(t,\omega)=\sum_{k=0}^\infty\sum_\ell v_{k,\ell}(t)Y_{k,\ell}(\omega).
\end{equation}
For $r\in\R$, the norm of $H^r(\Cyl)$ can be rewritten as
\begin{equation*}
\norm{v}_{H^r}^2=\sum_{k=0}^\infty\sum_\ell\int_\R(1+\xi^2+\lambda_k)^r\abs{\widehat{v_{k,\ell}}(\xi)}^2\dd\xi.
\end{equation*}
  Set
\begin{equation}\label{eq:weighted-sobolev}
	H_\nu^r(\Cyl):=\left\{u:e^{\nu t}u\in H^r(\Cyl),\ e^{-\nu t}u\in H^r(\Cyl)\right\},
\end{equation}
with norm
\begin{equation}\label{eq:weighted-norm}
	\norm{u}_{H_\nu^r}^2:=\norm{e^{\nu t}u}_{H^r}^2+\norm{e^{-\nu t}u}_{H^r}^2.
\end{equation}

We choose the domain and range of $L_\alpha$ to be
\begin{equation}\label{eq:X-Y}
	\cX_\nu^{G,e}:=\R\oplus H_\nu^{n}(\Cyl)^{G,e},
	\qquad \cY_\nu^{G,e}:=L^2_\nu(\Cyl)^{G,e},
\end{equation}
Here $H_\nu^k(\Cyl)$ is the weighted Sobolev space defined in
\eqref{eq:weighted-sobolev}. For each integer $k\geq0$, we write
\begin{equation*}
 H_\nu^k(\Cyl)^{G,e}
 :=\left\{
 u\in H_\nu^k(\Cyl):
 \begin{array}{l}
 u(t,g\omega)=u(t,\omega)\quad\text{for every }g\in G,\\
 u(-t,\omega)=u(t,\omega)
 \end{array}
 \right\}    
\end{equation*}
and set
\begin{equation*}
    L_\nu^2(\Cyl)^{G,e}:=H_\nu^0(\Cyl)^{G,e}.
\end{equation*}
Thus the superscripts $G$ and $e$ indicate $G$-invariance in $\omega$ and evenness in $t$, respectively.   A function in $\cX_\nu^{G,e}$ has a unique decomposition $c+\phi$ with $\phi\in H_\nu^{n}(\Cyl)^{G,e}$, so we can define the norm
\begin{equation*}
	\norm{c+\phi}_{\cX_\nu^{G,e}}^2
	:=|c|^2+\norm{\phi}_{H_\nu^{n}}^2.
\end{equation*}

The spherical averaging projection is
\begin{equation*}
	(\Pi_0v)(t):=\frac{1}{|\Sph^{n-1}|}\int_{\Sph^{n-1}}v(t,\omega)\dd\omega,
\end{equation*}
and we write $\Pi_\perp:=I-\Pi_0$. The main Fredholm property is the following.
\begin{proposition}\label{prop:main-fredholm}
Assume $n\ge3$, $\alpha>0$ and \eqref{eq:weight-condition} holds.	Then
\begin{equation}\label{eq:linearized-map}
		{L}_\alpha=P_n-W_\alpha:	\cX_\nu^{G,e}\longrightarrow\cY_\nu^{G,e}
	\end{equation}
	is a bounded Fredholm operator of index zero. 
	
	For the operator $P_n$, we have 
	\begin{equation}\label{eq:free-kernel}
		\Ker\bigl(P_n:\cX_\nu^{G,e}\to\cY_\nu^{G,e}\bigr)=\Span\{1\},
	\end{equation}
	and
	\begin{equation}\label{eq:free-range}
		\Ran\bigl(P_n:\cX_\nu^{G,e}\to\cY_\nu^{G,e}\bigr)=\left\{f\in\cY_\nu^{G,e}: \int_\R	\int_{\Sph^{n-1}}f(t,\omega)\dd\omega\dd t=0\right\}.
	\end{equation}
	On the nonradial $G$-invariant subspace, $P_n: \Pi_\perp H_\nu^n(\Cyl)^{G,e}\to \Pi_\perp L_\nu^2(\Cyl)^{G,e}$ is an isomorphism.
\end{proposition}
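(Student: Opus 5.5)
We will prove the free statements \eqref{eq:free-kernel}, \eqref{eq:free-range} and the nonradial isomorphism first. Then we will obtain the Fredholm property of $L_\alpha$ by a compact perturbation. Throughout we work mode by mode via \eqref{v spher decom}. On $\mathcal H_k$, $P_n$ acts as the multiplier $A_{n,k}$ with symbol $a_{n,k}(\xi)$. Conjugating by the weights $e^{\pm\nu t}$ turns the weighted problem into an unweighted one with shifted symbol $a_{n,k}(\xi\mp i\nu)$, since $e^{\nu t}A_{n,k}e^{-\nu t}$ has symbol $a_{n,k}(\xi+i\nu)$. The zeros of the meromorphic continuation of $a_{n,k}$ are the poles of $\Gamma((k\pm i\xi)/2)$, i.e. $\xi=\pm i(k+2j)$, $j\ge0$. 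Thus on the strip $|\Im\xi|\le\nu$, the condition \eqref{eq:weight-condition} guarantees the following:
\begin{itemize}
\item for $k\ge k_G$ (the only nonradial $G$-invariant modes), $a_{n,k}$ has no zeros since $\nu<k_G\le k$;
\item for $k=0$, the only zero is the double zero at $\xi=0$, the next being $\pm 2i$, excluded by $\nu<2$.
\end{itemize}
Moreover, Stirling's formula gives $|a_{n,k}(\xi+i\eta)|\ge c(1+|\xi|^2+\lambda_k)^{n/2}$ uniformly for $|\eta|\le\nu$ and $k\ge k_G$. This lower bound, uniform in $k$ (which requires a Stirling estimate for the Gamma quotient with large parameter $k$, and for even $n$ is immediate from \eqref{a 2m symbol}), is the step I expect to need the most care.

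\textbf{Nonradial part.} The uniform symbol bound gives $\|e^{\pm\nu t}v\|_{H^n}\le C\|e^{\pm\nu t}P_nv\|_{L^2}$. Inverting each mode by the multiplier $1/a_{n,k}(\xi\mp i\nu)$ constructs the inverse. One must check that the two weighted inverses agree. They do, because both equal convolution with $G_{n,k}$, which decays like $e^{-k|t|}$ with $k>\nu$, so the inverse is a single operator. Evenness in $t$ and $G$-invariance are preserved because $P_n$ commutes with reflection and rotations. This proves that $P_n:\Pi_\perp H^n_\nu{}^{G,e}\to\Pi_\perp L^2_\nu{}^{G,e}$ is an isomorphism.

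\textbf{Radial part.} For $f\in L^2_\nu$ even, write $A_{n,0}=(-\partial_t^2)B_n$ as in \eqref{eq:radial-factor}. The operator $B_n$ is invertible on $e^{\mp\nu t}L^2$ because $b_n$ has no zeros in $|\Im\xi|\le\nu<2$; its inverse is $g_n*$, with $g_n$ decaying like $e^{-2|t|}$. It remains to solve $-\phi''=h:=g_n*f$ with $\phi$ decaying like $e^{-\nu|t|}$ at both ends, up to the constant in $\cX$. The solution $\phi(t)=-\int_{-\infty}^t\int_{-\infty}^s h$ decays at $-\infty$. It tends to a constant at $+\infty$ (which is absorbed into $\R\oplus$) exactly when $\int h=0$, and $\int h=\int f/\beta_n$. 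Evenness makes $\phi'(t)\to0$ at both ends compatible. Hence the range is the zero-mean condition \eqref{eq:free-range}, and the kernel consists of affine functions in $\cX$ (Lemma~\ref{lem:tempered-radial-kernel}). Since weighted-decaying plus constant excludes $at$ with $a\neq0$, the kernel is $\Span\{1\}$, giving \eqref{eq:free-kernel}. Consequently $P_n:\cX^{G,e}_\nu\to\cY^{G,e}_\nu$ is Fredholm with one-dimensional kernel and codimension-one range, so it has index zero; boundedness is clear since $P_n1=0$.

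\textbf{Perturbation.} Finally, $W_\alpha$ decays like $e^{-n\alpha|t|}$ by Lemma~\ref{lem:V infty}, with $n\alpha>\nu$. Multiplication by $W_\alpha$ maps $\cX^{G,e}_\nu$ into $\cY^{G,e}_\nu$, and it maps the constant to $W_\alpha\in L^2_\nu$. It is compact: on $H^n_\nu$, it is the composition of the compact embedding $H^n([-T,T]\times\Sph^{n-1})\hookrightarrow L^2$ with small tails, since $\sup_{|t|>T}W_\alpha e^{2\nu|t|}\to0$. Hence $L_\alpha=P_n-W_\alpha$ is Fredholm of index zero.
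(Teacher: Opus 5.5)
Your proposal is correct and follows the paper's proof essentially step by step. It uses mode-wise exponential conjugation with the uniform shifted-symbol bound \eqref{eq:shifted-symbol-estimate} and inversion by $G_{n,k}*$ for $k\ge k_G$, the factorization $A_{n,0}=(-\partial_t^2)B_n$ with $B_n^{-1}=g_n*$ on the weighted spaces plus an explicit double integration, and compact perturbation by $W_\alpha$ via cutoffs and Rellich. Inverting $B_n$ first and identifying the kernel via Lemma~\ref{lem:tempered-radial-kernel} are only harmless reorderings of the paper's argument.

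One correction in the radial step: the limit $\int_\R s\,h(s)\dd s$ of your $\phi$ at $+\infty$ cannot be ``absorbed'' into the $\R\oplus$ factor, because it must match the zero limit at $-\infty$. It is precisely the evenness of $h$ that kills this first moment, as in the paper's $\int sf=0$. Also, boundedness of $P_n$ on $H^n_\nu$ needs the upper half of the Stirling symbol bound, not only the lower half you state.
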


Before going to the proof, we need some facts about the Fourier symbol $a_{n,k}(\xi)$. By \eqref{eq:mode-symbol}, we see that for $k\ge1$, $a_{n,k}(\xi)$ is holomorphic and nonzero in the strip $|\operatorname{Im}\xi|<k$.
For odd $n$, the zeros of $a_{n,k}$ are
\begin{equation*}
 \xi=\pm i(k+2j),\qquad j=0,1,2,\ldots.   
\end{equation*}
For even $n=2m$, \eqref{a 2m symbol} shows that the zeros of $a_{2m,k}$ are $\xi=\pm i(k+2j)$ with $j=0,\ldots,m-1$.

For the radial symbol, the only zero in $|\Im\zeta|<2$ is the double zero at $\zeta=0$. Consequently,
\begin{equation*}
 b_n(\zeta)=\frac{a_{n,0}(\zeta)}{\zeta^2} \quad\text{is analytic and nonzero in }|\Im\zeta|<2,
\end{equation*}
where the value at $\zeta=0$ is defined by continuity. This explains the first two restrictions on $\nu$ in \eqref{eq:weight-condition}.

By Stirling's formula, if $\operatorname{Re}(z)>c_0>0$, then we have
\begin{equation*}
	\abs{\frac{\Gamma(z+n/2)}{\Gamma(z)}}\sim (1+|z|)^{n/2}.
\end{equation*}
 Therefore, by \eqref{eq:mode-symbol}, there is a constant $C\ge1$, depending only on $n,k_G$ and $\nu$, such that for every $k\ge k_G$, $\xi\in\R$ and $|\sigma|\le\nu$,
\begin{equation}\label{eq:shifted-symbol-estimate}
	C^{-1}(1+k^2+\xi^2)^{n/2}
	\le \abs{a_{n,k}(\xi+i\sigma)}
	\le C(1+k^2+\xi^2)^{n/2}.
\end{equation}
Moreover, if $0<\nu<2$, then
\begin{equation}\label{eq:shifted-b-estimate}
	C^{-1}(1+\xi^2)^{(n-2)/2}
	\le \abs{b_n(\xi+i\sigma)}
	\le C(1+\xi^2)^{(n-2)/2}
\end{equation}
for $\xi\in\R$ and $|\sigma|\le\nu$.

\begin{proof}[Proof of Proposition \ref{prop:main-fredholm}]
We first determine the kernel and range of $P_n$ by treating radial and nonradial functions separately. We then obtain the Fredholm property and index of $L_\alpha$ by compact perturbation.

We now prove the weighted estimate for the nonradial component.  Let $v$ be smooth and compactly supported in $t$, and suppose that $v_{k,\ell}\neq 0$ for only finitely many pairs $(k,\ell)$. Here,  $v_{k,\ell}$ is the coefficient in (\ref{v spher decom}). For $\sigma=\pm\nu$, exponential conjugation and the Fourier convention give
	\begin{equation}\label{eq:conjugation}
		\widehat{e^{\sigma t}(P_nv)_{k,\ell}}(\xi)
		=a_{n,k}(\xi+i\sigma)\widehat{e^{\sigma t}v_{k,\ell}}(\xi).
	\end{equation}
	Indeed, multiplication by $e^{\sigma t}$ corresponds to shifting the Fourier variable to $\xi+i\sigma$.  Since $1+\lambda_k+\xi^2\sim 1+k^2+\xi^2$ uniformly in $k$, applying Plancherel's theorem, summing in $k$ and $\ell$ and using \eqref{eq:shifted-symbol-estimate} yields
	\begin{equation}\label{Pn H perp}
	C^{-1}\norm{v}_{H_\nu^{n}}\le	\norm{P_n v}_{L_\nu^2}\le C\norm{v}_{H_\nu^{n}}, \quad\text{whenever } \Pi_0v=0
	\end{equation}
for some constant $C>1$.  Smooth functions in $\Pi_\perp H_\nu^n(\Cyl)^{G,e}$ that are compactly supported in $t$ and whose spherical-harmonic expansions involve only finitely many degrees form a dense subspace. Hence \eqref{Pn H perp} is valid for all $v\in \Pi_\perp H_\nu^n(\mathcal{C})^{G,e}$.
	
We now prove surjectivity onto
$\Pi_\perp L_\nu^2(\Cyl)^{G,e}$.  Smooth functions in this space that are compactly supported in $t$ and whose spherical-harmonic expansions involve only finitely many degrees form a dense subspace. Let $f$ be such a function. Expand $f$ in spherical harmonics and define $v_{k,\ell}=G_{n,k}*f_{k,\ell}$. Then we have $\widehat{v_{k,\ell}}(\xi)=\frac{\widehat{f_{k,\ell}}(\xi)}{a_{n,k}(\xi)}$. Defining $v$ by \eqref{v spher decom}, the preceding argument shows that $P_nv=f$ and \eqref{Pn H perp} holds. For a general $f\in \Pi_\perp L_\nu^2(\mathcal{C})^{G,e}$, the same result follows from approximation. Thus, $P_n$ is surjective on the nonradial $G$-invariant subspace. In conclusion, $P_n: \Pi_\perp H_\nu^n(\Cyl)^{G,e}\to \Pi_\perp L_\nu^2(\Cyl)^{G,e}$ is an isomorphism.

\medskip

We now prove the solvability criterion and weighted estimate for $-y''=f$.  Since $a_{n,0}(\xi)=\xi^2b_n(\xi)$ has a double zero at zero, we first consider the operatoe $-\partial_t^2$. If $y\in H_\nu^2(\R)$ and $-y''=f$, the decay of $y'$ gives $\int_\R f=0$.
Conversely, let $f\in L_\nu^2(\R)^e$ have zero integral. Cauchy--Schwarz gives $sf(s)\in L^1$ and evenness gives $\int_\R sf(s)\dd s=0$. The absolutely convergent integral
\begin{equation}\label{-y''=f}
 y(t):=-\frac12\int_\R|t-s|f(s)\dd s
\end{equation}
defines an even solution of $-y''=f$ in distributions. Since $f$ is even, for $t\ge0$ and $F(t)=e^{\nu t}f(t)$, we can write
\begin{equation*}
 e^{\nu t}y(t)=-\int_0^\infty r e^{-\nu r}F(t+r)\dd r,\qquad
 e^{\nu t}y'(t)=\int_0^\infty e^{-\nu r}F(t+r)\dd r.  
\end{equation*}
Young's inequality then implies
\begin{equation*}
    \|e^{\nu t}y\|_{L^2(0,\infty)}+\|e^{\nu t}y'\|_{L^2(0,\infty)}\le C_\nu \|e^{\nu t}f\|_{L^2(0,\infty)}.
\end{equation*}
Together with $-y''=f$ and evenness, this yields
\begin{equation}\label{eq:second-derivative-estimate}
 \norm{y}_{H_\nu^2(\R)}\le C\norm{f}_{L_\nu^2(\R)}.
\end{equation}

We now prove that $B_n:H_\nu^n(\R)^e\to H_\nu^2(\R)^e$ is an isomorphism.  Since $\nu<2$, we have $e^{\pm\nu t}g_n\in L^1(\R)$.
Exponential conjugation and \eqref{eq:shifted-b-estimate}
therefore give
\[
 \norm{B_n\phi}_{H_\nu^2}\le C\norm{\phi}_{H_\nu^n},
 \qquad
 \norm{g_n*y}_{H_\nu^n}\le C\norm{y}_{H_\nu^2}.
\]
These operators are inverse Fourier multipliers and preserve evenness. This proves that $B_n:H_\nu^n(\R)^e\to H_\nu^2(\R)^e$ is an isomorphism.

We now prove the kernel and range formulas for $A_{n,0}$.  Identify $(c,\phi)\in\R\times H_\nu^n(\R)^e$ with $c+\phi$. Since $A_{n,0}(c+\phi)=-(B_n\phi)''$, a kernel element has $B_n\phi$ affine and in $H_\nu^2$, hence $B_n\phi=0$ and $\phi=0$. Thus
\begin{equation}\label{eq:radial-kernel}
 \Ker A_{n,0}=\Span\{1\}.
\end{equation}
Every range element has zero integral. Conversely, for an even $f$ with zero integral, take $y$ from \eqref{-y''=f} and $\phi=B_n^{-1}y$. Then $A_{n,0}\phi=f$ and by \eqref{eq:second-derivative-estimate},
\begin{equation*}
    \norm{\phi}_{H_\nu^n}\le C\norm{y}_{H_\nu^2}\le C\norm{f}_{L_\nu^2}.
\end{equation*}
Therefore
\begin{equation}\label{eq:radial-range}
 \Ran A_{n,0}=\left\{f\in L_\nu^2(\R)^e:\int_\R f(t)\dd t=0\right\}.
\end{equation}
Integration is a nonzero continuous functional on $L_\nu^2(\R)^e$, so the range is closed with codimension one and $\ind A_{n,0}=0$.  Combining this with the nonradial isomorphism gives \eqref{eq:free-kernel},
\eqref{eq:free-range} and $\ind P_n=0$.

We now prove that multiplication by $W_\alpha$ is compact. The result for $L_{\alpha}$ then follows by compact perturbation. Normality gives $W_\alpha(t)=O(e^{-n\alpha|t|})$. Since $\nu<n\alpha$, $W_\alpha\in L_\nu^2(\Cyl)$, so $c\mapsto cW_\alpha$ is bounded and finite rank. Choose smooth cutoffs $0\le\chi_R\le1$, equal to one for $|t|\le R$ and zero for $|t|\ge2R$. Rellich's theorem on bounded cylinders makes $\phi\mapsto\chi_RW_\alpha\phi$ compact from $H_\nu^n(\Cyl)$ to $L_\nu^2(\Cyl)$ and
\begin{equation*}
 \norm{(1-\chi_R)W_\alpha\phi}_{L_\nu^2}
 \le \sup_{|t|>R}|W_\alpha(t)|\,\norm{\phi}_{L_\nu^2}
 \le Ce^{-n\alpha R}\norm{\phi}_{H_\nu^n}.
 \end{equation*}

Operator-norm convergence and the finite-rank constant part therefore give compactness of multiplication by $W_\alpha$ from $\cX_\nu^{G,e}$ to $\cY_\nu^{G,e}$. The compact perturbation theorem \cite[Chapter~IV, Theorem~5.26]{Kato1995} now shows that $L_\alpha=P_n-W_\alpha$ is Fredholm with $\ind L_\alpha=\ind P_n=0$. This completes the proof.
\end{proof}

We next prove smooth dependence of the radial solution on $\alpha$. Since its asymptotic slope varies with $\alpha$, we first remove this prescribed linear part. For this purpose, we construct a fixed smooth even function $\tau$ that behaves like $|t|$ at infinity and work with $v_\alpha+\alpha\tau$ in a fixed weighted space.

Take a smooth even function $\eta\in C_c^\infty(\R)$ such that
\begin{equation*}
	\int_\R \eta(t)\dd t=1.
\end{equation*}
Define
\begin{equation}\label{def tau}
	\tau(t)=2\beta_n (\mathcal{K}_n*\eta)(t)+2\beta_n\int_0^\infty sg_n(s)\dd s.
\end{equation}

\begin{corollary}\label{coro v al smooth}
	$v_\alpha$ depends smoothly on $\alpha$. To be more precise, we fix $\alpha_0>0$ and $0<\nu<\min\{1,n\alpha_0\}$. Then $0<\nu<\min\{1,n\alpha\}$ for all $\alpha\in I$, a small neighborhood of $\alpha_0$. For $\tau(t)$ defined in \eqref{def tau}, the map
    \begin{equation*}
        \alpha\mapsto v_\alpha+\alpha\tau 
    \end{equation*}
    is smooth from $I$ to $\R\times H_\nu^n(\R)^e$
\end{corollary}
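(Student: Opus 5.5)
We plan to obtain the smooth dependence from the implicit function theorem, applied to the radial equation written in the fixed weighted space $\R\oplus H_\nu^n(\R)^e$. Write $v=c+\phi-\alpha\tau$ with $(c,\phi)\in\R\times H_\nu^n(\R)^e$. First we record the basic properties of $\tau$. Since $A_{n,0}(-\mathcal K_n)=\delta_0$ by \eqref{eq:K-fundamental-revised}, we have $A_{n,0}\tau=-2\beta_n\eta$. By \eqref{eq:K-linear-revised} and the compact support of $\eta$, $\tau(t)=|t|+O(e^{-2|t|})$, and $\tau$ is smooth and even. Consequently $e^{nv}=e^{nc}e^{n\phi}e^{-n\alpha\tau}$ decays like $e^{-n\alpha|t|}$, and for $\nu<n\alpha$ it lies in $L^2_\nu$.

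The map we will use is
\[
\mathcal N(\alpha,c,\phi):=A_{n,0}\phi+2\alpha\beta_n\eta-(n-1)!\,e^{nc}e^{n\phi}e^{-n\alpha\tau}\in L_\nu^2(\R)^e ,
\]
and $\mathcal N=0$ is exactly \eqref{eq:radial-equation} for $v$. Smoothness of $\mathcal N$ on $I\times\R\times H^n_\nu(\R)^e$ holds for the following reasons. $H_\nu^n\subset L^\infty$ with $n\ge3>1/2$, so the Nemytskii map $\phi\mapsto e^{n\phi}$ is smooth from $H^n_\nu$ into $L^\infty$. Multiplication by $e^{-n\alpha\tau}$ then maps $L^\infty$ smoothly into $L^2_\nu$. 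Derivatives in $\alpha$ produce factors $\tau^j e^{-n\alpha\tau}$, which still lie in $L^2_\nu$ because $\nu<n\alpha$ uniformly on $I$. The target must carry the evenness constraint, and this is automatic.

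Next we identify the base point and the linearization. By Theorem~\ref{thm radial}, translated to be even, $v_{\alpha_0}$ satisfies $v_{\alpha_0}+\alpha_0\tau=c_0+\phi_0$ with $\phi_0\in H^n_\nu$. This uses the asymptotics \eqref{eq:v toinfty}, upgraded to exponential rate as in Lemma~\ref{lem:V infty} (which gives $O(e^{\pm\delta t})$ with $\delta<\min\{1,n\alpha\}$, explaining the restriction $\nu<1$), together with regularity for the derivatives. The partial differential in $(c,\phi)$ at this point is $(\dot c,\dot\phi)\mapsto A_{n,0}\dot\phi-W_{\alpha_0}(\dot c+\dot\phi)=L_{\alpha_0,0}(\dot c+\dot\phi)$ on $\R\oplus H^n_\nu(\R)^e$. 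By the radial part of Proposition~\ref{prop:main-fredholm} (the kernel, range and index computation for $A_{n,0}$ plus compactness of $W_\alpha$), this operator is Fredholm of index zero into $L^2_\nu(\R)^e$. It therefore suffices to show that it is injective. A kernel element $h=\dot c+\dot\phi$ is bounded and solves $L_{\alpha_0,0}h=0$, so Proposition~\ref{prop:radial-nondegeneracy} gives $h\in\Span\{v_{\alpha_0}'\}$. Since $v_{\alpha_0}'$ is odd and $h$ is even, $h=0$. Thus the differential is an isomorphism, and the implicit function theorem yields a smooth branch $\alpha\mapsto(c(\alpha),\phi(\alpha))$ near $\alpha_0$.

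Finally, we identify the branch with $v_\alpha+\alpha\tau$. Each point of the branch gives an even, bounded-perturbation-of-$-\alpha|t|$ solution of \eqref{eq:radial-equation}, hence a radial normal solution by Lemma~\ref{lem:removability-normality}. By the uniqueness in Theorem~\ref{thm radial}, it coincides with the even representative $v_\alpha$, since evenness fixes the translation. The main obstacle is the smoothness of $\mathcal N$ in $\alpha$, specifically that the $\alpha$-derivatives of $e^{-n\alpha\tau}$ introduce polynomial weights in $\tau$. This must be absorbed using the strict inequality $\nu<n\alpha$ on the closure of $I$, which we arrange by shrinking $I$. The secondary point needing care is the exponential-rate asymptotics showing that $\phi_0\in H^n_\nu$.
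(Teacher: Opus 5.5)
Your core argument is the paper's. The map $\mathcal N$ is the paper's $\mathscr F$. Invertibility of $D_{(c,\phi)}\mathcal N=L_{\alpha_0,0}$ on $\R\oplus H^n_\nu(\R)^e$ is reduced to injectivity by the radial Fredholm theory, and injectivity comes from Proposition~\ref{prop:radial-nondegeneracy} plus parity. The branch is identified with $v_\alpha$ through uniqueness in Theorem~\ref{thm radial}.

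You diverge only in checking that the base point lies in the space, and that is the one step you leave unproved. Lemma~\ref{lem:V infty} gives exponential decay of $\phi_0$ and $\phi_0'$ only. Membership $\phi_0\in H^n_\nu$ requires $e^{\nu|t|}\phi_0^{(j)}\in L^2(\R)$ for every $j\le n$, and ``regularity for the derivatives'' is exactly that missing part. It can be supplied as follows. Set $h_\alpha:=(n-1)!e^{nv_\alpha}-2\beta_n\alpha\eta$. By \eqref{eq:strict-concavity} and $\tau''=2\beta_n g_n*\eta$, you get $\phi_0^{(j)}=-g_n*h_{\alpha_0}^{(j-2)}$ for $j\ge2$. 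Young's inequality then closes the bootstrap, using:
\begin{itemize}
\item $e^{\nu|\cdot|}g_n\in L^1$, since $\nu<2$;
\item $e^{\nu|\cdot|}h_{\alpha_0}^{(m)}\in L^2$, since $\nu<n\alpha_0$ and all derivatives of $v_{\alpha_0}$ are bounded by induction on $v_{\alpha_0}^{(i+2)}=-(n-1)!\,g_n*(e^{nv_{\alpha_0}})^{(i)}$.
\end{itemize}

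The paper avoids pointwise asymptotics entirely. It observes three facts about $h_\alpha$: it is even; it lies in $L^2_\nu$ merely because $e^{nv_\alpha}\le Ce^{-n\alpha|t|}$ and $\nu<n\alpha$; and it has zero integral by \eqref{int e nv}. So \eqref{eq:radial-range} produces $\varphi_\alpha\in H^n_\nu(\R)^e$ with $A_{n,0}\varphi_\alpha=h_\alpha=A_{n,0}(v_\alpha+\alpha\tau)$. Lemma~\ref{lem:tempered-radial-kernel} together with evenness then forces $v_\alpha+\alpha\tau-\varphi_\alpha$ to be constant. This reuses only the solvability already needed for the Fredholm step, so no derivative estimates are required.

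Relatedly, the bound $\nu<1$ is not explained by the radial asymptotics. The radial argument works for any $\nu<\min\{2,n\alpha_0\}$; the bound $\nu<1$ is imposed to match the bifurcation setting with $G=O(n-1)$, where $k_G=1$. It matters in your route only because the rate in Lemma~\ref{lem:V infty} is capped at $1$, and that cap is not sharp for radial solutions.
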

\begin{proof}
The definition of $\tau$ gives
\begin{equation*}
    A_{n,0}\tau=-2\beta_n\eta,
\end{equation*}
and by \eqref{eq:K-linear-revised}, we see that
\begin{equation*}
	\tau(t)=|t|+O(e^{-2|t|}),\qquad |t|\to\infty.
\end{equation*}

By \eqref{eq:v al infty} and \eqref{int e nv}, the function $h_\alpha:=(n-1)!e^{nv_\alpha}-2\beta_n\alpha\eta$ satisfies
\begin{equation*}
	h_\alpha\in L_\nu^2(\R)^e, \qquad \int_\R h_\alpha(t)\dd t=0.
\end{equation*}
Equations \eqref{eq:radial-kernel} and \eqref{eq:radial-range} imply that $A_{n,0}$ is an isomorphism from ${H}^n_{\nu}(\R)^e$ onto the zero-mean subspace of $L^2_\nu(\R)^e$. Therefore, there exists a unique $\varphi_\alpha\in {H}^n_{\nu}(\R)^e$ such that
\begin{equation*}
	A_{n,0}\varphi_\alpha=h_\alpha.
\end{equation*}
Consequently,
\begin{equation*}
	A_{n,0}(v_\alpha+\alpha\tau-\varphi_\alpha)=0.
\end{equation*}
The function in parentheses is an even tempered distribution. By Lemma \ref{lem:tempered-radial-kernel}, there is a constant $c_\alpha$ such that
\begin{equation}\label{eq:v-alphadecomposition}
	v_\alpha=-\alpha\tau+c_\alpha+\varphi_\alpha.
\end{equation}

We define the map
\begin{equation*}
	\mathscr{F}: I \times\R\times H^{n}_\nu(\R)^e
	\longrightarrow L^2_\nu(\R)^e
\end{equation*}
by
\begin{equation*}
	\mathscr{F}(\alpha,c,\varphi)=A_{n,0}\varphi+2\beta_n\alpha\eta-(n-1)!e^{n(-\alpha\tau+c+\varphi)}.
\end{equation*}
Then for $v=-\alpha\tau+c+\varphi$, $\mathscr{F}(\alpha,c,\varphi)=0$ is equivalent to the radial equation \eqref{eq:radial-equation}.

Using the embedding $H^n_\nu(\R)\to L^\infty(\R)$ and the fact that $e^{-n\alpha\tau},$ together with all of its local $\alpha$-derivatives, belongs to $L^2_\nu(\R)$, we obtain that $\mathscr{F}$ is smooth in all its arguments. Moreover, its derivative with respect to $(c,\varphi)$ is 
\begin{equation*}
    D_{(c,\varphi)}\mathscr{F}(\alpha,c,\varphi)[d,\psi]=A_{n,0}\psi-n!e^{n(-\alpha\tau+c+\varphi)}(d+\psi),
\end{equation*}
which agrees with $L_{\alpha,0}$ when $v_\alpha=-\alpha\tau+c_\alpha+\varphi_\alpha$. By Proposition~\ref{prop:radial-nondegeneracy}, the kernel of $L_{\alpha,0}$ is spanned by the odd function $v_\alpha'$, so $L_{\alpha,0}(d+\psi)=0$ implies that $d=0$ and $\psi=0$. On the other hand, Proposition \ref{prop:main-fredholm} implies that $L_{\alpha,0}$ is Fredholm of index zero, so it is an isomorphism. Therefore, by the implicit function theorem (see \cite[Theorem~15.1 and Corollary~15.1]{Deimling1985}), the map 
\begin{equation*}
	\alpha \mapsto (c_\alpha, \varphi_\alpha)
\end{equation*}
is smooth into $\R\times H^{n}_\nu(\R)^e$. This produces a unique local smooth family
\begin{equation*}
    \tilde{v}_\alpha=-\alpha\tau+\tilde c_\alpha+\tilde\varphi_\alpha.
\end{equation*}
By construction, $\tilde{v}_\alpha$ is an even normal solution of \eqref{eq:radial-equation}. The uniqueness therefore gives $\tilde{v}_\alpha=v_\alpha$. This proves the smooth dependence.
\end{proof}

\medskip
Using Theorem \ref{thm e value L}, we obtain the following corollary at the bifurcation point $\alpha_{n,k}$.
\begin{corollary}\label{cor:crossing-range}
	Let $\alpha_*=\alpha_{n,k}$ with $k\ge2$. Assume that $\dim\mathcal{H}_k^G=1$. Let $Y_k$ span $\mathcal{H}_k^G$ and let	$\psi_{\alpha_*,k}$ span $\Ker L_{\alpha_*,k}$. Set
	\begin{equation*}
		\Phi_k(t,\omega):=\psi_{\alpha_*,k}(t)Y_k(\omega).
	\end{equation*}
	Then for $L_{\alpha_*}: \cX_\nu^{G,e} \to \cY_\nu^{G,e}$, we have
	\begin{equation}\label{eq:crossing-kernel}
		\Ker L_{\alpha_*}=\Span\{\Phi_k\},
	\end{equation}
	and
	\begin{equation}\label{eq:crossing-range}
		\Ran L_{\alpha_*}=\left\{f\in\cY_\nu^{G,e}:\int_\Cyl f(t,\omega)\Phi_k(t,\omega)\dd t\dd\omega=0\right\}.
	\end{equation}
\end{corollary}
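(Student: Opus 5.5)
We decompose $\cX_\nu^{G,e}$ and $\cY_\nu^{G,e}$ into spherical modes, using that $L_{\alpha_*}=P_n-W_{\alpha_*}$ commutes with the projections onto each $\mathcal H_k^G$. For $h=c+\phi\in\Ker L_{\alpha_*}$, write $h=\sum_{j}h_j(t)Y^{(j)}(\omega)$ with $Y^{(j)}$ ranging over $G$-invariant spherical harmonics. Then each coefficient satisfies $L_{\alpha_*,j}h_j=0$, is even in $t$, and is bounded, since $H^n_\nu$ embeds in $L^\infty$ and the constant $c$ lies only in the radial mode.

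\textbf{Radial mode.} Proposition~\ref{prop:radial-nondegeneracy} gives $h_0\in\Span\{v_{\alpha_*}'\}$. Since $v_{\alpha_*}'$ is odd while $h_0$ is even, $h_0=0$.

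\textbf{Modes $j\ge1$.} Lemma~\ref{lem bdd=spec} puts $h_j$ in the $L^2$ kernel. By Theorem~\ref{thm e value L} and Proposition~\ref{thm:mu2}, this kernel is nontrivial only if $\alpha_*=\alpha_{n,j}$, and distinctness of the $\alpha_{n,j}$ forces $j=k$. For $j=1$, the proof of Corollary~\ref{coro:nondegen} (strict monotonicity of $\mu_1(\cdot,1)$ with $\mu_1(1,1)=1$) gives a kernel only at $\alpha=1<\alpha_*$. In mode $k$ the kernel is one-dimensional, spanned by the even function $\psi_{\alpha_*,k}$. Since $\dim\mathcal H_k^G=1$, the kernel is $\Span\{\Phi_k\}$. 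Finally, $\Phi_k$ does belong to $\cX_\nu^{G,e}$: because $\nu<k_G\le k$ and the symbol $a_{n,k}$ is nonvanishing on the strip $|\Im\xi|<k$, the representation $\psi=A_{n,k}^{-1}(W\psi)$ gives $e^{\pm\nu t}\psi\in H^n$. This proves \eqref{eq:crossing-kernel}.

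\textbf{Range.} By Proposition~\ref{prop:main-fredholm}, $L_{\alpha_*}$ is Fredholm of index zero, so $\operatorname{codim}\Ran L_{\alpha_*}=1$ and the range is closed. The pairing $f\mapsto\int_\Cyl f\Phi_k$ is continuous on $\cY_\nu^{G,e}$, because $\Phi_k$ decays like $e^{-k|t|}$ and $\nu<k$. It vanishes on the range: for $h=c+\phi$, formal self-adjointness gives $\int (P_n h-W h)\Phi_k=\int h\,L_{\alpha_*}\Phi_k=0$, and the constant part is harmless since $\Phi_k$ has zero spherical mean. The pairing is also nonzero on $\cY_\nu^{G,e}$, for example at $f=\chi\Phi_k$ with $\chi$ a cutoff. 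Its kernel is therefore a closed codimension-one subspace containing the range, so the two coincide, which is \eqref{eq:crossing-range}.

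The main obstacle is justifying the integration by parts in weighted spaces, since $\phi$ may grow like $e^{\nu|t|}$. The plan is to work mode by mode in Fourier variables: $\phi_k$ lies in $e^{\nu|t|}H^n$, $\psi$ decays like $e^{-k|t|}$, and the pairing $\langle A_{n,k}\phi_k,\psi\rangle=\langle\phi_k,A_{n,k}\psi\rangle$ can be verified by shifting contours within the analyticity strip $|\Im\xi|<k$.
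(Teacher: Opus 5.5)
Your proposal is correct and follows the paper's proof. You place $\Phi_k$ in the weighted space via $\psi_{\alpha_*,k}=A_{n,k}^{-1}(W_{\alpha_*}\psi_{\alpha_*,k})$, read off the kernel from Theorem~\ref{thm e value L} (spelling out the radial-mode parity argument and the $k=1$ case that the paper leaves implicit), and get the range from self-adjointness plus index zero.

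One correction: by \eqref{eq:weighted-sobolev}, elements of $H^n_\nu$ decay like $e^{-\nu|t|}$ rather than grow, so $\phi$, $f$ and $\Phi_k$ all lie in the unweighted spaces. Continuity of the pairing is then just Cauchy--Schwarz, and the identity $\langle A_{n,k}\phi_k,\psi_{\alpha_*,k}\rangle=\langle \phi_k,A_{n,k}\psi_{\alpha_*,k}\rangle$ follows directly from self-adjointness of $A_{n,k}$ on $L^2$ with domain $H^n$, with no contour shifting needed.
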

\begin{proof}
    We first show that the eigenfunction $\psi_{\alpha_*,k}$ is in the desired weighted space. In fact, since $W_\alpha\psi_{\alpha_*,k}\in L^2_\nu(\R)$, the equation $\psi_{\alpha_*,k}=A_{n,k}^{-1}(W_\alpha \psi_{\alpha_*,k})$ and the weighted inverse estimate in the proof of Proposition \ref{prop:main-fredholm} show that $\psi_{\alpha_*,k}\in H^n_{\nu}(\R)$ for $\nu<\min\{1,n\alpha_*\}$. Then \eqref{eq:crossing-kernel} follows directly from Theorem \ref{thm e value L}. Now let $u=c+\phi\in \cX_\nu^{G,e}$. Since $\Phi_k$ is orthogonal to constants, by self-adjointness, we have
    \begin{equation*}
        \int_{\Cyl}L_{\alpha_*}(c+\phi)\Phi_k=\int_{\Cyl}L_{\alpha_*}(\Phi_k)\phi=0.
    \end{equation*}
Consequently,
\begin{equation*}
    \Ran L_{\alpha_*}\subseteq\left\{f\in\cY_\nu^{G,e}:\int_\Cyl f(t,\omega)\Phi_k(t,\omega)\dd t\dd\omega=0\right\}.
\end{equation*}
Since $L_{\alpha_*}$ has index zero and $\Ker L_{\alpha_*}$ is one-dimensional, $\Ran L_{\alpha_*}$ has codimension $1$. Then \eqref{eq:crossing-range} follows.
\end{proof}
\subsection{Transversality}\label{sec:transversality}

In view of Corollary \ref{coro v al smooth}, we can define
\begin{equation*}
 q_\alpha:=\partial_\alpha v_\alpha.
\end{equation*}
Then $q_\alpha$ is even because $v_\alpha$ is even. Differentiating \eqref{eq:radial-equation} with respect to $\alpha$ gives
\begin{equation}\label{eq:q-jacobi}
 A_{n,0}q_\alpha=W_\alpha q_\alpha.
\end{equation}
Differentiating \eqref{eq:v-alphadecomposition} and using
Corollary~\ref{coro v al smooth} gives
\[
 q_\alpha=-\tau+\partial_\alpha c_\alpha
 +\partial_\alpha\varphi_\alpha,
 \qquad
 \partial_\alpha\varphi_\alpha\in H_\nu^n(\R)^e.
\]
Since $H_\nu^n(\R)\hookrightarrow L^\infty(\R)$ and $\tau(t)=|t|+O(e^{-2|t|})$, this yields $q_\alpha(t)=-|t|+O(1)$ as $|t|\to\infty$.

\begin{lemma}\label{thm:cumulative-positivity}

For $t\ge0$, define
\begin{equation*}
 M(t):=\int_0^tW_\alpha(s)q_\alpha(s)\dd s.
\end{equation*}  
Then $M(t)>0$ for all $t>0$.
\end{lemma}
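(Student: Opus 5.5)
The plan is to convert the sign of $M$ into the sign of an odd function $\zeta$ and then apply a nonlocal maximum principle, in the spirit of the proof of Proposition~\ref{prop:radial-nondegeneracy}. Differentiate the density Euler--Lagrange equation \eqref{eq:density-EL} in $\alpha$, which is legitimate by Corollary~\ref{coro v al smooth}.

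\textbf{Step 1 (linearized density equation).} Write $\rho=\rho_\alpha$ and $C=C_{n,\alpha}$. Since $\rho_\alpha=e^{nv_\alpha}/M_\alpha$ with $M_\alpha\propto\alpha$, the function $\sigma:=\partial_\alpha\rho_\alpha=\rho(nq_\alpha-1/\alpha)$ is even. Moreover $\int_\R\sigma=0$, because $\int\rho_\alpha=1$. Differentiating \eqref{eq:density-EL} and using $\partial_\alpha C=C/\alpha$ gives
\[
\frac{\sigma}{\rho}+C\,\mathcal K_n*\sigma+\frac{C}{\alpha}\mathcal K_n*\rho=\lambda_\alpha' .
\]
Differentiating in $t$ and using $C\mathcal K_n'*\rho=-(\log\rho)'=-nv_\alpha'$ (from \eqref{eq:density-EL}) yields
\[
\Bigl(\frac{\sigma}{\rho}+C\,\mathcal K_n*\sigma\Bigr)'=\frac{n}{\alpha}v_\alpha' .
\]

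\textbf{Step 2 (the function $\zeta$).} As in Proposition~\ref{prop:radial-nondegeneracy}, set $\zeta(t):=-\rho(t)^{-1}\int_{-\infty}^t\sigma$, so that $\sigma=-(\rho\zeta)'$ and $\zeta$ is odd. Since $\sigma$ is even with zero integral, for $t>0$ we get $\int_{-\infty}^t\sigma=\int_0^t\sigma$. A short computation using $\int_\R W_\alpha q_\alpha=2\beta_n$ (obtained by differentiating \eqref{int e nv} in $\alpha$) shows that this equals a positive multiple of $M(t)$ plus terms that can be absorbed. To avoid bookkeeping with the $-1/\alpha$ shift, I would instead define $\zeta$ directly from $W_\alpha q_\alpha$, setting $\zeta:=-c\rho^{-1}M$ for $t>0$ with $c>0$, and redo Step 1 with $\tilde\sigma:=n\rho q_\alpha$. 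The shift only adds a multiple of $\rho$, whose contribution is again proportional to $v_\alpha'$. In either formulation the claim becomes $\zeta<0$ on $(0,\infty)$.

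Repeating the integrations by parts of \eqref{Q al 1}--\eqref{Q al 2}, the left side of the differentiated equation becomes
\[
\mathcal L\zeta:=-\zeta''-(\log\rho)'\zeta'+C\int_\R g_n(t-s)\rho(s)\bigl(\zeta(t)-\zeta(s)\bigr)\dd s ,
\]
so the equation reads $\mathcal L\zeta=\kappa v_\alpha'$ for some constant $\kappa>0$. I expect the main care to go into this step: checking that the $\alpha$-derivative of the constant $M_\alpha$ and the $\frac{C}{\alpha}\mathcal K_n*\rho$ term really combine into a positive multiple of $v_\alpha'$. The right side is therefore strictly negative on $(0,\infty)$, because $v_\alpha$ is even and strictly decreasing there by Theorem~\ref{thm radial}.

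\textbf{Step 3 (boundary behaviour).} We have $\zeta(0)=0$ by oddness. Since $q_\alpha=-|t|+O(1)$ and $W_\alpha\sim e^{-n\alpha|t|}$, the limit $M(\infty)=\tfrac12\int_\R W_\alpha q_\alpha=\beta_n>0$ exists. Hence $\zeta(t)\sim -c\beta_n/\rho(t)\to-\infty$ as $t\to\infty$, and $\zeta$ is smooth by the regularity of $v_\alpha$ and $q_\alpha$.

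\textbf{Step 4 (maximum principle).} Suppose $\sup_{(0,\infty)}\zeta\ge0$. By Step 3 the supremum is attained at some $t_0>0$ with $\zeta_0:=\zeta(t_0)\ge0$, $\zeta'(t_0)=0$ and $\zeta''(t_0)\le0$. Pair $s$ with $-s$ in the nonlocal term and use that $\rho$ is even and $\zeta$ odd; the integrand over $s>0$ becomes
\[
\rho(s)\Bigl[\bigl(g_n(t_0-s)-g_n(t_0+s)\bigr)\bigl(\zeta_0-\zeta(s)\bigr)+2g_n(t_0+s)\zeta_0\Bigr].
\]
This is nonnegative, because $g_n$ is even, positive and strictly decreasing on $(0,\infty)$ (Lemma~\ref{lem:radial-kernel-g}), so that $g_n(t_0-s)\ge g_n(t_0+s)$, and because $\zeta_0\ge\zeta(s)$ for $s>0$. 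It follows that $\mathcal L\zeta(t_0)\ge0$, contradicting $\mathcal L\zeta(t_0)=\kappa v_\alpha'(t_0)<0$. Therefore $\zeta<0$ on $(0,\infty)$, which is equivalent to $M(t)>0$ for all $t>0$.
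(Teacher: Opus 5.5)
The proof does not close as assembled. Step~4 runs entirely on the strict sign $\kappa v_\alpha'(t_0)<0$, and that sign is not available for the $\zeta$ you say you would actually use. With $\tilde\sigma=n\rho q_\alpha=\sigma+\rho/\alpha$ and $\zeta=-c\rho^{-1}M$, one gets $\kappa=0$ exactly.

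To see this, set $\theta:=\rho^{-1}\int_0^t\rho$. The same integrations by parts apply; the boundary terms at $s=\pm\infty$ both equal $-\frac1{4\beta_n}$ and cancel. They give $\mathcal L\theta=-C\,\mathcal K_n'*\rho=nv_\alpha'$. So the shift by $\rho/\alpha$ contributes $-\frac n\alpha v_\alpha'$ and cancels the $+\frac n\alpha v_\alpha'$ of Step~1.

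This cancellation is forced. The equation $(\tilde\sigma/\rho+C\mathcal K_n*\tilde\sigma)'=0$ is just $L_{\alpha,0}q_\alpha=0$ rewritten. For $\zeta\propto-M/W_\alpha$, the equation $\mathcal L\zeta=0$ is the paper's identity \eqref{eq:ratio-identity} for $R=M/W_\alpha$, after dividing by $W_\alpha=C\rho$ and unfolding the half-line integrals by oddness. With $\kappa=0$, your pairing yields only $\mathcal L\zeta(t_0)\ge0$, which contradicts nothing. Since Steps~3 and~4 both use this $\zeta$, the gap is real.

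There are two repairs.

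\textbf{(a) Keep this $\zeta$ and get strictness from the nonlocal term.} If $\zeta_0>0$, the summand $2g_n(t_0+s)\zeta_0\rho(s)$ makes the integral strictly positive. If $\zeta_0=0$, then $\zeta\le0$ and $\zeta\not\equiv0$ on $(0,\infty)$, since it tends to $-\infty$. Because $g_n(|t_0-s|)-g_n(t_0+s)>0$, the first summand is then positive on a set of positive measure. This is exactly the paper's two-step argument: first exclude a negative minimum of $R$, then a zero minimum.

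\textbf{(b) Stay with your original $\sigma=\partial_\alpha\rho_\alpha$.} There Step~1 is correct with $\kappa=n/\alpha>0$, and Step~4 goes through verbatim, covering $\zeta_0>0$ and $\zeta_0=0$ at once. Two things must be redone, however:
\begin{itemize}
\item Step~3 needs a different argument, because $\int_0^\infty\sigma=0$. Here $\zeta(t)=\rho(t)^{-1}\int_t^\infty\sigma$, and $q_\alpha=-t+O(1)$ together with $\rho_\alpha\asymp e^{-n\alpha t}$ give $\zeta(t)\le-ct\to-\infty$. This replaces your claim $\zeta\sim-c\beta_n/\rho$, which does not hold for this $\zeta$.
\item The conclusion $\zeta<0$ is not equivalent to $M>0$. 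It is the stronger statement $M(t)>2\beta_n\int_0^t\rho_\alpha$, i.e.\ $\partial_\alpha\int_0^t\rho_\alpha>0$, which implies the lemma.
\end{itemize}
Route (b) is a genuinely different proof from the paper's. It replaces the two-case strong maximum principle with an inhomogeneous equation whose right side has a strict sign, and it yields a sharper monotonicity.

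One minor fix in Step~4: assume $\zeta(t_1)\ge0$ for some $t_1>0$, rather than $\sup\zeta\ge0$. A supremum of $0$ approached only as $t\downarrow0$ need not be attained.
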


\begin{proof}
We extend $M$ to an odd function on $\R$. By definition $M'=W_\alpha q_\alpha$. Applying $B_n^{-1}$ to \eqref{eq:q-jacobi} and integrating once, we obtain $-q_\alpha'=g_n*M$. In other words, we can write
\begin{equation}\label{eq:M-halfline}
 -\left(\frac{M'}{W_\alpha}\right)'(t)
 =\int_0^\infty\bigl[g_n(|t-s|)-g_n(t+s)\bigr]M(s)\dd s.
\end{equation}
The kernel $g_n(|t-s|)-g_n(t+s)$ is strictly positive because $g_n$ is strictly decreasing. 

On the other hand, since $\frac{W_\alpha'}{W_\alpha}=nv_\alpha'$, \eqref{eq:strict-concavity} gives
\begin{equation}\label{eq:N-halfline}
 -\left(\frac{W_\alpha'}{W_\alpha}\right)'(t)
 =\int_0^\infty\bigl[g_n(|t-s|)+g_n(t+s)\bigr]W_\alpha(s)\dd s.
\end{equation}

Differentiating the mass identity \eqref{eq:W-mass} gives
\begin{equation*}
 \lim_{t\to\infty}M(t)=\frac{1}{2}\int_\R W_\alpha q_\alpha\dd t=\beta_n>0.
\end{equation*}
Let $R=\frac{M}{W_\alpha}$. Then $R(0)=0$ and $R(t)\to+\infty$ as $t\to\infty$. Multiplying \eqref{eq:M-halfline} by $W_\alpha$, multiplying \eqref{eq:N-halfline} by $M$ and subtracting gives
\begin{align}\label{eq:ratio-identity}
 -\left({W_\alpha}R'\right)'(t)
 ={}&W_\alpha(t)\int_0^\infty
 \bigl[g_n(|t-s|)-g_n(t+s)\bigr]W_\alpha(s)[R(s)-R(t)]\dd s\notag\\
 &-2M(t)\int_0^\infty g_n(t+s)W_\alpha(s)\dd s.
\end{align}

Now we prove that $R\geq 0$.
If $R$ were negative somewhere, then it would have a negative global minimum at some $t_0>0$.  At $t=t_0$, the left side of \eqref{eq:ratio-identity} is nonpositive.  At $t=t_0$, the first term on the right is nonnegative, while the second is strictly positive because $M(t_0)<0$.  This is impossible.  Thus $R\ge0$.

Now we prove that $R>0$.
If $R(t_0)=0$ for some $t_0\in (0,\infty)$, then $t_0$ would be a global minimum equal to zero. The second term of the right hand side of \eqref{eq:ratio-identity} would vanish, but the first would be strictly positive because $R$ is not identically zero.  This is again impossible.  Hence $R>0$ and $M=RW_\alpha>0$ in $(0,\infty)$.

For $n=3$, $g_3$ has a logarithmic singularity at zero but
belongs to $L^1(\R)$, so its convolutions with the bounded
functions $M$ and $W_\alpha$ are continuous.
The identities therefore hold pointwise and the same minimum argument applies.
\end{proof}

Now we are in a position to prove \eqref{eq:mu-al-increasing}.

\begin{proof}[Proof of \eqref{eq:mu-al-increasing}]
The monotonicity is stated for every real $\kappa>0$. Here we only prove it for positive integer $k$, and the same argument works for all $\kappa>0$.
Let $\psi_{\alpha,k}>0$ be the even principal eigenfunction defined by \eqref{eq:generalized-eigenpair}. Then by Proposition \ref{thm:mu2}, $\partial_t\psi_{\alpha,k}(t)<0$ for $t>0$.

An integration by parts, together with
Lemma~\ref{thm:cumulative-positivity} and
$\partial_t(\psi_{\alpha,k}^2)<0$ on $(0,\infty)$, gives
\begin{equation*}
 \int_\R W_\alpha q_\alpha\psi_{\alpha,k}^2\dd t
 =-2\int_0^\infty M(t)(\psi_{\alpha,k}^2)'(t)\dd t>0.
\end{equation*}
The boundary terms vanish because $M(0)=0$, $M$ is bounded at infinity and
$\psi_{\alpha,k}$ decays exponentially.

Corollary \ref{coro v al smooth} proves smooth dependence of the radial profile $v_\alpha$ on $\alpha$ in a weighted space. Moreover, locally in $\alpha$, the exponential decay of $W_\alpha$ implies the differentiability of the integral kernel $\eqref{int kernel K}$. Hence the operator $\mathscr{K}_{\alpha,k}$ depends smoothly on $\alpha$ in operator norm. By the simplicity of the principal Birman--Schwinger eigenvalue and perturbation theory (see \cite[Chapter VII]{Kato1995}), the simple eigenvalue has a locally smooth normalized eigenfunction.

Now we can differentiate the generalized eigenvalue equation 
$A_{n,k}\psi_{\alpha,k}=\tau_{\alpha,k} W_\alpha\psi_{\alpha,k}$ with respect to $\alpha$  to obtain
\begin{equation*}
 \partial_\alpha\tau_{\alpha,k}
 =-n\tau_{\alpha,k}
 \frac{\int_\R W_\alpha q_\alpha\psi_{\alpha,k}^2\dd t}
 {\int_\R W_\alpha\psi_{\alpha,k}^2\dd t}<0.
\end{equation*}
Since $\tau_{\alpha,k}=\frac{1}{\mu_1({\alpha,k})}$, this proves \eqref{eq:mu-al-increasing}.
\end{proof}

Let $\lambda_{n,k}(\alpha)$ be the corresponding principal eigenvalue of $L_{\alpha,k}$. Since $\partial_\alpha W_\alpha=nW_\alpha q_\alpha$, we have
\begin{equation}\label{eq:lambda-transverse}
 \lambda_{n,k}'(\alpha_*)=-n\frac{\int_\R W_{\alpha_*}q_{\alpha_*}\psi_{\alpha_*,k}^2\dd t}{\int_\R\psi_{\alpha_*,k}^2\dd t}<0.
\end{equation}
Thus, by \eqref{eq:crossing-range} and \eqref{eq:lambda-transverse}, we have the following transversality property.
\begin{proposition}\label{prop:transversality}
For $\alpha_*=\alpha_{n,k}$, we have
\begin{equation*}
 \partial_\alpha L_{\alpha,k}\big|_{\alpha=\alpha_*}\psi_{\alpha_*,k}
 =-nW_{\alpha_*}q_{\alpha_*}\psi_{\alpha_*,k} \notin\Ran L_{\alpha_*,k}.
\end{equation*}
\end{proposition}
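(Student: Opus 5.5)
The plan is to reduce the statement to the one-dimensional mode-$k$ problem and then test the candidate against the kernel direction using the range characterization. By Corollary~\ref{cor:crossing-range}, applied mode by mode (equivalently, to $L_{\alpha_*,k}$ acting on even functions of $t$ alone), the range of $L_{\alpha_*,k}$ in $L^2_\nu(\R)^e$ is the annihilator of $\psi_{\alpha_*,k}$:
\begin{equation*}
 \Ran L_{\alpha_*,k}=\left\{f\in L_\nu^2(\R)^e:\int_\R f\,\psi_{\alpha_*,k}\dd t=0\right\}.
\end{equation*}
This holds because $L_{\alpha_*,k}$ is self-adjoint with respect to the $L^2$ pairing and, by Proposition~\ref{prop:main-fredholm}, Fredholm of index zero with one-dimensional kernel (Proposition~\ref{thm:mu2}). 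It therefore suffices to check two things: that $f:=-nW_{\alpha_*}q_{\alpha_*}\psi_{\alpha_*,k}$ lies in $L^2_\nu(\R)^e$, and that its pairing with $\psi_{\alpha_*,k}$ is nonzero.

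\textbf{Step 1: the derivative formula.} Since $A_{n,k}$ does not depend on $\alpha$ and $W_\alpha=n!e^{nv_\alpha}$, we have $\partial_\alpha W_\alpha=nW_\alpha q_\alpha$ with $q_\alpha=\partial_\alpha v_\alpha$. This derivative exists in the weighted topology by Corollary~\ref{coro v al smooth}, and it gives the stated identity $\partial_\alpha L_{\alpha,k}\psi=-nW_\alpha q_\alpha\psi$.

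\textbf{Step 2: membership.} We have $q_{\alpha_*}(t)=-|t|+O(1)$, and $\psi_{\alpha_*,k}$ is bounded (indeed $\psi_{\alpha_*,k}\in H^n_\nu$). Since $W_{\alpha_*}=O(e^{-n\alpha_*|t|})$ and $\nu<n\alpha_*$, the product $e^{\pm\nu t}W_{\alpha_*}q_{\alpha_*}\psi_{\alpha_*,k}$ is in $L^2$. Evenness is inherited from the evenness of all three factors.

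\textbf{Step 3: the key sign.} We need
\begin{equation*}
 \int_\R W_{\alpha_*}q_{\alpha_*}\psi_{\alpha_*,k}^2\dd t>0 .
\end{equation*}
This was already established in the proof of \eqref{eq:mu-al-increasing}. There, evenness and an integration by parts rewrite the integral as $-2\int_0^\infty M(t)(\psi_{\alpha_*,k}^2)'(t)\dd t$. This quantity is positive because $M>0$ on $(0,\infty)$ by Lemma~\ref{thm:cumulative-positivity}, and because $\partial_t\psi_{\alpha_*,k}<0$ on $(0,\infty)$ with $\psi_{\alpha_*,k}>0$ by Proposition~\ref{thm:mu2}. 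The boundary terms vanish since $M(0)=0$, $M$ is bounded, and $\psi_{\alpha_*,k}$ decays. It follows that $\int f\psi_{\alpha_*,k}\neq0$, so $f\notin\Ran L_{\alpha_*,k}$. Equivalently, this is the statement $\lambda_{n,k}'(\alpha_*)\neq0$ in \eqref{eq:lambda-transverse}.

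The main obstacle is the sign in Step 3, since $q_\alpha$ is not itself of one sign: it tends to $-\infty$ at infinity. The positivity must therefore come from the cumulative quantity $M$ rather than from a pointwise bound. That is exactly what Lemma~\ref{thm:cumulative-positivity}, combined with the monotonicity of $\psi_{\alpha_*,k}$, supplies. With that in hand, the remaining work is the bookkeeping in Step 2 and the identification of the range of $L_{\alpha_*,k}$ as the annihilator of its kernel.
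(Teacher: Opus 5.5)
Your proposal is correct and follows essentially the same route as the paper. The range of $L_{\alpha_*,k}$ is $L^2$-orthogonal to $\psi_{\alpha_*,k}$ by self-adjointness, and the sign $\int_\R W_{\alpha_*}q_{\alpha_*}\psi_{\alpha_*,k}^2\dd t>0$ comes from integrating by parts against $M>0$ and using the monotonicity of $\psi_{\alpha_*,k}$; you only need the inclusion of the range in the annihilator of $\psi_{\alpha_*,k}$, so the index-zero identification of the full range and the membership check in Step 2 are harmless extras.
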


\begin{proof}
Since $L_{\alpha_*,k}$ is self-adjoint and
$L_{\alpha_*,k}\psi_{\alpha_*,k}=0$, every element of its range
is $L^2$-orthogonal to $\psi_{\alpha_*,k}$.
On the other hand, \eqref{eq:crossing-range} and \eqref{eq:lambda-transverse} give
\[
 \left\langle
 \left.\partial_\alpha L_{\alpha,k}\right|_{\alpha=\alpha_*}
 \psi_{\alpha_*,k},\psi_{\alpha_*,k}
 \right\rangle_{L^2}
 =-n\int_\R
 W_{\alpha_*}q_{\alpha_*}\psi_{\alpha_*,k}^2\dd t<0,
\]
which proves the assertion.
\end{proof}

\subsection{Local nonradial bifurcation}\label{sec:bifurcation}
\begin{proof}[Proof of Theorem \ref{thm:main-bifur}]
Let $G=O(n-1)$ be the subgroup that fixes the last coordinate axis.  For each degree $j$, the $G$-invariant spherical-harmonic space $\mathcal{H}^G_j$ is one-dimensional, generated by
\begin{equation*}
 Y_j(\omega)=c_j\,C_j^{\frac{n-2}{2}}(\omega_n),
\end{equation*}
where $C_j^{\frac{n-2}{2}}$ is the Gegenbauer polynomial and $c_j$ is a nonzero normalizing constant. Thus by \eqref{eq:kG}, $k_G=1$.  For $\lambda>0$, the Gegenbauer polynomials $C_j^\lambda$ are defined by the generating function \cite[Eq.~(18.12.4)]{DLMF}
\[
 (1-2z\rho+\rho^2)^{-\lambda}
 =\sum_{j=0}^\infty C_j^\lambda(z)\rho^j,
 \qquad z\in[-1,1],\quad |\rho|<1.
\]

Fix $k\ge2$, set $\alpha_*=\alpha_{n,k}$ and choose
\begin{equation*}
 0<\nu<\min\{1,n\alpha_*\}.
\end{equation*}
Let $\cX_\nu^{G,e}$ and $\cY_\nu^{G,e}$ be defined in \eqref{eq:X-Y}. Define the map
\begin{equation*}
 \mathcal F(\alpha,w):=P_nw-(n-1)!e^{nv_\alpha}(e^{nw}-1).
\end{equation*}
Let $I$ be a neighborhood of $\alpha_*$ such that $\nu<n\alpha$ for all $\alpha\in I$. Then
\begin{equation*}
 \mathcal F:I\times\cX_\nu^{G,e} \longrightarrow \cY_\nu^{G,e}
\end{equation*}
is smooth near $(\alpha_*,0)$ and
\begin{equation*}
 \mathcal F(\alpha,0)=0, \qquad D_w\mathcal F(\alpha,0)=P_n-W_\alpha= L_\alpha.
\end{equation*}

At $\alpha_*=\alpha_{n,k}$, the centered radial mode is invertible.  Since $\dim\mathcal H_k^G=1$, Corollary~\ref{cor:crossing-range}
shows that
\[
 \Ker\bigl(L_{\alpha_*}:\cX_\nu^{G,e}\to\cY_\nu^{G,e}\bigr)
 =\Span\{\psi_{\alpha_*,k}(t)Y_k(\omega)\}.
\]
The Fredholm property of $L_\alpha$ is proved in Proposition \ref{prop:main-fredholm} and Proposition \ref{prop:transversality} gives the transversality condition.

In summary, all hypotheses of Theorem \ref{thm:quoted-CR} are satisfied. As a result, a local branch of $O(n-1)$-axisymmetric, nonradial solutions bifurcates from $v_{\alpha_*}$ for $\alpha_*=\alpha_{n,k}$. More precisely, there are $\varepsilon_0>0$ and smooth functions $s\mapsto(\alpha(s),w(s))$ for $|s|<\varepsilon_0$ such that
\begin{equation*}
 \alpha(0)=\alpha_{*}, \qquad w(0)=0, \qquad w(s)=s\psi_{\alpha_*,k}(t)Y_k(\omega)+o(s) \text{ in } \cX_\nu^{G,e},
\end{equation*}
and $V_s(t,\omega)=v_{\alpha(s)}(t)+w(s;t,\omega)$ solves \eqref{eq:cylinder-equation}.

Returning to Euclidean variables, we set
\[
 u_s(r,\omega)
 =V_s(\log r,\omega)-\alpha(s)\log r.
\]
Since $w(s)\in\cX_\nu^{G,e}$, we write
\[
 w(s)=c_s+\varphi_s,\qquad
 \varphi_s\in H_\nu^n(\Cyl)^{G,e}.
\]
Weighted Sobolev embedding gives
$\varphi_s(t,\omega)=O(e^{-\nu|t|})$ uniformly in $\omega$. Hence $u_s$ extends continuously to the origin,
\[
 u_s(x)=-2\alpha(s)\log|x|+O(1)
 \quad\text{as }|x|\to\infty,
\]
and
\begin{equation*}
\int_{\R^n}|x|^{n(\alpha(s)-1)}e^{nu_s}\dd x=\int_\Cyl e^{n(v_{\alpha(s)}+w(s))}\dd t\dd\omega<\infty.    
\end{equation*}
 
The equation holds away from the origin by conformal covariance. Lemma~\ref{lem:removability-normality} therefore shows that $u_s$ is a normal solution on $\R^n$. This completes the proof.
\end{proof}

\section{Radial symmetry for small $\gamma$}\label{sec:rad small ga}

In this section, we prove Theorem~\ref{thm:rad small ga}.
By Theorem~\ref{thm:kelvin}, every normal solution satisfying
the median normalization \eqref{rig:eq:median} obeys $u(x)=u({x}/{|x|^2})-2\alpha\log|x|$
 for $x\ne0$.
The mass identity \eqref{eq:normal-mass-intro}, Lemma~\ref{rig:lem:translation-identity}, and Kelvin symmetry exclude concentration as $\gamma\downarrow0$. Compactness and classification then give convergence to the standard bubble \eqref{rig:eq:standard-bubble}.

We argue by contradiction and compare a hypothetical nonradial solution with the normalized radial solution at the same parameter and normalize their difference in $L^\infty(\R^n)$. Kelvin symmetry ensures that its norm is attained on $\overline B_1$, so local convergence produces a nonzero bounded solution of the linearized equation at the standard bubble. Nondegeneracy identifies this limit as a linear combination of the basis of the kernel. Lemma~\ref{rig:lem:translation-identity} gives a vanishing gradient at the origin, which eliminates the translation modes. Kelvin symmetry eliminates the dilation mode, yielding a contradiction. Uniqueness under the median normalization then follows from Theorem~\ref{thm radial}.

\subsection{Normalization and compactness as \texorpdfstring{$\gamma\downarrow0$}{gamma tends to zero}}

Recall that every normal solution of \eqref{eq:euclidean} is Kelvin symmetric by Theorem \ref{thm:kelvin}. Then by \eqref{R half mass}, we see that each normal solution has a unique dilation satisfying
\begin{equation}\label{rig:eq:median}
 \int_{B_1}|x|^{n\gamma}e^{nu}\dd x=\frac{\alpha}{2}|\Sph^n|
\end{equation}
where $\alpha=1+\gamma$.

Our main result in this subsection is:

\begin{proposition}\label{rig:prop:compactness}
Let $\gamma_j\downarrow0$ and let $u_j$ be normal solutions normalized by
\eqref{rig:eq:median}.  Then, after passing to a subsequence,
\begin{equation*}
 u_j\longrightarrow U \quad\text{in }C^0_{\mathrm{loc}}(\R^n)
 \text{ and in }C^\ell_{\mathrm{loc}}(\R^n\setminus\{0\})
 \text{ for every }\ell\geq0,
\end{equation*}
where
\begin{equation}\label{rig:eq:standard-bubble}
 U(x)=\log\frac{2}{1+|x|^2}.
\end{equation}
\end{proposition}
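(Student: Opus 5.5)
We argue by a blow-up/compactness scheme in the spirit of Brezis--Merle and Lin, using the structure already available for normal solutions. Throughout write $f_j=|x|^{n\gamma_j}e^{nu_j}$, so $\int_{\R^n}f_j=\alpha_j|\Sph^n|\to|\Sph^n|$ and $\int_{B_1}f_j=\frac{\alpha_j}{2}|\Sph^n|$ by \eqref{rig:eq:median}. By Theorem~\ref{thm:kelvin} with $R=1$, $u_j(x)=u_j(x/|x|^2)-2\alpha_j\log|x|$, so the mass in $B_1^c$ is the Kelvin image of the mass in $B_1$. The goal is to show that $u_j$ is locally bounded; the limit is then identified by classification.

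\textbf{Step 1: Upper bound on $u_j$ in $\overline B_2$.} Suppose instead that $\max_{\overline B_2}u_j\to\infty$ along a subsequence. Standard blow-up analysis for the integral representation \eqref{rig:eq:normal-representation} would produce a concentration point $x_0$ carrying mass at least $|\Sph^n|$ in the limit. The weight $|x|^{n\gamma_j}$ tends to $1$ uniformly on compact sets, and even near $0$ it is bounded between $\varepsilon^{n\gamma_j}$ and $1$. So the rescaled limit is the regular bubble with mass exactly $|\Sph^n|$, and by Theorem~\ref{thm:NormalSolutionHMM} the limit profile is normal, since the mass of the rescaled limit is finite and bounded by the total. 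Kelvin symmetry then leaves three cases.
\begin{itemize}
\item If $x_0\ne0$ with $|x_0|\neq1$, its Kelvin image $x_0/|x_0|^2$ is a second concentration point. This gives total mass $2|\Sph^n|>\alpha_j|\Sph^n|$, a contradiction.
\item If $|x_0|=1$, the concentrating bubble sits symmetrically across the unit sphere. Here we use Corollary~\ref{coro:translation-identity} together with Lemma~\ref{rig:lem:translation-identity}. Essentially all the mass concentrates near $x_0$, so $\int x f_j\to|\Sph^n|x_0\neq0$, contradicting $\int xf_j=0$.
\item If $x_0=0$, Kelvin symmetry maps concentration at $0$ to concentration at $\infty$. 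Then mass $\ge|\Sph^n|$ sits in arbitrarily small balls $B_\varepsilon$, contradicting $\int_{B_1}f_j=\frac{\alpha_j}{2}|\Sph^n|$ for $j$ large.
\end{itemize}
I expect the main obstacle to be making Step 1 rigorous in the nonlocal setting with the vanishing singular weight: showing that a blow-up sequence has a concentration point of mass $\ge|\Sph^n|$. For this I would rely on the Brezis--Merle type estimate for the logarithmic potential together with Martinazzi's classification of normal solutions of the regular equation, after rescaling around maximum points.

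\textbf{Step 2: Lower bound and local convergence.} With $u_j\le C$ on $B_2$, Kelvin symmetry gives $u_j(x)\le C-2\alpha_j\log|x|$ on $B_{1/2}^c$. Hence $f_j\le C(1+|x|)^{-2n\alpha_j}$ uniformly, and the family is tight. The representation \eqref{u rep f} then gives equicontinuity of $u_j-c_j$ on compact sets, and the normalization $\int_{B_1}f_j=\frac{\alpha_j}{2}|\Sph^n|>0$ prevents $c_j\to-\infty$. So $u_j$ is bounded in $C^0_{\mathrm{loc}}$. Regularity of the logarithmic kernel away from $0$, where $|x|^{n\gamma_j}$ is smooth with uniformly bounded derivatives, gives $C^\ell_{\mathrm{loc}}(\R^n\setminus\{0\})$ bounds. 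Near $0$, the H\"older continuity of $\log*f_j$ for uniformly bounded $f_j$ gives $C^0_{\mathrm{loc}}$ compactness.

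\textbf{Step 3: Identification of the limit.} Pass to a subsequence converging to $u$. By tightness and dominated convergence, $u$ is a normal solution of the regular equation with $\gamma=0$ and mass $|\Sph^n|$. By the classification of normal solutions in the regular case, $u$ is a standard bubble $\log\frac{2\lambda}{\lambda^2+|x-x_1|^2}$. Passing to the limit in $\int xf_j=0$ forces $x_1=0$. Passing to the limit in \eqref{rig:eq:median} gives $\int_{B_1}e^{nu}=\frac12|\Sph^n|$, which forces $\lambda=1$. Thus $u=U$, and uniqueness of the limit upgrades subsequential convergence to the stated one.
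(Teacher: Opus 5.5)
\textbf{The gap: the first-moment limit in Step 1.} Your architecture is right, and Steps 2--3 essentially coincide with the paper's argument: upper bound, then gradient bounds, a lower bound from \eqref{rig:eq:median}, decay at infinity via Kelvin symmetry, the normal limit, classification, $\xi=0$ from $\int xf_j=0$, and $\lambda=1$ from \eqref{rig:eq:median}.

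The gap is in Step~1, case $|x_0|=1$. Weak convergence $f_j\,dx\rightharpoonup|\Sph^n|\delta_{x_0}$ only passes to the limit against bounded continuous test functions, and $x$ is unbounded. To get $\int xf_j\to|\Sph^n|x_0$ you need $\sup_j\int_{|x|>R}|x|f_j\to0$ as $R\to\infty$. At this stage you do not have it, because the decay bound you use in Step~2 is a consequence of the upper bound you are trying to prove.

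Since $f_j\,dx$ is invariant under $x\mapsto x/|x|^2$, this tail equals $\int_{|y|<1/R}|y|^{-1}f_j\,dy$. So what must be excluded is a vanishing amount of mass collapsing onto the origin fast enough to keep this integral large.

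The paper closes exactly this hole. All the mass goes to $x_*\ne0$, so the mass near $0$ tends to zero. A Brezis--Merle small-mass estimate then gives a uniform $L^2$ bound on $f_j$ near $0$; it comes from Jensen applied to the representation \eqref{u rep new}, with $K\ge0$ and $d_j$ bounded above. Cauchy--Schwarz then yields $\int_{|x|<\delta}|x|^{-1}f_j\le C\delta^{(n-2)/2}$, which is where $n\ge3$ is used. This justifies passing to the limit in \eqref{rig:eq:inverse-moment}, equivalently in $\int xf_j=0$. You need to add this step.

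\textbf{Secondary flaws in the blow-up sketch.} All three of your cases rest on the blow-up sketch, and it has two further problems.

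\emph{Normality of the profile.} Finite mass does not make the rescaled limit normal. For $n\ge3$ there are nonnormal finite-volume solutions whose volume differs from $|\Sph^n|$, as recalled in the introduction. Theorem~\ref{thm:NormalSolutionHMM} does not give normality from finiteness of mass. Without normality, the lower bound $|\Sph^n|$ on the concentrated mass, used in every case, fails. Normality has to come from passing to the limit in the rescaled logarithmic representation, as the paper does.

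\emph{The choice of scale.} The rescaling must absorb the weight. If $x_j\to0$ quickly, the factor $|x_j+r_jz|^{n\gamma_j}$ with the naive scale $r_j=e^{-u_j(x_j)}$ need not tend to $1$, since nothing prevents $r_j^{\gamma_j}\to0$. Then the limit profile can degenerate, which matters precisely in your case $x_0=0$. This is why the paper uses the implicit scale $r_je^{u_j(x_j)}(|x_j|+r_j)^{\gamma_j}=1$.

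\emph{Minor.} The decay in Step~2 is $f_j\le C|x|^{-2n-n\gamma_j}$, not $(1+|x|)^{-2n\alpha_j}$, because you dropped the weight. This is harmless.
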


The proof is divided into several steps: a local estimate under a small-mass condition, the exclusion of a possible blow-up profile, and the passage to a normalized limit. In this section, we set
\begin{equation*}
 f_j=|x|^{n\gamma_j}e^{nu_j}, \qquad
 \alpha_j=1+\gamma_j,\qquad
 \Lambda_j=\alpha_j|\Sph^n|.    
\end{equation*}
We introduce an auxiliary kernel
\begin{equation*}
    K(x,y)=\log\frac{\sqrt{1+|x|^2}\sqrt{1+|y|^2}}{|x-y|}.
\end{equation*}
Then
\begin{equation*}
    K\ge 0, \qquad K(x,y)=K(\frac{x}{|x|^2}, \frac{y}{|y|^2}).
\end{equation*}
Define
\begin{equation}\label{def Hj}
    H_j(x)=\int_{\R^n}K(x,y)f_j(y)\dd y.
\end{equation}
Then, the normal representation of $u_j$ can be written in the form
\begin{equation}\label{u rep new}
    u_j(x)=d_j-\alpha_j\log(1+|x|^2)+\frac{2}{|\Sph^n|}H_j(x).
\end{equation}
Consequently
\begin{equation}\label{f_j rep new}
    f_j(x)=e^{nd_j}\frac{|x|^{n\gamma_j}}{(1+|x|^2)^{n\alpha_j}}e^{\frac{2n}{|\Sph^n|}H_j(x)}.
\end{equation}
Since we consider $\gamma_j\downarrow 0$, we can assume without loss of generality that $0<\gamma_j<1$. Since $H_j\ge 0$, integrating \eqref{f_j rep new} gives a uniform upper bound for $d_j$.

 In the spirit of \cite{BrezisMerle1991}, we derive a small-mass logarithmic-potential estimate and establish a uniform $L^2$ estimate.
\begin{lemma}\label{lem: fj L2 uniform}
Suppose that, for a fixed ball $B_{2r}(q)$,
\begin{equation*}
    m_j:=\int_{B_{2r}(q)}f_j\dd x\le \frac{|\Sph^n|}{16}.
\end{equation*}
Then $\|f_j\|_{L^2(B_{r}(q))}\le C$ for some $C$ that depends only on $q$ and $r$ and is uniform in $j$.
\end{lemma}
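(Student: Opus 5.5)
We follow the Brezis--Merle strategy: split $H_j$ into a near part carrying the small mass and a far part that is bounded on $B_r(q)$, bound the near part in $L^p$ by Jensen's inequality, and insert both into \eqref{f_j rep new}.

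\textbf{Splitting.} For $x\in B_r(q)$ write
\begin{equation*}
 H_j(x)=\int_{B_{2r}(q)}K(x,y)f_j(y)\dd y+\int_{\R^n\setminus B_{2r}(q)}K(x,y)f_j(y)\dd y=:H_j^{1}(x)+H_j^{2}(x).
\end{equation*}

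\textbf{Far part.} For $x\in B_r(q)$ and $y\notin B_{2r}(q)$ we have $|x-y|\ge r$ and $|x-y|\ge c(1+|y|)$ with $c$ depending on $q,r$. Hence
\begin{equation*}
 K(x,y)\le \tfrac12\log(1+|x|^2)+\log\frac{\sqrt{1+|y|^2}}{|x-y|}\le C(q,r).
\end{equation*}
Since $\int f_j=\Lambda_j\le 2|\Sph^n|$, this gives $H_j^{2}\le C$ on $B_r(q)$.

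\textbf{Near part.} For $x,y\in B_{2r}(q)$,
\begin{equation*}
 K(x,y)\le C(q,r)+\log\frac{1}{|x-y|}.
\end{equation*}
If $m_j=0$ there is nothing to prove. Otherwise $\dd\mu=f_j\,\dd y/m_j$ is a probability measure on $B_{2r}(q)$, and Jensen's inequality gives, for $p>0$,
\begin{equation*}
 \exp\Bigl(\frac{2np}{|\Sph^n|}H_j^1(x)\Bigr)\le C\int_{B_{2r}(q)}|x-y|^{-2npm_j/|\Sph^n|}\dd\mu(y).
\end{equation*}
With $m_j\le|\Sph^n|/16$ the exponent is at most $np/8$. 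Taking $p=2$ gives exponent $\le n/4<n$. Integrating in $x$ over $B_r(q)$ and using Fubini, we get
\begin{equation*}
 \int_{B_r(q)}e^{\frac{4n}{|\Sph^n|}H_j^1}\dd x\le C\sup_y\int_{B_{3r}(y)}|x-y|^{-n/4}\dd x\le C.
\end{equation*}

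\textbf{Conclusion.} By \eqref{f_j rep new},
\begin{equation*}
 f_j^2=e^{2nd_j}\frac{|x|^{2n\gamma_j}}{(1+|x|^2)^{2n\alpha_j}}e^{\frac{4n}{|\Sph^n|}(H_j^1+H_j^2)}.
\end{equation*}
We use three bounds:
\begin{itemize}
\item $d_j$ is uniformly bounded above, as already noted.
\item The prefactor is bounded on $B_r(q)$, since $0<\gamma_j<1$.
\item $H_j^2\le C$ on $B_r(q)$ by the far-part estimate.
\end{itemize}
Hence $\|f_j\|_{L^2(B_r(q))}^2\le C\int_{B_r(q)}e^{\frac{4n}{|\Sph^n|}H_j^1}\le C$, uniformly in $j$.

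The only delicate point is the exponent bookkeeping in the Jensen step. With the constant $2/|\Sph^n|$ from \eqref{u rep new} and the threshold $|\Sph^n|/16$, the singularity $|x-y|^{-n/4}$ is integrable in $\R^n$ with a wide margin, so the argument closes. One also has to make sure that the constant in the kernel bound, which comes from $\log\sqrt{(1+|x|^2)(1+|y|^2)}$, depends only on $q$ and $r$, which holds because both points lie in $B_{2r}(q)$.
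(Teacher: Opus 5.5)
Your proposal is correct and follows essentially the same argument as the paper: bound $K(x,y)$ uniformly for $x\in B_r(q)$, $y\notin B_{2r}(q)$, bound it by a constant plus $\log\frac{1}{|x-y|}$ on $B_{2r}(q)$, then apply Jensen with the probability measure $f_j\dd y/m_j$ to get the singularity $|x-y|^{-4nm_j/|\Sph^n|}$ of exponent at most $n/4<n$, and integrate over $B_r(q)$ by Fubini. The only cosmetic difference is that the paper works with $\log^+\frac{1}{|x-y|}$, hence its $\max\{1,|x-y|^{-4nm_j/|\Sph^n|}\}$, whereas you carry a plain logarithm and absorb the resulting $r$-dependent factor into the constant.
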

\begin{proof}
    For $x\in B_r(q)$, the kernel $K(x,y)$ is uniformly bounded for $y\notin B_{2r}(q)$. For $y\in B_{2r}(q)$, we have $K(x,y)\le C+\log^+\frac{1}{|x-y|}$. Thus, by \eqref{def Hj} and \eqref{f_j rep new}, we have
    \begin{equation*}
        f_j(x)^2\le C\exp\Big\{\frac{4n}{|\Sph^n|}\int_{B_{2r}(q)} \log^+\frac{1}{|x-y|} f_j(y)\dd y\Big\}
    \end{equation*}
    Applying Jensen's inequality to the probability measure $\frac{\chi_{B_{2r}(q)}}{m_j}f_j(y)\dd y$, we obtain
    \begin{equation*}
        f_j(x)^2\le \frac{C}{m_j}\int_{B_{2r}(q)} \max\{1,{|x-y|}^{-\frac{4n}{|\Sph^n|}m_j} \}f_j(y)\dd y.
    \end{equation*}
    Since $\frac{4n}{|\Sph^n|}m_j\le \frac{n}4<n$,  integrating in $x\in B_r(q)$ gives
    \begin{equation*}
           \int_{B_{r}(q)}f_j(x)^2\dd x\le \frac{C}{m_j}\int_{B_{2r}(q)} \int_{B_r(q)}\max\{1,{|x-y|}^{-\frac{4n}{|\Sph^n|}m_j} \}\dd xf_j(y)\dd y\le C.
    \end{equation*}
    This proves the lemma.
\end{proof}

To prove Proposition \ref{rig:prop:compactness}, a crucial step is to exclude blow-up of the sequence $\{u_j\}$. The following lemma shows that if $\{u_j\}$ blows up, then after scaling, it converges to a standard bubble of a regular $Q$-curvature equation.
\begin{lemma}\label{rig:lem:blowup-profile}
Let $\{u_j\}$ be a sequence in Proposition~\ref{rig:prop:compactness}. Assume that $\max_{\R^n}u_j\to\infty$.  Let $x_j$ be a maximum point and let $r_j>0$ be the unique solution of
\begin{equation}\label{rig:eq:implicit-scale}
 r_j e^{u_j(x_j)}(|x_j|+r_j)^{\gamma_j}=1.
\end{equation}
Then $r_j\to0$ and up to a subsequence,
\begin{equation*}
\eta_j(z):=u_j(x_j+r_jz)-u_j(x_j)
\end{equation*}
converges in $C_{loc}(\R^n)$ to a normal solution of
\begin{equation}\label{rig:eq:limit-equation}
 (-\Delta)^{n/2}\eta=(n-1)!e^{n\eta}\quad\text{in }\R^n
\end{equation}
with
\begin{equation}\label{rig:eq:bubble-mass}
 \int_{\R^n}e^{n\eta}\dd z=|\Sph^n|.
\end{equation}
\end{lemma}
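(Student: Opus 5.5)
The plan is to rescale the logarithmic representation \eqref{u rep new}, control the rescaled densities locally, pass to the limit in the integral equation, and then identify the mass by ruling out a polynomial part through Theorem~\ref{thm:NormalSolutionHMM}.

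\textbf{Step 1: the rescaled equation.} Set $\tilde f_j(z)=r_j^n f_j(x_j+r_jz)$. This equals
\[
\Bigl(\frac{|x_j+r_jz|}{|x_j|+r_j}\Bigr)^{n\gamma_j}e^{n\eta_j(z)}.
\]
The choice \eqref{rig:eq:implicit-scale} is what produces the last form. Changing variables in \eqref{u rep new} gives
\[
\eta_j(z)=\frac{2}{|\Sph^n|}\int_{\R^n}\log\frac{|y|\,(\text{normalizing factor})}{|z-y|}\,\tilde f_j(y)\dd y+\text{(harmless terms)},
\]
with the harmless terms coming from $\log(1+|x|^2)$. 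These are smooth and slowly varying on the scale $r_j$ once we know $r_j\to0$.

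\textbf{Step 2: $r_j\to0$.} Since $u_j(x_j)\to\infty$ and $\gamma_j\in(0,1)$, \eqref{rig:eq:implicit-scale} gives $r_j^{1+\gamma_j}\le e^{-u_j(x_j)}\to0$, provided $|x_j|$ stays bounded. Boundedness of $|x_j|$ should follow from Kelvin symmetry: \eqref{u kelvin} with $R=1$ shows $\sup_{|x|\ge1}u_j\le\sup_{|x|\le1}u_j$. So we may take $|x_j|\le1$, and then $r_j\le e^{-u_j(x_j)/(1+\gamma_j)}\to0$.

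\textbf{Step 3: local bounds and passage to the limit.} We have $\eta_j\le0$ and $\eta_j(0)=0$. The weight $\bigl(|x_j+r_jz|/(|x_j|+r_j)\bigr)^{n\gamma_j}$ is at most $(1+|z|)^{n\gamma_j}$ and tends to $1$ locally uniformly, away from a possible point where it degenerates mildly. Hence $\tilde f_j$ is locally bounded, with total mass at most $\Lambda_j$. Writing $\eta_j$ as a logarithmic potential of $\tilde f_j$ plus a remainder, the remainder has locally bounded oscillation: the part coming from $|y|$ large is nearly constant, and the part from $|y|\le R$ is bounded because $\tilde f_j\le C$ and $\log$ is locally integrable. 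Together with $\eta_j(0)=0$, this gives local $C^{0,\beta}$ bounds. By Arzel\`a--Ascoli and a diagonal argument, $\eta_j\to\eta$ locally uniformly, and $\tilde f_j\to e^{n\eta}$ locally uniformly. Fatou gives $\int e^{n\eta}\le|\Sph^n|\lim\alpha_j=|\Sph^n|$. Passing to the limit in the representation, using tightness of the far-field contribution only through its gradient, yields
\[
\eta(z)=c+\frac{2}{|\Sph^n|}\int\log\frac{1+|y|}{|z-y|}\,e^{n\eta(y)}\dd y-a\cdot z\,(\text{possible affine part}).
\]

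\textbf{Step 4: normality and the mass, the main obstacle.} We must rule out a nonconstant polynomial in the limit, which could come from mass escaping to infinity on the rescaled scale. Using $\eta\le0$, any affine term $a\cdot z$ with $a\ne0$ would be unbounded above, so $a=0$. The limit equation is the regular case $\gamma=0$. Theorem~\ref{thm:NormalSolutionHMM} then applies with $p$ constant, since $p$ is a polynomial of degree at most $n-1$ bounded above, and by the above argument it is at most affine. This gives \eqref{rig:eq:bubble-mass}: $\int e^{n\eta}=|\Sph^n|$.

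The delicate point is justifying that the far-field contribution converges to an affine function. We use the mass bound $\Lambda_j\le2|\Sph^n|$ and the expansion
\[
\log|z-y|-\log|y|=O\bigl(|z|/|y|\bigr)\quad\text{for }|y|\gg|z|.
\]
This shows that derivatives of order at least $2$ of the far-field part tend to $0$ locally. Hence that part converges, up to constants, to an affine function, and Step 4 then eliminates the affine term.
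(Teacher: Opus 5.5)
Your proposal is correct and follows essentially the paper's route: you rescale so that $\tilde f_j=K_je^{n\eta_j}$, use $\eta_j\le0$ for local bounds and Arzel\`a--Ascoli, and pass to the limit in the subtracted representation $\eta_j(z)=\frac{2}{|\Sph^n|}\int\log\frac{|\zeta|}{|z-\zeta|}\tilde f_j(\zeta)\dd\zeta$ (your ``harmless terms'' from \eqref{u rep new} cancel exactly, because $\int f_j=\alpha_j|\Sph^n|$). Your detours are unnecessary but harmless, since $r_j^{1+\gamma_j}\le e^{-u_j(x_j)}$ holds without bounding $|x_j|$ and the tail bound $O(|z|/|\zeta|)$ times the uniform mass already shows that no affine term survives; deducing the mass from the normal-mass identity in Theorem~\ref{thm:NormalSolutionHMM} with $\gamma=0$ is a valid substitute for the paper's appeal to the Martinazzi/Hyder classification (one small correction: $\tilde f_j\to e^{n\eta}$ only in $L^1_{\mathrm{loc}}$, not locally uniformly, when $x_j/r_j$ stays bounded).
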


\begin{proof}
The left-hand side of \eqref{rig:eq:implicit-scale} is strictly increasing from zero to infinity, so $r_j$ exists uniquely. Moreover, we get $r_j^{1+\gamma_j}\le e^{-u_j(x_j)}$ and hence $r_j\to0$.
The rescaled density is
\begin{equation}\label{fj til}
 \widetilde f_j(z):=K_j(z)e^{n\eta_j(z)}\text{ with } K_j(z)= \left(\frac{|x_j+r_jz|}{|x_j|+r_j}\right)^{n\gamma_j}.
\end{equation}
Then we have
 \begin{equation*}
     \int_{\R^n}\widetilde f_j\dd z=\int_{\R^n}f_j\dd x\le C.
 \end{equation*}
 
We can assume $\gamma_j\le1$ after discarding finitely many terms.  For $R\ge1$ and $|z|\le R$, we have $K_j(z)\le R^n$.
Passing to a subsequence, $\frac{x_j}{r_j}$ either converges or tends to infinity and $K_j\to1$ almost everywhere.

Subtracting the normal representations of $u_j$ at $x_j+r_jz$ and $x_j$ gives
\begin{equation}\label{eta rep}
 \eta_j(z)=\frac{2}{|\Sph^n|}\int_{\R^n}\log\frac{|\zeta|}{|z-\zeta|}\widetilde f_j(\zeta)\dd\zeta.   
\end{equation}

Since $\eta_j\le0$, $\widetilde f_j$ is uniformly bounded on every fixed ball. Fix $R\ge1$. Since $\widetilde f_j$ is locally bounded and belongs to $L^1(\R^n)$, we differentiate the logarithmic potential in the weak sense (see \cite[Lemma~2.3(i)]{Hyder2019}). This gives
\begin{equation*}
 \nabla\eta_j(z)=-\frac{2}{|\Sph^n|} \int_{\R^n}\frac{z-\zeta}{|z-\zeta|^2}\widetilde f_j(\zeta)\dd\zeta.    
\end{equation*}

For $z\in B_R$, the integral over $B_{2R}$ satisfies
 \begin{equation*}
 \int_{B_{2R}}\frac{\widetilde f_j(\zeta)}{|z-\zeta|}\dd\zeta
 \le \|\widetilde f_j\|_{L^\infty(B_{2R})}\int_{B_{3R}}\frac{\dd y}{|y|}
\le C_nR^{n-1}\|\widetilde f_j\|_{L^\infty(B_{2R})}.
 \end{equation*}
On the complement, $|z-\zeta|\ge|\zeta|-R\ge R$, so
\begin{equation*}
 \int_{\R^n\setminus B_{2R}}\frac{\widetilde f_j(\zeta)}{|z-\zeta|}\dd\zeta
 \le R^{-1}\|\widetilde f_j\|_{L^1(\R^n)}.
\end{equation*}
Consequently,
\begin{equation*}
 \|\nabla\eta_j\|_{L^\infty(B_R)}
 \le C_n\left(R^{n-1}\|\widetilde f_j\|_{L^\infty(B_{2R})}+R^{-1}\|\widetilde f_j\|_{L^1(\R^n)}\right)
 \le C_R,
\end{equation*}
where $C_R$ is independent of $j$.
Together with $\eta_j(0)=0$, this yields local $W^{1,\infty}$ bounds. Thus, up to a subsequence, $\eta_j\to\eta$ in $C^0_{\mathrm{loc}}(\R^n)$ and $\widetilde f_j\to e^{n\eta}$ in $L^1_{\mathrm{loc}}(\R^n)$. Fatou's lemma gives $\int_{\R^n}e^{n\eta}\dd z<\infty$.

We now pass to the limit in \eqref{eta rep}. For $|\zeta|>R>2|z|$, we have
\begin{equation*}
 \left|\log\frac{|\zeta|}{|z-\zeta|}\right|\le C\frac{|z|}{|\zeta|}.   
\end{equation*}
On fixed balls, we use the uniform convergence obtained above. Therefore, letting first $j\to\infty$ and then $R\to\infty$ in \eqref{eta rep} gives
\begin{equation*}
 \eta(z)=\frac{2}{|\Sph^n|}\int_{\R^n}\log\frac{|y|}{|z-y|}e^{n\eta(y)}\dd y
 =c_\eta+\frac{2}{|\Sph^n|}\int_{\R^n}\log\frac{1+|y|}{|z-y|}e^{n\eta(y)}\dd y.
\end{equation*}
Here $c_\eta$ is finite because $\eta\le0$ and $e^{n\eta}\in L^1$. This representation gives \eqref{rig:eq:limit-equation} and normality. Local regularity gives smoothness. The classifications \cite[Theorem~2]{Martinazzi2009} in even dimensions and \cite[Theorem~1.3]{Hyder2019} in odd dimensions now show that $\eta$ is a standard bubble. In particular, \eqref{rig:eq:bubble-mass} holds.
\end{proof}

Now we can rule out blow-up.
\begin{lemma}\label{rig:lem:no-concentration}
Let $\{u_j\}$ be a sequence in Proposition~\ref{rig:prop:compactness}.  Then $\max\limits_{\R^n}u_j\le C$ for some finite $C$ independent of $j$.
\end{lemma}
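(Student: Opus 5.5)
We argue by contradiction. Suppose $\max u_j=u_j(x_j)\to\infty$ along a subsequence. By Lemma~\ref{rig:lem:blowup-profile}, the rescaled functions $\eta_j$ converge to a standard bubble carrying mass $|\Sph^n|$. Hence, for every fixed $R$,
\begin{equation*}
\int_{B_{Rr_j}(x_j)}f_j\dd x\to\int_{B_R}e^{n\eta}\dd z\approx|\Sph^n|.
\end{equation*}
The total mass is $\Lambda_j=\alpha_j|\Sph^n|\to|\Sph^n|$. So asymptotically all of the mass concentrates in the shrinking balls $B_{Rr_j}(x_j)$, and only $o(1)$ mass lies outside them as $R\to\infty$. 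The plan is to show this is incompatible with the normalization \eqref{rig:eq:median} and Kelvin symmetry.

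First I locate the concentration point. The normalization \eqref{rig:eq:median} puts exactly half the mass, $\frac{\alpha_j}{2}|\Sph^n|$, in $B_1$ and half outside. Suppose $x_j$ stays a definite distance from $\partial B_1$, or $|x_j|\to\infty$. Then the tiny balls $B_{Rr_j}(x_j)$ lie eventually entirely inside or entirely outside $B_1$. That puts at least about $|\Sph^n|$ on one side, contradicting the half-mass split. Hence $|x_j|\to1$ along a subsequence, and $x_j\to p$ with $|p|=1$.

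Next I would rule out this boundary concentration. Kelvin symmetry gives $f_j(x)=f_j(x/|x|^2)|x|^{-2n}$, so the density near $p$ is mirrored across the sphere $\{|x|=1\}$.

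One route uses Corollary~\ref{coro:translation-identity}. Since $\int_{\R^n} xf_j\dd x=0$, concentration of almost all mass near $p$ would give
\begin{equation*}
\int_{\R^n} xf_j\dd x\approx p\,|\Sph^n|\neq0.
\end{equation*}
This route requires controlling $\int|x|f_j$ outside the balls, where the mass is small but could escape to infinity. To handle that I use Kelvin symmetry together with the change of variables $x\mapsto x/|x|^2$: the first moment over $\{|x|>2\}$ equals $\int_{|y|<1/2}|y|^{-1}f_j(y)\dd y$. That quantity is controlled by the small-mass $L^2$ bound of Lemma~\ref{lem: fj L2 uniform} near the origin, since almost no mass sits near $0$ ($p\neq0$). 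On the bounded annulus $\tfrac12\le|x|\le2$, the small mass outside $B_{Rr_j}(x_j)$ contributes only $o(1)$ to the moment. This gives $\int xf_j\to p|\Sph^n|\ne0$, a contradiction.

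Alternatively, Lemma~\ref{rig:lem:translation-identity}, $\int \frac{x}{|x|^2}f_j=0$, works symmetrically and avoids the behaviour at infinity altogether. There the only delicate region is near the origin, handled by the same $L^2$ estimate (with $|x|^{-1}\in L^2_{\rm loc}$ since $n\ge3$). I would use this version.

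The main obstacle is the uniform tail control: making precise that the mass outside $B_{Rr_j}(x_j)$ is $o_R(1)$ uniformly, and that no second concentration point or vanishing tail spoils the moment identity. Here the total mass $\alpha_j|\Sph^n|$ is only slightly bigger than one bubble, which leaves room for at most one bubble. Near the origin, the Brezis--Merle type Lemma~\ref{lem: fj L2 uniform} applies because the mass there is small.
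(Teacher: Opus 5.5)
Your proposal is correct and follows essentially the same route as the paper. The steps match: the blow-up profile and the total mass $\alpha_j|\Sph^n|\to|\Sph^n|$ give full concentration $f_j\,dx\rightharpoonup|\Sph^n|\delta_{x_*}$; the median normalization keeps $x_*$ away from $0$ and $\infty$ (your sharper conclusion $|x_*|=1$ is right but not needed); the small-mass $L^2$ bound near the origin controls $\int_{|x|<\delta}|x|^{-1}f_j$; and passing to the limit in the inverse-moment identity of Lemma~\ref{rig:lem:translation-identity} yields the contradiction $|\Sph^n|\,x_*/|x_*|^2=0$, with the tail control you flag handled exactly as you indicate via $\liminf_j\int_{B_{Rr_j}(x_j)}f_j>|\Sph^n|-\varepsilon$.
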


\begin{proof}
Suppose for contradiction that $\max\limits_{\R^n}u_j\to\infty$. We  take the maximum points $x_j$ and scales $r_j$ from Lemma~\ref{rig:lem:blowup-profile}. Let $\eta$ be the limit function in Lemma~\ref{rig:lem:blowup-profile}. After passing to a subsequence, $x_j\to x_*\in\R^n\cup\{\infty\}$.
For every $\varepsilon>0$, we can choose $R$ such that
\begin{equation*}
     \int_{B_R}e^{n\eta}\dd z>|\Sph^n|-\varepsilon.
\end{equation*}
For $f_j=|x|^{n\gamma_j}e^{nu_j}$, the local convergence of the rescaled densities \eqref{fj til} gives
\begin{equation*}
 \liminf_{j\to\infty}\int_{B_{Rr_j}(x_j)}f_j(x)\dd x
 \ge\int_{B_R}e^{n\eta}\dd z
 >|\Sph^n|-\varepsilon.
\end{equation*}
Since $r_j\to0$ and the total mass of $f_j\dd x$ is $\alpha_j|\Sph^n| \to |\Sph^n|$, the mass outside these shrinking balls tends to zero as $\varepsilon\downarrow0$. Consequently,
\begin{equation}\label{conver fj dx}
    f_j\dd x\rightharpoonup|\Sph^n|\delta_{x_*}
\end{equation}
The median normalization \eqref{rig:eq:median} rules out both $x_*=0$ and $x_*=\infty$, because otherwise the mass of $B_1$ would then tend respectively to the full mass or to zero.  Hence $x_*\in\R^n\setminus\{0\}$.

Since $x_*\neq 0$, we can choose $r_0>0$ such that $x_*\notin B_{2r_0}(0)$. Then by concentration at $x_*$, we can apply Lemma \ref{lem: fj L2 uniform} to obtain $\sup_j\|f_j\|_{L^2(B_{r_0}(0))}\le C_{r_0}$. Hence, for $0<\delta<r_0$, the Cauchy--Schwarz inequality implies
\begin{equation*}
     \int_{|x|<\delta}|x|^{-1}f_j(x)\dd x\le C_{r_0}(\int_{|x|<\delta}|x|^{-2}\dd x)^{\frac{1}{2}}\le C_{r_0}'\delta^{\frac{n-2}{2}}.
\end{equation*}
At infinity, we have the uniform bound
\begin{equation*}
   \int_{|x|>R}|x|^{-1}f_j(x)\dd x\le CR^{-1}.
\end{equation*}
Therefore by \eqref{conver fj dx}, we obtain
\begin{equation*}
     \int_{\R^n}\frac{x}{|x|^2} f_j(x)\dd x\longrightarrow |\Sph^n|\frac{x_*}{|x_*|^2}
\end{equation*}
because there is no loss of mass at $0$ or infinity. Passing to the limit in \eqref{rig:eq:inverse-moment} therefore gives $0=|\Sph^n|x_*$, contradicting \(x_*\ne0\). Hence concentration is impossible, and the lemma is proved.
\end{proof}

\begin{proof}[Proof of Proposition~\ref{rig:prop:compactness}]
By Lemma~\ref{rig:lem:no-concentration}, the functions $u_j$ are uniformly bounded above. Hence $f_j$ is uniformly bounded on every fixed ball. Using \eqref{grad u rep f} and splitting the integral into $|x-y|<1$ and its complement, we obtain, for every $R\ge1$,
\begin{equation*}
 \|\nabla u_j\|_{L^\infty(B_R)}
 \le \frac{2}{|\Sph^n|}\left(\|f_j\|_{L^\infty(B_{R+1})}\int_{B_1}|z|^{-1}\dd z+\|f_j\|_{L^1(\R^n)}\right)
 \le C_R.
\end{equation*}
On the other hand, since $\gamma_j>0$, the median normalization
\eqref{rig:eq:median} gives
\begin{equation*}
 \frac{|\Sph^n|}{2}\le \int_{B_1}f_j\dd x\le |B_1|\exp\left(n\sup_{B_1}u_j\right).\end{equation*}
Thus $\sup_{B_1}u_j$ is uniformly bounded below. Together with
the gradient estimate and the uniform upper bound, this yields
\begin{equation*}
 \sup_j\|u_j\|_{W^{1,\infty}(B_R)}<\infty \qquad\text{for every }R\ge1.   
\end{equation*}
The Arzel\`a--Ascoli theorem therefore
gives, after passing to a subsequence,
\begin{equation*}
 u_j\longrightarrow u_0\quad\text{in }C^0_{\mathrm{loc}}(\R^n).    
\end{equation*}

Let $\Omega'\Subset\Omega\Subset\R^n\setminus\{0\}$ and $1<p<\infty$. The local potential estimates used in the proof of \cite[Theorem~2.1]{HMM2021}, applied to the normal
representation, give
\[
 \|u_j\|_{W^{n,p}(\Omega')}
 \le C\left(
 \|f_j\|_{L^p(\Omega)}
 +\|f_j\|_{L^1(\R^n)}
 +\|u_j\|_{L^\infty(\Omega)}
 \right)\le C,
\]
where $C$ is independent of $j$. Since the weights $|x|^{n\gamma_j}$ have uniformly bounded derivatives of every order on $\Omega$, the regularity iteration in the same proof yields
\begin{equation*}
 \sup_j\|u_j\|_{W^{m,p}(\Omega')}<\infty
 \qquad\text{for every integer }m\ge1.   
\end{equation*}
Compact Sobolev embeddings therefore give, up to a subsequence,
\[
 u_j\longrightarrow u_0
 \quad\text{in }C^\ell_{\mathrm{loc}}(\R^n\setminus\{0\})
 \qquad\text{for every }\ell\ge0.
\]

We next control the mass at infinity. By the uniform bound of $u_j$ in $B_1$ and the Kelvin symmetry, we obtain
\begin{equation*}
     f_j(x)\leq C|x|^{-2n-n\gamma_j}, \qquad |x|>1
\end{equation*}
for some constant $C$ independent of $j$. Consequently,
\begin{equation}\label{int fj uniform}
 \sup_j\int_{|x|>R}(1+|x|)f_j(x)\dd x\leq CR^{-(n-1)}.   
\end{equation}
Thus both the masses and the first moments are uniformly integrable. Since \(\gamma_j\to0\) and \(u_j\to u_0\) locally uniformly,
\begin{equation*}
 f_j=|x|^{n\gamma_j}e^{nu_j}
 \longrightarrow e^{nu_0}
 \quad\text{in }L^1_{\mathrm{loc}}(\R^n).    
\end{equation*}
Combining this local convergence with \eqref{int fj uniform},
we obtain
\begin{equation*}
     \int_{\R^n}e^{nu_0}\dd x=\lim_{j\to\infty}\int_{\R^n}f_j\dd x
 =|\Sph^n|.
\end{equation*}

It remains to verify normality. Fix \(x_0\in\R^n\). Subtracting the normal representations at \(x\) and \(x_0\) gives
\begin{equation}\label{uj rep x x0}
 u_j(x)-u_j(x_0)=\frac{2}{|\Sph^n|}\int_{\R^n}\log\frac{|x_0-y|}{|x-y|}f_j(y)\dd y.   
\end{equation}
On each fixed ball, we pass to the limit by local convergence. On its complement,
\begin{equation*}
     \left|\log\frac{|x_0-y|}{|x-y|}\right|\leq\frac{C_{x,x_0}}{|y|}
\end{equation*}
for large $|y|$, so the uniform mass bound controls the integral outside fixed balls. Therefore, we can pass to the limit in \eqref{uj rep x x0} to obtain
\begin{equation*}
    u_0(x)-u_0(x_0)=\frac{2}{|\Sph^n|}\int_{\R^n}\log\frac{|x_0-y|}{|x-y|}e^{nu_0(y)}\dd y.
\end{equation*}
Rearranging this identity yields
\begin{equation*}
 u_0(x)=c_0+\frac{2}{|\Sph^n|}\int_{\R^n}\log\frac{1+|y|}{|x-y|}e^{nu_0(y)}\dd y   
\end{equation*}
for some $c_0\in\R$. The fundamental-solution identity
\begin{equation*}
 (-\Delta)^{n/2}\log\frac1{|x|}=\frac{(n-1)!|\Sph^n|}{2}\delta_0    
\end{equation*}
therefore gives
\begin{equation*}
    (-\Delta)^{n/2}u_0=(n-1)!e^{nu_0} \quad\text{in }\mathcal D'(\R^n).
\end{equation*}
Thus $u_0$ is a normal solution of the unweighted equation. The classifications \cite[Theorem~2]{Martinazzi2009} in even dimensions and \cite[Theorem~1.3]{Hyder2019} in odd dimensions now give
\begin{equation*}
 u_0(x)=\log\frac{2\lambda}{1+\lambda^2|x-\xi|^2}   
\end{equation*} 
for some $\lambda>0$ and $\xi\in\R^n$ . By the uniform integrability \eqref{int fj uniform} and
Corollary \ref{coro:translation-identity},
\begin{equation*}
0=\lim_{j\to\infty}\int_{\R^n}x f_j(x)\dd x=\int_{\R^n}x e^{nu_0(x)}\dd x=|\Sph^n|\xi,    
\end{equation*}
so \(\xi=0\). Finally, passing to the limit in \eqref{rig:eq:median} and
changing variables $y=\lambda x$ give
\begin{equation*}
    \int_{B_\lambda}\left(\frac{2}{1+|y|^2}\right)^n\dd y=\int_{B_1}e^{nu_0(x)}\dd x
 =\frac{|\Sph^n|}{2}.
\end{equation*}
The left-hand side is strictly increasing in $\lambda$ and equals $\frac{|\Sph^n|}{2}$ at $\lambda=1$. Therefore $\lambda=1$. Hence
\begin{equation*}
 u_0(x)=\log\frac{2}{1+|x|^2}=U(x)    
\end{equation*}
\end{proof}

\subsection{Proof of Theorem~\ref{thm:rad small ga}}

\begin{proof}[Proof of Theorem~\ref{thm:rad small ga}] 
By Theorem \ref{thm:kelvin}, we know that every normal solution is Kelvin symmetric. Let $u_\gamma^{\text{rad}}$ be the unique radial normal solution such that it is Kelvin symmetric with respect to $B_1$. 
We argue by contradiction. If the theorem were false, we could choose $\gamma_j\downarrow0$ and a normal solution $u_j$ of equation \eqref{eq:euclidean} with parameter $\gamma_j$, which is different from $u_j^{\text{rad}}:=u_{\gamma_j}^{\text{rad}}$ and also Kelvin symmetric with respect to $B_1$. By \eqref{rig:eq:median}, the Kelvin symmetry is equivalent to
\begin{equation*}
	\int_{B_1}(n-1)!|x|^{n\gamma_j}e^{nu_j^{\text{rad}}}\,dx=
	\int_{B_1}(n-1)!|x|^{n\gamma_j}e^{nu_j}\,dx=\frac{(n-1)!(1+\gamma_j)}{2}|\Sph^n|.
\end{equation*}

By Proposition \ref{rig:prop:compactness}, we have
\begin{equation}\label{uj conver U}
 u_j, u_j^{\text{rad}}\longrightarrow U=\log\frac{2}{1+|x|^2} \quad\text{in }C^0_{\mathrm{loc}}(\R^n)
\end{equation}

Set
\begin{equation*}
h_j:=\frac{u_{j}^{\text{rad}}-u_{j}}{\|u_{j}^{\text{rad}}-u_{j}\|_{L^\infty(\R^n)}}.
\end{equation*}
Then
\begin{equation}\label{eq:maximum}
h_j(\frac{x}{|x|^2})=h_j(x),\qquad
\|h_j\|_{L^\infty(\R^n)}=\max_{\overline B_1}|h_j|=1.
\end{equation}
We are going to show $h\equiv 0$ to obtain a contradiction.

Subtracting the equations of $u_j$ and $u_j^{\text{rad}}$ yields
\begin{equation*}\label{eq:linear}
(-\Delta)^{n/2}h_j=a_jh_j,\qquad
a_j(x)=n!|x|^{n\gamma_j}\int_0^1 e^{n((1-s)u_{j}^{\text{rad}}(x)+su_{j}(x))}\,ds.
\end{equation*}
For every fixed $R$,  \eqref{uj conver U} gives
\begin{equation}\label{eq:coeff}
0\le a_j\le C_R\quad\hbox{on }B_R,\qquad
a_j\longrightarrow n!e^{nU}\quad\hbox{in }L^p(B_R)
\quad(1\le p<\infty).
\end{equation}

There is also a uniform global $L^1$ bound requiring only the mass identity: 
\begin{equation}\label{eq:L1}
\int_{\R^n}a_j\,dx\le\frac{n!}{2}\int_{\R^n}|x|^{n\gamma_j}
\bigl(e^{nu_{j}^{\text{rad}}}+e^{nu_{j}}\bigr)\,dx=n!(1+\gamma_j)|\Sph^n|\le C.
\end{equation}

Subtracting the two normal representations of $u_j$ and $u_j^{\text{rad}}$ gives
\begin{equation}\label{eq:repr}
h_j(x)-h_j(0)=-\frac{2}{(n-1)!|\Sph^n|}\int_{\R^n}\log\frac{|x-y|}{|y|}\,a_j(y)h_j(y)\,dy.
\end{equation}
Hence
\begin{equation}\label{grad hj}
\nabla h_j(x)=-\frac{2}{(n-1)!|\Sph^n|}\int_{\R^n}\frac{x-y}{|x-y|^2}\,a_j(y)h_j(y)\,dy.
\end{equation}

 For $x\in B_R$, we split the integral into $|x-y|<1$ and its complement. Differentiate \eqref{grad hj} once more, by \eqref{eq:coeff} and \eqref{eq:L1}, we obtain
\begin{equation*}
	\begin{aligned}
		|\nabla h_j(x)|+|D^2h_j(x)|\le &C\int_{|x-y|<1}
		\frac{1}{|x-y|^2}\,a_j(y)|h_j(y)|\,dy
		+C\int_{|x-y|\geq 1}
		\frac{1}{|x-y|}\,a_j(y)|h_j(y)|\,dy\\
		\le& C_R\int_{|z|<1}
		\bigl(|z|^{-2}\bigr)\,dz+C\int_{\R^n}a_j\,dy\\
		\le &C_R.
	\end{aligned}
\end{equation*}
Here $D^2h_j$ is first understood in the weak sense. Thus, by the Arzel\`a--Ascoli theorem, after passing to a subsequence, we have
\begin{equation}\label{eq:convergence}
h_j\longrightarrow h\quad\hbox{in }C^1_{\mathrm{loc}}(\R^n).
\end{equation}
In particular, convergence is uniform on $\overline B_1$. Equation \eqref{eq:maximum} therefore gives
\begin{equation}\label{max h=1}
\max_{\overline B_1}|h|=1,\qquad \|h\|_{L^\infty(\R^n)}=1.
\end{equation}

Now we pass to the limit in \eqref{eq:repr}. For fixed $x$ and $R>2|x|+1$, we have
\begin{equation*}
\left|\log\frac{|x-y|}{|y|}\right|\le\frac{C_x}{|y|}, \quad |y|>R,
\end{equation*}
so by \eqref{eq:L1}, 
\begin{equation*}
	\int_{|y|>R}\left|\log\frac{|x-y|}{|y|}\right|a_j(y)|h_j(y)|\,dy\le\frac{C_x}{R}.
\end{equation*}
 On bounded sets, we have the convergence \eqref{eq:coeff} and \eqref{eq:convergence}. Consequently,
\begin{equation}\label{eq:limitintegral}
h(x)-h(0)=-\frac{2}{(n-1)!|\Sph^n|}\int_{\R^n}\log\frac{|x-y|}{|y|}\,n!e^{nU(y)}h(y)\,dy.
\end{equation}

We observe that \eqref{eq:limitintegral} is exactly the integral form of the linearized equation at $U$. In other words,
\begin{equation*}
	(-\Delta)^{n/2}h=n!e^{nU}h\quad\hbox{in }\R^n.
\end{equation*}
By Corollary \ref{coro:nondegen}, the nondegeneracy of $U$ implies that $h=\sum_{i=0}^n c_iZ_i$ for some $c_i\in\R$, where $Z_0,\dots, Z_n$ are defined by \eqref{eq:kernel}.

Since $\nabla u_j(0)=\nabla u_j^{\text{rad}}(0)=0$ for all $j$, we have  $\nabla h_j(0)=0$. By \eqref{eq:convergence}, $\nabla h(0)=0$. Since
\begin{equation*}
\nabla Z_0(0)=0,\quad \partial_k Z_i(0)=2\delta_{ik}, \qquad i,k=1,\dots,n,
\end{equation*}
we get $c_1=\dots=c_n=0$.

 For every $x\ne0$, equation \eqref{eq:maximum} passes to the limit and gives $h(\frac{x}{|x|^2})=h(x)$. Since
\begin{equation*}
Z_0(\frac{x}{|x|^2})=-Z_0(x),\quad Z_i(\frac{x}{|x|^2})=Z_i(x),\qquad i=1,\dots,n,
\end{equation*}
we get $c_0=0$.

In conclusion, we have $h\equiv0$, contradicting \eqref{max h=1}. The uniqueness up to a dilation follows immediately. This completes the proof of Theorem \ref{thm:rad small ga}.

\end{proof}

\section*{Acknowledgment}

H. Li would like to extend his sincere gratitude to Professor Lei Zhang (UFL) for his inspiring suggestions, invaluable guidance and constant support over the years. The research of J. Wei is partially supported by GRF of RGC of Hong Kong entitled ``On critical and supercritical Fujita equation''. The research of W. Yang is partially supported by National Key R\&D Program of China 2022YFA1006800, NSFC No. 12531010, FDCT No. 0070/2024/RIA1, Multi-Year Research Grant No. MYRG-GRG2024-00082-FST-UMDF, MYRG-GRG202500051-FST and UMDF No. TISF/2025/006/FST. The authors acknowledge the use of AI tools. All mathematical arguments and proofs in the final manuscript were checked and written by the authors.

\bigskip

\noindent
\textsc{Haoyu Li}\\
Department of Mathematics, Faculty of Science and Technology, University of Macau, Taipa, Macau, China\\
\textit{Email address: hyli1994@hotmail.com}

\medskip

\noindent
\textsc{Tuoxin Li}\\
Department of Mathematics, Faculty of Science and Technology, University of Macau, Taipa, Macau, China\\
\textit{Email address: tuoxinli@um.edu.mo} 

\medskip

\noindent
\textsc{Juncheng Wei}\\
Department Of Mathematics, The Chinese University Of Hong Kong, Shatin, Nt, Hong Kong\\
\textit{Email address:  wei@math.cuhk.edu.hk} 

\medskip

\noindent
\textsc{Wen Yang}\\
Department of Mathematics, Faculty of Science and Technology, University of Macau, Taipa, Macau, China\\
\textit{Email address: wenyang@um.edu.mo}


\begin{thebibliography}{99}

\bibitem{AWZ2022}
M.~Ahmedou, L.~Wu and L.~Zhang,
\emph{Classification and a priori estimates for the singular prescribing
$Q$-curvature equation on $4$-manifold},
J. Funct. Anal. \textbf{283} (2022), Paper No.~109649.



\bibitem{Branson1995}
T.~P. Branson,
\emph{Sharp inequalities, the functional determinant and the complementary
series},
Trans. Amer. Math. Soc. \textbf{347} (1995), no.~10, 3671--3742.





\bibitem{BrezisMerle1991}
H.~Br\'ezis and F.~Merle,
\emph{Uniform estimates and blow-up behavior for solutions of $-\Delta u=V(x)e^u$ in two dimensions},
Comm. Partial Differential Equations \textbf{16} (1991), no.~8--9, 1223--1253.



\bibitem{CLY2019}
J.~S. Case, Y.-J. Lin and W.~Yuan,
\emph{Conformally variational Riemannian invariants},
Trans. Amer. Math. Soc. \textbf{371} (2019), no.~11, 8217--8254.

\bibitem{CM2023}
J.~S. Case and A.~Malchiodi,
\emph{A factorization of the GJMS operators of special Einstein products and
applications},
J. Lond. Math. Soc. (2) \textbf{110} (2024), no.~5,
Paper No.~e70023, 17 pp.

\bibitem{ChangChen2001}
S.-Y.~A. Chang and W.~Chen,
\emph{A note on a class of higher order conformally covariant equations},
Discrete Contin. Dyn. Syst. \textbf{7} (2001), no.~2, 275--281.

\bibitem{ChangGonzalez2011}
S.-Y.~A. Chang and M.~d.~M. Gonz\'alez,
\emph{Fractional Laplacian in conformal geometry},
Adv. Math. \textbf{226} (2011), no.~2, 1410--1432.




\bibitem{ChenLi1993}
W.~Chen and C.~Li,
\emph{Qualitative properties of solutions to some nonlinear elliptic equations
in $\R^2$},
Duke Math. J. \textbf{71} (1993), no.~2, 427--439.



\bibitem{ChenLiOu2006}
W.~Chen, C.~Li and B.~Ou,
\emph{Classification of solutions for an integral equation},
Comm. Pure Appl. Math. \textbf{59} (2006), no.~3, 330--343.



\bibitem{CR}
M.~G. Crandall and P.~H. Rabinowitz,
\emph{Bifurcation from simple eigenvalues},
J. Funct. Anal. \textbf{8} (1971), 321--340.

\bibitem{Deimling1985}
K.~Deimling,
\emph{Nonlinear Functional Analysis},
Springer-Verlag, Berlin, 1985.

\bibitem{DelaTorreGonzalez2018}
A.~DelaTorre and M.~d.~M.~Gonz\'alez,
\emph{Isolated singularities for a semilinear equation for the fractional Laplacian arising in conformal geometry},
Rev. Mat. Iberoam. \textbf{34} (2018), no.~4, 1645--1678.

\bibitem{DEM2012}
M.~del Pino, P.~Esposito and M.~Musso,
\emph{Nondegeneracy of entire solutions of a singular Liouville equation},
Proc. Amer. Math. Soc. \textbf{140} (2012), no.~2, 581--588.



\bibitem{DHL2000}
Z.~Djadli, E.~Hebey and M.~Ledoux,
\emph{Paneitz-type operators and applications},
Duke Math. J. \textbf{104} (2000), no.~1, 129--169.

\bibitem{DM2008}
Z.~Djadli and A.~Malchiodi,
\emph{Existence of conformal metrics with constant $Q$-curvature},
Ann. of Math. (2) \textbf{168} (2008), no.~3, 813--858.

\bibitem{Durrett2019}
R.~Durrett,
\emph{Probability: Theory and Examples},
fifth edition,
Cambridge University Press, 2019.

\bibitem{FangMa2022}
H.~Fang and B.~Ma,
\emph{Constant $Q$-curvature metrics on conic $4$-manifolds},
Adv. Calc. Var. \textbf{15} (2022), no.~2, 235--264.

\bibitem{FG2013}
C.~Fefferman and C.~R.~Graham,
\emph{Juhl's formulae for GJMS operators and $Q$-curvatures},
J. Amer. Math. Soc. \textbf{26} (2013), no.~4, 1191--1207.

\bibitem{GhoshHyder2025}
M.~Ghosh and A.~Hyder,
\emph{Bol's type inequality for singular metrics and its application to prescribing $Q$-curvature problems},
preprint, arXiv:2512.24828 (2025).

\bibitem{GGN2013}
F.~Gladiali, M.~Grossi and S.~L.~N.~Neves,
\emph{Nonradial solutions for the H\'enon equation in $\R^N$},
Adv. Math. \textbf{249} (2013), 1--36.

\bibitem{GJMS1992}
C.~R. Graham, R.~Jenne, L.~J. Mason and G.~A.~J. Sparling,
\emph{Conformally invariant powers of the Laplacian, I: Existence},
J. London Math. Soc. (2) \textbf{46} (1992), no.~3, 557--565.

\bibitem{GLW2026}
Z.~Guo, Z.~Liu and F.~Wan,
\emph{New types of singular solutions of conformal $Q$-curvature equations},
Acta Math. Sin. (Engl. Ser.) \textbf{42} (2026), no.~2, 481--507.

\bibitem{GurMal2015}
M.~J. Gursky and A.~Malchiodi,
\emph{A strong maximum principle for the Paneitz operator and a non-local flow
for the $Q$-curvature},
J. Eur. Math. Soc. (JEMS) \textbf{17} (2015), no.~9, 2137--2173.

\bibitem{HormanderI}
L.~H\"ormander,
\emph{The Analysis of Linear Partial Differential Operators I:
Distribution Theory and Fourier Analysis},
Classics in Mathematics,
Springer-Verlag, Berlin, 2003.

\bibitem{HormanderIII}
L.~H\"ormander,
\emph{The Analysis of Linear Partial Differential Operators III:
Pseudo-Differential Operators},
Classics in Mathematics,
Springer, Berlin, 2007.


\bibitem{Hyder2016}
A.~Hyder,
\emph{Existence of entire solutions to a fractional Liouville equation in
$\R^n$},
Rend. Lincei Mat. Appl. \textbf{27} (2016), no.~1, 1--14.

\bibitem{Hyder2019}
A.~Hyder,
\emph{Structure of conformal metrics on $\mathbb R^n$ with constant
$Q$-curvature},
Differential Integral Equations \textbf{32} (2019), no.~7--8, 423--454.

\bibitem{HLW2020}
A.~Hyder, C.-S.~Lin and J.~Wei,
\emph{The $\mathrm{SU}(3)$ Toda system with multiple singular sources},
Pacific J. Math. \textbf{305} (2020), no.~2, 645--666.

\bibitem{HMM2021}
A.~Hyder, G.~Mancini and L.~Martinazzi,
\emph{Local and nonlocal singular Liouville equations in Euclidean spaces},
Int. Math. Res. Not. IMRN (2021), no.~15, 11393--11425.

\bibitem{HM2015}
A.~Hyder and L.~Martinazzi,
\emph{Conformal metrics on $\R^n$ with constant $Q$-curvature, prescribed
volume, asymptotic behavior},
Discrete Contin. Dyn. Syst. \textbf{35} (2015), no.~1, 283--299.

\bibitem{HM2021Power}
A.~Hyder and L.~Martinazzi,
\emph{Normal conformal metrics on $\R^4$ with $Q$-curvature having power-like growth},
J. Differential Equations \textbf{301} (2021), 37--72.

\bibitem{JSY2025}
A.~Jevnikar, Y.~Sire and W.~Yang,
\emph{Prescribing $Q$-curvature on even-dimensional manifolds with conical singularities},
Rev. Mat. Iberoam. \textbf{41} (2025), no.~1, 1--28.

\bibitem{JX2026}
Q.~Jiang and J.~Xiong,
\emph{Existence of non-radial entire solutions for the H\'enon equation
beyond even exponents},
preprint, arXiv:2606.31670.

\bibitem{JSTW2022}
J.~Jin, L.~Shu, D.~Tai and D.~Wu,
\emph{Normal conformal metrics on $\R^4$ with singular $Q$-curvature having power-like growth},
J. Differential Equations \textbf{327} (2022), 64--108.

\bibitem{JMMX2015}
T.~Jin, A.~Maalaoui, L.~Martinazzi and J.~Xiong,
\emph{Existence and asymptotics for solutions of a non-local $Q$-curvature
equation in dimension three},
Calc. Var. Partial Differential Equations \textbf{52} (2015), no.~3--4,
469--488.

\bibitem{Karlin}
S.~Karlin,
\emph{Total Positivity, Vol.~I},
Stanford University Press, Stanford, CA, 1968.

\bibitem{Kato1995}
T.~Kato,
\emph{Perturbation Theory for Linear Operators},
second edition, Classics in Mathematics, Springer-Verlag, Berlin, 1995.

\bibitem{KohrNistorWendland}
M.~Kohr, V.~Nistor and W.~L.~Wendland,
\emph{Layer potentials and essentially translation invariant pseudodifferential
operators on manifolds with cylindrical ends},
in \emph{Postpandemic Operator Theory},
Theta Ser. Adv. Math., pp.~61--116, 2023;
arXiv:2308.06308.

\bibitem{KonigLaurain2022}
T.~K\"onig and P.~Laurain,
\emph{Constant $Q$-curvature metrics with a singularity},
Trans. Amer. Math. Soc. \textbf{375} (2022), no.~4, 2915--2948.




\bibitem{Lin1998}
C.-S.~Lin,
\emph{A classification of solutions of a conformally invariant fourth order
equation in $\R^n$},
Comment. Math. Helv. \textbf{73} (1998), no.~2, 206--231.





\bibitem{LockhartMcOwen1985}
R.~B.~Lockhart and R.~C.~McOwen,
\emph{Elliptic differential operators on noncompact manifolds},
Ann. Scuola Norm. Sup. Pisa Cl. Sci. (4) \textbf{12} (1985), no.~3, 409--447.





\bibitem{Martinazzi2009}
L.~Martinazzi,
\emph{Classification of solutions to the higher order Liouville's equation on
$\R^{2m}$},
Math. Z. \textbf{263} (2009), no.~2, 307--329.




\bibitem{McCann1997}
R.~J.~McCann,
\emph{A convexity principle for interacting gases},
Adv. Math. \textbf{128} (1997), no.~1, 153--179.



\bibitem{DLMF}
NIST Digital Library of Mathematical Functions,
\emph{Chapters 5 and 15},
\url{https://dlmf.nist.gov/}.

\bibitem{Pinkus}
A.~Pinkus,
\emph{Totally Positive Matrices},
Cambridge University Press, Cambridge, 2010.

\bibitem{PrajapatTarantello2001}
J.~Prajapat and G.~Tarantello,
\emph{On a class of elliptic problems in $\R^2$: Symmetry and uniqueness
results},
Proc. Roy. Soc. Edinburgh Sect. A \textbf{131} (2001), no.~4, 967--985.


\bibitem{ReedSimonI}
M.~Reed and B.~Simon,
\emph{Methods of Modern Mathematical Physics, Vol.~I:
Functional Analysis},
second edition,
Academic Press, San Diego, 1980.


\bibitem{ReedSimonIV}
M.~Reed and B.~Simon,
\emph{Methods of Modern Mathematical Physics, Vol.~IV:
Analysis of Operators},
Academic Press, New York, 1978.


\bibitem{Rudin1991}
W.~Rudin,
\emph{Functional Analysis}, second edition,
McGraw--Hill, New York, 1991.




\bibitem{Santambrogio2016}
F.~Santambrogio,
\emph{Dealing with moment measures via entropy and optimal transport},
J. Funct. Anal. \textbf{271} (2016), no.~2, 418--436.

\bibitem{Schaefer1974}
H.~H. Schaefer,
\emph{Banach Lattices and Positive Operators},
Grundlehren der mathematischen Wissenschaften, Vol.~215,
Springer-Verlag, Berlin--Heidelberg--New York, 1974.


%\bibitem{Simon}
%B.~Simon,
%\emph{Trace Ideals and Their Applications},
%second ed., American Mathematical Society, Providence, RI, 2005.

\bibitem{Villani}
C.~Villani,
\emph{Topics in Optimal Transportation},
American Mathematical Society, Providence, RI, 2003.

\bibitem{WeiXu1999}
J.~Wei and X.~Xu,
\emph{Classification of solutions of higher order conformally invariant
equations},
Math. Ann. \textbf{313} (1999), no.~2, 207--228.

\bibitem{WeiYe2008}
J.~Wei and D.~Ye,
\emph{Nonradial solutions for a conformally invariant fourth order equation in
$\R^4$},
Calc. Var. Partial Differential Equations \textbf{32} (2008), no.~3, 373--386.



\end{thebibliography}
\end{document}